\documentclass[12pt]{amsart}
\usepackage{amssymb, amsmath, amsfonts, amsthm, graphics, mathrsfs}
\usepackage[hmargin=1 in, vmargin = 1 in]{geometry}
\usepackage{hyperref}
\usepackage{tikz-cd}
\usepackage{mathabx}
\usetikzlibrary{calc}
\usepackage[all]{xy}
\usepackage{float}
\setlength{\belowcaptionskip}{-5pt}
\newcommand{\sslash}{\mathbin{/\mkern-6mu/}}
% When you first define a new word, use this macro to make it stand out
% EG We say that an abelian group $I$ is \newword{injective} if, for any
% injection $G \to H$, and any map $G \to I$, there is a map $H \to I$ making the
% obvious diagram commute.

\newcommand{\newword}[1]{\textbf{#1}}
\newcommand{\defn}[1]{\textbf{#1}}
\newcommand{\defnintro}[1]{{\it #1}}
%Arrows

%Things LaTeX names by appearance, rather than meaning
% By now, I've learned the standard LaTeX names, but I remember they used to give me trouble, so here are some macros
 %The isomorphism symbol

%Some multiletter functions
\DeclareMathOperator{\rank}{rank}
\DeclareMathOperator{\Hom}{Hom}
\DeclareMathOperator{\Ext}{Ext}

\DeclareMathOperator{\Ker}{Ker}

\DeclareMathOperator{\Spec}{Spec}

\DeclareMathOperator{\Res}{Res}
\DeclareMathOperator{\sgn}{sgn}
\DeclareMathOperator{\Frac}{Frac}

%Their calligraphic versions; use these for the sheaf constructions

 %The very curly H

%This makes alternating tensors look right in displayed equations

%Blackboard bold letters.
\renewcommand{\AA}{\mathbb{A}}

\newcommand{\CC}{\mathbb{C}}

\newcommand{\FF}{\mathbb{F}}
\newcommand{\GG}{\mathbb{G}}
\newcommand{\HH}{\mathbb{H}}

\newcommand{\PP}{\mathbb{P}}
\newcommand{\QQ}{\mathbb{Q}}
\newcommand{\RR}{\mathbb{R}}

\newcommand{\ZZ}{\mathbb{Z}}

%Calligraphic letters

\newcommand{\cA}{\mathcal{A}}

\newcommand{\cF}{\mathcal{F}}

\newcommand{\cO}{\mathcal{O}}

\newcommand{\fg}{\mathfrak{g}}

\newtheorem{theorem}{Theorem}

\newtheorem{thm}[theorem]{Theorem}
\newtheorem{Theorem}[theorem]{Theorem}
\newtheorem*{TheoremIntro}{Theorem}
\newtheorem{proposition}[theorem]{Proposition}
\newtheorem{prop}[theorem]{Proposition}

\newtheorem{cor}[theorem]{Corollary}
\newtheorem{conjecture}[theorem]{Conjecture}

\newtheorem{lemma}[theorem]{Lemma}
\newtheorem{lem}[theorem]{Lemma}

\newtheorem{remark}[theorem]{Remark}
\newtheorem{Remark}[theorem]{Remark}
\numberwithin{theorem}{section}
\newtheorem{example}[theorem]{Example}

\def\oF{\overline{\FF}}
\newcommand{\lcm}{\mathrm{lcm}}

\newcommand{\SL}{\mathrm{SL}}

\DeclareMathOperator{\dlog}{\mathrm{dlog}}
\DeclareMathOperator{\diag}{\mathrm{diag}}
\DeclareMathOperator{\Gr}{\mathrm{Gr}}

\def\tC{\tilde C}
\def\tB{\tilde B}
\def\wB{\widehat{B}}
\def\wC{\widehat{C}}
\def\x{\mathbf{x}}

\def\z{\mathbf{z}}

\def\Id{\mathrm{Id}}
\def\bgamma{\bar \gamma}
\def\oG{\vec{\Gamma}}
\def\Dir{\mathrm{Dir}}
\title{Cohomology of cluster varieties. I. \\ Locally acyclic case}
\author[Thomas Lam]{Thomas Lam}
\address{Department of Mathematics, 
University of Michigan, Ann Arbor, MI 48109, USA.}
\email{tfylam@umich.edu}
\thanks{T.L. was partially supported by NSF grants DMS-1160726, DMS-1464693, and a Simons Fellowship. D.E.S. was partially supported by NSF grant DMS-1600223.}
\author[David E Speyer]{David E Speyer}
%\address{Department of Mathematics, 
%University of Michigan, Ann Arbor, MI 48109, USA.}
\email{speyer@umich.edu}
\begin{document}
\begin{abstract}
We initiate a systematic study of the cohomology of cluster varieties.  We introduce the Louise property for cluster algebras that holds for all acyclic cluster algebras, and for most cluster algebras arising from marked surfaces.  For cluster varieties satisfying the Louise property and of full rank, we show that the cohomology satisfies the curious Lefschetz property of Hausel and Rodriguez-Villegas, and that the mixed Hodge structure is split over $\QQ$.  We give a complete description of the highest weight part of the mixed Hodge structure of these cluster varieties, and develop the notion of a standard differential form on a cluster variety.  We show that the point counts of these cluster varieties over finite fields can be expressed in terms of Dirichlet characters.  Under an additional integrality hypothesis, the point counts are shown to be polynomials in the order of the finite field.
\end{abstract}

\maketitle
\setcounter{tocdepth}{1}
\tableofcontents

\section{Introduction} 
Cluster algebras are certain commutative algebras introduced by Fomin and Zelevinsky \cite{CA1} that have found applications in many areas of mathematics.  The aim of this article is to initiate a systematic study of the singular cohomology of a \defnintro{cluster variety} $\cA$, the spectrum of a cluster algebra $A$ defined over the complex numbers.  By definition, a cluster algebra $A$ is a subalgebra of a rational function field generated by (possibly infinitely many) cluster variables.  We begin by discussing conditions guaranteeing that $\cA$ is a smooth complex affine algebraic variety, and in particular, a complex manifold.  The cohomology $H^*(\cA,\CC)$ can then be calculated via (algebraic) deRham cohomology.

\subsection{Local acyclicity and the Louise property}
Let $A$ be a rank $n$ skew-symmetric cluster algebra of geometric type with $m$ frozen variables, defined over $\CC$.  We note that our convention is to invert the frozen variables; see Section~\ref{sec:cluster background} for a full introduction to cluster algebras.
 Thus $A = A(\tB)$ is determined up to isomorphism by an $(n+m) \times n$ \defnintro{extended exchange matrix} $\tB$, an integer matrix whose top $n \times n$ submatrix is skew-symmetric.  \defnintro{Mutation} produces new exchange matrices $\tB'$ from $\tB$; two mutation-equivalent exchange matrices define the same cluster algebra.  

The \defnintro{quiver} of $\tB$ is the directed graph $\oG = \oG(\tB)$ with vertex set $1$, $2$, \dots, $n$, and an edge $i \to j$ if $\tB_{ij}>0$.  Associated to any subset $\oG' \subset [n]$ is a cluster algebra $ A(\tB_{\oG'})$, obtained by freezing the cluster variables $\{ x_i \}_{i \not \in \oG'}$, with extended exchange matrix $\tB_{\oG'}$ given by taking the columns of $\tB$ indexed by $\oG'$.  We say that $A(\tB)$ is \defnintro{acyclic} if $\oG(\tB')$ has no directed cycles, for some mutation $\tB'$ of $\tB$.  

Following Muller~\cite{Mul}, we define $i \to j$ to be a \defnintro{separating edge} of $\oG$ if there is no bi-infinite path through the edge $i \to j$. 
%In particular, if $\Gamma$ is acyclic, then every edge is separating.
%Muller~\cite{Mul} shows that, if $i \longrightarrow j$ is a separating edge, and $x_i$ and $x_j$ are the cluster variables corresponding to $i$ and $j$, then $\{ x_i \neq 0 \} \cup \{ x_j \neq 0 \} = \cA$. If $A$ is locally acyclic, then $\{ x_i \neq 0 \}$, $\{ x_j \neq 0 \}$, $\{ x_i \neq 0 \} \cap \{ x_j \neq 0 \}$ will each be cluster varieties, corresponding respectively to freezing $x_i$, freezing $x_j$ and freezing both $x_i$ and $x_j$.
%We can then attempt to recursively analyze these cluster algebras in the same manner.

%This motivates the following definitions: We say that a cluster algebra is \newword{isolated} if $B=0$; equivalently, if $\Gamma$ is empty.
The Louise property for cluster algebras is defined recursively.  We say that a cluster algebra $A  = A(\tB)$ satisfies the \defnintro{Louise property} if either 
\begin{enumerate}
\item $\oG(\tB)$ has no edges, or  
\item for some $\tB'$ mutation-equivalent to $\tB$, the quiver $\oG = \oG(\tB')$ of $\tB'$ has a separating edge $i \to j$, such that the cluster algebras $A(\tB'_{\oG \setminus \{i\}}), A(\tB'_{\oG \setminus \{j\}})$, and $A(\tB'_{\oG \setminus \{i,j\}})$ all satisfy the Louise property.
\end{enumerate}

Any acyclic cluster variety satisfies the Louise property and any cluster variety satisfying the Louise property is \defnintro{locally acyclic}: it can be covered by finitely many acyclic cluster localizations.  Muller \cite{Mul} showed that locally acyclic cluster algebras enjoy many favorable properties; in particular, under a full rank condition locally acyclic cluster varieties are smooth.  In this paper, we establish general properties of cohomology of cluster algebras satisfying the Louise property or satisfying local acyclicity.  In the sequel \cite{LS}, we give more explicit results for acyclic cluster algebras.

%\begin{theorem}
%Suppose that $A$ satisfies the Louise property and is of full rank.  Then $A$ is finitely-generated over $\CC$ and regular, and $\cA$ is a smooth complex affine algebraic variety.
%\end{theorem}

\subsection{Mixed Hodge structure of cluster varieties}
Deligne \cite{Del} defined a mixed Hodge structure on the cohomology groups $H^\ast(X,\CC)$ for a complex algebraic variety $X$.  It induces a \defnintro{Deligne splitting} $H^k(X,\CC) = \bigoplus_{p,q} H^{k,(p,q)}(X)$.  We say that $H^\ast(X)$ is of \defnintro{mixed Tate type} (also known as \defnintro{Hodge-Tate type}) if $H^{k,(p,q)}(X) = 0$ for $p \neq q$.  We say that the mixed Hodge structure of $H^\ast(X)$ is \defnintro{split over $\QQ$} if each summand $H^{k,(p,q)}(X)$ has a basis coming from $H^\ast(X,\QQ)$.  (This implies that $H^\ast(X)$ is a direct sum of pure Hodge structures.)

We say that a cluster algebra $A(\tB)$, or the cluster variety $\cA =\Spec(A)$, is of \defnintro{full rank} if the matrix $\tB$ has full rank.

\begin{TheoremIntro}[proved in Section \ref{ssec:curious}]
Suppose that $\cA$ satisfies the Louise property and is of full rank.  Then $\cA$ is a smooth complex affine algebraic variety.  The mixed Hodge structure of $H^\ast(\cA,\CC)$ is of mixed Tate type, and is split over $\QQ$.
%
%
%The Deligne summands $H^{r,(p,q)}(\cA)$ are zero for $p \neq q$ and the splitting $H^r(\cA, \CC) = \bigoplus_{p=\lceil r/2 \rceil}^r H^{r,(p,p)}(\cA)$ is defined over $\QQ$. In other words, the mixed Hodge structure on $H^r(\cA, \CC)$ is split over $\QQ$ and of Tate type.
\end{TheoremIntro}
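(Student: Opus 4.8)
The plan is to induct on the recursive structure of the Louise property, using a Mayer--Vietoris argument built from the cluster localizations furnished by a separating edge. Since the Louise property implies local acyclicity, smoothness under the full rank hypothesis is exactly Muller's result, so the content is the statement about the mixed Hodge structure. The base case (1) is a cluster algebra whose quiver has no edges; such a cluster algebra is a Laurent polynomial ring, so $\cA \cong (\CC^\ast)^r \times \CC^s$ for some $r, s$, whose cohomology is the exterior algebra on classes in $H^{1,(1,1)}$, which is manifestly of mixed Tate type and split over $\QQ$. For the inductive step, suppose $\oG = \oG(\tB')$ has a separating edge $i \to j$ with the three smaller cluster algebras $A(\tB'_{\oG\setminus\{i\}})$, $A(\tB'_{\oG\setminus\{j\}})$, $A(\tB'_{\oG\setminus\{i,j\}})$ satisfying the Louise property. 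Following Muller, the separating edge produces a cover of $\cA$ by two principal open cluster localizations $\cA[x_i^{-1}]$ and $\cA[x_j^{-1}]$, corresponding to $A(\tB'_{\oG\setminus\{j\}})$ and $A(\tB'_{\oG\setminus\{i\}})$ (inverting $x_i$ freezes $x_j$'s neighbor structure appropriately, and vice versa), with intersection $\cA[x_i^{-1}, x_j^{-1}]$ corresponding to $A(\tB'_{\oG\setminus\{i,j\}})$. By the inductive hypothesis all three of these are smooth with cohomology of mixed Tate type and split over $\QQ$.

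The core step is then to run the Mayer--Vietoris sequence
\[
\cdots \to H^{k-1}(\cA[x_i^{-1},x_j^{-1}]) \to H^k(\cA) \to H^k(\cA[x_i^{-1}]) \oplus H^k(\cA[x_j^{-1}]) \to H^k(\cA[x_i^{-1},x_j^{-1}]) \to \cdots
\]
in the category of mixed Hodge structures. I would argue that the connecting maps are strictly compatible with the weight and Hodge filtrations (Deligne's theorem that morphisms of MHS are strict), so that $H^k(\cA)$ is an extension of a sub-MHS of $H^k(\cA[x_i^{-1}]) \oplus H^k(\cA[x_j^{-1}])$ by a quotient-MHS of $H^{k-1}(\cA[x_i^{-1},x_j^{-1}])$. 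Both the sub and the quotient are again of mixed Tate type, since mixed Tate type is stable under sub-quotients; and an extension of mixed Tate structures is mixed Tate, because the relevant $\Ext^1$ groups between pure Tate structures of different weights that would contribute off-diagonal Hodge pieces vanish for the pieces in play (this is where one checks that no $(p,q)$ with $p\neq q$ can appear). For the "split over $\QQ$" assertion, the same strictness argument lets one lift the rational structure: the rational Mayer--Vietoris sequence maps to the complex one compatibly, so a rational basis of each $H^{k,(p,q)}$ on the two pieces and the overlap assembles to a rational basis downstairs; here one uses that an extension of $\QQ$-split mixed Tate structures is $\QQ$-split, again via vanishing of the obstructing $\Ext^1(\QQ(-p),\QQ(-q))$ for $p \ne q$ (and more care for $p=q$, where one uses that the extension splits integrally after choosing compatible rational lifts).

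The main obstacle, and the step requiring the most care, is controlling the mixed Hodge structure on the cluster localizations and verifying that the Mayer--Vietoris maps really are maps of mixed Hodge structures with the needed strictness --- in particular, confirming that the open immersions $\cA[x_i^{-1}] \hookrightarrow \cA$ and the identifications with the smaller cluster varieties are algebraic, so that functoriality of Deligne's MHS applies. A secondary subtlety is that the inductive hypothesis must be applied after a mutation $\tB \to \tB'$, so one needs that mutation is an isomorphism of cluster varieties (true, since mutation-equivalent exchange matrices define the same cluster algebra), hence induces an isomorphism of mixed Hodge structures; and that freezing a variable corresponds geometrically to the localization, which is precisely Muller's setup for separating edges. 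Finally, I would want to double check the edge case where one of the three smaller quivers is empty, so the induction bottoms out correctly, and confirm the full rank hypothesis descends to all the relevant submatrices $\tB'_{\oG\setminus S}$ so that smoothness (hence the applicability of Deligne's theory in the smooth-variety form, or at least the clean behavior of the localizations) is available at every stage.
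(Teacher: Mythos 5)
Two steps in your proposal are genuinely wrong, and they are exactly the two places where the paper has to work hardest. First, the base case: when $\oG(\tB)$ has no edges, the cluster algebra is \emph{not} a Laurent polynomial ring and $\cA$ is not $(\CC^\ast)^r\times\CC^s$. ``No edges'' only forces the principal part $B$ to vanish; the frozen rows survive (and must have rank $n$ by the full rank hypothesis), so each mutable variable still satisfies an exchange relation $x_ix_i'=\prod_j y_j^{[c_{ji}]_+}+\prod_j y_j^{[-c_{ji}]_+}$. Up to a finite cover, such an isolated $\cA$ is a product of varieties $\{xx'=y^d+1,\ y\neq 0\}$ with a torus, and its cohomology has a nonstandard summand $H^{2,(1,1)}$ of dimension $d-1$; it is not an exterior algebra on classes of type $(1,1)$. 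The paper's Section~\ref{sec:isolated} (retraction onto a necklace of spheres, covering maps reducing to $d$-principal coefficients, and explicit period integrals of $y^c\gamma$ over the spheres) is devoted entirely to this base case, and the rationality of the splitting there is established by those explicit integrals (Proposition~\ref{prop:isolatedrank1}).

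Second, $\QQ$-splitness does not propagate through Mayer--Vietoris by the mechanism you invoke: $\Ext^1_{\mathrm{MHS}}(\QQ(-p),\QQ(-q))$ does \emph{not} vanish for $p>q$ (it is essentially $\CC/(2\pi i)^{p-q}\QQ$), so an extension of $\QQ$-split mixed Tate structures need not be $\QQ$-split; this nonvanishing is precisely why ``split over $\QQ$'' is a nontrivial assertion. The paper even exhibits the failure in a nearby situation: for the Laurent phenomenon variety $xx'=f(y)$ with $f$ having distinct roots, the cohomology is mixed Tate but is not split over $\QQ$, the extension data being logarithms of ratios of the roots (see the remark following Proposition~\ref{prop:isolatedrank1}). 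Hence no soft extension argument can work, and the paper takes a different route: after reducing to $n+m$ even (multiply by $\CC^\ast$), it proves the curious Lefschetz property for $(\cA,\gamma)$ with $\gamma$ a full-rank GSV form, by Mayer--Vietoris together with the five lemma and the compatibility of the boundary map with cup product by $\gamma$ (Theorem~\ref{CuriousMV}); $\QQ$-splitness is then deduced from curious Lefschetz (Theorem~\ref{thm:splitQ}), because the Lefschetz-primitive pieces are cut out by kernels of powers of the rational class $\gamma$ intersected with weight-filtration steps, all of which are defined over $\QQ$. Your mixed-Tate-type step via Mayer--Vietoris is fine (it is Lemma~\ref{lem:mixedTate}), and smoothness is indeed Muller's theorem, but as written your proposal proves neither the base case nor the splitting over $\QQ$.
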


%\subsection{The curious Lefschetz theorem for cluster varieties}
Let $(x_1, x_2, \ldots, x_{n+m})$ be a cluster with extended exchange matrix $\tilde{B}$.  Define a \defnintro{Gekhtman -- Shapiro--Vainshtein form}, or \defnintro{GSV form} for short, to be any $2$-form
\[
\gamma = \sum_{i,j=1}^{n+m}  \widehat{B}_{ij} \frac{d x_i}{x_i}  \wedge \frac{dx_j}{x_j}
\]
where $\widehat{B}$ is a $(n+m) \times (n+m)$ skew symmetric matrix extending $\tB$.  The importance of this closed $2$-form was first identified by Gekhtman, Shapiro and Vainshtein \cite{GSV}, and is independent of the choice of cluster.  It defines a cohomology class $[\gamma] \in H^{2,(2,2)}(\cA,\CC)$.  We say that $\gamma$ has \defnintro{full rank} if $\wB$ has full rank.

%When $A$ is locally acyclic, Muller~CITE showed that $\gamma$ extends to a closed $2$-form on all of $\cA$, and hence defines a class $[\gamma] \in H^2(\cA)$. In fact, this class lies in $H^2(\cA, \QQ)$ and in $H^{2,(1,1)}(\cA)$. 

Suppose that $X$ is a smooth $2d$-dimensional affine variety and $[\gamma] \in H^{2,(2,2)}(X,\CC)$.  We say that the \defnintro{curious Lefschetz property} holds for the pair $(X,[\gamma])$, if $H^{\ast}(X,\CC)$ is of mixed Tate type and if, for all $s \geq 0$ and all $p \leq d$, the map 
\[ [\gamma]^{d -p} : H^{p+s, (p,p)}(X, \CC) \to H^{2d-p+s, (2d-p, 2d-p)}(X, \CC) \]
is an isomorphism.  The curious Lefschetz property was formalized by Hausel and Rodriguez-Villegas \cite{HR} in their study of the mixed Hodge polynomials of character varieties.

\begin{TheoremIntro}[proved in Section \ref{ssec:curious}]
Suppose $\cA$ is an even-dimensional cluster variety satisfying the Louise property and is of full rank.   Then the pair $(\cA,[\gamma])$ satisfies the curious Lefschetz property for any full rank GSV-form $\gamma$.
\end{TheoremIntro}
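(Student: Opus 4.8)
The plan is to induct on the rank $n$ of the cluster algebra, exploiting the recursive definition of the Louise property. Write $2d=\dim\cA$. By the first theorem above, $\cA$, and likewise every cluster variety met below, is smooth affine with mixed Tate cohomology; so $H^k$ vanishes above the complex dimension, and $H^{a,(b,b)}=0$ whenever $b>a$ (since $F^{a+1}H^a=0$ on a smooth variety). These vanishings already make $[\gamma]^{d-p}\colon H^{p+s,(p,p)}(\cA)\to H^{2d-p+s,(2d-p,2d-p)}(\cA)$ a trivial isomorphism $0\to 0$ once $s>p$ — the source because its weight $2p$ is then below its degree $p+s$, the target because its degree $2d-p+s$ then exceeds $2d$ — so only the range $0\le s\le p\le d$ requires argument.

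For the base case, when $\oG(\tB)$ has no edges, I would argue directly: the cluster variety is assembled from a torus and rank-one pieces, the full rank hypothesis forces $[\gamma]$ to be nondegenerate, and the curious Lefschetz isomorphisms reduce to the Hard Lefschetz identities for the associated $\mathfrak{sl}_2$-action on $H^\ast(\cA,\CC)$; alternatively this case falls out of the explicit description of $H^\ast(\cA)$ in terms of standard differential forms.

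For the inductive step I would pass to a mutation $\tB'$ witnessing the Louise property (which changes neither $\cA$ nor $[\gamma]$) and let $i\to j$ be the resulting separating edge. By Muller's theory of separating edges \cite{Mul}, recalled earlier, $\cA$ is the union of the two open subvarieties $\{x_i\neq 0\}$ and $\{x_j\neq 0\}$, which are the cluster varieties $U_i=\Spec A(\tB_{\oG\setminus\{i\}})$ and $U_j=\Spec A(\tB_{\oG\setminus\{j\}})$, meeting in $U_{ij}:=\{x_ix_j\neq 0\}=\Spec A(\tB_{\oG\setminus\{i,j\}})$. Each of $U_i,U_j,U_{ij}$ has dimension $2d$, has rank below $n$, satisfies the Louise property (by its very definition for $\cA$), and is of full rank (deleting columns from the full rank $\tB$ leaves full rank); and since a single skew extension $\wB$ of $\tB$ extends $\tB_{\oG\setminus\{i\}}$, $\tB_{\oG\setminus\{j\}}$ and $\tB_{\oG\setminus\{i,j\}}$, the class $[\gamma]$ restricts to a full rank GSV form on each. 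Thus the inductive hypothesis gives the curious Lefschetz property for $U_i$, $U_j$ and $U_{ij}$. I would then take the $(p,p)$-graded piece of the Mayer--Vietoris long exact sequence of mixed Hodge structures for $\cA=U_i\cup U_j$ (which carries no Tate twist); since every term is mixed Tate and the Deligne splitting is functorial, this graded piece is again exact, yielding for each $k$ an exact sequence
\[
H^{k-1,(p,p)}(U_i)\oplus H^{k-1,(p,p)}(U_j)\to H^{k-1,(p,p)}(U_{ij})\to H^{k,(p,p)}(\cA)\to H^{k,(p,p)}(U_i)\oplus H^{k,(p,p)}(U_j)\to H^{k,(p,p)}(U_{ij}).
\]
Because $[\gamma]$ is global, cup product with $[\gamma]^{d-p}$ is $H^\ast(\cA)$-linear, commutes with every map of the Mayer--Vietoris sequence (no Koszul sign, $2(d-p)$ being even), and carries the sequence above with $k=p+s$ to the corresponding sequence in degree $2d-p+s$ and Hodge bidegree $(2d-p,2d-p)$. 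In the resulting commutative ladder the four vertical maps flanking $[\gamma]^{d-p}\colon H^{p+s,(p,p)}(\cA)\to H^{2d-p+s,(2d-p,2d-p)}(\cA)$ are precisely the curious Lefschetz maps for $U_i,U_j,U_{ij}$ at indices $(p,s)$ and $(p,s-1)$, hence isomorphisms by induction (the $s=0$ boundary terms vanishing by $H^{a,(b,b)}=0$ for $b>a$), so the five lemma gives the claimed isomorphism for $\cA$ and completes the induction.

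The homological part of this argument is soft; the two places where the real content sits are the inputs it consumes. One is the geometry of separating edges — the cover $\cA=U_i\cup U_j$, the identification of $U_i,U_j,U_{ij}$ as lower-rank cluster varieties, and the identification of $[\gamma]$ with their GSV forms — which I would take from Muller's theory \cite{Mul} as developed in the earlier sections. The other is the mixed Tate property of $H^\ast(\cA)$ together with the strictness and functoriality of the Hodge splitting under the Mayer--Vietoris maps, supplied by the first theorem and by Deligne \cite{Del}. The full rank hypothesis does its genuine work in the base case, where it is exactly what makes $[\gamma]$ nondegenerate.
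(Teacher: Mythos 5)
Your inductive step is exactly the paper's argument: the Mayer--Vietoris ladder in the $(p,p)$-graded pieces, the compatibility of the boundary map with cup product by the even-degree class $[\gamma]^{d-p}$ (the paper's Lemma~\ref{MayerMult}), the five lemma, and the observation that the restrictions of a full rank GSV form to $U_i$, $U_j$, $U_{ij}$ are again full rank GSV forms for those lower-rank Louise, full-rank cluster varieties. That part is complete and correct.

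The genuine gap is the base case, which is where the paper does most of its work. First, an isolated full-rank cluster variety is \emph{not} ``assembled from a torus and rank-one pieces'': for a general $\tB=\left(\begin{smallmatrix}0\\ C\end{smallmatrix}\right)$ it is only a finite quotient of $Y_d^n\times(\CC^\ast)^{m-n}$. One needs the covering-map machinery (Propositions~\ref{prop:coverprincipal}, \ref{prop:coveringmap}, \ref{prop:GSVpullback}) to pass to that product, to check that $\gamma$ pulls back to a full rank GSV form there, and even then a further conjugation of $\wB$ is required to kill the cross terms between the frozen blocks before the product reduction (Proposition~\ref{prop:curiousproduct}) applies; curious Lefschetz then descends to the invariant part of the deck group. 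Second, and more seriously, the curious Lefschetz property for the rank-one piece $\{xx'=y^b+1,\ y\neq 0\}$ is \emph{not} a formal hard Lefschetz / $\mathfrak{sl}_2$ statement: nondegeneracy of $\gamma$ buys you nothing here because $H^2$ is $b$-dimensional while $H^0$ is one-dimensional. What is actually needed is the computation of the Deligne splitting of $H^2$ -- that $H^{2,(2,2)}$ is one-dimensional and spanned by $[\gamma]$ while the remaining $(b-1)$-dimensional piece sits in $H^{2,(1,1)}$ (Proposition~\ref{prop:isolatedrank1}, proved by retracting onto a necklace of spheres, integrating explicitly, and a separate Mayer--Vietoris argument to locate the $(1,1)$ part). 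Your fallback via ``standard differential forms'' cannot supply this either, since the standard part only sees $H^{k,(k,k)}$ and the whole point of the isolated case is the nonstandard summand $H^{2,(1,1)}$. Without this computation the induction has nothing to stand on.
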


Even when $\cA$ is odd dimensional, the numerical consequences of the curious Lefschetz property hold:
\begin{TheoremIntro}[proved in Section \ref{ssec:curious}]
Suppose $\cA$ is an $e$-dimensional cluster variety satisfying the Louise property and is of full rank.  Then $\dim H^{p+s, (p,p)}(\cA, \CC) = \dim H^{e-p+s, (e-p, e-p)}(\cA, \CC)$.
\end{TheoremIntro}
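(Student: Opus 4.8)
This statement is the numerical shadow of the curious Lefschetz property. When $e$ is even it is immediate from the preceding theorem: choosing any full-rank GSV form $\gamma$ on $\cA$ — one exists because $\cA$ is of full rank and even-dimensional — and invoking that theorem, the map $[\gamma]^{d-p}\colon H^{p+s,(p,p)}(\cA)\to H^{2d-p+s,(2d-p,2d-p)}(\cA)$ is an isomorphism for $2d=e$, which is exactly the asserted equality of dimensions. So the plan is to deduce the odd-dimensional case from the even-dimensional one, by multiplying with $\GG_m$.

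First I would record that $\cA':=\cA\times\GG_m$ is again a cluster variety: its cluster algebra is $A(\tB')$, where $\tB'$ is $\tB$ with one extra zero row appended — this adjoins a single inverted frozen variable $t$, so $\cA'=\Spec\big(A\otimes_\CC\CC[t^{\pm1}]\big)$. Since the quiver $\oG(\tB')$ and the rank of $\tB'$ agree with those of $\tB$, and since $\GG_m$ is itself Louise (empty quiver) and of full rank, $\cA'$ satisfies the Louise property and is of full rank; this is routine by induction on the rank (or follows from a product lemma, if one is available). As $\dim\cA'=e+1$ is even, the even-dimensional theorem applies to $\cA'$ and gives, for all $p$ and $s$ (reading any out-of-range cohomology group as $0$),
\[ \dim H^{p+s,(p,p)}(\cA')=\dim H^{(e+1)-p+s,\,((e+1)-p,\,(e+1)-p)}(\cA'). \]

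Next I would feed in Künneth. Since $H^\ast(\GG_m,\CC)$ is $\CC$ in degree $0$ of Hodge type $(0,0)$ and $\CC$ in degree $1$ of Hodge type $(1,1)$, and $H^\ast(\cA)$ is of mixed Tate type by the first theorem above, the Deligne-split Künneth formula gives $\dim H^{k,(p,p)}(\cA')=\dim H^{k,(p,p)}(\cA)+\dim H^{k-1,(p-1,p-1)}(\cA)$. Writing $a_{p,s}:=\dim H^{p+s,(p,p)}(\cA)$, the displayed identity becomes $a_{p,s}+a_{p-1,s}=a_{e-p+1,\,s}+a_{e-p,\,s}$ for all $p,s$. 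Fix $s$ and set $c_p:=a_{p,s}-a_{e-p,s}$; using $a_{e-p+1,s}=a_{e-(p-1),s}$, the relation rearranges to $c_p+c_{p-1}=0$ for all $p\in\ZZ$. Since $a_{p,s}=0$ for $|p|$ large, $c_p=0$ for $|p|$ large, and then $c_p+c_{p-1}=0$ forces $c_p\equiv 0$ by descending from $p\gg 0$ (equivalently ascending from $p\ll 0$). Hence $a_{p,s}=a_{e-p,s}$, which is the claim.

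The only substantive inputs are the two earlier theorems; the rest — that $\cA\times\GG_m$ is a Louise, full-rank cluster variety, the Künneth computation, and the telescoping — is formal. The point that genuinely needs to be checked is the hypothesis of the even-dimensional theorem, which takes a full-rank GSV form as input: one must verify that $\cA$ (resp.\ $\cA\times\GG_m$) in fact admits one when $e$ is even (resp.\ odd). This amounts to the elementary linear algebra fact that an $(n+m)\times n$ matrix of rank $n$ with skew-symmetric top block, $n+m$ even, extends to a nondegenerate skew-symmetric $(n+m)\times(n+m)$ matrix — provable by enlarging the prescribed block one coordinate at a time while keeping the radical of the partial alternating form trivial — but this should be confirmed rather than taken for granted.
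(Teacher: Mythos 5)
Your proposal is correct and follows the paper's own route: the even-dimensional case is the numerical content of the curious Lefschetz property (with the existence of a full-rank GSV form supplied by Lemma~\ref{lem:GSVfullrank}), and the odd case is handled exactly as in the paper by appending a zero frozen row to get $\cA\times\CC^{\ast}$ and applying K\"unneth. The only difference is cosmetic: you spell out the telescoping deduction of $a_{p,s}=a_{e-p,s}$ from the identity for $\cA\times\CC^{\ast}$, which the paper leaves implicit.
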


The reader may like to turn now to Section~\ref{sec:examples} to see typical tables of $\dim H^{k,(p,p)}(\cA,\CC)$.

\subsection{Standard forms on cluster varieties}
Let $T_r \simeq (\CC^\ast)^r$ be an $r$-dimensional complex torus.  A cohomology class $[\omega] \in H^\ast(T_r,\CC)$ can be uniquely represented by a \defnintro{standard form} 
\[
\omega = \sum_{I = \{i_1,i_2,\ldots,i_k\}} a_I \;\frac{dx_{i_1}}{x_{i_1}} \wedge \frac{dx_{i_2}}{x_{i_2}} \wedge \cdots \wedge \frac{dx_{i_k}}{x_{i_k}}, \qquad a_I \in \CC.
\]
Each cluster $(x_1,\ldots,x_{n+m})$ of a cluster algebra $A$ has an associated cluster torus $T \subset \cA$.  A regular differential form on a cluster variety $\cA$ is called \defnintro{standard} if it restricts to a standard form on every cluster torus $T \subset \cA$.  It turns out that the space of standard forms on $\cA$ has many descriptions.

\begin{TheoremIntro}[proved in Sections \ref{ssec:standard} and \ref{ssec:neighbor}]
Suppose that $\cA$ is locally acyclic and of full rank.  Then the following vector spaces are isomorphic, by the natural isomorphisms:
\begin{enumerate}
\item The space of standard differential forms on $\cA$.
\item The space of regular differential forms on $\cA$ which are standard on some cluster torus.
\item For any cluster torus $T \subset \cA$, the space of standard differential forms on $T$ which extend to regular forms on its neighboring tori.
\end{enumerate}
\end{TheoremIntro}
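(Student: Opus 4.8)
Write $V_1$, $V_2$, $V_3$ for the three vector spaces in the statement, $V_3$ being the one attached to the chosen cluster torus $T$, with cluster $(x_1,\dots,x_{n+m})$ and extended exchange matrix $\tB$. There are obvious natural maps: the inclusion $V_1\hookrightarrow V_2$, since a standard form on $\cA$ is in particular regular and standard on some cluster torus; and the restriction maps $V_2\to V_3$ and $V_1\to V_3$, $\omega\mapsto\omega|_T$. The plan is to prove that all of these are isomorphisms. Everything rests on one local computation: how a standard form on a cluster torus transforms under restriction to an adjacent one.

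Fix a mutable direction $k$ and let $T^{(k)}$ be the neighbouring torus, with new cluster variable $x'_k$ given by the exchange relation $x_k x'_k=P+Q$, where $P=\prod_{\tB_{ik}>0}x_i^{\tB_{ik}}$, $Q=\prod_{\tB_{ik}<0}x_i^{-\tB_{ik}}$, and all other cluster variables are unchanged. Set $\theta_k=x_k\,\partial/\partial x_k$ and $\eta_k=\sum_i\tB_{ik}\,\frac{dx_i}{x_i}$, a standard $1$-form proportional to the contraction $\iota_{\theta_k}\gamma$ of any GSV form $\gamma$. Using $\frac{dx_k}{x_k}=\frac{d(P+Q)}{P+Q}-\frac{dx'_k}{x'_k}$ and the identity $\frac{d(P+Q)}{P+Q}=\frac{dQ}{Q}+\frac{P}{P+Q}\,\eta_k$, a short computation gives, for every standard form $\sigma$ on $T$,
\[
\sigma\big|_{T^{(k)}}=\sigma^{\flat}+\frac{P}{P+Q}\bigl(\eta_k\wedge\iota_{\theta_k}\sigma\bigr),
\]
with $\sigma^{\flat}$ an explicit standard form on $T^{(k)}$. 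When $\eta_k\neq0$ the function $P/(P+Q)$ has a genuine pole along the non-empty divisor $\{P+Q=0\}\subset T^{(k)}$, while $\eta_k\wedge\iota_{\theta_k}\sigma$ has constant coefficients and so vanishes nowhere on the torus unless it is identically zero; hence $\sigma|_{T^{(k)}}$ is regular if and only if $\eta_k\wedge\iota_{\theta_k}\sigma=0$, and in that case $\sigma|_{T^{(k)}}=\sigma^{\flat}$ is again standard. Two consequences: (i) a regular form on $\cA$ standard on one cluster torus is standard on every adjacent torus, since each cluster torus is an open subscheme of $\cA$ and the restriction is automatically regular, hence standard on all clusters by connectedness of the exchange graph; this gives $V_2=V_1$, and makes the maps $V_1\to V_3$, $V_2\to V_3$ well defined and injective ($T$ is dense in the irreducible variety $\cA$). (ii) for a standard form $\sigma$ on $T$, lying in $V_3$ is precisely the condition $\eta_k\wedge\iota_{\theta_k}\sigma=0$ for all mutable $k$.

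It remains to show $V_1\to V_3$ is surjective. Let $\sigma\in V_3$, so by (ii) $\sigma$ is standard on $T$ with $\eta_k\wedge\iota_{\theta_k}\sigma=0$ for all mutable $k$. The crucial point, which I expect to be the main obstacle, is that this system of linear conditions is preserved by mutation: for each $k$, the standard form $\sigma|_{T^{(k)}}$ again satisfies $\eta'_l\wedge\iota_{\theta'_l}(\sigma|_{T^{(k)}})=0$ for all mutable $l$, where $\eta'_l$ and $\theta'_l$ are formed from the mutated matrix $\mu_k(\tB)$; equivalently $\sigma|_{T^{(k)}}$ lies in the space associated to $T^{(k)}$ in the same way $V_3$ is associated to $T$. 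I would prove this by the same calculation as above, substituting $\frac{dx'_j}{x'_j}=\frac{dx_j}{x_j}$ for $j\neq k$ and $\frac{dx'_k}{x'_k}=\frac{dQ}{Q}-\frac{dx_k}{x_k}$, the discrepancy between the two sides being a multiple of $\eta_k$ which drops out after wedging with any column of $\mu_k(\tB)$, by the matrix-mutation rules relating those columns to $\eta_k$ and the columns of $\tB$, and using the hypotheses $\eta_m\wedge\iota_{\theta_m}\sigma=0$; conceptually, these conditions express annihilation of $\sigma$ against $\wB$, and $\wB$, equivalently the form $\gamma$, transforms covariantly under mutation. Granting this, induction along the connected exchange graph shows that, as a rational form on $\cA$, $\sigma$ is regular (and standard) on every cluster torus $T_C$; being restrictions of a single rational form, these patch to one regular differential form on the open set $U=\bigcup_C T_C\subseteq\cA$.

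Finally, since $\cA$ is locally acyclic and of full rank, Muller \cite{Mul} shows that $\cA$ is smooth and that $A$ equals its upper cluster algebra, the intersection $\bigcap_C\cL_C$ of the Laurent rings of all clusters; equivalently $\Gamma(U,\cO_{\cA})=A=\Gamma(\cA,\cO_{\cA})$. As $\cA$ is normal, this forces $\cA\setminus U$ to have codimension at least $2$. Because $\cA$ is smooth, the sheaves $\Omega^p_{\cA}$ are locally free, so they satisfy Serre's condition $(S_2)$ and the regular form $\sigma$ on $U$ extends (Hartogs) to a regular form on all of $\cA$. This extension is standard on $T$, hence standard on $\cA$ by consequence (i) above, so it lies in $V_1$ and restricts to $\sigma$. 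Thus $V_1\to V_3$ is an isomorphism, and with $V_1=V_2$ this establishes the three isomorphisms, realized by the natural maps above. (Full rank enters only here, ensuring smoothness of $\cA$ and hence the Hartogs extension; the substantive difficulty is the mutation-invariance of the linear conditions in the previous paragraph.)
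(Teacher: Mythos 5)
Your architecture is essentially the paper's: the same local residue criterion for a standard form on $T$ to extend regularly to an adjacent torus (your condition $\eta_k\wedge\iota_{\theta_k}\sigma=0$ is the paper's $\omega_2\wedge\sum_s\tB_{sr}\,\dlog x_s=0$ of Proposition~\ref{prop:residues}, up to sign), the same propagation of these conditions along the exchange graph, and the same codimension-two/Hartogs extension from the union of cluster tori to $\cA$. Two of your substitutions are legitimate and arguably cleaner than the paper's: deducing $\mathrm{codim}(\cA\setminus\bigcup T)\geq 2$ from $A=U$ plus normality (instead of the direct analysis in Proposition~\ref{prop:uniontori}), and proving directly that a regular form standard on one torus is standard on all of them (instead of the detour through $H^{\ast}(\cA)_{st}$, Proposition~\ref{prop:rationalLogImpliesTate} and Lemma~\ref{lem:topHodgeInjective} in Theorem~\ref{thm:standardequiv}).

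The gap is exactly where you flagged it: the assertion that the conditions at $T$ imply the corresponding conditions at $T^{(k)}$ is not proved, and the one-sentence mechanism you propose does not obviously work. Rewriting $\eta'_l$ and $\sigma|_{T^{(k)}}$ in the coordinates of $T$ introduces the non-constant correction $\tfrac{P}{P+Q}\,\eta_k$ in two places simultaneously (inside $\eta'_l$, through the term $\tB'_{kl}\,\dlog x'_k$, and inside every occurrence of $\dlog x'_k$ in the mutated form), and the cancellation is not simply "wedging with a column of $\mu_k(\tB)$": one must split $\sigma=\omega_1+\omega_2\wedge\dlog x_k+\omega_3\wedge\dlog x_l+\omega_4\wedge\dlog x_k\wedge\dlog x_l$, extract from the hypothesis the two separate identities in \eqref{vanishing}, choose between the two equivalent expressions \eqref{mutateStandard} and \eqref{mutateStandardPlus} according to the sign of $\tB_{kl}$, and only then verify the vanishing using the explicit mutation rule for the $l$-th column of $\tB$. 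This is the content of Proposition~\ref{keepsExtending}, which is where essentially all of the work of Section~\ref{ssec:neighbor} lives. The "covariance of $\wB$" heuristic does not substitute for it: the conditions are expressed relative to the standard basis of a particular torus, and the contraction $\iota_{\theta_k}$ does not transform covariantly under mutation. Until that computation is carried out, the surjectivity of $V_1\to V_3$ — i.e., the nontrivial inclusion (3)$\subseteq$(1) — is unproved.
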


Define the \defnintro{standard part} of the cohomology of $\cA$ by $H^\ast(\cA)_{st} := \bigoplus_k H^{k,(k,k)}(\cA)$; this is the highest weight part of cohomology.  
Like for the cohomology of a torus, the standard cohomology of a cluster variety is uniquely represented by standard forms.

\begin{TheoremIntro}[proved in Section \ref{ssec:standard}]
Suppose that $\cA$ is locally acyclic and of full rank. 
The natural map $\omega \mapsto [\omega] \in H^\ast(\cA)$ sends the space of standard forms on $\cA$ isomorphically onto the standard part $H^\ast(\cA)_{st}$.
\end{TheoremIntro}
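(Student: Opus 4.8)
I would prove the statement in three parts: injectivity by restriction to a single cluster torus, "the image lies in the standard part'' by combining the logarithmic nature of standard forms with the mixed Tate property of acyclic cluster varieties, and surjectivity by a Mayer--Vietoris/\v{C}ech argument over a locally acyclic cover that feeds off the already-established three-descriptions theorem. For injectivity, fix a cluster torus $T\subseteq\cA$. Since $T$ is Zariski dense and $\cA$ is smooth, restriction of regular $k$-forms $\Omega^k(\cA)\to\Omega^k(T)$ is injective, and it intertwines $\omega\mapsto[\omega]$ with the pullback $H^k(\cA,\CC)\to H^k(T,\CC)$. If $\omega$ is standard and $[\omega]=0$, then $[\omega|_T]=0$; as $\omega|_T$ is a standard form on the torus $T$, and $\omega\mapsto[\omega]$ is injective on standard forms on a torus (whose cohomology is the exterior algebra on the classes $[\dlog x_i]$), we get $\omega|_T=0$, hence $\omega=0$.

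Next, to see the image lies in $H^\ast(\cA)_{st}$, let $\omega$ be a standard $k$-form. On a cluster torus $T_0$ it equals a constant-coefficient combination $\sum a_I\bigwedge\dlog(x_{i})$ of wedges of $\dlog$'s of rational functions; since this expression agrees with $\omega$ on the dense open $T_0$, $\omega$ equals it as a rational form on any smooth compactification $\overline{\cA}$ with simple normal crossings boundary $D$, and a $\dlog$-expression has only simple poles along every prime divisor. Hence $\omega$ is a closed global section of $\Omega^k_{\overline{\cA}}(\log D)$ and so $[\omega]\in F^kH^k(\cA)$. Now cover $\cA$ by finitely many acyclic cluster localizations $\cU_i$; each $\cU_i$ is acyclic, hence satisfies the Louise property, and is of full rank, so its cohomology is mixed Tate and $F^kH^k(\cU_i)=H^{k,(k,k)}(\cU_i)$. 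Using $F^{k+1}H^k(\cA)=0$, write $[\omega]=\sum_{q\le k}c_q$ with $c_q\in H^{k,(k,q)}(\cA)$; each $c_q$ with $q<k$ restricts to $0$ on every $\cU_i$. But by the \v{C}ech spectral sequence of $\{\cU_i\}$, the kernel of $H^k(\cA)\to\bigoplus_iH^k(\cU_i)$ is filtered by subquotients of the groups $H^{<k}$ of the intersections $\cU_{i_0}\cap\cdots\cap\cU_{i_p}$, all of which have $F^k=0$; hence this kernel has trivial $F^k$, forcing $c_q=0$ for $q<k$, i.e.\ $[\omega]\in H^{k,(k,k)}(\cA)$.

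For surjectivity, I would use that the \v{C}ech spectral sequence of $\{\cU_i\}$ is a spectral sequence of mixed Hodge structures and pass to top weight $2k$: since $\mathrm{Gr}^W_{2k}H^k(X)=H^{k,(k,k)}(X)$ for smooth $X$, and since all \v{C}ech-degree $\ge1$ contributions and all higher differentials drop out for weight reasons, one obtains a natural isomorphism
\[
H^{k,(k,k)}(\cA)\;\cong\;\ker\!\Big(\bigoplus_iH^{k,(k,k)}(\cU_i)\longrightarrow\bigoplus_{i<j}H^{k,(k,k)}(\cU_i\cap\cU_j)\Big)
\]
compatible with $\omega\mapsto[\omega]$. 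Arguing by induction (the theorem for the acyclic pieces $\cU_i$ and for the intersections $\cU_i\cap\cU_j$, which are again cluster localizations, of full rank), both sides become spaces of standard forms, and -- using injectivity on the intersections to promote agreement of cohomology classes to equality of forms -- the kernel is exactly the space of families of standard forms on the $\cU_i$ agreeing on overlaps. Such a family glues to a regular $k$-form $\omega$ on $\cA$; arranging the cover so that some cluster torus $T_0$ of $\cA$ occurs among the $\cU_i$, the form $\omega$ restricts to a standard form on $T_0$, so by the already-proved three-descriptions theorem (description (2)) it is a standard form on $\cA$. Thus the map is surjective, and combined with the two previous steps it is an isomorphism onto $\bigoplus_kH^{k,(k,k)}(\cA)=H^\ast(\cA)_{st}$.

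The acyclic case used above I would prove by a nested induction on the number of mutable vertices, via the Louise recursion: when the quiver has no arrows, $\cA$ is a product of a torus and rank-one cluster varieties, for which cohomology and standard forms are computed by hand; otherwise, a separating edge $i\to j$ (take $j$ a sink of the acyclic quiver) gives $\cA=\cA[x_i^{-1}]\cup\cA[x_j^{-1}]$ with all three of $\cA[x_i^{-1}]$, $\cA[x_j^{-1}]$, $\cA[x_i^{-1},x_j^{-1}]$ acyclic of full rank with fewer mutable vertices, so one concludes by the two-term Mayer--Vietoris argument above. I expect the hard parts to be: (i) the gluing step, where standard forms that agree as cohomology classes on overlaps must be shown to agree as forms -- i.e.\ injectivity again -- so that the whole argument is bootstrapped from injectivity, the three-descriptions theorem, and the torus base case; and (ii) establishing that the \v{C}ech/Mayer--Vietoris formalism is genuinely one of mixed Hodge structures in this generality, so that "top-weight cohomology'' can be extracted exactly. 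A secondary bookkeeping point is to verify that cluster localizations of (locally) acyclic full-rank cluster algebras remain (locally) acyclic and of full rank, and that the two nested inductions are well-founded.
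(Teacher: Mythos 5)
Your proposal is essentially correct in outline, but it takes a genuinely different and much heavier route than the paper. The paper's proof (Theorem~\ref{thm:standardequiv}) involves no induction and no Mayer--Vietoris at all: given a class in $H^\ast(\cA)_{st}$, one takes its standard representative on \emph{every} cluster torus, shows these agree on overlaps by Lemma~\ref{lem:standardrestrict} (which rests on the injectivity of rational log forms into cohomology, Lemma~\ref{lem:dlogzero}), glues them over the union of the tori, extends across the codimension-two complement (Proposition~\ref{prop:uniontori} plus smoothness), and then identifies the class of the extension using Proposition~\ref{prop:rationalLogImpliesTate} and Lemma~\ref{lem:topHodgeInjective}. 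Those last two inputs are statements about arbitrary smooth varieties, so the argument is uniform in $\cA$ and needs no mixed Tate hypothesis on any pieces. You instead run an induction over the Louise/acyclic structure, replacing Proposition~\ref{prop:rationalLogImpliesTate} and Lemma~\ref{lem:topHodgeInjective} by the mixed-Tateness of the acyclic charts (Theorem~\ref{thm:curious}) together with weight/strictness arguments in a \v{C}ech spectral sequence of mixed Hodge structures, and replacing the codimension-two extension by gluing over an open cover. This can be made to work, but it buys less: it only applies to cluster varieties, it needs the full MHS-compatible \v{C}ech formalism (a strengthening of Proposition~\ref{prop:splitMV} that you must cite or prove), and it must be careful to invoke the ``three descriptions'' theorem only in its cohomology-free form (Propositions~\ref{prop:residues}, \ref{keepsExtending}, \ref{prop:neighborTori}), since the paper's own derivation of ``standard on one torus implies standard'' is bundled with the very statement you are proving.

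There is one concrete gap: the base case of your nested induction. You assert that when the quiver has no arrows, $\cA$ is a product of a torus and rank-one cluster varieties. This is false for a general full-rank isolated exchange matrix $\tB=\left(\begin{smallmatrix}0\\ C\end{smallmatrix}\right)$: the defining equations are $x_ix_i'=\prod_j y_j^{C_{ji}}+1$, and such a variety is only a finite free quotient of a product $Y_d^n\times(\CC^\ast)^{m-n}$, obtained via a covering map as in Proposition~\ref{prop:coverprincipal} (this is exactly how Theorem~\ref{thm:Isolated Main} is proved). So your base case needs an extra step: either compute the isolated full-rank case directly, or prove a descent statement along such covers, namely that both the space of standard forms and $H^\ast(-)_{st}$ are unchanged under a covering map of cluster varieties (the content of Lemma~\ref{lem:standardquotient}, using that the deck group acts by scaling cluster variables and hence trivially on standard forms). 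With that repair, and with the well-foundedness bookkeeping you already flag (acyclic charts and their intersections of a non-acyclic locally acyclic $\cA$ have strictly fewer mutable vertices; intersections need only the unconditional injectivity step, not the full inductive statement), your argument goes through.
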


Let $\Gamma$ denote the underlying undirected graph of $\oG(\tB)$.  Suppose the connected components of $\Gamma$ are $\Gamma_1,\ldots,\Gamma_r$.  We say that $\gamma_a$ is a GSV form for $\Gamma_a$ if it is a GSV form for $A(\tB_{\Gamma_a})$.  

\begin{TheoremIntro}[proved in Sections \ref{ssec:generators}--\ref{sec:proofstandardbasis}]
Suppose that $\cA$ is locally acyclic and of full rank.   Then $H^{\ast}(\cA)_{st}$ is generated by GSV 2-forms $\gamma_1,\ldots,\gamma_r$ for the connected components of $\Gamma$ and the 1-forms $\dlog y_1$, $\dlog y_2$, \dots, $\dlog y_m$, where $y_i$ are the frozen variables. If the connected components of $\Gamma$ have cardinalities $n_1$, $n_2$, \dots, $n_r$, then the Poincar\'{e} series of $H^{\ast}(\cA)_{st}$ is
\[ P(H^{\ast}(\cA)_{st},t) = (1+t)^{m-r} \prod_{i=1}^r \left( 1+ t+ \cdots + t^{n_i} + t^{n_i+1} \right) . \]
\end{TheoremIntro}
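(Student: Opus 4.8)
The plan is to translate the statement into linear algebra over $\bigwedge^{\bullet}V$, where $V=\bigoplus_{i=1}^{n+m}\CC\,\dlog x_i$ for a fixed cluster $(x_1,\dots,x_{n+m})$ with cluster torus $T\subset\cA$. By the preceding theorem the map $\omega\mapsto[\omega]$ identifies $H^{\ast}(\cA)_{st}$ with the algebra $\Omega_{st}$ of standard differential forms on $\cA$; and by part~(3) of the theorem describing standard forms, $\Omega_{st}$ is precisely the subalgebra of $\bigwedge^{\bullet}V$ consisting of those constant-coefficient forms on $T$ that extend to regular forms on the $n$ neighboring cluster tori $\mu_1T,\dots,\mu_nT$. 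So it remains to (i) make this subalgebra explicit, (ii) verify the proposed generators lie in it, and (iii) prove generation and compute the Poincar\'e series.

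\textbf{The extension condition, and the generators.}
Fix $k\in[n]$, write the exchange relation as $x_kx_k'=P_k^{+}+P_k^{-}$ with $P_k^{\pm}$ monomials in $\{x_i:i\neq k\}$, and set $\xi_k:=\sum_{i=1}^{n+m}\widetilde{B}_{ik}\,\dlog x_i\in V$, the $k$-th column of $\widetilde{B}$; full rank means the $\xi_k$ are linearly independent, so in particular $\xi_k\neq 0$. Writing $\omega=\dlog x_k\wedge\alpha_k+\beta_k$ with $\alpha_k,\beta_k$ free of $\dlog x_k$ and passing to coordinates on $\mu_kT$ via $\dlog x_k=\dlog(P_k^{+}+P_k^{-})-\dlog x_k'$, one finds that $\omega$ can only fail to be regular along the divisor $\{P_k^{+}+P_k^{-}=0\}\cap\mu_kT$, with Poincar\'e residue a nonzero scalar multiple of the restriction of $\xi_k\wedge\alpha_k$; hence $\omega$ extends across $\mu_k$ exactly when $\xi_k\wedge\alpha_k=0$. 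Since $\alpha_k=\iota_k\omega$ (contraction in the $\dlog x_k$-direction) and $\xi_k=-\tfrac12\iota_k\gamma$ for any GSV form $\gamma$ of $A(\widetilde{B})$, we obtain
\[
\Omega_{st}=\bigl\{\,\omega\in{\textstyle\bigwedge^{\bullet}}V:\ (\iota_k\gamma)\wedge(\iota_k\omega)=0\ \text{for all }k\in[n]\,\bigr\},
\]
manifestly a graded subalgebra of $\bigwedge^{\bullet}V$. It contains each $\dlog y_j$ (a coordinate common to all clusters, on which no condition is imposed), and each $\gamma_a$: choosing the extension of $\widetilde{B}_{\Gamma_a}$ with zero free block, one computes $\iota_k\gamma_a=-2\xi_k$ for $k\in\Gamma_a$ and $\iota_k\gamma_a=0$ for $k\notin\Gamma_a$, so $(\iota_k\gamma)\wedge(\iota_k\gamma_a)=0$ in either case; regularity of $\gamma_a$ on all of $\cA$ then follows from the standard-forms theorem. (Note also $\gamma=\gamma_1+\dots+\gamma_r+\gamma_F$ with $\gamma_F\in\bigwedge^{2}F$ a polynomial in the $\dlog y_j$.)

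\textbf{The computation, and the main obstacle.}
It remains to show the subalgebra generated by $\gamma_1,\dots,\gamma_r,\dlog y_1,\dots,\dlog y_m$ exhausts $\Omega_{st}$, and to evaluate its Poincar\'e series. Write $V=U_1\oplus\dots\oplus U_r\oplus F$ with $U_a=\langle\dlog x_i:i\in\Gamma_a\rangle$ and $F=\langle\dlog y_1,\dots,\dlog y_m\rangle$. The condition indexed by $k\in\Gamma_a$ involves only the tensor factors $\bigwedge^{\bullet}U_a$ and $\bigwedge^{\bullet}F$, so via the K\"unneth decomposition of $\bigwedge^{\bullet}V$ relative to $\bigwedge^{\bullet}F$ one reduces, after tracking dimensions, to the case $r=1$ of a single connected component. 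For $r=1$ I would solve $\Omega_{st}=\{\omega:\xi_k\wedge\iota_k\omega=0,\ k\in[n]\}$ directly, in coordinates adapted to the linearly independent columns $\xi_1,\dots,\xi_n$: the solution space turns out to be spanned by products of $\gamma$ with exterior monomials in the $\dlog y_j$, giving a presentation of $\Omega_{st}$ as a quotient of $\CC[\gamma]\otimes\bigwedge^{\bullet}F$ whose Hilbert series is $(1+t)^{m-1}(1+t+\dots+t^{n+1})$. This computation is the crux: the delicate point is the entanglement of the directions internal to a component with the frozen directions appearing in its columns $\xi_k$ — visible already when $n_a=1$, where $\xi_k$ lies entirely in $F$ — and it is exactly the full-rank hypothesis on $\widetilde{B}$ that forces one frozen direction per component to ``collapse'', producing the factor $(1+t+\dots+t^{n_a+1})$ of degree $n_a+1$ rather than the naive $n_a$, hence the global exponent $m-r$ in place of $m$.

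\textbf{An alternative via the Louise recursion.}
One may instead argue by induction along the Louise recursion. For a separating edge $i\to j$ one has $\cA=\cA_{x_i}\cup\cA_{x_j}$ with $\cA_{x_i}\cap\cA_{x_j}=\cA_{x_ix_j}$, all three localizations being full-rank cluster varieties satisfying the Louise property (freezing $i$, $j$, and $\{i,j\}$). Taking weight-$2p$ graded pieces of the Mayer--Vietoris sequence yields, for each $p$, a four-term exact sequence whose left-hand term $H^{p-1,(p,p)}(\cA_{x_ix_j})$ vanishes for weight reasons, hence
\[
0\to H^{p,(p,p)}(\cA)\to H^{p,(p,p)}(\cA_{x_i})\oplus H^{p,(p,p)}(\cA_{x_j})\to H^{p,(p,p)}(\cA_{x_ix_j})\to 0,
\]
once the last map is known to be onto — the substantive input, which follows from the structure of the separating edge used to establish the curious Lefschetz property. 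One then applies the inductive hypothesis to the three smaller cluster algebras, the base case being the edgeless quiver, where a change of basis of $F$ turns the columns of $\widetilde{B}$ into distinct frozen coordinates and $\Omega_{st}$ is read off at once (giving Poincar\'e series $(1+t)^{m-n}(1+t+t^2)^{n}$). Either way, checking that the stated Poincar\'e series behaves correctly under passing to components (respectively under deleting a vertex of $\Gamma$) is then routine bookkeeping.
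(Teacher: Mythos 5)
Your setup is sound and matches the paper's own reduction: the identification of $H^{\ast}(\cA)_{st}$ with the space of standard forms on a fixed torus $T$ satisfying the residue conditions $\xi_k\wedge\iota_k\omega=0$ for $k\in[n]$ is exactly Proposition~\ref{prop:residues} together with Corollary~\ref{cor:LinAlg}, and your verification that the $\gamma_a$ and the $\dlog y_j$ satisfy these conditions is correct. But the crux of the theorem is precisely the step you defer with ``the solution space turns out to be spanned by products of $\gamma$ with exterior monomials in the $\dlog y_j$.'' That assertion is the entire content of Proposition~\ref{prop:standardbasis}, and it is not a routine computation ``in coordinates adapted to the $\xi_k$'': the system couples the contraction directions (mutable) with the supports of the $\xi_k$ (mutable and frozen), and the paper needs three nontrivial ingredients to solve it. First, a reduction via covering maps (Lemma~\ref{lem:standardquotient}, Proposition~\ref{prop:coverprincipal}) to $d$-principal coefficients, where each $\xi_k$ has the normal form $\sum_i B_{ik}\dlog x_i + d\,\dlog y_k$; your proposed direct attack for arbitrary full-rank $\tB$ skips this normalization, without which the ``one frozen direction per component collapses'' phenomenon you correctly identify is hard to even set up. Second, a leading-term induction (fewest mutable, most frozen factors) whose key step, Lemma~\ref{lem:dIJ}, shows the leading coefficients $d_{IJ}$ depend only on $J$; this uses connectedness of $\Gamma$ by transporting coefficients along paths. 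Third, and most importantly, that transport argument invokes a separately established basis theorem for path quivers (Proposition~\ref{prop:An} and Corollary~\ref{cor:pathbasis}), which is proved not by linear algebra but by an inductive Mayer--Vietoris computation on the varieties themselves, including an explicit surjectivity check for the restriction to $U\cap V$. None of this is present in your proposal, so the generation statement and the Poincar\'e series remain unproved. (Your K\"unneth-style reduction to $r=1$ also needs care: the residue conditions for different components all involve the common factor $\bigwedge^{\bullet}F$, so the solution space is not a priori a tensor product over components; the paper sidesteps this because, with principal coefficients, the \emph{variety} is a product.)

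Your alternative via the Louise recursion has the same gap in a different guise: the surjectivity of $H^{p,(p,p)}(\cA_{x_i})\oplus H^{p,(p,p)}(\cA_{x_j})\to H^{p,(p,p)}(\cA_{x_ix_j})$ does not follow from the curious Lefschetz property (which is itself proved by a five-lemma argument assuming the pieces are known, not by establishing this surjectivity); it must be checked by exhibiting explicit preimages of the generators, which is exactly the computation the paper performs for path quivers in the proof of Proposition~\ref{prop:An}. Moreover, that route would only cover cluster algebras satisfying the Louise property, whereas the theorem is asserted for all locally acyclic cluster varieties of full rank, which the paper reaches through the linear-algebra characterization rather than the Louise recursion.
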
  

\subsection{Point counts of cluster varieties}
Cluster algebras (and thus cluster varieties) can, and often are, defined over $\ZZ$ as the $\ZZ$-subalgebra of an ambient field generated by cluster variables. 
%The cluster variety $\cA$ can be defined over $\ZZ$.  
For a finite field $\FF_q$ with $q$ elements, we write $\cA(\FF_q)$ for the set of $\FF_q$-points of $\cA$.

The set of all functions $f: \{ \mbox{prime powers} \} \to \CC$ has a ring structure obtained by pointwise addition and multiplication.  Let $\Lambda$ denote the subring generated over $\ZZ$ by the functions $f(q) = q$ and the Dirichlet characters $\chi(q)$. 

\begin{TheoremIntro}[proved in Section \ref{sec:pointcounts}]
Suppose that $\cA$ satisfies the Louise property and is of full rank.  Then there is an element $g \in \Lambda$ such that for sufficiently large primes $p$, we have $g(q) = \#\cA(\FF_q)$ for any prime power $q = p^a$.  If, in addition, the rows of $\tB$ span $\ZZ^n$, then $\# \cA(\FF_q)$ is a polynomial in $q$ for all prime powers $q$.  
\end{TheoremIntro}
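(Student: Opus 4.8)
The plan is to induct on the recursive structure of the Louise property, with edgeless quivers forming the base case. The recursion is legitimate because both hypotheses descend to the smaller cluster algebras $A(\tB'_{\oG\setminus\{i\}})$, $A(\tB'_{\oG\setminus\{j\}})$, $A(\tB'_{\oG\setminus\{i,j\}})$: full rank and the $\ZZ$-row-span of $\tB$ are unchanged under mutation (which acts by $\tB\mapsto E\tB F$ with $E\in\GL_{n+m}(\ZZ)$ and $F\in\GL_n(\ZZ)$), and they survive deleting a column of $\tB$ as well, since the coordinate projection $\ZZ^n\onto\ZZ^{n-1}$ carries the row span onto the row span and an $(n+m)\times(n-1)$ integer matrix of rank $\ge n-1$ automatically has full rank. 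Together with the fact that the number of mutable vertices strictly decreases at each recursive step, this makes the induction well-founded.

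For the inductive step I would invoke Muller's separating-edge covering \cite{Mul}: if $\oG(\tB')$ has a separating edge $i\to j$, then $\cA$ is the union of the two cluster localizations $U_1=\cA[x_i^{-1}]$ and $U_2=\cA[x_j^{-1}]$, and $U_1$, $U_2$, $U_1\cap U_2=\cA[x_i^{-1},x_j^{-1}]$ are, away from finitely many primes, the cluster varieties of the three freezings appearing in the Louise condition. Since $\cA(\FF_q)=U_1(\FF_q)\cup U_2(\FF_q)$, inclusion--exclusion on point sets gives
\[ \#\cA(\FF_q)=\#U_1(\FF_q)+\#U_2(\FF_q)-\#(U_1\cap U_2)(\FF_q), \]
so, as $\Lambda$ (resp.\ the polynomial ring $\ZZ[q]$) is closed under subtraction, the statement for $\cA$ reduces to the inductive statements for the three smaller varieties. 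Primes are excluded at this stage only to ensure that the covering, the localizations, and the reductions mod $p$ of the relevant cluster variables behave as expected; under the spanning hypothesis I expect the cluster algebra to carry enough integral structure that no primes need be discarded, which is why the stronger conclusion then holds at every prime power.

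In the base case $\oG(\tB)$ has no edges, mutation at a mutable vertex $k$ merely flips signs of frozen rows, and the exchange relation reads $x_k x_k'=z^{[v_k]_+}+z^{[-v_k]_+}$, where $z$ runs over the (invertible) frozen variables and $v_k$ is the $k$-th column of the frozen part of the exchange matrix; hence $\cA$ is the subvariety of $\mathbb{A}^{2r}\times(\CC^\ast)^{s}$ cut out by these $r$ relations, with $r$, $s$ the numbers of mutable, resp.\ frozen, variables of the base-case cluster algebra. Counting fibrewise over $z\in(\FF_q^\ast)^s$, the $k$-th factor contributes $q-1$ points when $z^{v_k}\neq-1$ and $2q-1$ points when $z^{v_k}=-1$, so $\#\cA(\FF_q)$ is a $\ZZ[q]$-linear combination of the quantities $\#\{z\in(\FF_q^\ast)^s: z^{v_k}=-1 \text{ for all } k\in S\}$ over $S\subseteq\{1,\dots,r\}$. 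Full rank forces the $v_k$ to be $\ZZ$-linearly independent, and the spanning hypothesis says the $v_k$ extend to a $\ZZ$-basis of the ambient lattice, a property inherited by every subfamily $\{v_k\}_{k\in S}$; thus the Smith normal forms of the matrices $(v_k)_{k\in S}$ are trivial and a short computation in $(\FF_q^\ast)^s$ gives $\#\{z:z^{v_k}=-1,\ k\in S\}=(q-1)^{s-|S|}$ in every characteristic, so the count is a polynomial in $q$. Dropping the spanning hypothesis, the same computation produces instead factors $\gcd(d_i,q-1)$ and indicator functions of congruence conditions on $q$, all of which lie in $\Lambda$ by orthogonality of Dirichlet characters once $p$ is large enough for $q$ to be coprime to the moduli involved.

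The step I expect to be the main obstacle is managing this interplay of integral structures: checking that Muller's covering descends to a covering of $\ZZ[1/N]$-schemes for an explicitly controlled $N$, pinning down exactly which finite-field point counts of the edgeless pieces occur, re-expressing them inside $\Lambda$, and confirming that under the spanning hypothesis these expressions collapse to honest polynomials in $q$ valid at every prime power, including the small primes that must be set aside in the general statement.
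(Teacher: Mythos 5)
Your argument is correct, but it takes a genuinely different route from the paper's. Section \ref{sec:pointcounts} does not count points directly: it first proves Theorem \ref{thm:eigenvalues}, that the Frobenius eigenvalues on $H^\ast(\cA_{\overline{\FF_p}},\QQ_\ell)$ are of the form $\chi(p)p^s$ (the same Louise induction, but run through the $\ell$-adic Mayer--Vietoris sequence, with the rank-one computation of Proposition \ref{prop:eigen} as base case), and then deduces the point count from the Grothendieck--Lefschetz fixed point formula together with Poincar\'e duality; the ``sufficiently large $p$'' there is exactly what is needed for $\cA_{\overline{\FF_p}}$ to be smooth so that duality applies, and Theorem \ref{thm:finitesmooth} removes that restriction in the really full rank case. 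You instead run the Louise induction directly on $\#\cA(\FF_q)$ via inclusion--exclusion for the two-set open cover, and replace the cohomological base case by a fibrewise count of the isolated variety over its frozen torus. The two base cases are close cousins: your Smith-normal-form count of $\{z : z^{v_k}=-1\}$ is a several-variable version of the paper's Lemma \ref{lem:isolated1}, and your conversion of $\gcd$'s into Dirichlet characters is exactly Lemma \ref{charSum*}. What your route buys is elementarity --- no smoothness, no Poincar\'e duality, no \'etale--Betti comparison, and a sharper accounting of which primes must be discarded; what it loses is the Frobenius eigenvalue statement itself, which the paper wants as a separate theorem and which ties the point count to the mixed Hodge numbers via \eqref{eq:GrothFinal}. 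The one step you should make explicit rather than defer: the identification of $\Spec A[x_i^{-1}]$, $\Spec A[x_j^{-1}]$ and their intersection with the three freezings (Propositions \ref{prop:covering} and \ref{prop:localization}), and the presentation of an isolated cluster algebra by its exchange relations, must be invoked over $\ZZ$ --- Muller's results are stated integrally, as the paper notes --- after which your inclusion--exclusion is valid at every prime and the really-full-rank conclusion holds at all prime powers as you claim.
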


Furthermore, we show that the eigenvalues of Frobenius acting on the $\ell$-adic cohomology groups $H^\ast(\cA_{\oF_p},\QQ_\ell)$ are of the form $\chi(p)p^s$, where $\chi$ is a Dirichlet character and $s$ is an integer.  We remark that Chapoton \cite{Cha} has also studied point counts of varieties closely related to cluster varieties.

\subsection{Applications}
Due to the ubiquity of cluster structures, we expect a wide variety of applications.  We indicate a few here.
\subsubsection{Open Richardson varieties}

Let $G$ be a simple complex algebraic group with Lie algebra $\fg$, and let $B, B_-$ be opposite Borel subgroups.  Let $G/B$ be the flag variety with Schubert cells $\mathring X_w := B w B/B \subset G/B$ and opposite Schubert cells $\mathring X^w:=B_- wB/B \subset G/B$, where $w$ denotes an element of the Weyl group of $G$.

The \defnintro{open Richardson variety} is the intersection $R^u_v = \mathring X_v \cap \mathring X^u$.  It is a smooth affine algebraic variety, and is nonempty when $u \leq v$ in Bruhat order.  A folklore conjecture states that the coordinate ring $\cO(R^u_v)$ is a cluster algebra.  Leclerc \cite{Lec} has defined a candidate cluster algebra $A^u_v$ and proved that $A^u_v$ injects into $\cO(R^u_v)$.  Goodearl and Yakimov \cite{GY} have studied the closely related conjecture for double Bruhat cells.  Muller and Speyer \cite{MS} have studied the special case of open projected Richardson varieties in the Grassmannian. \footnote{Very recently, the mixed Hodge structure of ``open positroid varieties" were computed in \cite{GL2}, where our curious Lefschetz theorem is applied to settle unimodality and symmetry questions in $q,t$-Catalan theory.} 

We refine the conjecture as follows.
\begin{conjecture}\label{conj:Richardson}\footnote{For recent progress on this conjecture, see \cite{SSBW,GL,Ing}.}
The open Richardson variety $R^u_v$ is a cluster variety satisfying the Louise property and of full rank.
\end{conjecture}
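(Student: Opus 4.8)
\subsection*{A strategy for Conjecture \ref{conj:Richardson}}

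The plan is to establish the three required properties — that $R^u_v$ is a cluster variety, that it satisfies the Louise property, and that it is of full rank — by reducing everything to Leclerc's candidate algebra $A^u_v$ together with an induction on the dimension $d = \ell(v) - \ell(u)$. The strategy decouples the purely combinatorial claims (the Louise property and full rank of the exchange matrix $\tB$ attached to $A^u_v$) from the geometric claim that $A^u_v = \cO(R^u_v)$. For the combinatorial part one fixes a reduced word for $v$ adapted to $u$ and works with the associated quiver of $A^u_v$; for the geometric part one combines Leclerc's inclusion $A^u_v \hookrightarrow \cO(R^u_v)$ with the local acyclicity guaranteed by the Louise property, and a codimension-two argument in the spirit of Muller and Speyer.

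First I would treat the combinatorial claims. The key geometric input is the fibration structure of Richardson varieties: for a simple reflection $s = s_i$ and the projection $G/B \to G/P_i$, the variety $R^u_v$ is — depending on the positions of $u$, $v$, $s$ — isomorphic to $R^{u'}_{v'} \times \CC^\ast$, fibers over $R^{u'}_{v'}$ with affine-line fibers, or already equals a smaller $R^{u'}_{v'}$, where $u', v'$ are obtained from $u,v$ by the obvious length-reducing operations. On the cluster side, each such operation should correspond to deleting one mutable vertex of the quiver of $A^u_v$ — after a suitable mutation — producing the quiver of $A^{u'}_{v'}$ (possibly with an extra frozen vertex accounting for a $\CC^\ast$ factor). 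One then checks that the deleted vertex participates in a \emph{separating} edge $i \to j$, i.e.\ that no bi-infinite path passes through it; this is the genuinely new combinatorial content, since for non-acyclic Richardson varieties it is not automatic. With a separating edge in hand, the Louise property for $A^u_v$ follows by induction, the three smaller algebras $A(\tB'_{\oG \setminus \{i\}})$, $A(\tB'_{\oG \setminus \{j\}})$, $A(\tB'_{\oG \setminus \{i,j\}})$ in the recursive definition being (products of) smaller Leclerc algebras $A^{u'}_{v'}$. Full rank of $\tB$ would be carried along the same induction, using an explicit Berenstein--Zelevinsky-type description of the exchange matrix and the fact that the relevant vertex deletions preserve full rank; the base cases are the torus $(\CC^\ast)^d$ and low-dimensional Richardson varieties, where $\tB$ is visibly of full rank (indeed the quiver has no edges).

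It then remains to upgrade Leclerc's inclusion to an equality $A^u_v = \cO(R^u_v)$. Once the Louise property and full rank are known, $\Spec A^u_v$ is smooth and, by Muller, $A^u_v$ coincides with its upper cluster algebra $\overline{A^u_v}$. Since $R^u_v$ is a smooth, hence normal, affine variety, $\cO(R^u_v)$ is determined by its behaviour away from codimension two, so it suffices to show that the cluster tori of $A^u_v$ cover $R^u_v$ outside a codimension-two locus and that each cluster variable of $A^u_v$ cuts out the corresponding boundary divisor of $R^u_v$ with multiplicity one. This identifies $\overline{A^u_v}$ with $\cO(R^u_v)$, and combined with $A^u_v = \overline{A^u_v}$ gives the desired equality; hence $R^u_v \isom \Spec A^u_v$ is a cluster variety with all the required properties.

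The main obstacle is twofold. The deeper one is the geometric identification $A^u_v = \cO(R^u_v)$: controlling the codimension-one behaviour of the cluster structure — which cluster and frozen variables vanish on which boundary divisors of $R^u_v$, and with which multiplicities — requires a detailed understanding of the boundary of the Richardson variety and of Leclerc's cluster variables, and is precisely the content of the long-standing folklore conjecture. The more tractable but still substantial obstacle is the separating-edge claim: showing that the quiver of $A^u_v$ always admits, after mutation, a separating edge realizing the fibration above, which demands a careful combinatorial analysis of Leclerc's quivers well beyond the acyclic case. The already-established instances — open positroid varieties, by Muller--Speyer, and all acyclic Richardson varieties — provide both base cases and consistency checks for the induction.
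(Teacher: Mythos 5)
You have not given a proof, and indeed the statement you are addressing is stated in the paper only as Conjecture~\ref{conj:Richardson}: the paper itself offers no proof (it records Leclerc's inclusion $A^u_v \hookrightarrow \cO(R^u_v)$, the Goodearl--Yakimov and Muller--Speyer partial results, and leaves the conjecture open), so there is no argument of the authors to compare yours against. What you have written is a plausible plan of attack, but by your own admission it defers exactly the two steps that carry all the mathematical content: (a) the existence, after mutation, of a separating edge in Leclerc's quiver compatible with the length-reducing fibrations $R^u_v \to R^{u'}_{v'}$, for which you give no argument beyond saying it ``demands a careful combinatorial analysis''; and (b) the identification $A^u_v = \cO(R^u_v)$, which you yourself note is ``precisely the content of the long-standing folklore conjecture.'' A proof cannot outsource its central claims to future work, so the gap here is not a detail but the entire theorem.

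Beyond that, several intermediate steps of your outline are themselves unestablished and would need real work. The asserted dictionary between the geometric operations on $(u,v)$ and deletion of a single mutable vertex of Leclerc's quiver (after a suitable mutation) is not known in general; even granting a separating edge, the Louise recursion requires the three frozen subalgebras $A(\tB'_{\oG\setminus\{i\}})$, $A(\tB'_{\oG\setminus\{j\}})$, $A(\tB'_{\oG\setminus\{i,j\}})$ to again be (products of) Leclerc algebras, which does not follow formally from the fibration picture; the full-rank claim needs an explicit handle on $\tB$ for Leclerc's seeds rather than an appeal to ``a Berenstein--Zelevinsky-type description''; and the codimension-two argument identifying $\cO(R^u_v)$ with the upper cluster algebra requires knowing which boundary divisors of $R^u_v$ are cut out, with multiplicity one, by the frozen variables --- precisely the codimension-one control you flag as the deep obstacle. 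So the proposal is a reasonable research program, broadly in the spirit of Muller--Speyer's treatment of positroid varieties, but it does not prove the conjecture, and the conjecture remains open in this paper.
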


In \cite{MS}, Muller and Speyer show that the Grassmannian is a cluster variety satisfying the Louise property.

The varieties $R^u_v$ were studied by Kazhdan and Lusztig.  The point counts $\#R^u_v(\FF_q)$ are given by their $R$-polynomials \cite{KL}.

The singular cohomology $H^*(R^u_v,\CC)$ of an open Richardson variety has an interpretation as a higher extension group of Verma modules.  Let $\cO$ denote the Category $\cO$ of the simple Lie algebra $\fg$.  For $w \in W$, let $M_w$ be the Verma module in the principal block of $\cO$ indexed by $w$.  The following folklore result follows from the localization theorem of Beilinson and Bernstein, see \cite[Proposition 4.2.1]{RSW}.

\begin{theorem} \label{thm:BB} For $u,v \in W$, we have an isomorphism $H^\ast(R^u_v,\CC) \cong \Ext^{\ell(v)-\ell(u)-\ast}_{\cO}(M_u,M_v)$. 
\end{theorem}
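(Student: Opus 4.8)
The plan is to derive Theorem~\ref{thm:BB} from Beilinson--Bernstein localization, which turns $\Ext$ between Verma modules into $\Hom$ between standard objects on $G/B$ that can then be computed locally at a torus-fixed point by hyperbolic localization. Working in the principal block $\cO_0$ with a fixed regular integral central character, I would combine the Beilinson--Bernstein equivalence, the Riemann--Hilbert correspondence, and the fact that for $G/B$ the realization functor $D^b(\mathrm{Perv}_{(B)}(G/B))\to D^b_{(B)}(G/B)$ is an equivalence, to obtain an equivalence $D^b(\cO_0)\simeq D^b_{(B)}(G/B)$ (the bounded derived category of $\CC$-sheaves constructible for the Schubert stratification) under which the Verma $M_w$ --- normalized as in the paper, so that $M_u$ embeds into $M_v$ exactly when $u\le v$ --- corresponds to the standard object $\Delta_w:=(j_w)_!\,\underline{\CC}_{\mathring X_w}[\ell(w)]$, with $j_w:\mathring X_w\hookrightarrow G/B$. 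This gives $\Ext^i_{\cO}(M_u,M_v)\cong\Hom^i_{D^b_{(B)}(G/B)}(\Delta_u,\Delta_v)$. (One could equivalently start from the Frobenius-reciprocity isomorphism $\Ext^\bullet_{\cO}(M_u,M_v)\cong H^\bullet(\mathfrak n,M_v)_\nu$ for the appropriate weight $\nu$ and then localize $M_v$.)

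Next I would reduce the right-hand side to a local invariant at $p:=uB/B$. Because $\mathring X_u\cong\AA^{\ell(u)}$ is contractible and $j_u^!\Delta_v$ has locally constant cohomology sheaves on it, the adjunction $(j_u)_!\dashv j_u^!$ gives, for $i_p:\{p\}\hookrightarrow G/B$,
\[
  \Hom^i_{D^b_{(B)}(G/B)}(\Delta_u,\Delta_v)\;\cong\;H^{\,i+\ell(u)}\!\bigl(i_p^{\,!}\Delta_v\bigr),
\]
so the problem becomes the computation of the costalk $i_p^{\,!}\Delta_v$. For this I would pick a generic cocharacter $\lambda:\mathbb{G}_m\to T$ whose Bialynicki--Birula decomposition at $p$ has attracting cell the opposite Schubert cell $\mathring X^u=B_-uB/B$ and repelling cell $\mathring X_u$; the opposite chamber is excluded, since there $\Delta_v$ would restrict to $0$ on the attracting cell $\mathring X_u$ (as $\mathring X_u\cap\mathring X_v=\varnothing$ for $u\neq v$), which would force $\Ext^\bullet_{\cO}(M_u,M_v)=0$. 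As $\Delta_v$ is $\mathbb{G}_m$-monodromic, Braden's hyperbolic localization theorem yields a canonical isomorphism $i_p^{\,!}\Delta_v\cong R\Gamma_c\bigl(\mathring X^u,\Delta_v|_{\mathring X^u}\bigr)[-2\ell(u)]$, and proper base change identifies $\Delta_v|_{\mathring X^u}$ with $(j')_!\,\underline{\CC}_{\mathring X_v\cap\mathring X^u}[\ell(v)]=(j')_!\,\underline{\CC}_{R^u_v}[\ell(v)]$ (with $j':R^u_v\hookrightarrow\mathring X^u$), so $i_p^{\,!}\Delta_v\cong R\Gamma_c(R^u_v,\CC)[\ell(v)-2\ell(u)]$.

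Assembling these,
\[
  \Ext^i_{\cO}(M_u,M_v)\;\cong\;H^{\,i+\ell(v)-\ell(u)}_c(R^u_v,\CC);
\]
since $R^u_v$ is smooth of dimension $d:=\ell(v)-\ell(u)$ (and empty, with both sides vanishing, unless $u\le v$), Poincar\'e duality rewrites this as $\bigl(H^{\,d-i}(R^u_v,\CC)\bigr)^\vee$, which --- all spaces being finite-dimensional over $\CC$ --- is the claimed isomorphism $H^\ast(R^u_v,\CC)\cong\Ext^{\,\ell(v)-\ell(u)-\ast}_{\cO}(M_u,M_v)$.

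The delicate part is the hyperbolic-localization step: one must choose the correct $\lambda$-chamber --- it is precisely the vanishing $\mathring X_u\cap\mathring X_v=\varnothing$ that makes $R^u_v$, rather than a germ of $\mathring X_v$ at $p$ or a ``mirror'' Richardson variety, appear --- and track the homological shifts in Braden's theorem carefully enough that the shift lands on $\ell(v)-\ell(u)$. An alternative that avoids Braden's theorem is to compute $i_p^{\,!}\Delta_v$ from the stalk of $\mathbb{D}\Delta_v$ at $p$ and use Richardson's transversality of $\mathring X_v$ and $\mathring X^u$ at $p$ to identify a small neighbourhood of $p$ in $\mathring X_v$ with $R^u_v$ via an equivariant local normal form; the geometric content is the same. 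A further bookkeeping point is pinning down the dictionary in the first step --- matching the paper's normalization of $M_w$ with $\Delta_w$ so that $\Ext^\bullet_{\cO}(M_u,M_v)$ is supported on $u\le v$.
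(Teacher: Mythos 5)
Your proposal is correct and is essentially the argument the paper has in mind: the paper does not prove Theorem~\ref{thm:BB} itself but cites Beilinson--Bernstein localization via \cite[Proposition 4.2.1]{RSW}, and your chain (Beilinson--Bernstein plus Riemann--Hilbert to identify $M_w$ with $\Delta_w$, adjunction and contractibility of $\mathring X_u$ to reduce $\Ext^i_{\cO}(M_u,M_v)$ to the costalk $i_p^{\,!}\Delta_v$, hyperbolic localization to identify that costalk with $R\Gamma_c(R^u_v,\CC)$ up to shift, then Poincar\'e duality on the smooth $(\ell(v)-\ell(u))$-dimensional $R^u_v$) is precisely how that computation goes, with the degree bookkeeping coming out right. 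One harmless blemish: your aside that the opposite $\lambda$-chamber ``would force $\Ext^\bullet_{\cO}(M_u,M_v)=0$'' is not accurate --- with that chamber Braden's theorem merely computes a different (vanishing) hyperbolic restriction that is unrelated to $i_p^{\,!}\Delta_v$ --- but since the chamber you actually use (attracting cell $\mathring X^u$, repelling cell $\mathring X_u$) is the correct and standard one, the proof itself is unaffected.
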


The computation of $\Ext^\ast_{\cO}(M_u,M_v)$ is an open problem.

The isomorphism of Theorem \ref{thm:BB} is compatible with mixed Hodge structures.  Namely, by a result of Soergel, the principal block $\cO_0$ is equivalent to the module category ${\rm Mod}(R)$ of some associative algebra $R$.  The algebra $R$ is naturally graded, and this induces a second grading on $\Ext^\ast_{\cO}(M_u,M_v)$ that matches with the weight filtration of $H^\ast(R^u_v,\CC)$, see \cite{RSW}.  

Thus Conjecture \ref{conj:Richardson} combined with our curious Lefschetz theorem (Theorem \ref{thm:curious}) would imply that the algebra $\Ext^\ast_{\cO}(M_u,M_v)$ satisfies the curious Lefschetz property.  As far as the authors are aware, such a structure is not predicted from representation theory.

\subsubsection{Character varieties}
Let $G$ be a complex reductive algebraic group and let $\Sigma$ be a topological surface.  By a {\it character variety}, we mean the space ${\rm Hom}(\pi_1(\Sigma),G)\sslash G$ or a variant thereof.  Cluster structures on character varieties are a fundamental ingredient in the work of Fock and Goncharov \cite{FG} on higher Teichm\"uller theory.  Fomin, Shapiro, and Thurston \cite{FST} gave an explicit construction of a cluster algebra for a triangulable marked surface.  On the other hand,  Hausel and Rodriguez-Villegas studied the mixed Hodge polynomials of certain character varieties in \cite{HR}, where they discovered the curious Lefschetz property.  We do not know the exact relationship between our cluster varieties and the character varieties of Hausel and Rodriguez-Villegas, but we do expect a deep relation.  As we now explain, our results apply to the cluster algebras constructed by Fomin, Shapiro, and Thurston.

Let $(\Sigma,M)$ be a triangulable marked surface.  Thus $\Sigma$ is a topological surface possibly with boundary, and $M$ is a finite subset of marked points in $\Sigma$.  The triangulable condition excludes a number of small exceptional cases; see \cite{FST} or \cite[Section 9]{Mul} for the full list.  Fomin, Shapiro, and Thurston \cite{FST} associate to $(\Sigma,M)$ a cluster algebra $A(\Sigma,M)$.  We have the following result, which has an identical proof to Muller's local acyclicity result \cite[Theorems 10.5 and 10.6]{Mul}.

\begin{theorem}\label{thm:surfaces}
Suppose that the triangulable surface $(\Sigma, M)$ is either
\begin{enumerate}
\item a union of triangulable disks, or 
\item $M$ contains at least two points in each component of $\Sigma$.
\end{enumerate}
Then $A(\Sigma,M)$ satisfies the Louise property.
\end{theorem}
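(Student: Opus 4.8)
As noted in the statement, the argument is Muller's proof of local acyclicity from \cite[Section~10]{Mul}; the one point to add is that the recursion Muller uses to establish local acyclicity is exactly the recursion in the definition of the Louise property above. Accordingly, the plan is to induct on the rank $n$ of $A(\Sigma,M)$, equivalently on the number of internal arcs in a triangulation $T$ of $(\Sigma,M)$. For the base case $n=0$: every connected component of $(\Sigma,M)$ is then a triangle (a disk with three marked points), so the quiver $\oG(\tB)$ has no vertices and hence no edges, and clause (1) of the definition of the Louise property applies.

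For the inductive step, assume $n\geq 1$, so that every triangulation of $(\Sigma,M)$ contains an internal arc. Following Muller, I would first pass to a mutation-equivalent matrix $\tB'$ --- that is, apply a sequence of flips --- so as to obtain a triangulation $T$ with an internal arc $\gamma$ bordering two distinct triangles, neither self-folded, positioned so that a suitable arrow $i\to j$ of the quiver $\oG(T)$ is a separating edge. The combinatorial core of the argument, imported unchanged from \cite[Section~10]{Mul}, provides three facts: (a) such a triangulation, arc, and separating arrow exist; (b) under the standard identification, the three frozen cluster algebras $A(\tB'_{\oG(T)\setminus\{i\}})$, $A(\tB'_{\oG(T)\setminus\{j\}})$, and $A(\tB'_{\oG(T)\setminus\{i,j\}})$ are the cluster algebras of the marked surfaces obtained by cutting $(\Sigma,M)$ along the relevant arc(s); and (c) each of these cut surfaces is again triangulable, has rank strictly less than $n$, and still satisfies hypothesis (1) or hypothesis (2) of the theorem. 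Granting (a)--(c), the induction hypothesis shows all three frozen cluster algebras satisfy the Louise property, so clause (2) of the definition applies to $A(\Sigma,M)$ and the induction closes.

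The step demanding genuine care is (c): one must check that cutting a triangulable surface satisfying (1) or (2) along the selected arc never produces a non-triangulable exceptional surface --- a monogon, a bigon, a sphere with at most three punctures, a once-punctured disk, and so on --- nor, in the case of hypothesis (2), a connected component carrying fewer than two marked points. Guaranteeing this constrains which arc may be cut, and the verification reduces to tracking how the marked points at the two endpoints of the cutting arc are distributed among the connected components of the cut surface, plus a by-hand check of the finitely many small-rank surfaces. This is precisely the technical bulk of Muller's proof of Theorems~10.5 and~10.6 of \cite{Mul}, and I expect it to be the main obstacle; steps (a) and (b), by contrast, are local computations in the quiver of a triangulated surface. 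Since Muller's argument already carries out the analysis in (c), the Louise property for $A(\Sigma,M)$ follows with no further input.
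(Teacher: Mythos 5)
Your proposal is correct and matches the paper's approach exactly: the paper gives no independent argument for Theorem~\ref{thm:surfaces}, stating only that the proof is identical to Muller's proof of local acyclicity for surfaces \cite[Theorems 10.5 and 10.6]{Mul}, the one observation being (as you note) that Muller's cutting recursion is precisely the Louise recursion, including the double freezing $A(\tB'_{\oG\setminus\{i,j\}})$. The only caveat is that your induction's base case should be ``the quiver is edgeless (or, more generally, acyclic) after mutation'' rather than ``$n=0$'' --- e.g.\ a disk with four marked points has $n=1$ and no separating edge --- but this is exactly clause (1) of the Louise definition and does not affect the argument.
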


The authors do not know whether or not the quivers associated to groups other than $SL_2$ will be Louise. 
Indeed, we find it quite mysterious whether or not a general cluster algebra should be expected to be Louise: The only cases which have been proved not to be Louise are certain algebras of finite mutation type, but it is unclear whether this is because most cluster algebras are Louise or because we have a lack of techniques for proving such results.

\subsection{Outline}
In Section \ref{sec:cohom}, we review basic facts concerning mixed Hodge structures, and formulate results concerning logarithmic forms on smooth complex varieties.  In Section \ref{sec:curious}, we discuss the curious Lefschetz property.  We show that the curious Lefschetz property implies the splitting of mixed Hodge structures and show that it is compatible with Mayer-Vietoris sequences.  In Section \ref{sec:cluster background}, we recall terminology on cluster algebras and some fundamental results concerning locally acyclic cluster algebras.  In Section \ref{sec:autom}, we discuss automorphisms and covering maps of cluster varieties.  We describe quotients of a cluster variety in terms of exchange matrices.  In Section \ref{sec:examples}, we give examples of mixed Hodge numbers of cluster varieties illustrating various results of the paper.  In Section \ref{sec:isolated}, we give a complete description of the mixed Hodge structure of isolated cluster varieties.  In Section \ref{sec:mixedhodge}, we prove our main results on the mixed Hodge structures of cluster varieties satisfying the Louise property.  In Section \ref{sec:standard}, we give multiple descriptions of standard forms on cluster varieties.  We describe generators for the standard cohomology and prove a formula for the Poincar\'{e} series of standard cohomology.  In Section \ref{sec:pointcounts}, we study point counts of locally acyclic cluster varieties over finite fields, and corresponding eigenvalues of Frobenius in $\ell$-adic cohomology.

\subsection{Acknowledgments}
We thank Tamas Hausel, Allen Knutson, Greg Muller, Nicholas Proudfoot, Vivek Shende, Dylan Thurston and Geordie Williamson for interesting discussions.
%Thanks to the website \texttt{mathoverflow.net} and to several of its users for a valuable conversation on logarithmic differential forms (\texttt{http://mathoverflow.net/questions/130129}).
Computations leading to the results reported here were done with the \texttt{deRham} and \texttt{deRhamAll} commands from the \texttt{Macaulay2} package \texttt{Dmodules} (author Harrison Tsai); the authors thank ``Anton" from the Macaulay2 Google Group, and Bennet Fauber, for their help.

\section{Mixed Hodge structures and logarithmic forms}\label{sec:cohom}
\subsection{Deligne splitting}
General references for mixed Hodge structures are \cite{Del,PS}.  Let $X$ be a $d$-dimensional complex algebraic variety.  For each $k$, there is an increasing weight filtration
\[
0 = W_{-1} \subseteq W_0 \subseteq \cdots \subseteq W_{2k} = H^k(X,\QQ)
\]
and a decreasing Hodge filtration
\[
H^k(X,\CC)= F^0 \supseteq F^1 \supseteq \cdots \supseteq F^d \supseteq F^{d+1} = 0
\]
such that for every $0 \leq p \leq j$ one has
\[
\Gr_j^W (H^k(X,\CC)) = F^p \Gr_j^W (H^k(X,\CC)) \oplus \overline{ F^{j-p+1}\Gr_j^W (H^k(X,\CC))}
\]
where $\Gr_j^W (H^k(X,\CC))$ is the associated graded spaces of the weight filtration tensored with $\CC$. In other words, $F^p$ and $\overline{F^p}$ induce opposed filtrations on $\Gr_j^W(H^k(X,\CC))$.  The two filtrations together gives the mixed Hodge structure on $H^k(X,\CC)$.

\begin{lemma}[{\cite[Lemma-Definition 3.4]{PS}}] \label{L:bigrading}
Define the subspace $H^{k,(p,q)}$ of $H^k(X, \CC)$ by
\[
H^{k,(p,q)}:= F^p \cap W_{p+q} \cap\left(\overline{F^q} \cap W_{p+q} + \sum_{j \geq 2} \overline{F^{q-j+1}} \cap W_{p+q-j} \right).
\]
Then the $H^{k,(p,q)}$ satisfy
\begin{enumerate}
\item
$W_\ell(H^k(X,\CC)) = \bigoplus_{p+q \leq \ell} H^{k,(p,q)}$,
\item $ 
F^p(H^k(X,\CC)) = \bigoplus _{p' \geq p,\ 0 \leq q} H^{k,(p',q)}$,
\item $H^{k,(p,q)} = \overline{H^{k,(q,p)}} \mod W_{p+q-1}$.
\end{enumerate}
\end{lemma}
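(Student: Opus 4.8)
The plan is to reconstruct Deligne's canonical bigrading --- which is exactly what the lemma asserts --- following the classical argument by induction on the length of the weight filtration. Write $I^{p,q} := H^{k,(p,q)}$ for the subspace of $H^k(X,\CC)$ cut out by the displayed formula, and set $V := H^k(X,\CC)$. The substantive claim is the decomposition $V = \bigoplus_{p,q} I^{p,q}$; once this is in hand, properties (1)--(3) fall out. I would first dispatch uniqueness of any bigrading obeying (1) and (2): those two conditions force $W_\ell = \bigoplus_{p+q \le \ell} I^{p,q}$ and $F^p = \bigoplus_{r \ge p} I^{r,s}$, hence $\overline{F^q}$ coincides with $\bigoplus_{s \ge q} I^{r,s}$ modulo spaces of weight $< p+q$; substituting into the right-hand side of the formula, and noting that for $j \ge 2$ the summand $\overline{F^{q-j+1}} \cap W_{p+q-j}$ meets $F^p$ only in pieces of weight $< p+q$, one reads off that the formula returns $I^{p,q}$. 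So the work is to prove existence.

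For existence I would induct on the length of $W_\bullet$, using as the basic tool that in a pure Hodge structure of weight $n$ the subspaces $F^a$ and $\overline{F^{\,n+1-a}}$ are complementary. The base case is a pure structure of weight $n_0$: there $W_m = 0$ for $m < n_0$ and $W_m = V$ for $m \ge n_0$, the correction sum collapses, and the formula gives $I^{p,q} = F^p \cap \overline{F^q}$ for $p+q = n_0$ and $I^{p,q} = 0$ otherwise (either the outer $W_{p+q}$ vanishes, or $F^p$ and $\overline{F^{\,n_0-p+1}}$ are complementary), i.e.\ the classical Hodge decomposition, from which (1)--(3) are clear. For the inductive step, let $m$ be minimal with $W_m \neq 0$, so $W_m = \Gr^W_m V$ is a pure sub-Hodge structure of weight $m$ and $V'' := V/W_m$ carries a mixed Hodge structure with shorter weight filtration. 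Reading off the formula: $I^{p,q} = 0$ for $p+q < m$, and $I^{p,q} = (F^p \cap W_m) \cap \overline{F^q \cap W_m} = H^{p,q}(W_m)$ for $p+q = m$ (all correction terms involve $W_{m-j} = 0$), so $\bigoplus_{p+q=m} I^{p,q} = W_m$. The main point is then that for $p+q > m$ the quotient map $V \to V''$ sends $I^{p,q}$ isomorphically onto the inductively constructed Deligne piece $\widetilde{I}^{\,p,q}(V'')$; granting this, a short diagram chase gives $V = \bigoplus_{p,q} I^{p,q}$, and with it (1). Property (2) then follows by comparing associated graded pieces --- the inclusion $\bigoplus_{p \ge r} I^{p,q} \subseteq F^r$ is immediate from $I^{p,q}\subseteq F^p$, and equality uses the graded identity $F^r \Gr^W_\ell = \bigoplus_{p \ge r,\, p+q=\ell} H^{p,q}(\Gr^W_\ell)$ --- and property (3) is formal, since modulo $W_{p+q-1}$ both $I^{p,q}$ and $\overline{I^{q,p}}$ reduce to $H^{p,q}(\Gr^W_{p+q})$, which is conjugation-symmetric.

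The step I expect to be the main obstacle is precisely that ``main point'': that $I^{p,q} \to V''$ is an isomorphism onto $\widetilde{I}^{\,p,q}(V'')$ for $p+q > m$. This splits into injectivity --- equivalently $I^{p,q} \cap W_{p+q-1} = 0$ --- and surjectivity, and it is exactly here that Deligne's correction terms $\sum_{j \ge 2} \overline{F^{q-j+1}} \cap W_{p+q-j}$ are indispensable: for the naive candidate $F^p \cap \overline{F^q} \cap W_{p+q}$ the intersection with $W_{p+q-1}$ need not vanish, reflecting that $F$ and $\overline{F}$ are opposed only on the graded pieces $\Gr^W_\bullet$ and not on $V$. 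For injectivity I would argue by a second, descending induction on the weight: given $v \in I^{p,q} \cap W_{p+q-1}$ with $n := p+q$, the formula lets one write $v$ so that, modulo $W_{n-2}$, its conjugate part lies in $\overline{F^q}\,\Gr^W_{n-1}$; since its $F$-part lies in $F^p \Gr^W_{n-1}$ and $n > n-1$, complementarity in the pure piece $\Gr^W_{n-1}$ forces $v \in W_{n-2}$, and one repeats --- absorbing one further correction term at each stage --- until $v = 0$. Surjectivity is the lifting direction: one lifts a class of $\widetilde{I}^{\,p,q}(V'')$ through the quotient and then corrects it by terms of lower weight into $I^{p,q}$, again using the correction terms of the formula; this, together with the remaining verifications of (1)--(3), is routine bookkeeping with the two filtrations.
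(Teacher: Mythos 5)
The paper offers no proof of this lemma: it is quoted verbatim from Peters--Steenbrink (Lemma-Definition 3.4), i.e.\ Deligne's construction of the canonical bigrading, so there is no in-paper argument to compare yours against. What you have written is a faithful reconstruction of the standard proof from the literature, and its skeleton is sound: the base case of a pure structure reduces to the classical Hodge decomposition via the complementarity of $F^a$ and $\overline{F^{\,n+1-a}}$, the correction terms $\sum_{j\ge 2}\overline{F^{q-j+1}}\cap W_{p+q-j}$ are correctly identified as the device that kills $I^{p,q}\cap W_{p+q-1}$, and properties (2) and (3) do follow formally once the direct sum decomposition refining $W_\bullet$ is established. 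The one place where you are gesturing rather than proving is the surjectivity half of the claim that $I^{p,q}(V)\to \widetilde I^{\,p,q}(V/W_m)$ is an isomorphism: lifting a Deligne piece of the quotient back into $I^{p,q}(V)$ is exactly the point where one must produce, term by term, the elements of $\overline{F^{q-j+1}}\cap W_{p+q-j}$ that repair the failure of a naive lift to lie in $\overline{F^q}$, and calling this "routine bookkeeping" undersells it --- it is the technical heart of Deligne's argument (in Peters--Steenbrink it is packaged instead as a descending induction showing $I^{p,q}$ projects isomorphically onto $H^{p,q}(\Gr^W_{p+q})$, which avoids passing to the quotient $V/W_m$ but does the same work). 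If you intend this as a complete proof rather than a citation, that step needs to be written out; as an outline it is correct and is essentially the argument the cited reference supplies.
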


The decomposition of $H^k(X,\CC)$ into the subspaces $H^{k,(p,q)}$ is known as the \defn{Deligne splitting}.  The mixed Hodge numbers of $H^k(X)$ are $h^{k,(p,q)} := \dim H^{k,(p,q)}$. 
As Lemma~\ref{L:bigrading} shows, the data of the Deligne splitting is equivalent to that of the weight and Hodge filtrations; we will generally prefer to work with the Deligne splitting.

\begin{proposition}\label{prop:splitMV}
The Deligne splitting is functorial, and is respected by the boundary maps in Mayer-Vietoris sequences.
\end{proposition}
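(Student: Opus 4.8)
The plan is to reduce both assertions to standard facts about Deligne's mixed Hodge structures. The first claim, functoriality of the Deligne splitting, is really a statement that can be extracted from Lemma~\ref{L:bigrading}: the subspaces $H^{k,(p,q)}$ are defined purely in terms of the weight filtration $W_\bullet$ and Hodge filtration $F^\bullet$, via the explicit formula $H^{k,(p,q)}= F^p \cap W_{p+q} \cap (\overline{F^q} \cap W_{p+q} + \sum_{j\ge 2}\overline{F^{q-j+1}}\cap W_{p+q-j})$. Since a morphism $f\colon X\to Y$ of complex algebraic varieties induces $f^\ast\colon H^k(Y,\CC)\to H^k(X,\CC)$ which is strict for $W_\bullet$ and for $F^\bullet$ (this is part of Deligne's theorem that pullback is a morphism of mixed Hodge structures), the pullback carries $F^p_Y\cap W^Y_{p+q}$ into $F^p_X\cap W^X_{p+q}$, and likewise for the complex-conjugate terms, so $f^\ast\bigl(H^{k,(p,q)}(Y)\bigr)\subseteq H^{k,(p,q)}(X)$. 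That is exactly functoriality of the splitting, and it follows formally once one knows that $f^\ast$ respects both filtrations and commutes with complex conjugation on $H^k(-,\CC)=H^k(-,\QQ)\otimes\CC$.

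For the second assertion — that the Mayer--Vietoris connecting homomorphisms respect the Deligne splitting — I would invoke the fact that the Mayer--Vietoris sequence
\[
\cdots \to H^k(X,\CC) \to H^k(U,\CC)\oplus H^k(V,\CC) \to H^k(U\cap V,\CC) \xrightarrow{\ \delta\ } H^{k+1}(X,\CC) \to \cdots
\]
for an open cover $X = U\cup V$ is a long exact sequence of mixed Hodge structures. This is standard: one realizes the cover by a hypercover / simplicial scheme $U\sqcup V \rightrightarrows \cdots$ refining $X$, whose cohomological descent spectral sequence computes $H^\ast(X)$ and carries its mixed Hodge structure; the Mayer--Vietoris sequence is the associated long exact sequence, and all its maps — including the connecting map $\delta$, which is the differential $d_1$ (or an edge map) of that spectral sequence — are morphisms of mixed Hodge structures, possibly after a Tate twist that shifts weights but not the property of being filtration-strict. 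A morphism of mixed Hodge structures is strict for $W_\bullet$ and $F^\bullet$ and commutes with conjugation, hence by the same formula as above it sends $H^{k,(p,q)}$ into $H^{k+1,(p',q')}$ with $(p',q')$ the appropriately shifted bidegree; in particular it respects the Deligne bigrading. So once the Mayer--Vietoris sequence is identified as a sequence of MHS, the claim is immediate from Lemma~\ref{L:bigrading}.

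The only real content, then, is citing or recalling: (i) pullback along morphisms is a morphism of MHS; (ii) the Mayer--Vietoris sequence is an exact sequence of MHS; and (iii) morphisms of MHS are bistrict and conjugation-compatible, so they preserve the Deligne splitting. All three are in Deligne~\cite{Del} and in Peters--Steenbrink~\cite{PS}; I would give precise references there rather than reprove them. The one point that deserves a sentence of care is the weight shift: the connecting map $\delta$ raises cohomological degree by one and is a morphism of MHS of a definite type (it does \emph{not} preserve weight on the nose), so "respected by the boundary maps" should be read as: $\delta$ carries the Deligne summand of $H^k(U\cap V)$ into the corresponding Deligne summand of $H^{k+1}(X)$ under the bookkeeping dictated by the MHS-morphism structure. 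I expect the main (minor) obstacle to be pinning down this bigrading shift precisely and making sure the statement as used later in the paper — e.g.\ in the Mayer--Vietoris arguments for the Louise property — matches the convention; there is no genuine mathematical difficulty beyond assembling these cited facts.
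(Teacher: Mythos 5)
Your proposal is correct and takes essentially the same route as the paper, which in fact states this proposition without proof, treating it exactly as you do: a standard consequence of the facts in Deligne and Peters--Steenbrink that pullbacks and the Mayer--Vietoris sequence are (sequences of) mixed Hodge structures, combined with the definition of $H^{k,(p,q)}$ purely in terms of $W_\bullet$, $F^\bullet$ and conjugation. The one hedge you can drop: for an open cover the connecting map $\delta$ is a morphism of mixed Hodge structures with no Tate twist, so it sends $H^{k,(p,q)}(U\cap V)$ into $H^{k+1,(p,q)}(X)$ on the nose (the $(+1,+1)$ shift you worry about occurs for the Gysin sequence, which the paper treats separately), matching how the proposition is used later in the paper.
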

 %and (with appropriate shifting) Gysin sequences.

%It follows from Lemma \ref{L:bigrading} that
%\begin{corollary}
%Any morphism $f: (H,W,F) \to (H',W,F)$ of mixed Hodge structures is strict.  That is
%\begin{align*}
%f(H) \cap F^p(H') &= f(F^p(H))\qquad \text{and} \qquad
%f(H) \cap W_p(H') = f(W_p(H)).
%\end{align*}
%\end{corollary}

%The weight filtration satisfies
%\begin{align}
%\begin{split}\label{eq:weights}
%&\mbox{If $X$ is compact, then $H^k(X,\QQ)$ has weights in $\{0,1,\ldots, k\}$.}\\ 
%&
%\mbox{If $X$ is smooth, then $H^k(X,\QQ)$ has weights in $\{k,k+1,\ldots, 2k\}$.}
%\end{split}
%\end{align}

\begin{Theorem}[{see \cite[Theorem 5.39]{PS}}]
\label{T:weights}
If $X$ is compact, then $H^k(X,\QQ)$ has weights in $\{0,1,\ldots, k\}$; if $X$ is smooth, then $H^k(X,\QQ)$ has weights in $\{k,k+1,\ldots, 2k\}$.
\end{Theorem}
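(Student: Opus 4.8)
The plan is to deduce both bounds from Deligne's explicit constructions of the mixed Hodge structure, in which a weight filtration is produced together with the cohomology, and then to read off the weight estimates directly from those constructions.

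\emph{Compact case.} For a (possibly singular) proper $X$, I would pass to a smooth proper hypercovering $\epsilon \colon X_\bullet \to X$, which exists by resolution of singularities together with cohomological descent, so that $H^k(X,\QQ) \cong \HH^k(X_\bullet,\QQ)$. The associated spectral sequence
\[
E_1^{p,q} = H^q(X_p, \QQ) \;\Longrightarrow\; H^{p+q}(X, \QQ)
\]
has $E_1$-terms $H^q(X_p,\QQ)$ that are \emph{pure} of weight $q$, since each $X_p$ is smooth and projective and classical Hodge theory applies. Deligne shows the differentials are strict for the Hodge filtration and that the induced filtration on the abutment is the weight filtration, so $\Gr^W_q H^k(X,\QQ)$ is a subquotient of $E_1^{k-q,q} = H^q(X_{k-q},\QQ)$. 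Nonvanishing forces the simplicial index $k-q$ to be $\geq 0$, i.e.\ $q \leq k$, while $W_{-1}=0$ gives $q \geq 0$; hence the weights lie in $\{0,1,\dots,k\}$. (Equivalently, one can induct on $\dim X$ using the abstract blow-up exact sequence, comparing $H^k(X)$ with $H^k(\tilde X)$ for a resolution $\tilde X \to X$ and with the cohomology of the lower-dimensional center and its preimage.)

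\emph{Smooth case.} For a smooth $X$, I would choose by Hironaka a good compactification $\bar X \supseteq X$ with $D := \bar X \setminus X = \bigcup_i D_i$ a simple normal crossings divisor, and compute $H^\ast(X,\CC)$ as the hypercohomology of the logarithmic de Rham complex $\Omega^\bullet_{\bar X}(\log D)$ with its weight filtration $W_m$ by order of logarithmic pole. The iterated Poincar\'e residue identifies $\Gr^W_m \Omega^\bullet_{\bar X}(\log D)$ with $(a_m)_\ast \Omega^{\bullet-m}_{D^{(m)}}(-m)$, where $D^{(m)} = \bigdisjointunion_{|I|=m} D_I$ is the smooth projective variety of $m$-fold intersections and $a_m \colon D^{(m)} \to \bar X$ is the inclusion. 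The resulting weight spectral sequence has $E_1$-page built from the groups $H^{k-m}(D^{(m)},\QQ)(-m)$ with $m \geq 0$, the summand indexed by $m$ contributing to $\Gr^W_{k+m} H^k(X,\QQ)$ and being pure of weight $(k-m) + 2m = k+m$. Since $\Gr^W_{k+m}H^k(X)$ can be nonzero only when $H^{k-m}(D^{(m)}) \neq 0$, in particular only when $0 \leq m \leq k$, the weights of $H^k(X,\QQ)$ lie in $\{k, k+1, \dots, 2k\}$.

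\emph{Main obstacle.} The real content is not the bookkeeping above but Deligne's theorems that the filtrations just produced genuinely underlie a mixed Hodge structure: one must prove that all differentials in these spectral sequences are strict for the Hodge filtration — equivalently, that the weight spectral sequences degenerate at $E_2$ — and that the outcome is independent of the auxiliary choices (the hypercovering; the compactification $\bar X$ and the divisor $D$). Granting these inputs, both estimates are immediate. There is also a reduction of the smooth case to the compact case: applying the first statement to $\bar X$ and $D$ and using the long exact sequence $\cdots \to H^{k-1}(D) \to H^k_c(X) \to H^k(\bar X) \to \cdots$ shows $H^k_c(X,\QQ)$ has weights $\leq k$, and Poincar\'e--Lefschetz duality $H^k(X,\QQ) \cong H^{2d-k}_c(X,\QQ)^\vee(-d)$ for smooth connected $X$ of dimension $d$ then converts weights $\leq 2d-k$ into weights $\geq k$.
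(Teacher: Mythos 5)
The paper offers no proof of this statement — it is quoted verbatim from Peters--Steenbrink \cite[Theorem 5.39]{PS} — and your argument is precisely the standard one found in that reference and in Deligne's original papers: the descent spectral sequence of a smooth proper hypercovering for the compact case, and the weight filtration on the logarithmic de Rham complex together with the Poincar\'e residue for the smooth case. Your sketch is correct, and you rightly identify that the genuine content lies in the $E_2$-degeneration and independence-of-choices theorems, which you (like the paper) take as given from the literature.
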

In particular, if $X$ is smooth and projective then $H^k(X,\QQ)$ is pure of weight $k$.  The compactly supported cohomology $H_c^\ast(X)$ can also be given a mixed Hodge structure.

\begin{theorem}[{see \cite[Theorem 6.23]{PS}}]\label{thm:Poincare}
For a $d$-dimensional smooth complex variety $X$ we have Poincar\'{e} duality, giving a perfect pairing
\[
H^k(X,\QQ) \times H_c^{2d-k}(X,\QQ) \to \QQ(-d)
\]
compatible with mixed Hodge structures.  Here $\QQ(-d)$ is the pure mixed Hodge structure on $\QQ$ with weight $2d$ and Hodge filtration $F^d=\QQ$ and $F^{d+1} = 0$.
\end{theorem}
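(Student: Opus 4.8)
The plan is to separate the \emph{topological} content of the statement from its \emph{Hodge-theoretic} content. Topologically, Poincaré duality for the smooth oriented $2d$-manifold underlying $X$ (with the canonical orientation coming from the complex structure) already provides a perfect pairing over $\QQ$: cup product followed by the trace (``integration over $X$'') map
\[
H^k(X,\QQ) \otimes H_c^{2d-k}(X,\QQ) \xrightarrow{\ \smile\ } H_c^{2d}(X,\QQ) \xrightarrow{\ \operatorname{tr}\ } \QQ .
\]
So the entire burden of the theorem is to upgrade this to a statement about mixed Hodge structures: (a) the cup-product pairing is a morphism of mixed Hodge structures, and (b) the target $H_c^{2d}(X,\QQ)$, together with its trace map, is the Tate structure $\QQ(-d)$.

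First I would recall the construction of the mixed Hodge structure on $H_c^\ast(X)$. Using resolution of singularities, choose a smooth projective compactification $\overline{X}$ of $X$ with closed complement $D = \overline{X}\setminus X$ of complex dimension $\leq d-1$, and write $j\colon X \into \overline{X}$ and $i\colon D \into \overline{X}$ for the inclusions. The complex $j_!\QQ_X$ on $\overline{X}$ underlies a mixed Hodge complex whose hypercohomology is $H_c^\ast(X,\QQ)$, and the exact triangle $j_!\QQ_X \to \QQ_{\overline{X}} \to i_\ast\QQ_D$ is a morphism of mixed Hodge complexes, giving a long exact sequence of mixed Hodge structures
\[
\cdots \to H_c^k(X,\QQ) \to H^k(\overline{X},\QQ) \to H^k(D,\QQ) \to H_c^{k+1}(X,\QQ)\to\cdots .
\]

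For (a), I would realize the cup-product pairing at the level of mixed Hodge complexes on $\overline{X}$. The projection formula furnishes a canonical morphism $Rj_\ast\QQ_X \otimes^{L} j_!\QQ_X \to j_!\QQ_X$ in the derived category of $\overline{X}$; the point is that this lifts to a morphism of mixed Hodge complexes, strictly compatibly with Hodge and weight filtrations, because $Rj_\ast$, $j_!$, tensor product, and the projection formula all belong to the mixed-Hodge-theoretic six-functor formalism. Taking hypercohomology over $\overline{X}$ turns this into the pairing $H^k(X)\otimes H_c^{2d-k}(X)\to H_c^{2d}(X)$ and shows it is a morphism of mixed Hodge structures. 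For (b), assume $X$ connected; since $\dim_\RR D \leq 2d-2$ we have $H^{2d-1}(D,\QQ)=H^{2d}(D,\QQ)=0$, so the long exact sequence yields an isomorphism of mixed Hodge structures $H_c^{2d}(X,\QQ)\xrightarrow{\ \sim\ }H^{2d}(\overline{X},\QQ)$. As $\overline{X}$ is smooth projective of dimension $d$, its top cohomology is one-dimensional, pure of weight $2d$, and of Hodge type $(d,d)$, that is, isomorphic to $\QQ(-d)$, and the trace map is precisely this identification. For disconnected $X$ one argues over each connected component and notes the trace is the sum of the component traces; this gives (b) in general, and combining with the previous step completes the proof.

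I expect the main obstacle to be (a): making precise that a pairing \emph{between} ordinary cohomology and compactly supported cohomology, valued in compactly supported cohomology, is induced by a morphism of mixed Hodge complexes on the compactification, i.e. checking that the projection-formula map $Rj_\ast \otimes j_! \to j_!$ is strictly compatible with the weight and Hodge filtrations. Once this is in place, everything else (topological Poincaré duality, the vanishing of $H^\ast(D)$ in the top two degrees, and the purity and Tate type of $H^{2d}(\overline{X})$) is standard, and the Tate twist $\QQ(-d)$ rather than $\QQ(0)$ is forced by the weight $2d$ of $H^{2d}(\overline{X})$.
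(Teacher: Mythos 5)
The paper does not actually prove this statement: it is quoted as background, with the proof delegated to Peters--Steenbrink \cite[Theorem 6.23]{PS}, so there is no internal argument to compare yours against. Judged on its own terms, your reduction is sound: perfectness of the pairing is indeed just topological Poincar\'e duality for the underlying oriented $2d$-manifold, and since morphisms of mixed Hodge structures are strict, it suffices to show (a) that the cup-product pairing $H^k(X)\otimes H^{2d-k}_c(X)\to H^{2d}_c(X)$ is a morphism of mixed Hodge structures and (b) that $H^{2d}_c(X)\cong\QQ(-d)$. Your step (b) is correct: with $\overline{X}$ smooth projective and $D=\overline{X}\setminus X$ of complex dimension at most $d-1$, the long exact sequence kills $H^{2d-1}(D)$ and $H^{2d}(D)$, identifying $H^{2d}_c(X)$ with $H^{2d}(\overline{X})\cong\QQ(-d)$, componentwise for disconnected $X$.

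The gap is exactly where you predicted it: step (a) is asserted, not proved. Saying that $Rj_\ast$, $j_!$, $\otimes^L$ and the projection formula ``belong to the mixed-Hodge-theoretic six-functor formalism'' is an appeal to Saito's theory of mixed Hodge modules, which is far heavier machinery than the statement being proved and is not available at the level of Deligne's construction of mixed Hodge structures that \cite{PS} (and this paper) work with; at that level one must exhibit the pairing on explicit mixed Hodge complexes. The standard way to close the gap is to take $\overline{X}$ a smooth compactification with $D$ a simple normal crossings divisor, compute $H^\ast(X)$ from $\Omega^\bullet_{\overline{X}}(\log D)$ and $H^\ast_c(X)$ from the twisted log complex $\Omega^\bullet_{\overline{X}}(\log D)(-D)$ with their Hodge and weight filtrations, and observe that wedge product gives a filtered morphism
\[
\Omega^p_{\overline{X}}(\log D)\otimes\Omega^q_{\overline{X}}(\log D)(-D)\longrightarrow\Omega^{p+q}_{\overline{X}}(\log D)(-D),
\]
which on hypercohomology realizes the cup product and shows directly that it is a morphism of mixed Hodge structures; one then composes with the trace identification of (b). If you are permitted to quote Saito's formalism as a black box, your outline does go through, but then you are proving a classical statement with strictly stronger input; without that, step (a) remains the missing argument.
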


\subsection{Mixed Tate type and split over $\QQ$}
We say that a mixed Hodge structure $(H,W,F)$ is of \defn{mixed Tate type} if $\Gr_j^W(H) = 0$ for $j$ odd, and $
\Gr_{2j}^W(H) = F^j \Gr_{2j}^W(H)$.
This condition is equivalent to the condition $H^{k,(p,q)}=0$ for $p \neq q$.  When $H^{\ast}(X)$ is of mixed Tate type, we have
%we adopt the abbreviation $I^{k,(p)}$ for the $(p,p)$ part of $H^k(X)$, and observe that it has the simpler formula
\[ H^{k, (p,p)}(X) = W_{2p}(H^k(X,\CC))  \cap F^p(H^k(X)). \]

\begin{lemma}\label{lem:mixedTate}
Suppose that $X$ is covered by two open sets $U$ and $V$.  If $U$, $V$, and $U \cap V$ have cohomology of mixed Tate type, then so does $X$.
\end{lemma}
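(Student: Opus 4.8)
The plan is to use the Mayer--Vietoris sequence for the open cover $X = U \cup V$ together with the two closure properties of mixed Tate type that we have available: first, that mixed Tate type is preserved under extensions in the category of mixed Hodge structures, and second, that the Deligne splitting is compatible with the connecting maps in Mayer--Vietoris (Proposition~\ref{prop:splitMV}). Concretely, I would write down the long exact sequence
\[
\cdots \to H^{k-1}(U \cap V, \CC) \xrightarrow{\delta} H^k(X,\CC) \xrightarrow{\rho} H^k(U,\CC) \oplus H^k(V,\CC) \to \cdots
\]
and observe that, by functoriality of the Deligne splitting and Proposition~\ref{prop:splitMV}, every map in this sequence is a morphism of mixed Hodge structures; in particular, it carries each bigraded piece $H^{k,(p,q)}$ of the source into the corresponding piece $H^{k,(p,q)}$ (respectively $H^{k-1,(p-1,q-1)}$ for $\delta$, which shifts weights by $2$) of the target. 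Since morphisms of mixed Hodge structures are strict for both filtrations, the image and kernel of each map inherit mixed Hodge substructures whose bigraded pieces are the images and kernels of the bigraded pieces of the map.

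The key steps, in order, are as follows. First I would extract from the Mayer--Vietoris sequence the short exact sequence of mixed Hodge structures
\[
0 \to \CoKer\bigl(H^{k-1}(U) \oplus H^{k-1}(V) \to H^{k-1}(U \cap V)\bigr) \to H^k(X,\CC) \to \Ker\bigl(H^k(U) \oplus H^k(V) \to H^k(U \cap V)\bigr) \to 0 .
\]
Second, I would note that the outer two terms are of mixed Tate type: the submodule is a subquotient (cokernel of a morphism, restricted to a subobject) of $H^{k-1}(U \cap V)$, which is of mixed Tate type by hypothesis, and mixed Tate type passes to subquotients because in the bigraded description $H^{\ell,(p,q)} = 0$ for $p \ne q$ is obviously inherited by any subspace or quotient that is itself a mixed Hodge substructure or quotient; similarly the quotient term is a sub-mixed-Hodge-structure of $H^k(U) \oplus H^k(V)$, which is of mixed Tate type since $U$ and $V$ are. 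Third, I would invoke the fact that an extension of a mixed Tate mixed Hodge structure by a mixed Tate one is again mixed Tate — this is immediate from the bigraded characterization, since in a short exact sequence $0 \to A \to B \to C \to 0$ of mixed Hodge structures one has $\dim B^{\ell,(p,q)} = \dim A^{\ell,(p,q)} + \dim C^{\ell,(p,q)}$ by strictness, so if the $(p,q)$ with $p \ne q$ parts of $A$ and $C$ vanish then so does that of $B$. Applying this with $B = H^k(X,\CC)$ for every $k$ gives the claim.

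The main obstacle — really the only subtle point — is making sure that the Mayer--Vietoris sequence is genuinely a sequence of mixed Hodge structures, i.e. that the connecting homomorphism $\delta$ is a morphism of mixed Hodge structures of the appropriate Tate twist. This is exactly what Proposition~\ref{prop:splitMV} provides: the Deligne splitting is functorial and is respected by the boundary maps in Mayer--Vietoris, so $\delta$ sends $H^{k-1,(p,q)}(U \cap V)$ into $H^{k,(p+1,q+1)}(X)$. Once this compatibility is in hand, everything else is the formal observation that the class of mixed Tate mixed Hodge structures is closed under subobjects, quotients, and extensions, together with exactness of Mayer--Vietoris. A minor bookkeeping point is that one must run the argument for all $k$ simultaneously (the cokernel term at degree $k$ involves degree $k-1$ data), but this causes no difficulty since the hypothesis on $U$, $V$, and $U \cap V$ is uniform in the cohomological degree.
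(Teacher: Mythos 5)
Your proof is correct and follows essentially the same route as the paper, which simply observes that the Mayer--Vietoris sequence respects the Deligne splitting, so that the bigraded exact sequence $H^{k-1,(p,q)}(U\cap V) \to H^{k,(p,q)}(X) \to H^{k,(p,q)}(U)\oplus H^{k,(p,q)}(V)$ has vanishing outer terms for $p \neq q$; your extension/subquotient packaging is just a more verbose version of the same argument. One small correction: the Mayer--Vietoris connecting map for an open cover carries no Tate twist --- it sends $H^{k-1,(p,q)}(U\cap V)$ to $H^{k,(p,q)}(X)$, not to $H^{k,(p+1,q+1)}(X)$ (the $(+1,+1)$ shift occurs for the Gysin sequence of a closed subvariety) --- though this does not affect your conclusion, since either way off-diagonal pieces map to off-diagonal pieces.
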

\begin{proof}
The compatibility of the Deligne splitting with the Mayer-Vietoris sequence shows that $H^{\ast,(p,q)} = 0$ for $p \neq q$.
\end{proof}

%We say that a mixed Hodge structure is \defn{split} if it is a direct sum of pure Hodge structures \cite{PS}.  

We say that the mixed Hodge structure of $H^\ast(X)$ is \defn{split over $\QQ$} if each summand $H^{k,(p,q)}(X)$ has a basis coming from $H^\ast(X,\QQ)$.  If $H^\ast(X)$ is split over $\QQ$ then $F^p(H^k(X,\CC))$ is invariant under complex conjugation, and it follows that $H^{\ast}(X)$ is of mixed Tate type.  Furthermore, in this case the mixed Hodge structure on $H^k(X)$ is split: it is the direct sum of pure Hodge structures on the subspaces $H^{k,(p,p)}(X)$ as $p$ varies.

%\subsection{Algebraic deRham cohomology}
%
%The following result of Grothendieck \cite{Gro} allows us to compute singular cohomology using the algebraic deRham complex.
%\begin{theorem}
%Suppose $X$ is a smooth complex affine algebraic variety.  Then the singular cohomology $H^*(X,\CC)$ is isormorphic to the cohomology of the algebraic deRham complex of ${\mathcal O}(X)$.
%\end{theorem}
%
%By ``differential form", we shall usually mean an algebraic deRham form.

%\begin{prop}
%Suppose that $\tilde{B}$ is full rank and the Louise property holds. Then $H^{\ast}(X)$ is of mixed Tate type. 
%\end{prop}
%\begin{proof}
%In Theorem~\ref{thm:Isolated Main} we shall show that $H^{\ast}(X)$ is of mixed Tate type when $\tB$ is of full rank and the skew-symmetric part $B$ of $\tB$ is the zero matrix.  This is the base case of the recursion in the Louise algorithm.  The claim follows from Lemma \ref{lem:mixedTate}.
%\end{proof}

\subsection{Log forms}

We introduce an easy way to recognize that a $k$-form is in $H^{k,(k,k)}$. Define the ring of \newword{log forms} on $X$ to be the sub-$\CC$-algebra of $\Omega^{\ast}(X)$ generated by the forms $d \log f : = df/f$, for $f$ a unit in $\cO(X)$. Note that log forms are automatically closed.

\begin{lemma} \label{lem:logImpliesTate}
Let $\omega$ be a log form of degree $k$ on a smooth variety $X$. Then the cohomology class $[\omega]$ represented by $\omega$ is in $H^{k,(k,k)}(X)$. 
\end{lemma}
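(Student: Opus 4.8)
The plan is to reduce to the case of a single generator $d\log f$ and then use functoriality. Since $H^{k,(k,k)}(X)$ is the top weight-graded piece of $F^k$ (when everything in sight is of mixed Tate type, but more robustly $H^{k,(p,q)}$ with $p+q=2k$ forces $p=q=k$ by Lemma~\ref{L:bigrading}(3) together with weight reasons on a smooth variety), it suffices to show that $[\omega] \in W_{2k} \cap F^k$ and that $[\omega]$ has no components $H^{k,(p,q)}$ with $p+q<2k$. The cleanest route is: (i) observe that a product of $k$ classes each in $H^{\ast,(1,1)}$ lands in $H^{k,(k,k)}$, because cup product is a morphism of mixed Hodge structures and hence respects the Deligne bigrading additively, so $H^{1,(1,1)}\otimes\cdots\otimes H^{1,(1,1)} \to H^{k,(k,k)}$; and (ii) reduce to showing $[d\log f] \in H^{1,(1,1)}(X)$ for a unit $f\in\cO(X)^\times$.

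For step (ii), I would use the map $f\colon X \to \GG_m = \Spec\CC[t,t^{-1}]$ induced by the unit $f$, and pull back. On $\GG_m$ one has $H^1(\GG_m,\CC) = \CC\cdot[dt/t]$, and this class is well known to be of type $(1,1)$: indeed $H^1(\GG_m)$ is pure of weight $2$ (it is the Tate twist $\QQ(-1)$, as one sees from the Gysin sequence for $\GG_m \hookrightarrow \AA^1$, or from $H^1(\GG_m) \cong H^2_c(\GG_m)^\vee(-1) \cong \QQ(-1)$ via Theorem~\ref{thm:Poincare}), so $H^{1,(p,q)}(\GG_m)$ is nonzero only for $p+q=2$, and by Lemma~\ref{L:bigrading}(3) and $\dim=1$ it must be $(1,1)$. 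Since $f^\ast$ is a morphism of mixed Hodge structures (functoriality, Proposition~\ref{prop:splitMV}), $f^\ast[dt/t] = [d\log f]$ lies in $H^{1,(1,1)}(X)$. Combining with step (i), an arbitrary log form of degree $k$ — being a $\CC$-linear combination of $k$-fold products of classes $[d\log f_i]$ — has cohomology class in $H^{k,(k,k)}(X)$, which is what we want.

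The only genuine subtlety, and the step I expect to require the most care, is bookkeeping around the bigrading: one must check that the cup product $H^{k_1,(p_1,q_1)} \otimes H^{k_2,(p_2,q_2)} \to H^{k_1+k_2,(p_1+p_2,q_1+q_2)}$ respects the Deligne splitting, not merely the weight and Hodge filtrations. This follows because the cup product is a morphism of mixed Hodge structures, the Deligne splitting is canonical and functorial (Lemma~\ref{L:bigrading}), and the tensor product of mixed Hodge structures has Deligne splitting given by the tensor product of the splittings — so the product of a $(p_1,q_1)$-class and a $(p_2,q_2)$-class is a $(p_1+p_2,q_1+q_2)$-class. One small point: a log form of degree $k$ is a sum of terms $d\log f_{i_1}\wedge\cdots\wedge d\log f_{i_k}$, and even if some $f_{i_j}$ repeat the wedge is zero, so we may assume distinct units; this does not affect the argument. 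No new input beyond functoriality, the computation of $H^\ast(\GG_m)$ as a mixed Hodge structure, and the multiplicativity of the Deligne bigrading is needed.
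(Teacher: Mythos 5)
Your proposal is correct and follows essentially the same route as the paper's proof: reduce via multiplicativity of the Deligne bigrading to the case $k=1$ and a single generator $d\log f$, then pull back the $(1,1)$ generator of $H^1(\CC^\ast)$ along the map $f\colon X\to\CC^\ast$. The extra details you supply (why $H^1(\GG_m)$ is of type $(1,1)$, and that cup product respects the splitting) are exactly the facts the paper invokes implicitly.
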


\begin{proof}
Since wedge product respects the Deligne grading, it is enough to check the claim for $k=1$. Since a sum of forms in degree $(1,1)$ is in degree $(1,1)$, it is enough to check the claim for a form of the type $d \log f$. We can consider $f$ as a map from $X$ to $\CC^{\ast}$. Then $d \log f$ is $f^{\ast} \theta$, where $\theta$ is a generator for $H^1(\CC^{\ast}, \ZZ)$. The form $\theta$ is in Deligne degree $(1,1)$, so its pullback is as well.
\end{proof}

%new material here 2/29/16
We say that a regular differential form $\eta$ on $X$ is a \newword{rational log form} if there is a dense Zariski open set $U$ of $X$ such that $\eta|_{U}$ is a log form on $U$. 
Since $U$ is dense in $X$, and $d \eta|_U=0$, rational log forms are also closed, and we can discuss their cohomology classes. Note that a rational log form is required to be defined on all of $X$.

We give the first example of a rational log form which is of importance to us. Let $\cA = \{ (x,x',y) : x x' = y+1 ,\ y \neq 0 \} \subset \CC^3$. Letting $T$ and $T'$ be the open sets $x \neq 0$ and $x' \neq 0$ in $\cA$, we note that $T \cup T'$ is all of $\cA$ except the single point $(x,x',y) = (0,0,-1)$. Let $\gamma$ be the differential form which is $\dlog x \wedge \dlog y$ on $T$ and $-\dlog x' \wedge \dlog y$ on $T'$.  Since $\cA \setminus (T \cup T')$ is a single point on the smooth surface $\cA$, the form $\gamma$ extends to $\cA$. (Explicitly, $\gamma = (dx \wedge dx')/y$.)  On $\cA$, we cannot express $\gamma$ as a log form, since the only units in $\cO(\cA)$ are $\CC^{\ast} \cdot y^{\ZZ}$. 
Nonetheless, we want to use the fact that $\gamma$ can be written as a log form on the tori $T$ and $T'$, so we introduce the notion of a rational log form.

We prove several lemmas on rational log forms. In particular, note that Proposition~\ref{prop:rationalLogImpliesTate} is strictly stronger than Lemma~\ref{lem:logImpliesTate}.
\begin{lemma}\label{lem:logform}
Let $\overline{X}$ be a smooth compactification of $X$, with $D:=\overline{X} \setminus X$ a simple normal crossings divisor, and let $\eta$ be a rational log form on $X$. Then $\eta$ extends to a section in $H^0(\overline{X}, \Omega^k(\log D))$.
\end{lemma}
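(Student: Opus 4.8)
The plan is to reduce to a purely local statement near each component of $D$ and then use that $\eta$ is, on a dense open set, a sum of products of $\dlog$'s, which have at worst logarithmic poles. First I would fix a point $p \in D$ and local analytic (or étale) coordinates $z_1, \dots, z_d$ on $\overline X$ near $p$ in which $D = \{z_1 \cdots z_\ell = 0\}$; here $z_1, \dots, z_\ell$ are the equations of the branches of $D$ through $p$, and $z_{\ell+1}, \dots, z_d$ are transverse. The claim "$\eta$ extends to a section of $\Omega^k(\log D)$" is local: it suffices to show that in a neighborhood of $p$ the form $\eta$ lies in the $\cO$-module generated by the $k$-fold wedges of $\dlog z_1, \dots, \dlog z_\ell, dz_{\ell+1}, \dots, dz_d$.

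Next I would exploit the hypothesis. By assumption there is a dense Zariski open $U \subseteq X$ with $\eta|_U$ a log form, i.e. a $\CC$-linear combination of wedges $\dlog f_{i_1} \wedge \cdots \wedge \dlog f_{i_k}$ with $f_{i_j} \in \cO(U)^\times$. Shrinking $U$ we may assume $U$ meets the locus where $z_1 \cdots z_\ell \ne 0$; on $U$ each unit $f$ has a well-defined order of vanishing $\nu_j(f) \in \ZZ$ along the branch $\{z_j = 0\}$ of $D$, and we can write $\dlog f = \sum_{j=1}^\ell \nu_j(f)\, \dlog z_j + (\text{a regular 1-form near } p)$, since $f / \prod_j z_j^{\nu_j(f)}$ is a unit in the local ring at the generic point of each branch, hence has no pole or zero along $D$ near $p$ — more precisely $f \cdot \prod z_j^{-\nu_j(f)}$ is regular and nowhere zero along $D \cap (\text{nbhd of } p)$ after possibly shrinking, because $\eta$ (and hence, after the linear-algebra bookkeeping below, the relevant combinations of the $f$'s) is already known to be \emph{regular} on all of $X \supseteq \overline X \setminus D$. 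Substituting these expansions into the log-form expression for $\eta|_U$ and expanding, $\eta|_U$ becomes an $\cO_{\overline X, p}[\dlog z_1, \dots, \dlog z_\ell]$-combination of the allowed generators, i.e. a section of $\Omega^k(\log D)$ over $U \cap (\text{nbhd of } p)$. Since $\eta$ is a \emph{regular} form on $X$, it agrees with this log-pole expression on the dense open $U$, hence (both being rational sections of $\Omega^k$ on the nbhd of $p$ that coincide on a dense set) they are equal; therefore $\eta$ itself is a section of $\Omega^k(\log D)$ near $p$. Ranging over $p \in D$ and over $X$ itself (where $\eta$ is regular, so trivially a section), we conclude $\eta \in H^0(\overline X, \Omega^k(\log D))$.

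The main obstacle is the bookkeeping that the individual units $f_{i_j}$ appearing in $\eta|_U$ need not themselves extend nicely across $D$ — only the total form $\eta$ does. One must be careful that after substituting $\dlog f_{i_j} = \sum_j \nu_j(f_{i_j})\dlog z_j + (\text{regular})$ and expanding, the spurious higher-order-pole contributions (which would come from products of two or more "regular" pieces that are nonetheless only regular on $X$, not on $\overline X$) are controlled by the regularity of $\eta$ on $X$: concretely, the subtlety is that "regular on $U$" and "regular on $X$" differ, and one should first extend $\eta$ from $U$ to $X$ using the given hypothesis before localizing at $p$. Once the substitution is organized so that the coefficients are honest elements of the local ring $\cO_{\overline X, p}$ (using that $\eta$ has, a priori, at worst poles along $D$ since it is regular on the complement of $D$ in $\overline X$), the conclusion is immediate; I do not expect any essential difficulty beyond this care, and the argument is essentially the standard fact that a rational $k$-form whose polar locus is contained in a snc divisor $D$ and which can be written with only $\dlog$-type poles lies in $\Omega^k(\log D)$.
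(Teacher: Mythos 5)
There is a genuine gap, and it sits exactly at the point you flag but do not resolve. Your key step is the decomposition $\dlog f = \sum_{j=1}^{\ell}\nu_j(f)\,\dlog z_j + (\text{regular $1$-form near } p)$ at an \emph{arbitrary} point $p\in D$. The function $g = f\cdot\prod_j z_j^{-\nu_j(f)}$ indeed has no zeros or poles along the branches of $D$ through $p$, but $f$ is only a unit on the dense open $U$, so $g$ may vanish or have a pole along a divisor $Z\not\subseteq D$ with $p\in Z$ (a component of $\overline X\setminus U$ other than $D$, or the zero locus of $f$ inside $X\setminus U$, either of which can meet $D$). Then $\dlog g$ has a pole along $Z$, and no shrinking of the neighborhood of $p$ avoids $Z$, since $p\in Z$. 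The regularity of the total form $\eta$ on $X$ does force these spurious poles to cancel in the sum, but that cancellation is precisely what has to be proved, and your closing paragraph restates the difficulty (``once the substitution is organized so that the coefficients are honest elements of $\cO_{\overline X,p}$\dots the conclusion is immediate'') rather than carrying it out.

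The repair is to abandon the pointwise argument and check only in codimension one, which is what the paper does. At the generic point of a component $H$ of $D$ the local ring is a discrete valuation ring, so each $f_i=z^{n_i}u_i$ with $u_i$ a unit \emph{there}; hence $\dlog u_i$ is genuinely regular along $H$ and $\eta=\dlog z\wedge\eta_1+\eta_2$ with $\eta_1,\eta_2$ having no pole along $H$. Since $\Omega^k_{\overline X}(\log D)$ is locally free, hence reflexive, a rational section of it that is regular on $X$ and is a section at the generic point of every component of $D$ is a global section; this last (implicit) step is what lets one avoid ever looking at special points of $D$, including the crossing points your argument struggles with.
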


\begin{proof}
By assumption, the only poles of $\eta$ are along the components of $D$. Choose a particular component $H$ of $D$ to focus on, and let $z$ be a local coordinate near $D$. Write $\eta$ as a polynomial in forms of the form $\dlog f_i$, for various meromorphic functions $f_i$, and let $f_i = z^{n_i} u_i$, for $u_i$ with neither zeroes nor poles on $H$. Then $\dlog f_i = n_i \dlog z + \dlog u_i$ and $\eta$ is easily seen to be of the form $\dlog z \wedge \eta_1 + \eta_2$, where neither $\eta_1$ nor $\eta_2$ has a pole along $H$, as desired.
\end{proof}

The following result can be thought of as a ``Hodge theorem" for log forms; giving a preferred de Rham representative for any class which can be represented as a rational log form. %See~\cite{MO130129} for further discussion. 

\begin{lemma} \label{lem:dlogzero} 
Let $X$ be a smooth complex variety. The ring of rational log forms injects into $H^{\ast}(X)$.
\end{lemma}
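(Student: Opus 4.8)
The plan is to show that a nonzero rational log form $\eta$ on $X$ has nonzero cohomology class $[\eta] \in H^\ast(X,\CC)$. First I would reduce to the compactified setting using Lemma~\ref{lem:logform}: choose a smooth compactification $\overline{X}$ with $D = \overline{X}\setminus X$ a simple normal crossings divisor, so that $\eta$ extends to a global section of $\Omega^k_{\overline{X}}(\log D)$. The classical theory of Deligne (the log-complex computing $H^\ast(X)$) tells us that $H^0(\overline{X},\Omega^k(\log D))$ injects into $H^k(X,\CC)$: this is precisely the statement that $F^k H^k(X)$, the bottom piece of the Hodge filtration realized by the log de Rham complex, receives $H^0(\overline{X},\Omega^k(\log D))$ injectively, because a global logarithmic $k$-form which is exact in the log de Rham complex must already be zero (it would have to be $d$ of a log $(k-1)$-form, but $\Omega^{k-1}(\log D)$ in degree $k-1$ maps to $\Omega^k(\log D)$, and a closed global section of the bottom of the Hodge filtration that is a coboundary is zero by the degeneration of the Hodge–de Rham spectral sequence at $E_1$).

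Concretely, the key steps in order are: (1) invoke Lemma~\ref{lem:logform} to extend $\eta$ to $\omega \in H^0(\overline{X},\Omega^k(\log D))$; (2) recall that the $E_1$-degeneration of the Hodge–de Rham spectral sequence for the log complex $\Omega^\bullet_{\overline{X}}(\log D)$ identifies $H^0(\overline{X},\Omega^k(\log D))$ with $\Gr_F^k H^k(X,\CC)$, in particular giving an injection $H^0(\overline{X},\Omega^k(\log D)) \hookrightarrow H^k(X,\CC)$ compatible with the de Rham class; (3) conclude that if $[\eta] = 0$ in $H^k(X,\CC)$ then $\omega = 0$ as a section, hence $\eta = 0$ as a form on the dense open $U \subseteq X$ where it is a log form, hence $\eta = 0$ on all of $X$. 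This shows the map from rational log forms to $H^\ast(X)$ is injective. One should also note that the ring of rational log forms consists of closed forms (remarked in the text), so the map to cohomology is well-defined, and it is a ring map since cup product is computed by wedge product of closed forms.

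The main obstacle is step (2): making precise that a \emph{global} logarithmic form that is a \emph{coboundary} in the log de Rham complex must vanish. This is where Hodge theory genuinely enters — it is not a formal consequence of $\Omega^\bullet(\log D)$ computing $H^\ast(X)$, but rather of the $E_1$-degeneration, equivalently of the strictness of the Hodge filtration. An alternative, perhaps cleaner, route that sidesteps spectral sequences: a nonzero holomorphic (hence closed) logarithmic $k$-form $\omega$ on $\overline{X}$ restricts to a nonzero holomorphic $k$-form on the quasi-projective manifold $X$; one then argues directly that its de Rham class is nonzero by pairing against a suitable homology class, or by using that $\omega$ is in $F^k H^k(X) = W_k H^k(X) \cap F^k$ being a genuine holomorphic form of top Hodge degree. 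I expect the spectral sequence argument to be the most robust and is the one I would write up; the only care needed is to cite Deligne's construction and the $E_1$-degeneration for the logarithmic de Rham complex of a simple normal crossings compactification precisely.
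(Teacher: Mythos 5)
Your proposal is correct and follows essentially the same route as the paper: extend $\eta$ to a global section of $\Omega^k_{\overline{X}}(\log D)$ via Lemma~\ref{lem:logform}, then use the $E_1$-degeneration of the logarithmic Hodge--de Rham spectral sequence to conclude that $H^0(\overline{X},\Omega^k(\log D))$ injects into $H^k(X,\CC)$ via the de Rham class. The extra elaborations you offer (pairing against homology, strictness of the Hodge filtration) are not needed; the degeneration citation is exactly what the paper uses.
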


\begin{proof}
Let $\eta$ be a $p$-form on $X$, and assume that $\eta$ is a rational log form. Our goal is to show that, if $\eta$ is exact, then it is zero.

Choose a normal crossings compactification $\overline{X}$ of $X$ and let $D = \overline{X} \setminus X$. By Lemma \ref{lem:logform},  $\eta$ can be considered as an element of $H^0(\overline{X}, \Omega^p(\log D))$. 

There is a spectral sequence $H^q(\overline{X}, \Omega^p(\log D)) \implies H^{p+q}(X, \CC)$ which degenerates at $E_1$ \cite[Theorem 8.35]{Voisin}.
So $H^0(\overline{X}, \Omega^p(\log D))$ injects into $H^p(X, \CC)$. This map explicitly consists of considering $\eta$ as a closed $p$-form on $X$ and taking the cohomology class. So our hypothesis that $\eta$ is exact says that it maps to $0$ under this injection. We conclude that $\eta=0$.
\end{proof}

Lemma~\ref{lem:dlogzero} has a fascinating corollary for the structure of cluster algebras. The second author learned that this might be true from Paul Hacking, and understands it to have been developed in collaboration that lead to~\cite{GHKK}.

\begin{cor}
Let $A$ be a cluster algebra and let $(x_1, \ldots, x_{n+m})$ and $(z_1, \ldots, z_{n+m})$ be two clusters of $A$. Write $f = \sum c_I x^I = \sum d_J z^J$. Then $c_{0 \cdots 0} = d_{0 \cdots 0}$.
\end{cor}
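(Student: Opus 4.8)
The plan is to detect the constant term of $f$ as a period against the top cohomology of the overlap of the two cluster tori, and then to invoke Lemma~\ref{lem:dlogzero} to see that the relevant logarithmic form is cohomologically nonzero.

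Write $N=n+m$. By the Laurent phenomenon, $f$ is a genuine Laurent polynomial $\sum_I c_I x^I$ in the cluster $(x_1,\dots,x_{n+m})$ and likewise $\sum_J d_J z^J$ in $(z_1,\dots,z_{n+m})$; abbreviate $c_0=c_{0\cdots0}$ and $d_0=d_{0\cdots0}$. Let $T_x=\Spec\CC[x_1^{\pm1},\dots,x_N^{\pm1}]$ and $T_z$ be the cluster tori, equipped with the top forms $\Omega_x=\dlog x_1\wedge\cdots\wedge\dlog x_N$ and $\Omega_z$, and let $T_{xz}$ denote the dense open subvariety on which the $z_i$, viewed as rational functions of the $x_j$, are regular and nonvanishing (equivalently the $x_i$ are regular and nonvanishing as functions of the $z_j$); this is a nonempty smooth affine variety. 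A preliminary point I would establish first is that $\Omega_x=\epsilon\,\Omega_z$ on $T_{xz}$ for a sign $\epsilon\in\{\pm1\}$: by connectedness of the exchange graph it suffices to check this across one mutation $\mu_k$, where $x_k'=x_k^{-1}(M_++M_-)$ with $M_\pm$ monomials not involving $x_k$, so $\dlog x_k'=\dlog(M_++M_-)-\dlog x_k$ and the $\dlog(M_++M_-)$ term, lying in the span of $\{\dlog x_i\}_{i\ne k}$, wedges to zero against the remaining factors, giving $\Omega_{x'}=-\Omega_x$.

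Next I would use the elementary identity that on any cluster torus $T$ with coordinates $w_1,\dots,w_N$ and top form $\Omega_T=\dlog w_1\wedge\cdots\wedge\dlog w_N$, a Laurent polynomial $g=\sum_I g_I w^I$ has the property that $(g-g_{0\cdots0})\Omega_T$ is exact on $T$ with a primitive regular on $T$, since $w^I\Omega_T=\pm\,i_\ell^{-1}\,d\!\bigl(w^I\,\dlog w_1\wedge\cdots\widehat{\dlog w_\ell}\cdots\wedge\dlog w_N\bigr)$ whenever $I\ne0$ and $i_\ell\ne0$. Applying this in both clusters, write $f\Omega_x=c_0\,\Omega_x+d\beta^x$ with $\beta^x$ regular on $T_x$, and $f\Omega_z=d_0\,\Omega_z+d\beta^z$ with $\beta^z$ regular on $T_z$. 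Restricting both to $T_{xz}$ and using $\Omega_x=\epsilon\,\Omega_z$ to rewrite $f\Omega_x=\epsilon\,f\Omega_z$, subtracting the two expressions yields $(c_0-d_0)\,\Omega_x=d(\epsilon\beta^z-\beta^x)$ on $T_{xz}$, where the primitive $\epsilon\beta^z-\beta^x$ is regular on $T_{xz}$ because $\beta^x$ is regular on $T_x\supseteq T_{xz}$ and $\beta^z$ on $T_z\supseteq T_{xz}$. Hence $(c_0-d_0)[\Omega_x]=0$ in $H^N(T_{xz})$.

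Finally, $\Omega_x$ is a nonzero log form on the smooth variety $T_{xz}$, so by Lemma~\ref{lem:dlogzero} its class $[\Omega_x]\in H^N(T_{xz})$ is nonzero, forcing $c_0=d_0$. I expect the only genuinely nontrivial input to be this last appeal to Lemma~\ref{lem:dlogzero}: $T_{xz}$ is a torus with several hypersurfaces deleted, so the restriction $H^N(T_x)\to H^N(T_{xz})$ can fail to be injective and $[\Omega_x]\ne0$ is not visible from a naive contour integral — it is exactly the Hodge-theoretic content of that lemma (degeneration of the logarithmic de Rham spectral sequence on a normal crossings compactification) that guarantees it. Everything else is bookkeeping, the one point requiring care being to keep track of where each of $\beta^x,\beta^z$ is regular so that their combination is honestly exact on $T_{xz}$.
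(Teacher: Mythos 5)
Your argument is correct and follows essentially the same route as the paper: compare the top $\dlog$ forms of the two clusters up to sign via a single mutation, observe that $(f-c_{0\cdots 0})\Omega_x$ and $(f-d_{0\cdots 0})\Omega_z$ are exact on their respective tori, subtract on the overlap, and conclude via Lemma~\ref{lem:dlogzero} that the exact log form $(c_{0\cdots 0}-d_{0\cdots 0})\Omega_x$ vanishes. The only difference is cosmetic: you phrase the final step as nonvanishing of the class $[\Omega_x]$ in $H^{n+m}(T_x\cap T_z)$, while the paper phrases it as injectivity of log forms into cohomology applied directly to $(c_{0\cdots 0}-d_{0\cdots 0})\Omega_x$; these are the same appeal to the same lemma.
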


\begin{proof}
Note that, if $(x_1, \ldots, x_{n+m})$ and $(x'_1, \ldots, x'_{n+m})$ are adjacent clusters, then $\dlog x_1 \wedge \cdots \wedge\dlog x_{n+m} = - \dlog x'_1 \wedge \cdots \wedge \dlog x'_{n+m}$. So $\dlog x_1 \wedge \cdots \wedge\dlog x_{n+m} = \pm \dlog z_1 \wedge \cdots \wedge\dlog z_{n+m}$; reorder the $z_i$ if necessary so that the sign is $+$ and define this $(n+m)$-form to be $\omega$. Let $T$ and $T'$ be the open tori corresponding to $(x_1, \ldots, x_{ n+m})$ and $(z_1, \ldots, z_{n+m})$.
 
Then $(f-c_{0 \cdots 0}) \omega$ is exact on $T$ and $(f-d_{0 \cdots 0 }) \omega$ is exact on $T'$. So both of these forms are exact on $T \cap T'$, and their difference $(c_{0 \cdots 0} - d_{0 \cdots 0}) \omega$ is exact on $T \cap T'$. But $(c_{0 \cdots 0} - d_{0 \cdots 0}) \omega$ is a rational log form, so this shows $(c_{0 \cdots 0} - d_{0 \cdots 0}) \omega=0$ and $c_{0 \cdots 0} = d_{0 \cdots 0}$.
\end{proof}

The following Lemma is merely a stepping stone to Proposition~\ref{prop:rationalLogImpliesTate}, which is strictly stronger than it.
\begin{lem} \label{lem:rationalLogTopHodge}
A rational log form of degree $k$ on $X$ represents a cohomology class in $F^k H^k(X)$.
\end{lem}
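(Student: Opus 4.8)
The statement is that a rational log form $\eta$ of degree $k$ on a smooth variety $X$ represents a class in $F^k H^k(X)$. The natural strategy is to combine Lemma~\ref{lem:logform} with the description of the Hodge filtration on the cohomology of an open variety in terms of the log-de Rham complex. First I would choose a smooth normal crossings compactification $\overline{X}$ of $X$ with boundary divisor $D = \overline{X}\setminus X$, which is possible by Hironaka. By Lemma~\ref{lem:logform}, $\eta$ extends to a global section of $\Omega^k_{\overline{X}}(\log D)$; in particular $\eta$ lives in the lowest piece (in the "stupid" filtration sense, i.e.\ the piece generated in form-degree $\geq k$) of the log-de Rham complex $\Omega^{\bullet}_{\overline{X}}(\log D)$.

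The key input is Deligne's theorem that $H^k(X,\CC) = \HH^k(\overline{X}, \Omega^{\bullet}_{\overline{X}}(\log D))$ and that the Hodge filtration $F^p$ is induced by the stupid (Hodge) filtration $F^p\Omega^{\bullet}_{\overline{X}}(\log D) = \Omega^{\geq p}_{\overline{X}}(\log D)$, together with the $E_1$-degeneration already invoked in the proof of Lemma~\ref{lem:dlogzero} (\cite[Theorem 8.35]{Voisin}). Concretely, $E_1$-degeneration gives that the edge map $H^0(\overline{X}, \Omega^k_{\overline{X}}(\log D)) \to H^k(X,\CC)$ (which is exactly "regard a closed log $k$-form as a de Rham cocycle and take its class," as noted in the proof of Lemma~\ref{lem:dlogzero}) has image contained in $F^k H^k(X)$. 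Since $\eta$ is closed (rational log forms are closed, as noted after Lemma~\ref{lem:logImpliesTate}) and, by Lemma~\ref{lem:logform}, equals the image of a class in $H^0(\overline{X}, \Omega^k_{\overline{X}}(\log D))$, its de Rham class $[\eta]$ therefore lies in $F^k H^k(X)$.

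So the proof is essentially a one-line invocation: \emph{by Lemma~\ref{lem:logform}, $\eta$ comes from $H^0(\overline X, \Omega^k_{\overline X}(\log D)) = F^k \HH^k(\overline X, \Omega^{\bullet}(\log D)) = F^k H^k(X)$,} using that the Hodge filtration on $H^k(X)$ is the one induced by the stupid filtration on the log complex and that the spectral sequence degenerates at $E_1$. I would state this citing \cite{Del} (or \cite[Ch. 8]{Voisin}) for the identification of $F^{\bullet}$ with the stupid filtration.

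The main (and only) obstacle is making sure the bookkeeping about \emph{which} filtration is being used is airtight: one must be careful that $H^0(\overline X,\Omega^k_{\overline X}(\log D))$ really does map \emph{into} $F^k$ and not merely into $H^k$. The clean way around this is to note that $F^k \HH^k$ is by definition the image of $\HH^k(\overline X, \Omega^{\geq k}(\log D))$, that $\Omega^{\geq k}(\log D)$ is the complex $[\Omega^k(\log D) \to \Omega^{k+1}(\log D) \to \cdots]$ placed in degrees $k, k+1, \ldots$, and that a closed global section of $\Omega^k(\log D)$ is precisely a hypercohomology class of this truncated complex in degree $k$; the map from there to $\HH^k$ of the full complex is the inclusion of complexes, whose image is by definition $F^k$. (That this map into $H^k(X,\CC)$ agrees with "take the de Rham class of $\eta$ as a closed form on $X$" is the content of the $E_1$-degeneration argument already used for Lemma~\ref{lem:dlogzero}.) Once this identification is spelled out, there is nothing further to compute.
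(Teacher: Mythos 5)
Your proof is correct and follows essentially the same route as the paper's: compactify with normal crossings boundary, extend $\eta$ via Lemma~\ref{lem:logform} to a section of $H^0(\overline{X},\Omega^k(\log D))$, identify $F^kH^k(X)$ with the image of the hypercohomology of the stupidly truncated log de Rham complex, and use $E_1$-degeneration to see that such a section yields a class there. The only cosmetic difference is that you invoke closedness of $\eta$ directly, whereas the paper deduces the vanishing of the relevant differential from the degeneration of the spectral sequence; both are valid.
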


\begin{proof}
Choose a normal crossings compactification $\overline{X}$ of $X$. On $\overline{X}$, consider the log de Rham complex $\Omega_{\overline{X}}^\bullet(\log D) := (0 \to \cO(D) \to \Omega^1(\log D) \to \Omega^2(\log D) \to \cdots)$, and its $k$-th truncation $\Omega_{\overline{X}}^{\geq k}(\log D) :=(0 \to 0 \to \cdots \to 0 \to \Omega^k(\log D) \to \Omega^{k+1}(\log D) \to \cdots)$. By \cite[p. 208]{Voisin}, $F^k H^k(X) = {\rm Im}(\HH^k(\Omega_{\overline{X}}^{\geq k}(\log D)) \to \HH^k(\Omega_{\overline{X}}^\bullet(\log D)))$ is the image of the $k$-th hypercohomology of the truncated complex in the $k$-th hypercohomology of the log de Rham complex. 

Consider the \u{C}ech spectral sequence for the two complexes. On the $E_1$ page for the full log de Rham complex, we have $H^i(\overline{X}, \Omega^j(\log D))$ and for the truncated complex we have this in the columns where $j \geq k$. The maps on the $E_1$ page are $0$ -- by \cite[Theorem 8.35]{Voisin} for the full log de Rham complex, and by compatibility of the spectral sequences for the truncated one. So a class in $H^0(\overline{X}, \Omega^k(\log D))$ represents a class in the $k$-th hypercohomology of the truncated complex, and hence a class in $F^k H^k(X)$.
\end{proof}

\begin{lem} \label{ref:lemBlowUp}
Let $X$ be a smooth variety and let $X' \to X$ be a blow up of $X$ along a smooth center. Then the pull back map $F^k H^k(X) \to F^k H^k(X')$ is an isomorphism.
\end{lem}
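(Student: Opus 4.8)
The plan is to use the blow-up formula for cohomology together with compatibility of the Hodge filtration, and then show that all the new classes introduced by the blow-up live in filtration levels strictly below $F^k$ in degree $k$. Recall that if $\pi: X' \to X$ is the blow-up along a smooth center $Z$ of codimension $c$, with exceptional divisor $E = \PP(N_{Z/X})$, then there is a decomposition of mixed Hodge structures
\[
H^k(X') \cong H^k(X) \oplus \bigoplus_{i=1}^{c-1} H^{k-2i}(Z)(-i),
\]
compatible with the Hodge filtration, where the first summand is the image of $\pi^*$. Thus $F^k H^k(X') \cong F^k H^k(X) \oplus \bigoplus_{i=1}^{c-1} F^k\bigl(H^{k-2i}(Z)(-i)\bigr)$, and it remains to see that each Tate-twisted summand contributes nothing to $F^k$ in degree $k$.

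The key step is the filtration bookkeeping. The Tate twist $(-i)$ shifts the Hodge filtration up by $i$, so $F^p\bigl(H^{k-2i}(Z)(-i)\bigr) = F^{p-i} H^{k-2i}(Z)$. For $p = k$ this is $F^{k-i} H^{k-2i}(Z)$. But $H^{k-2i}(Z)$ is a mixed Hodge structure on a cohomology group of degree $k-2i$, so its Hodge filtration satisfies $F^{j} H^{k-2i}(Z) = 0$ for $j > k-2i$. Since $i \geq 1$, we have $k - i \geq k - 2i + 1 > k - 2i$, so indeed $F^{k-i} H^{k-2i}(Z) = 0$ for every $i$ in the range $1 \leq i \leq c-1$. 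Hence the only contribution to $F^k H^k(X')$ comes from the $\pi^*$-image of $F^k H^k(X)$, and since $\pi^*$ is injective on all of $H^k$ (it admits the Gysin pushforward as a one-sided inverse up to a multiple), it restricts to an isomorphism $F^k H^k(X) \xrightarrow{\sim} F^k H^k(X')$.

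An alternative, more self-contained route avoids invoking the blow-up formula as a black box: one can argue directly with log-de-Rham models. Using Lemma~\ref{lem:rationalLogTopHodge}'s proof idea, $F^k H^k$ is computed as the image of the hypercohomology of the truncated log-de-Rham complex $\Omega^{\geq k}(\log D)$; one checks that $\pi^*$ induces a quasi-isomorphism-like comparison on these truncated complexes away from a set of high codimension, the discrepancy being governed by $\Omega^k$-sheaves pushed forward from $E$, which again sit in the wrong degree to contribute at level $F^k$. I expect this second approach to require more care with the normal crossings compactification (one must arrange $\overline{X}$ and $\overline{X'}$ compatibly), so I would present the first approach as the main argument.

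The main obstacle, as I see it, is purely a matter of citing the blow-up formula in the correct generality and with the correct compatibility with the Hodge (not just weight) filtration; this is standard — see e.g. Voisin or \cite{PS} — but one must be slightly careful that the statement is for the full mixed Hodge structure and that the center $Z$ need not be proper (so $H^{k-2i}(Z)$ is itself only a mixed, not pure, Hodge structure). Fortunately, the vanishing $F^{j}H^{k-2i}(Z) = 0$ for $j > k-2i$ holds for any variety, so properness of $Z$ is not needed for the filtration count to go through.
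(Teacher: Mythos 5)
Your argument is correct, and the essential point is the same as the paper's, but the two proofs package the inputs differently. The paper never invokes the direct-sum blow-up formula: it works with the exact sequence of the blow-up square from \cite[Corollary 5.37]{PS}, $\cdots \to H^k(X) \to H^k(X') \oplus H^k(V) \to H^k(E) \to H^{k+1}(X) \to \cdots$, which respects the Deligne splitting, restricts it to the $(k,q)$-summands (noting $H^{k-1}(E)$ has no Deligne type $(k,q)$ classes), and uses the projective bundle formula to see that $H^{k,(k,q)}(V) \to H^{k,(k,q)}(E)$ is an isomorphism; exactness then forces $H^{k,(k,q)}(X) \to H^{k,(k,q)}(X')$ to be an isomorphism, and summing over $q$ gives the statement for $F^kH^k$. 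Your route instead takes the full mixed-Hodge blow-up decomposition $H^k(X') \cong H^k(X) \oplus \bigoplus_{i=1}^{c-1} H^{k-2i}(Z)(-i)$ as the black box and does the Tate-twist bookkeeping $F^k\bigl(H^{k-2i}(Z)(-i)\bigr) = F^{k-i}H^{k-2i}(Z) = 0$, which is exactly the same degree count the paper performs (the extra summands of $H^k(E)$, and all of $H^{k-1}(E)$, sit in Hodge level $<k$). The one caveat with your version is the citation: the textbook statements of the blow-up decomposition (e.g.\ Voisin) are for compact K\"ahler manifolds, whereas here $X$, $X'$, $Z$, $E$ are open varieties, so you should either point to a reference in the open/mixed setting or derive the splitting from the same discriminant-square exact sequence (your observation that $\pi_*\pi^* = \mathrm{id}$ gives the needed injectivity of $\pi^*$ and splits the sequence). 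The paper's formulation buys a lighter citation load and avoids establishing the splitting; yours buys a cleaner one-line filtration argument once the decomposition is in hand.
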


When $X$ is proper, this is the well known fact that the Hodge groups $H^{k,(k, q)}(X) \cong H^q(X, \Omega^k)$ are birational invariants for proper models $X$. However, we will be using this result for $X$ which are not proper.

\begin{proof}
By Lemma~\ref{L:bigrading} and \cite[Proposition 4.20]{PS}, we have $F^k H^k(U) = \bigoplus_q H^{k,(k,q)}(U)$ when $U$ is smooth.
Let $V$ be the center of the blow up, and let $E$ be the exceptional divisor, which is a projective bundle over $V$. Then we have an exact sequence \cite[Corollary 5.37]{PS} 
\[ \cdots \to H^k(X) \to H^k(X') \oplus H^k(V) \to H^k(E) \to H^{k+1}(X) \to \cdots ,\]
respecting the Deligne splitting. Let $0 \leq q \leq k$. Since $H^{k-1}(E)$ has no terms in Deligne degree $(k,q)$, there is an exact sequence
\[ 0 \to  H^{k, (k,q)}(X) \to H^{k, (k,q)}(X') \oplus H^{k, (k,q)}(V) \to H^{k, (k,q)}(E) \to \cdots .\]
Since $E$ is a projective bundle over $V$, using the standard description of cohomology of a projective bundle, the map $H^{k, (k,q)}(V) \to H^{k, (k,q)}(E)$ is an isomorphism. So $H^{k, (k,q)}(X) \to H^{k, (k,q)}(X')$ is an isomorphism. 
\end{proof}

\begin{lem} \label{lem:topHodgeInjective}
Let $X$ be a smooth variety and let $U$ be a dense Zariski open subset of $X$. For every positive integer $k$, the map $F^k H^k(X) \to F^k H^k(U)$ is injective.
\end{lem}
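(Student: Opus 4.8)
The plan is to reduce the general case to the case of a blow-up along a smooth center by resolving the complement, then invoke Lemma~\ref{ref:lemBlowUp}. First I would choose a smooth compactification $\overline{X}$ of $X$ such that $D := \overline{X} \setminus X$ is a simple normal crossings divisor; since $U \subset X$ is dense Zariski open, I may also arrange (by further blowing up, using Hironaka) that $Z := X \setminus U$ is the support of a simple normal crossings divisor inside $\overline{X}$, and indeed that $\overline{X} \setminus U$ is simple normal crossings. The point is that $F^k H^k(-)$ is computed by the degree-$k$ piece of the Hodge filtration, which by \cite[p.~208]{Voisin} (as used in the proof of Lemma~\ref{lem:rationalLogTopHodge}) is the image of $\HH^k$ of the truncated log de Rham complex $\Omega^{\geq k}_{\overline{X}}(\log D)$ inside $\HH^k(\Omega^\bullet_{\overline{X}}(\log D))$, and similarly $F^k H^k(U)$ is the image of $\HH^k(\Omega^{\geq k}_{\overline{X}}(\log D_U))$ where $D_U := \overline{X} \setminus U \supseteq D$.

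The key comparison is then between the truncated log de Rham complexes for the divisors $D$ and $D_U$. Both have trivial $E_1$ differentials in the Cech spectral sequence (by \cite[Theorem 8.35]{Voisin} for the full complexes, hence for the truncated ones by compatibility, exactly as in the proof of Lemma~\ref{lem:rationalLogTopHodge}), so $F^k H^k(X)$ is a subquotient of $H^0(\overline{X}, \Omega^k(\log D))$ and $F^k H^k(U)$ a subquotient of $H^0(\overline{X}, \Omega^k(\log D_U))$; more precisely, in each case the relevant group is the kernel of $d\colon H^0(\overline{X},\Omega^k(\log -)) \to H^0(\overline{X},\Omega^{k+1}(\log -))$, since the $E_1$ differentials vanish and the $E_1^{0,k}$ term is all of $H^0$ while all higher-column contributions to $F^k$ come from $\Omega^{\geq k}$. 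The restriction map $F^k H^k(X) \to F^k H^k(U)$ is induced by the inclusion of sheaves $\Omega^k(\log D) \hookrightarrow \Omega^k(\log D_U)$ on global sections, which is visibly injective; hence the induced map on the (closed) subspaces is injective as well.

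Alternatively, and perhaps more cleanly, I would factor the inclusion $U \hookrightarrow X$ through a sequence of blow-ups: by Hironaka there is a proper birational $\pi\colon X' \to X$ with $X'$ smooth, obtained as a composition of blow-ups along smooth centers, such that $\pi^{-1}(U) \to U$ is an isomorphism and $X' \setminus \pi^{-1}(U)$ is a simple normal crossings divisor. By Lemma~\ref{ref:lemBlowUp} applied iteratively, $\pi^*\colon F^k H^k(X) \to F^k H^k(X')$ is an isomorphism, so it suffices to prove injectivity of $F^k H^k(X') \to F^k H^k(U')$ where $U' := \pi^{-1}(U)$ is the complement of an snc divisor $E'$ in the smooth variety $X'$. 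For this model one chooses a compactification $\overline{X'}$ with $\overline{X'} \setminus X'$ snc and $\overline{X'} \setminus U'$ snc, and runs the log de Rham argument above with $D = \overline{X'}\setminus X'$ and $D \cup E' = \overline{X'}\setminus U'$.

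The main obstacle is the geometric input: ensuring one can choose a single smooth compactification $\overline{X}$ (or smooth birational model $X'$) in which both $\overline{X}\setminus X$ and $\overline{X}\setminus U$ are simple normal crossings divisors, so that the two truncated log de Rham complexes live on the same space and the sheaf inclusion $\Omega^k(\log D) \hookrightarrow \Omega^k(\log D_U)$ makes sense. This is a standard consequence of resolution of singularities (embedded resolution of $\overline{X}\setminus U$), but it must be set up carefully, and one must check that the $E_1$-degeneration and the identification of $F^k$ with a subspace of $H^0(\Omega^k(\log -))$ are genuinely compatible with the sheaf inclusion — i.e., that the spectral sequence comparison is functorial in the divisor. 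Granting that, injectivity is immediate from injectivity of $H^0(\overline{X},\Omega^k(\log D)) \hookrightarrow H^0(\overline{X},\Omega^k(\log D_U))$.
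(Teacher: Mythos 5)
Your proposal is correct, and its first step (blowing up along smooth centers in $X \setminus U$ and invoking Lemma~\ref{ref:lemBlowUp} so that one may assume $X \setminus U$ is a normal crossings divisor) is exactly the paper's first step; but after that the two arguments genuinely diverge. The paper peels off the snc divisor one smooth component at a time and, for a smooth hypersurface $D \subset Y$, uses the Gysin sequence $H^{k-2}(D) \to H^k(Y) \to H^k(Y\setminus D)$ together with the weight bound on $H^{k-2}(D)$: the Gysin map shifts Deligne degree by $(1,1)$, so its image cannot meet $H^{k,(k,q)}(Y)$, giving injectivity summand by summand. This needs no compactification and only formal properties of mixed Hodge structures. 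You instead pass to a common good compactification, identify $F^kH^k$ of $X'$ and of $U$ with $H^0(\overline{X'},\Omega^k(\log D))$ and $H^0(\overline{X'},\Omega^k(\log D_U))$ via $E_1$-degeneration, and conclude from left-exactness of $H^0$ applied to the sheaf inclusion $\Omega^k(\log D)\hookrightarrow \Omega^k(\log D_U)$. This is valid, and it buys you a one-shot argument with no induction on boundary components, at the cost of two pieces of (standard but nontrivial) infrastructure: the existence of a compactification $\overline{X'}$ in which both $\overline{X'}\setminus X'$ and $\overline{X'}\setminus U$ are snc, and the functoriality of the filtered log de Rham model under enlarging the boundary divisor. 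One caution on your first variant: you cannot in general arrange $\overline{X}\setminus U$ to be snc ``by further blowing up'' while keeping $X$ fixed, since the required centers lie in $X\setminus U$ and so modify $X$; this is precisely why the reduction via Lemma~\ref{ref:lemBlowUp} in your second variant (and in the paper) is not optional but necessary. With that variant taken as the actual proof, your argument is complete.
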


\begin{proof}
By a sequence of blowups at smooth centers in $X \setminus U$, we can find $\pi: X' \to X$ such that $\pi^{-1}(U) \to U$ is an isomorphism and $X' \setminus U$ is a normal crossing divisor, $X' \setminus U = D_1 \cup D_2 \cup \cdots \cup D_r$. By Lemma~\ref{ref:lemBlowUp}, $F^k H^k(X) \to F^k H^k(X')$ is an isomorphism, so it is enough to show that $F^k H^k(X') \to F^k H^k(U)$ is injective. We can decompose this map as $F^k H^k(X') \to F^k H^k(X' \setminus D_1)  \to F^k H^k(X' \setminus ( D_1 \cup D_2) ) \to \cdots \to   F^k H^k(X' \setminus \bigcup_{i=1}^r D_i ) = F^k H^k(U)$. So we need to show that, if $Y$ is a smooth variety and $D$ a smooth hypersurface, then $F^k H^k(Y) \to F^k H^k(Y \setminus D)$ is injective.

We use the Gysin long exact sequence
\begin{equation}\label{eq:Gysin} \cdots \to H^{k-2}(D) \to  H^k(Y) \to H^k(Y \setminus D) \to H^{k-1}(D) \to \cdots . \end{equation}
This sequence is more commonly stated for compactly supported cohomology (see \cite[p.138]{PS}); in our situation, $Y,Y \setminus D$, and $D$ are all smooth so we may apply Poincar\'{e} duality (Theorem \ref{thm:Poincare}) to obtain \eqref{eq:Gysin}.
%Another source for this is Chapter 9 of Deligne's Hodge III, but there it is stated for Y proper.

The Deligne splitting on $H^{k-2}(D)$ lives in weights $(p,q)$ for $0 \leq p,q \leq k-2$. The boundary map in the Gysin sequence sends $H^{k-2, (p,q)}(D)$ to $H^{k, (p+1, q+1)}(Y)$. 
In particular, for all $p \geq k$, we have an exact sequence
\[ 0 \to H^{k, (p,q)}(Y) \to H^{k, (p,q)}(Y \setminus D)  \to \cdots \]
so $H^{k, (k,q)}(Y) \to H^{k, (k,q)}(Y \setminus D)$ is injective as required.
\end{proof}

Finally, we can prove:
\begin{prop} \label{prop:rationalLogImpliesTate} 
Let $X$ be a smooth variety and $\eta$ a rational log $k$-form. Then $\eta$ represents a cohomology class in $H^{k,(k,k)}(X)$.
\end{prop}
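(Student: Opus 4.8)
The plan is to combine the two ingredients that were carefully set up in the preceding lemmas: first, a rational log $k$-form lies in $F^k H^k(X)$ by Lemma~\ref{lem:rationalLogTopHodge}; second, we want to show it also lies in $W_{2k} \cap F^k$, which (since $X$ is smooth) is exactly $H^{k,(k,k)}$. The weight bound is the real content. The natural strategy is to pass to a torus where $\eta$ becomes an honest log form, use Lemma~\ref{lem:logImpliesTate} there, and then push the information back up to $X$ using the injectivity results for $F^k H^k$.

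More precisely, let $U \subseteq X$ be the dense Zariski open set on which $\eta$ restricts to a genuine log form. By Lemma~\ref{lem:logImpliesTate}, $[\eta|_U] \in H^{k,(k,k)}(U) = W_{2k}H^k(U,\CC) \cap F^k H^k(U)$; in particular $[\eta|_U]$ has weight $\leq 2k$. Now by Lemma~\ref{lem:topHodgeInjective}, the restriction map $F^k H^k(X) \to F^k H^k(U)$ is injective. By Lemma~\ref{lem:rationalLogTopHodge}, $[\eta] \in F^k H^k(X)$, so $[\eta]$ is determined by its image $[\eta|_U]$. The point is then that $[\eta]$ must itself have weight $\leq 2k$: indeed $H^k(X,\CC)$ is smooth so all of its weights are already $\geq k$, and the weight filtration is strict under the restriction map $H^k(X) \to H^k(U)$, so a class whose restriction has weight $\leq 2k$ has weight $\leq 2k$ itself. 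Combining, $[\eta] \in W_{2k}H^k(X,\CC) \cap F^k H^k(X) = H^{k,(k,k)}(X)$, the last equality because on a smooth variety $H^k$ lives in weights $\{k,\ldots,2k\}$ and the only Deligne piece in $F^k \cap W_{2k}$ is $H^{k,(k,k)}$.

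**The main obstacle** is making the weight-strictness step fully rigorous. Strictness of morphisms of mixed Hodge structures with respect to the weight filtration is standard (it is part of Deligne's theory), but one has to be slightly careful: we need that the restriction map $H^k(X,\QQ) \to H^k(U,\QQ)$ is a morphism of mixed Hodge structures (true, by functoriality, Proposition~\ref{prop:splitMV}), and then invoke strictness plus the smoothness bound $W_{k-1}H^k(X) = 0$ to conclude that an element of $H^k(X)$ whose restriction lies in $W_{2k}H^k(U)$ already lies in $W_{2k}H^k(X) = H^k(X)$ — which is automatic! So in fact the only thing we really need is $F^k H^k(X) \subseteq W_{2k} H^k(X)$, and since $W_{2k}H^k(X) = H^k(X)$ for any $k$-dimensional-or-larger consideration is vacuous when $X$ is smooth of dimension $\geq$ ... wait — one should double check: $W_{2k}H^k(X,\QQ) = H^k(X,\QQ)$ holds for \emph{any} variety since weights of $H^k$ never exceed $2k$. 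Hence the weight condition is free, and Proposition~\ref{prop:rationalLogImpliesTate} reduces entirely to: $[\eta] \in F^k H^k(X)$ (Lemma~\ref{lem:rationalLogTopHodge}), together with the observation that $F^k H^k(X) \cap W_{2k}H^k(X) = H^{k,(k,k)}(X)$. The role of Lemma~\ref{lem:topHodgeInjective} and the torus argument is then not strictly needed for the weight bound but does give an independent sanity check; I would still include the reduction to $U$ only if a subtlety forces $F^p$ alone to be insufficient. I expect the clean writeup to be two or three lines: cite Lemma~\ref{lem:rationalLogTopHodge} for $[\eta]\in F^kH^k(X)$, note $W_{2k}H^k(X,\CC)=H^k(X,\CC)$, and read off from Lemma~\ref{L:bigrading} together with Theorem~\ref{T:weights} that $F^k\cap W_{2k}=H^{k,(k,k)}$ on a smooth variety.
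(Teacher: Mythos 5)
Your opening plan (restrict to the dense open $U$ where $\eta$ is an honest log form, apply Lemma~\ref{lem:logImpliesTate} there, and push back to $X$ via Lemma~\ref{lem:topHodgeInjective}) is exactly the paper's argument, but the ``simplification'' you then adopt is wrong, and since that is the proof you actually propose to write, there is a genuine gap. The error is the claim that on a smooth variety $F^k H^k(X)\cap W_{2k}H^k(X)=H^{k,(k,k)}(X)$. By Lemma~\ref{L:bigrading}, $W_{2k}H^k(X,\CC)$ is all of $H^k(X,\CC)$ and $F^kH^k(X)=\bigoplus_{q\leq k}H^{k,(k,q)}(X)$, so $F^k\cap W_{2k}=F^k$ contains every summand $H^{k,(k,q)}(X)$ with $q<k$. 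Smoothness only forces weights $\geq k$ (Theorem~\ref{T:weights}), i.e.\ $p+q\geq k$, which does not exclude $(p,q)=(k,q)$ with $q<k$. These summands are genuinely nonzero in general: for instance, for $X=\PP^2\setminus C$ with $C$ a smooth cubic, the Gysin sequence gives $\Gr^W_3H^2(X)\cong H^1(C)(-1)\neq 0$, which has Hodge type $(2,1)\oplus(1,2)$, so $H^{2,(2,1)}(X)\subseteq F^2H^2(X)$ is nonzero. The identity $H^{k,(k,k)}=W_{2k}\cap F^k$ that you invoke is only stated in the paper under the mixed Tate hypothesis, which Proposition~\ref{prop:rationalLogImpliesTate} does not assume --- and cannot assume, since the proposition is used (e.g.\ in Proposition~\ref{prop:isolatedrank1} and Theorem~\ref{thm:standardequiv}) precisely as an input toward identifying Deligne summands, before any Tate-type statement is available.

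Consequently the restriction to $U$ and Lemma~\ref{lem:topHodgeInjective} are not an optional ``sanity check'': they are the whole content beyond Lemma~\ref{lem:rationalLogTopHodge}. The correct completion is the one you sketched first: write $[\eta]=\sum_{q=0}^k\theta_q$ with $\theta_q\in H^{k,(k,q)}(X)$; since $[\eta|_U]\in H^{k,(k,k)}(U)$ by Lemma~\ref{lem:logImpliesTate} and restriction respects the Deligne splitting, $\theta_q|_U=0$ for $q<k$; and since $H^{k,(k,q)}(X)\to H^{k,(k,q)}(U)$ is injective by Lemma~\ref{lem:topHodgeInjective}, each $\theta_q$ with $q<k$ vanishes, so $[\eta]=\theta_k\in H^{k,(k,k)}(X)$. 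Also note that your framing of the missing condition as a weight bound is off: the weight bound $W_{2k}$ is indeed automatic, and what must be controlled is the second Hodge index (the $\overline{F}$-direction), which is exactly what the restriction argument handles.
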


\begin{proof}
By Lemma~\ref{lem:rationalLogTopHodge}, $\eta$ represents a class in $F^k H^k(X) = \bigoplus_{q=0}^k H^{k, (k,q)}(X)$. Let the class $[\eta]$ of $\eta$ be $[\eta] = \sum_{q=0}^k \theta_q$, where $\theta_q \in H^{k,(k,q)}(X)$.
Let $U$ be a dense Zariski open set on which $\eta$ becomes a log form. Then $[\eta|_U] = \sum_{q=0}^k \theta_q|_U$. The restriction $\theta_q|_U$ lies in $H^{k,(k,q)}(U)$. But, by Lemma~\ref{lem:logImpliesTate}, $[\eta|_U] \in H^{k,(k,k)}(U)$. So, for $0 \leq k < q$, we have $\theta_k|_U=0$.

But, by Lemma~\ref{lem:topHodgeInjective}, $H^{k, (k,q)}(X)$ injects into $H^{k,(k,q)}(U)$. So, for $0 \leq k < q$, the cohomology class $\theta_k$ is zero. In other words, $[\eta] = \theta_k$. We have shown that $\eta$ represents a class in $H^{k, (k,k)}(X)$, as desired.
\end{proof}

\section{The curious Lefschetz property}
\label{sec:curious}

Suppose that $X$ is a smooth $2d$-dimensional affine variety and let $\gamma \in H^{2,(2,2)}(X,\CC)$.  

We say that the \defn{curious Lefschetz property} holds for the pair $(X,\gamma)$, if $H^{\ast}(X)$ is of mixed Tate type (meaning $H^{r,(p,q)}(X)=0$ for $p \neq q$) and if, for all $s \geq 0$ and all $p \leq d$, the map 
\[ \gamma^{d -p} : H^{p+s, (p,p)}(X, \CC) \to H^{2d-p+s, (2d-p, 2d-p)}(X, \CC) \]
is an isomorphism.

\subsection{The curious Lefschetz property and splitting of Hodge structure}

We begin with some basic linear algebra observations.
Let $V^0$, $V^1$, $V^2$, \dots, $V^{2d}$ be a sequence of vector spaces with maps $\gamma: V^i \to V^{i+2}$. Suppose that, for all $k \leq d$, the map $\gamma^{d-k} : V^k \to V^{2d-k}$ is an isomorphism. For $0 \leq i \leq \ell \leq d$, let $W(\ell,i)$ be the subspace of $V^{d-\ell+2i}$ defined by
\[ W(\ell, i) := \gamma^i V^{d-\ell} \cap \mathrm{Ker} \left(\gamma^{\ell-i+1} : V^{d-\ell+2i} \to V^{d+\ell+2} \right) .\]
We leave the following easy lemma to the reader:

\begin{lemma}
The maps $W(\ell,0) \overset{\gamma}{\longrightarrow} W(\ell,1) \overset{\gamma}{\longrightarrow} W(\ell,2) \overset{\gamma}{\longrightarrow} \cdots \overset{\gamma}{\longrightarrow} W(\ell,\ell)$ are isomorphisms, and $\gamma W(\ell,\ell)=0$. The vector space $V^{k}$ decomposes as the direct sum 
\[ V^k = \bigoplus_{\substack{ d-\ell+2i=k \\ 0 \leq i \leq \ell \leq d}} W(\ell,i). \]
\end{lemma}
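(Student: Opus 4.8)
The plan is to carry out the standard \emph{primitive (Lefschetz) decomposition} argument in this abstract setting. For $0\le k\le d$ define the \emph{primitive subspace}
\[ P^k := \ker\bigl(\gamma^{\,d-k+1}\colon V^k\to V^{2d-k+2}\bigr)\subseteq V^k . \]
I will show that $W(\ell,i)=\gamma^i P^{\,d-\ell}$, that $\gamma^i$ is injective on $P^{\,d-\ell}$ for $0\le i\le\ell\le d$, and that $V^k=\bigoplus_i\gamma^i P^{\,k-2i}$; granting these, all three assertions of the lemma are immediate.

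First I would record the elementary consequences of the hypothesis. Since $\gamma^{d-k}\colon V^k\to V^{2d-k}$ is injective for $k\le d$ and factors through $\gamma\colon V^k\to V^{k+2}$ when $k<d$, the map $\gamma\colon V^k\to V^{k+2}$ is injective for $k<d$; dually, factoring a $\gamma$ out of the isomorphism $\gamma^{m-d}\colon V^{2d-m}\xrightarrow{\sim}V^m$ (valid for $m\ge d$), the map $\gamma\colon V^{m-2}\to V^m$ is surjective for $m>d$. Consequently $\gamma^{d-k+1}\colon V^k\to V^{2d-k+2}$ has the same image as the surjection $\gamma\colon V^{2d-k}\to V^{2d-k+2}$, so for $k\le d$ one gets $\rank\bigl(\gamma^{d-k+1}|_{V^k}\bigr)=\dim V^{2d-k+2}=\dim V^{k-2}$, the last equality because $\gamma^{d-k+2}\colon V^{k-2}\to V^{2d-k+2}$ is an isomorphism; hence $\dim P^k=\dim V^k-\dim V^{k-2}$.

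The heart of the argument is the recursive splitting $V^k=P^k\oplus\gamma V^{k-2}$ for every $k\le d$. The sum is direct: if $\gamma y\in P^k$ with $y\in V^{k-2}$ then $\gamma^{d-k+2}y=0$, and $\gamma^{d-k+2}\colon V^{k-2}\to V^{2d-k+2}$ is injective, so $y=0$. It exhausts $V^k$ by the dimension count above, using that $\gamma$ is injective on $V^{k-2}$ so that $\dim\gamma V^{k-2}=\dim V^{k-2}$. Iterating, and using the injectivity of $\gamma$ on each $V^{k-2m}$ with $m\ge1$ and $k\le d$, I obtain $V^k=\bigoplus_{j\ge0}\gamma^j P^{\,k-2j}$ for $k\le d$, with each $\gamma^j$ injective on $P^{\,k-2j}$. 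For $k>d$ I transport the decomposition of $V^{2d-k}$ forward along the isomorphism $\gamma^{k-d}\colon V^{2d-k}\xrightarrow{\sim}V^k$ and reindex by $i=(k-d)+j$, getting, uniformly in $k$,
\[ V^k=\bigoplus_{\max(0,\,k-d)\,\le\, i\,\le\,\lfloor k/2\rfloor}\gamma^i P^{\,k-2i}, \]
again with $\gamma^i$ injective on $P^{\,k-2i}$.

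Finally I would match this with the subspaces $W(\ell,i)$. Straight from the definitions, an element of $\gamma^i V^{d-\ell}$ lies in $\ker\gamma^{\ell-i+1}$ precisely when it has the form $\gamma^i y$ with $\gamma^{\ell+1}y=0$, i.e.\ with $y\in P^{\,d-\ell}$; hence $W(\ell,i)=\gamma^i P^{\,d-\ell}$. For $0\le i<\ell\le d$ both $\gamma^i$ and $\gamma^{i+1}$ are injective on $P^{\,d-\ell}$ (they occur among the summands in the displayed decompositions of $V^{d-\ell+2i}$ and $V^{d-\ell+2i+2}$), so $\gamma\colon W(\ell,i)\to W(\ell,i+1)$, which is visibly surjective, is an isomorphism; and $\gamma W(\ell,\ell)=\gamma^{\ell+1}P^{\,d-\ell}=0$ by the definition of $P^{\,d-\ell}$. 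Substituting $\ell=d-k+2i$ turns the displayed decomposition of $V^k$ into the claimed $V^k=\bigoplus_{d-\ell+2i=k,\ 0\le i\le\ell\le d}W(\ell,i)$. The one genuinely non-formal point — and the place an error is most likely to slip in — is ensuring these sums are \emph{honest} direct sums, i.e.\ tracking which powers $\gamma^j$ are injective on which $P^m$, and checking that the two regimes $k\le d$ and $k>d$ assemble into the single index set appearing in the statement.
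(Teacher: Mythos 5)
Your proposal is correct, and it is exactly the standard primitive (Lefschetz/$\mathfrak{sl}_2$-style) decomposition argument the paper has in mind — the paper explicitly leaves this lemma to the reader, noting only the analogy with primitive cohomology, so there is no separate published proof to diverge from. One small remark: your surjectivity step uses dimension counts, hence tacitly assumes the $V^k$ are finite-dimensional (true in the paper's application to $H^{p+s,(p,p)}(X,\CC)$); this can be avoided by noting that for $v\in V^k$, $k\le d$, there is $y\in V^{k-2}$ with $\gamma^{d-k+2}y=\gamma^{d-k+1}v$, so $v-\gamma y\in P^k$.
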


%The letter $\ell$ is meant to suggest ``length". 
The reader familiar with the ordinary Lefschetz decomposition of cohomology should recognize $W(\ell,0)$ as the ``primitive cohomology"; the reader familiar with the representation theory of $SL_2$ should recognize $W(\ell,0)$ as the analogue of the highest weight vectors.

Suppose $(X, \gamma)$ has the curious Lefschetz property. Fix a positive integer $s$. We can apply this analysis to the vector spaces $H^{p+s, (p, p)}(X)$, for $0 \leq p \leq 2d$. (Only the spaces with $s \leq p \leq 2d-s$ can actually be nonzero, due to Theorem~\ref{T:weights}.) 

Temporarily write $H(s,p)$ for $H^{s+p, (p,p)}(X)$.  We make the corresponding definition
\begin{equation} W(s, \ell, i) := \gamma^i H(s,d-\ell) \cap \mathrm{Ker} \left(\gamma^{\ell-i+1} : H(s,d-\ell+2i) \to H(s,d+\ell+2) \right) .\label{primitiveH}
\end{equation}

The Deligne summands $H^{s+p, (p,p)}(X)$ are complex subspaces of the complex vector space $H^{s+p}(X, \CC)$. However, the curious Lefschetz property provides us with far more rigidity.
\begin{lem}\label{lem:Ws}
Suppose that $(X, \gamma)$ has the curious Lefschetz property. Suppose also that a nonzero scalar multiple of $\gamma$ is in  $H^2(X, \QQ)$. Then all of the $W(s, \ell, i)$ have bases in $H^\ast(X, \QQ)$.
\end{lem}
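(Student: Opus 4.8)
The plan is to reduce the statement to the assertion that the Deligne splitting of $H^\ast(X,\CC)$ is defined over $\QQ$, and to obtain the latter from the Lefschetz $\mathfrak{sl}_2$-action supplied by the curious Lefschetz property. First I would rescale $\gamma$: replacing $\gamma$ by a nonzero scalar multiple changes none of the subspaces $W(s,\ell,i)$, since these are built only from kernels and images of powers of $\gamma$; so I may and will assume $\gamma\in H^2(X,\QQ)$.

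Next I would package the curious Lefschetz data as an $\mathfrak{sl}_2$-triple. Set $H^\bullet:=\bigoplus_k H^k(X,\CC)$ (finite-dimensional), let $e$ be cup product with $\gamma$, and let $h$ be the semisimple operator acting on $H^{k,(p,p)}(X)$ by the scalar $p-d$. Since $\gamma\in H^{2,(2,2)}(X)$ and cup product respects the Deligne grading, $e$ maps $H^{k,(p,p)}(X)$ into $H^{k+2,(p+2,p+2)}(X)$, so $[h,e]=2e$; and the curious Lefschetz isomorphisms $\gamma^{d-p}\colon H^{p+s,(p,p)}(X)\xrightarrow{\sim}H^{2d-p+s,(2d-p,2d-p)}(X)$ say precisely that $e$ satisfies hard Lefschetz with respect to the $h$-grading. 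Hence there is a unique $f$ with $(e,h,f)$ an $\mathfrak{sl}_2$-triple, and the $(p-d)$-eigenspace of $h$ inside $H^k(X,\CC)$ is the Deligne summand $H^{k,(p,p)}(X)$.

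The step I expect to be the main obstacle is showing that $h$ is defined over $\QQ$. Since $H^\ast(X)$ is of mixed Tate type, the $\QQ$-rational weight filtration $W_\bullet$ has only even jumps, $h$ preserves it, and $h$ acts on $\Gr^W_{2p}$ by $p-d$. I claim that these properties, together with $[h,\gamma]=2\gamma$, determine $h$ uniquely: given another such operator $h'$, the operator $\delta:=h-h'$ commutes with $\gamma$, preserves $W_\bullet$, and induces $0$ on $\Gr^W_\bullet$, hence strictly lowers $W_\bullet$ and so strictly lowers the $h$-grading; writing $\delta=\sum_{r\ge 1}\delta_{-r}$ for its $\mathrm{ad}(h)$-eigencomponents and matching $\mathrm{ad}(h)$-weights in $[\delta,\gamma]=0$ shows that each $\delta_{-r}$ commutes with $e$; but a nonzero $\mathrm{ad}(h)$-eigenvector annihilated by $\mathrm{ad}(e)$ is a highest-weight vector for the induced $\mathfrak{sl}_2$-action on $\mathfrak{gl}(H^\bullet)$, so has non-negative $\mathrm{ad}(h)$-weight, forcing $\delta_{-r}=0$; thus $\delta=0$. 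Because the conditions characterizing $h$ involve only the $\QQ$-rational data $\gamma$, $W_\bullet$ and rational scalars, every field automorphism of $\CC$ fixing $\QQ$ sends $h$ to an operator with the same properties, hence back to $h$; so $h$ preserves $H^\bullet(X,\QQ)$, i.e.\ $h$ is $\QQ$-rational. (In particular $H^\ast(X)$ is split over $\QQ$.)

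Finally I would assemble the conclusion. With $h$ and $\gamma$ both $\QQ$-rational, $W(s,\ell,0)$ equals the intersection inside $H^{s+d-\ell}(X,\CC)$ of $\mathrm{Ker}\bigl(\gamma^{\ell+1}\colon H^{s+d-\ell}(X,\CC)\to H^{s+d+\ell+2}(X,\CC)\bigr)$ with the $h$-eigenspace $\ker(h+\ell)=H^{s+d-\ell,(d-\ell,d-\ell)}(X)$; the reduction to this kernel is legitimate because $\gamma^{\ell+1}$ carries distinct Deligne summands into distinct ones, so this is exactly the kernel in~\eqref{primitiveH}. Both spaces being intersected are $\QQ$-rational, so $W(s,\ell,0)$ has a basis in $H^\ast(X,\QQ)$; and by the linear-algebra lemma preceding~\eqref{primitiveH} we have $W(s,\ell,i)=\gamma^i\bigl(W(s,\ell,0)\bigr)$, which is therefore $\QQ$-rational as well.
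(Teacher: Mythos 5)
Your proof is correct, but it takes a genuinely different route from the paper's. The paper's own argument is a short, direct one: writing $j=s+d-\ell$, it observes that $W(s,\ell,0)$ equals the intersection of two manifestly rational subspaces of $H^{j}(X,\CC)$, namely the kernel of $\gamma^{\ell+1}$ on all of $H^{j}(X,\CC)$ (rational because a rational multiple of $\gamma$ is) and the weight-filtration step $\bigoplus_{r\ge s}H^{j,(j-r,j-r)}=W_{2(j-s)}H^{j}$ (rational by Lemma~\ref{L:bigrading}); the identification of this intersection with $W(s,\ell,0)$ comes from an explicit decomposition of that kernel into the pieces $W(s',\ell',i')$, and $W(s,\ell,i)=\gamma^{i}W(s,\ell,0)$ finishes. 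You instead establish $\QQ$-rationality of the entire Deligne splitting first---in effect reproving Theorem~\ref{thm:splitQ} directly, whereas the paper deduces it as a corollary of this lemma---by characterizing the grading operator $h$ through the rational data $(W_\bullet,\gamma)$, using the Lefschetz $\mathfrak{sl}_2$ to get uniqueness and then Galois descent via $\mathrm{Aut}(\CC/\QQ)$; intersecting the now-rational summand $H^{j,(d-\ell,d-\ell)}$ with the rational kernel of $\gamma^{\ell+1}$ recovers $W(s,\ell,0)$ as in \eqref{primitiveH}, and applying $\gamma^{i}$ gives the rest. Your steps do hold: the curious Lefschetz isomorphisms assemble into hard Lefschetz for the $h$-grading because mixed Tate type together with Theorem~\ref{T:weights} forces $s\ge 0$ on every nonzero summand, the completion to an $\mathfrak{sl}_2$-triple is standard given hard Lefschetz, the uniqueness argument for $h$ (negative $\mathrm{ad}(h)$-weight vectors killed by $\mathrm{ad}(e)$ vanish) is sound, and there is no circularity since you never invoke Theorem~\ref{thm:splitQ} itself. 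The trade-off: the paper's proof is elementary filtration-level linear algebra and shorter; yours uses heavier machinery ($\mathfrak{sl}_2$-triples, descent from field automorphisms of $\CC$) but delivers the $\QQ$-splitting of the mixed Hodge structure in one stroke, with the grading operator pinned down intrinsically by rational data.
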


(We ask that  a multiple of $\gamma$ be rational, rather than requiring that $\gamma$ itself be, because it is more convenient not to build a factor of $(2 \pi i)^2$ into the definition of the GSV form.)

\begin{proof}
It is enough to prove the claim for $W(s,\ell,0)$, as $W(s, \ell,i) = \gamma^i W(s,\ell,0)$.
Note that $W(s, \ell,0) \subseteq H^{s+d-\ell}(X, \CC)$; we abbreviate $j:=s+d-\ell$.

Let $K$ be the kernel of $\gamma^{\ell+1} : H^{j}(X,\CC) \to H^{j+2\ell + 2}(X, \CC)$. Then $K = \CC \otimes \mathrm{Ker}(\gamma^{\ell+1} : H^{j}(X,\QQ) \to H^{j+2 \ell + 2}(X, \QQ))$, so $K$ has a basis in $H^{\ast}(X, \QQ)$.
Computation also shows that
\[ K = \bigoplus_{\substack{ s' \leq s \\ \ell-s+s' \leq  \ell' \leq \ell+s-s' \\ \ell'-\ell \equiv s'-s \bmod 2}} W\left(s', \ell', \tfrac{\ell'-\ell+s-s'}{2} \right) . \] 
So $K \cap \bigoplus_{r \geq s} H^{j, (j-r,j-r)}(X, \CC) = W(s, \ell, 0)$.  But by Lemma \ref{L:bigrading}(1), $\bigoplus_{r \geq s} H^{j, (j-r,j-r)}(X, \CC) $ is one of the pieces of the weight filtration on $H^j$, so it also has a basis in $H^{\ast}(X, \QQ)$, and thus their intersection has a basis in $H^{\ast}(X, \QQ)$.
\end{proof}

As a corollary, we obtain
\begin{theorem}\label{thm:splitQ}
Suppose that $(X, \gamma)$ has the curious Lefschetz property. Suppose also that a nonzero scalar multiple of $\gamma$ is in  $H^2(X, \QQ)$. Then each Deligne summand $H^{p+s, (p,p)}(X)$ has a basis in $H^{\ast}(X, \QQ)$. In other words, the mixed Hodge structure of $H^\ast(X)$ is split over $\QQ$.
\end{theorem}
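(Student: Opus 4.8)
The plan is to deduce the theorem directly from Lemma~\ref{lem:Ws} together with the linear-algebra decomposition lemma stated just above it. Fix $s \geq 0$ and $p$. Applying that decomposition lemma to the sequence of spaces $H(s,p) = H^{s+p,(p,p)}(X)$ equipped with the maps $\gamma$ (this is legitimate precisely because the curious Lefschetz property guarantees that $\gamma^{d-k}\colon H(s,k)\to H(s,2d-k)$ is an isomorphism for every $k\leq d$), one obtains an internal direct sum decomposition
\[ H^{s+p,(p,p)}(X) = \bigoplus_{\substack{d-\ell+2i=p\\ 0\leq i\leq \ell\leq d}} W(s,\ell,i), \]
where the $W(s,\ell,i)$ are the subspaces defined in~\eqref{primitiveH}.

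Next I would invoke Lemma~\ref{lem:Ws}: under our hypothesis that some nonzero scalar multiple of $\gamma$ lies in $H^2(X,\QQ)$, each summand $W(s,\ell,i)$ has a basis consisting of classes in $H^\ast(X,\QQ)$. Taking the union of these bases over the finitely many pairs $(\ell,i)$ with $d-\ell+2i = p$ yields a basis of $H^{s+p,(p,p)}(X)$ lying inside $H^\ast(X,\QQ)$; the only point to verify is the elementary fact that an internal direct sum of subspaces, each spanned by vectors drawn from a fixed $\QQ$-structure, is again spanned by vectors from that $\QQ$-structure. Since $p$ and $s$ were arbitrary, every Deligne summand of the form $H^{k,(p,p)}(X)$ admits a basis in $H^\ast(X,\QQ)$.

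Finally, to phrase the conclusion as splitting over $\QQ$ in the sense of the introduction, I would use that the curious Lefschetz property includes the assertion that $H^\ast(X)$ is of mixed Tate type, so $H^{k,(p,q)}(X)=0$ whenever $p\neq q$. Hence the only nonzero Deligne summands are the $H^{k,(p,p)}(X)$, and these have just been shown to have bases coming from $H^\ast(X,\QQ)$. That is exactly the statement that each $H^{k,(p,q)}(X)$ has a basis from $H^\ast(X,\QQ)$, i.e.\ that the mixed Hodge structure of $H^\ast(X)$ is split over $\QQ$.

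There is essentially no remaining obstacle: all of the genuine content sits in Lemma~\ref{lem:Ws}, whose proof realizes the primitive pieces $W(s,\ell,0)$ as the intersection of the kernel of a rational power-of-$\gamma$ map with a step of the (rational) weight filtration, and then recovers the general $W(s,\ell,i)$ as $\gamma^i W(s,\ell,0)$. The present statement is the bookkeeping step that reassembles those pieces into the Deligne summands, so the proof is short once Lemma~\ref{lem:Ws} and the decomposition lemma are in hand.
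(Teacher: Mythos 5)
Your proposal is correct and is essentially the paper's own proof: the paper likewise deduces the theorem from Lemma~\ref{lem:Ws} together with the decomposition $H^{p+s,(p,p)}(X)=\bigoplus_{d-\ell+2i=p}W(s,\ell,i)$ furnished by the linear-algebra lemma, applied to the spaces $H(s,p)$ via the curious Lefschetz isomorphisms. Your additional remarks (assembling $\QQ$-bases across the direct sum, and using mixed Tate type to dismiss the $(p,q)$, $p\neq q$, summands) are just the bookkeeping the paper leaves implicit.
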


\begin{proof}
The result follows from Lemma \ref{lem:Ws} and the equality
\begin{align*} H^{p+s, (p,p)}(X) &= \bigoplus_{d-\ell+2i=p} W(s, \ell,i). 
\hfill \qedhere
\end{align*}
\end{proof}

\subsection{The curious Lefschetz property in products}
%Suppose $X$ is a smooth $2d$-dimensional affine variety and let $\gamma \in H^{2,(2,2)}(X,\CC)$.  
%
%We say that the \defn{curious Lefschetz property} holds for the pair $(X,\gamma)$, if $H^{\ast}(X)$ is entirely of Tate type (meaning $H^{r,(p,q)}(X)=0$ for $p \neq q$) and if, for all $s \geq 0$ and all $r \leq d$, the map 
%\[ [\gamma]^{d -r} : H^{r+s, (r,r)}(X, \CC) \to H^{2d+s-r, (2d-r, 2d-r)}(X, \CC) \]
%is an isomorphism. 
%

The following lemma has been proved several times in the context of the more standard hard Lefschetz theorem.
Either of the proofs of \cite[Theorem 2.2]{McDaniel} or \cite[Proposition~3.4]{HW} applies in our setting, when combined with the fact that the K\"unneth isomorphism respects the Deligne splitting.
% The Kunneth isomorphism leads to the following result.

\begin{proposition}\label{prop:curiousproduct}
Suppose that the curious Lefschetz property holds for $(X,\gamma)$ and $(X',\gamma')$.  Then it holds for $(X \times X', \pi_1^*(\gamma) + \pi_2^*(\gamma'))$ where $\pi_1: X \times X' \to X$ and $\pi_2: X \times X' \to X'$ denote the two projections.
\end{proposition}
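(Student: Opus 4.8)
The plan is to deduce the proposition from the K\"unneth formula together with the purely linear-algebraic fact that the hard Lefschetz condition for a graded vector space equipped with a Lefschetz operator is stable under tensor products; this fact is precisely the content of \cite[Theorem 2.2]{McDaniel} and \cite[Proposition~3.4]{HW}. Write $\dim X = 2d$ and $\dim X' = 2d'$, set $D := d+d'$ (so $X\times X'$ is smooth affine of dimension $2D$), and put $\Gamma := \pi_1^\ast\gamma + \pi_2^\ast\gamma'$.

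First I would collect the Hodge-theoretic input. The K\"unneth isomorphism $H^\ast(X\times X',\CC)\cong H^\ast(X,\CC)\otimes H^\ast(X',\CC)$ is an isomorphism of graded rings and of mixed Hodge structures, hence respects the Deligne bigrading; concretely
\[
H^{p+s,(p,p)}(X\times X') \;=\; \bigoplus_{\substack{p_1+p_2=p\\ s_1+s_2=s}} H^{p_1+s_1,(p_1,p_1)}(X)\otimes H^{p_2+s_2,(p_2,p_2)}(X').
\]
In particular a tensor product of mixed-Tate structures is mixed Tate, so $H^\ast(X\times X')$ is of mixed Tate type. Moreover cup product with $\Gamma$ is transported to the operator $\gamma\otimes 1 + 1\otimes\gamma'$ (cup with $\gamma$ in the first factor plus cup with $\gamma'$ in the second); no Koszul sign appears because $\gamma,\gamma'$ have even degree, and for the same reason the two summands commute.

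Next I would recast the hypotheses in terms of $\mathfrak{sl}_2$-representations, which is also the mechanism behind the cited linear algebra. For fixed $s\ge 0$ put $V_s := \bigoplus_p H^{p+s,(p,p)}(X)$, graded by the index $p$, so that cup with $\gamma$ is a degree-$2$ endomorphism of $V_s$; the curious Lefschetz property for $(X,\gamma)$ says that for every $s$ the map $\gamma^{d-p}\colon (V_s)_p\to (V_s)_{2d-p}$ is an isomorphism for all $p\le d$, which by the standard $\mathfrak{sl}_2$-reformulation of hard Lefschetz is equivalent to $(V_s,\gamma)$ underlying a finite-dimensional $\mathfrak{sl}_2$-representation with $h$ acting on $(V_s)_p$ by $p-d$ and $e$ acting as $\gamma$; likewise for $X'$, with spaces $V'_{s'}$ and center $d'$. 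The K\"unneth identity then reads $\bigoplus_p H^{p+s,(p,p)}(X\times X') = \bigoplus_{s_1+s_2=s} V_{s_1}\otimes V'_{s_2}$, and $\Gamma = \gamma\otimes 1 + 1\otimes\gamma'$ preserves each summand and acts there as the diagonal operator $e\otimes 1 + 1\otimes e'$. Since a tensor product of finite-dimensional $\mathfrak{sl}_2$-representations is again one, with $h$ acting on the degree-$p$ piece of $V_{s_1}\otimes V'_{s_2}$ by $(p_1-d)+(p_2-d')=p-D$, the hard Lefschetz conclusion holds for $(V_{s_1}\otimes V'_{s_2},\Gamma)$ with center $D$. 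Summing over $s_1+s_2=s$ shows that $\Gamma^{D-p}\colon H^{p+s,(p,p)}(X\times X')\to H^{2D-p+s,(2D-p,2D-p)}(X\times X')$ is an isomorphism for all $s\ge 0$ and all $p\le D$, which together with the mixed-Tate property is exactly the curious Lefschetz property for $(X\times X',\Gamma)$.

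I expect the only genuinely delicate step to be the Hodge-theoretic bookkeeping of the second paragraph: one must verify that the K\"unneth isomorphism is a morphism of mixed Hodge structures (so that the Deligne bigrading decomposes as stated) and that cup product with $\pi_1^\ast\gamma+\pi_2^\ast\gamma'$ corresponds to $\gamma\otimes 1+1\otimes\gamma'$. Everything else is classical: the extra index $s$ present in the curious (as opposed to ordinary) Lefschetz property causes no difficulty, because the $\mathfrak{sl}_2$ argument is carried out one value of $s$ at a time, so no idea beyond those of \cite{McDaniel} and \cite{HW} is required.
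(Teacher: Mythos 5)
Your proposal is correct and follows essentially the same route as the paper, which simply invokes the hard-Lefschetz-under-tensor-product arguments of \cite{McDaniel} and \cite{HW} together with the compatibility of the K\"unneth isomorphism with the Deligne splitting. You have merely written out the $\mathfrak{sl}_2$ mechanism behind those citations explicitly, which is fine.
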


\subsection{The curious Lefschetz property in Mayer-Vietoris sequences}

The aim of this section is to prove the following theorem. 

\begin{theorem} \label{CuriousMV}
Let $X$ be a smooth $2d$-dimensional affine variety and let $\gamma \in H^{2,(2,2)}(X,\CC)$.  Let $U$ and $V$ be open affine subvarieties of $X$ (so $U \cap V$ is also affine). Suppose that $(U, \gamma)$, $(V, \gamma)$ and $(U \cap V, \gamma)$ all have the curious Lefschetz property. Then $(X, \gamma)$ has the curious Lefschetz property.
\end{theorem}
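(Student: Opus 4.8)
The plan is to combine the Mayer--Vietoris sequence for the cover $X = U \cup V$ with the already-established linear algebra of the curious Lefschetz decomposition. First I would record the Mayer--Vietoris sequence
\[
\cdots \to H^{k-1}(U \cap V) \xrightarrow{\delta} H^k(X) \to H^k(U) \oplus H^k(V) \to H^k(U \cap V) \xrightarrow{\delta} H^{k+1}(X) \to \cdots,
\]
which by Proposition~\ref{prop:splitMV} respects the Deligne splitting, and observe that cup product with $\gamma$ (pulled back appropriately) commutes with all maps in the sequence, since the restriction maps and the boundary maps are maps of $H^\ast(X)$-modules. Because $(U,\gamma)$, $(V,\gamma)$, $(U\cap V,\gamma)$ are all of mixed Tate type, Lemma~\ref{lem:mixedTate} gives that $H^\ast(X)$ is of mixed Tate type; so only the pieces $H^{p+s,(p,p)}$ are nonzero and we may restrict the whole exact sequence to a fixed superscript $s$, obtaining for each $s$ a long exact sequence of the ``graded'' cohomologies $H(s,p) := H^{p+s,(p,p)}$ in the variables $U$, $V$, $U\cap V$, $X$, with all maps commuting with $\gamma$.

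Next I would set up the bookkeeping. For a smooth affine $2d$-fold $Y$ with the curious Lefschetz property, the preceding lemma (the $W(\ell,i)$ decomposition) writes $H^\ast(Y)_{(s)} := \bigoplus_p H(s,p)$ as a direct sum of ``$\gamma$-strings'' $W(s,\ell,i)$, $0 \le i \le \ell \le d$, where $\gamma$ carries $W(s,\ell,i)$ isomorphically onto $W(s,\ell,i+1)$ for $i<\ell$ and kills $W(s,\ell,\ell)$; equivalently, as a $\CC[\gamma]$-module, $H^\ast(Y)_{(s)}$ is a direct sum of modules $\CC[\gamma]/\gamma^{\ell+1}$ placed in the appropriate degrees. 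The curious Lefschetz property for $Y$ is exactly the statement that every such string is ``centered'', i.e.\ the string attached to $W(s,\ell,0)$ starts in degree $d-\ell$ and ends in degree $d+\ell$. The key algebraic fact I would isolate and prove is: given a long exact sequence of finite-dimensional graded $\CC[\gamma]$-modules $\cdots \to B_{k-1} \to C_k \to A_k \to B_k \to C_{k+1} \to \cdots$ (graded by the cohomological degree $k$, with $\gamma$ of degree $2$), in which $A$ and $B$ are each direct sums of centered strings about the central degree $d$, the module $C$ is also a direct sum of centered strings about $d$. This is a purely homological-algebra statement about the category of graded $\CC[\gamma]$-modules: decompose $A$, $B$ into indecomposable strings, and argue that the connecting maps must match up strings compatibly with the centering. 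Concretely, one shows the connecting map $\delta: H^{k-1}(U\cap V) \to H^k(X)$ and the restriction map $H^k(X) \to H^k(U)\oplus H^k(V)$ are $\gamma$-module maps, and uses that a $\gamma$-module map between centered-string modules, together with the symmetry $\dim H(s,p) \leftrightarrow \dim H(s,2d-p)$ that is forced on $X$ by the exact sequence (the last of the three ``TheoremIntro'' statements, applied formally), pins down that the strings of $C = H^\ast(X)_{(s)}$ are centered.

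A cleaner way to organize the final step, which I would actually adopt: it suffices to prove $\gamma^{d-p}: H(s,p) \to H(s,2d-p)$ is injective for all $p \le d$, since then by the degree-wise symmetry $\dim H(s,p) = \dim H(s,2d-p)$ (which follows purely from the exactness of the Mayer--Vietoris sequence and the corresponding equalities for $U$, $V$, $U\cap V$ — a diagram chase / Euler-characteristic-in-each-$\gamma$-string argument) injectivity forces bijectivity. To prove injectivity of $\gamma^{d-p}$ on $H(s,p) = H^\ast(X)_{(s)}$ in degree $p+s$, suppose $\alpha \in H(s,p)$ with $\gamma^{d-p}\alpha = 0$. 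Map $\alpha$ to $H(s,p)_U \oplus H(s,p)_V$; there $\gamma^{d-p}$ is injective, so the image of $\alpha$ vanishes, hence $\alpha = \delta(\beta)$ for some $\beta \in H(s-1,p)_{U\cap V}$ (note the shift: $\delta$ raises cohomological degree by $1$ but preserves $(p,p)$-type, so it lowers the index $s$ by... — I would track this carefully: $\delta: H^{p+s-1}(U\cap V) \to H^{p+s}(X)$, and mixed-Tate-ness plus weight $2p$ forces $\beta \in H^{p+s-1,(p,p)}(U\cap V)$, i.e.\ $\beta \in H(s',p)_{U\cap V}$ with $s' = s-1$... only if such weight classes exist; otherwise $\beta = 0$). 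Then $0 = \gamma^{d-p}\alpha = \gamma^{d-p}\delta(\beta) = \delta(\gamma^{d-p}\beta)$, and by curious Lefschetz on $U\cap V$ we get information about $\gamma^{d-p}\beta$; iterating along the strings of $U \cap V$ and using exactness, one shows $\alpha = 0$. The main obstacle — and the step I expect to require the most care — is exactly this last bootstrapping: keeping the weight index $s$ and cohomological degree straight through the connecting maps, and checking that the ``centered string'' structure on $U$, $V$, $U\cap V$ propagates through a long exact sequence rather than producing off-center strings on $X$. I would handle it by the abstract lemma on exact sequences of graded $\CC[\gamma]$-modules with centered-string summands, proving that lemma once and for all by induction on the total dimension, splitting off one indecomposable string at a time.
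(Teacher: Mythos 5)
Your skeleton is the same as the paper's: use Mayer--Vietoris, note that the boundary map $\delta$ respects the Deligne splitting (Proposition~\ref{prop:splitMV}) and commutes with cup product by $\gamma$ (the paper proves $\delta(\gamma\cup\beta)=(-1)^2\gamma\cup\delta(\beta)$ as Lemma~\ref{MayerMult}), get mixed Tate type from Lemma~\ref{lem:mixedTate}, and then deduce the Lefschetz isomorphisms for $X$ from those for $U$, $V$, $U\cap V$. The paper's execution is simply the five lemma applied to the commutative ladder whose columns are the $(p,p)$- and $(2d-p,2d-p)$-isotypic Mayer--Vietoris sequences and whose rows are the maps $\gamma^{d-p}$. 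Your injectivity chase ($\gamma^{d-p}\alpha=0$ forces $\alpha|_U=\alpha|_V=0$, hence $\alpha=\delta(\beta)$, then push $\beta$ around using curious Lefschetz on $U\cap V$ and on $U\oplus V$) is exactly the first half of that five lemma and is correct.

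The genuine gap is in how you propose to get surjectivity. You want to combine injectivity with the equality $\dim H^{p+s,(p,p)}(X)=\dim H^{2d-p+s,(2d-p,2d-p)}(X)$, and you assert that this equality ``follows purely from the exactness of the Mayer--Vietoris sequence'' via an Euler-characteristic argument. It does not: for fixed Deligne type $(p,p)$ the sequence runs through all cohomological degrees, so exactness only determines the alternating sum $\sum_s(-1)^s\dim H^{p+s,(p,p)}(X)$ in terms of $U$, $V$, $U\cap V$; the individual dimensions depend on the ranks of the maps, i.e., on exactly the information you are trying to establish. (Indeed, in the paper this dimension symmetry is a \emph{consequence} of Theorem~\ref{thm:curious}, not an input.) Your fallback -- the abstract lemma that an exact sequence of graded $\CC[\gamma]$-modules with centered-string outer terms has centered-string middle term -- is true, but your proposed proof by ``splitting off one indecomposable string at a time'' is not carried out and is delicate, since maps between string modules need not respect any chosen decomposition. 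Both gaps are repaired at once by running the dual diagram chase for surjectivity (using exactness at $H^{2d-p+s,(2d-p,2d-p)}(X)\to H^{\ast}(U)\oplus H^{\ast}(V)\to H^{\ast}(U\cap V)$ together with curious Lefschetz on $U$, $V$, $U\cap V$), which is precisely the other half of the five lemma.
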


Our proof will be through the Mayer-Vietoris sequence 
%\[ \xymatrix{
%    0 \ar[r] & H^0(X) \ar[r] & H^0(U) \oplus H^0(V) \ar[r] & H^0(U \cap V)  \ar@{->} `r/8pt[d] `/10pt[l] `^dl[ll]|{} `^r/3pt[dll]^{\Large \delta} [dll] \\
%& H^1(X) \ar[r] & H^1(U) \oplus H^1(V) \ar[r] & H^1(U \cap V)  \ar@{->} `r/8pt[d] `/10pt[l] `^dl[ll]|{} `^r/3pt[dll]^{\Large \delta} [dll] \\
%             & \cdots & & \\
%    }\]
\[ 0 \to H^0(X) \to H^0(U) \oplus H^0(V) \to H^0(U \cap V) \overset{\delta}{\longrightarrow} H^1(X) \to \cdots \]

The maps preserving cohomological degree are ordinary restriction in cohomology, and hence preserve the Deligne splitting and commute with cup product. 
The boundary map $\delta$ also preserves the Deligne splitting (Proposition \ref{prop:splitMV}), sending $H^{r,(p,q)}(U \cap V)$ to $H^{r+1, (p,q)}(X)$. 
Our next lemma discusses the relationship of $\delta$ to the cup product.

\begin{lemma} \label{MayerMult}
Let $X$ be a manifold covered with open sets $U$ and $V$. Let $\delta$ denote the boundary map in the corresponding Mayer-Vietoris sequence. Let $\alpha \in H^s(X)$ and $\beta \in H^t(U \cap V)$. Then $\alpha \cup \delta(\beta) = (-1)^s \delta(\alpha \cup \beta)$, as classes in $H^{s+t+1}(X)$.
\end{lemma}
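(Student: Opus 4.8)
The plan is to compute both sides on the level of Čech--de Rham cochains, which is the standard way to see the Mayer--Vietoris boundary map concretely. Fix a partition of unity $\{\rho_U, \rho_V\}$ subordinate to the cover $\{U,V\}$, so $\rho_U + \rho_V = 1$, $\operatorname{supp}\rho_U \subseteq U$, $\operatorname{supp}\rho_V \subseteq V$. Represent $\beta$ by a closed $t$-form $b$ on $U \cap V$. The Mayer--Vietoris recipe for $\delta(\beta)$ is: write $b = b_U|_{U\cap V} - b_V|_{U \cap V}$ where $b_U := \rho_V\, b$ (extended by zero to a form on $U$) and $b_V := -\rho_U\, b$ (extended by zero to a form on $V$); then $db_U$ and $db_V$ agree on $U\cap V$ and glue to a global closed $(t+1)$-form on $X$ representing $\delta(\beta)$. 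Represent $\alpha$ by a closed $s$-form $a$ on $X$.

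With these representatives in hand, the computation of $\alpha \cup \delta(\beta)$ is direct: $a \wedge (db_U)$ is a global form representing $\alpha \cup \delta(\beta)$ (using that $db_U$ glues to the chosen representative of $\delta(\beta)$). On the other side, $\alpha \cup \beta$ is represented on $U \cap V$ by $a|_{U\cap V} \wedge b$, and applying the same recipe, $\delta(\alpha \cup \beta)$ is represented by the global form that equals $d\bigl(\rho_V\,(a \wedge b)\bigr) = d\bigl(a \wedge (\rho_V b)\bigr) = d(a \wedge b_U)$ on $U$. Since $a$ is closed, $d(a \wedge b_U) = (-1)^s\, a \wedge db_U$. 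Comparing the two global forms gives $\alpha \cup \delta(\beta) = (-1)^s \delta(\alpha \cup \beta)$ at the level of de Rham cohomology. (One should note that $a \wedge b_U$, a priori only a form on $U$, extends by zero across $X$ because $a\wedge b_U = a \wedge \rho_V b$ is supported away from $V \setminus U$; this is exactly the point where the support conditions on the partition of unity are used.)

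The only genuine subtlety — and the place I expect to spend the most care — is bookkeeping the sign $(-1)^s$ and making sure the sign conventions for $\delta$ and for the cup product are the ones actually in force in the paper (there is a well-known ambiguity: different sources put the sign on $b_U$ vs.\ $b_V$, or orient the connecting map oppositely, which flips the sign in the identity). The cleanest way to pin this down is to fix the convention that $\delta$ is induced by $b \mapsto db_U$ with $b_U = \rho_V b$, and the cup product by wedge of representatives with no extra sign; then the identity reads as stated. An alternative, coordinate-free argument avoiding the partition of unity is to invoke that the Mayer--Vietoris sequence is the long exact sequence of a short exact sequence of complexes of sheaves (or of the mapping cone), that cup product with the fixed class $\alpha \in H^s(X)$ is a chain map of degree $s$ on this short exact sequence, and that connecting homomorphisms commute with degree-$s$ chain maps up to the Koszul sign $(-1)^s$; this is a general homological fact and sidesteps the explicit computation entirely, though one still must check that the cup-product action is compatible with the three restriction maps defining the sequence. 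I would present the Čech--de Rham computation as the main argument and remark on the homological-algebra shortcut.
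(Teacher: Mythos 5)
Your proposal is correct and follows essentially the same route as the paper: represent $\delta(\beta)$ via a partition of unity as $d(\sigma\tilde\beta)$, represent the cup products by wedge products, and extract the sign $(-1)^s$ from the Leibniz rule using that the representative of $\alpha$ is closed. The paper's version is terser (it waves at the support and sign-convention issues you spell out), but the argument is the same.
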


In applications of this lemma, $s$ will always be $2$, so the $(-1)^s$ term will drop out.

\begin{proof}
It is easiest to verify this in the deRham model for cohomology.  Let $\sigma_U+\sigma_V=1$ be a partition of unity subordinate to the cover $(U, V)$. Represent $\beta$ by a closed $t$-form $\tilde{\beta}$. So $\sigma_U \tilde{\beta}$ is a $t$-form on all of $X$. Up to sign conventions, $\delta(\beta)$ is represented by the form $d(\sigma_U \tilde{\beta})$. 

Lift $\alpha$ to a closed $s$-form $\tilde{\alpha}$. Then $\alpha \cup \delta(\beta)$ is represented by $\tilde{\alpha} \wedge d(\sigma_U \tilde{\beta})$ and $\delta(\alpha \cup \beta)$ is represented by $d(\tilde{\alpha} \wedge \sigma_U \tilde{\beta})$. Since $\tilde{\alpha}$ is closed, $d(\tilde{\alpha} \wedge \sigma_U \tilde{\beta}) = (-1)^s \tilde{\alpha} \wedge d(\sigma_U \tilde{\beta})$.
\end{proof}

\begin{proof}[Proof of Theorem~\ref{CuriousMV}]
First, we check that the mixed Hodge structure on $X$ is of mixed Tate type. Let $p \neq q$. Since the maps of the Mayer-Vietoris sequence respect the Deligne splitting, we have an exact sequence $H^{r-1,(p,q)}(U \cap V) \to H^{r,(p,q)}(X) \to H^{r, (p,q)}(U) \oplus H^{r, (p,q)}(V)$. The outside terms are assumed to be $0$, so the middle term is as well.

Now, consider the diagram
\[ \xymatrix@C=40 pt{
{\begin{matrix}H^{r-1,(p,p)}(U)  \\ \oplus  H^{r-1,(p,p)}(V) \end{matrix}} \ar[d] \ar[r]^-{\gamma^{d-p} \oplus \gamma^{d-p}} &  {\begin{matrix} H^{2d-2p+r-1,(2d-p,2d-p)}(U) \\  \oplus  H^{2d-2p+r-1,(2d-p,2d-p)}(V) \end{matrix}}  \ar[d] \\ 
H^{r-1,(p,p)}(U \cap V) \ar[d]^{\delta} \ar[r]^-{\gamma^{d-p}}& H^{2d-2p+r-1,(2d-p,2d-p)}(U \cap V)  \ar[d]^{\delta} \\ 
H^{r,(p,p)}(X) \ar[d] \ar[r]^-{\gamma^{d-p}}& H^{2d-2p+r,(2d-p,2d-p)}(X)  \ar[d] \\ 
 {\begin{matrix}  H^{r,(p,p)}(U)  \\ \oplus  H^{r,(p,p)}(V) \end{matrix} }\ar[d] \ar[r]^-{\gamma^{d-p} \oplus \gamma^{d-p}}  &  { \begin{matrix} H^{2d-2p+r,(2d-p,2d-p)}(U)  \\ \oplus  H^{2d-2p+r,(2d-p,2d-p)}(V) \end{matrix}} \ar[d] \\ 
 H^{r,(p,p)}(U \cap V)  \ar[r]^-{\gamma^{d-p}}&  H^{2d-2p+r,(2d-p,2d-p)}(U \cap V)\\
}\]

The diagram commutes, using Lemma~\ref{MayerMult} for the square that include $\delta$ and the fact that restriction of cohomology classes is a map of rings for the others.
Our induction hypothesis is that the first, second, fourth and fifth rows are isomorphisms. So the five lemma shows that the third row is an isomorphism, as desired.
\end{proof}

\section{Cluster algebras}
\label{sec:cluster background}
%We consider cluster algebras of geometric type.
\subsection{Seeds}
Let $\cF$ be a rational function field over $\CC$.  An extended exchange matrix is a $(n+m) \times n$ matrix $\tB = (\tB_{ij})$ such that the principal part, the top $n \times n$ square submatrix $B$, is skew-symmetric. (We anticipate no difficulty in extending our results to the skew-symmetrizable case, but restrict to the skew-symmetric case for convenience.)  A seed $t = (\x,\tB)$ consists of $n+m$ algebraically independent elements $x_1$, $x_2$, \ldots, $x_{n+m} \in \cF$ and an extended exchange matrix $\tB$.  %The $n+m$ variables $(x_1,x_2,\ldots,x_{n+m})$ is a \defn{cluster}.  
We say that $(\x,\tB)$ has rank $n$.

Mutation of seeds is defined as follows: given an index $k \in \{1,2,\ldots,n\}$, one defines a new extended exchange matrix $\tB' = \mu_k(\tB)$, the mutation of $\tB$ in the direction $k$, by the formula 
\[ \tB'_{ij} = \begin{cases} - \tB_{ij} & \mbox{if $i=k$ or $j=k$,}\\ 
%\tB_{ij} + \sgn(B_{ik})[\tB_{ik}\tB_{kj}]_{+} =
\tB_{ij} + [\tB_{ik}]_{+} [\tB_{kj}]_{+} - [\tB_{ik}]_{-} [\tB_{kj}]_{-} & \mbox{otherwise,} \\ \end{cases} \]
%where $\sgn(x) = +1$ or $-1$ or $0$ depending on whether $x > 0$, $x =0$, or $x < 0$, and we have 
where $[x]_+ = \max(x,0)$ and $[x]_- = \min(x,0)$.
One creates a new seed $t' = (\x', \tB') = \mu_k(t)$ by the formulae
\begin{align}\label{eq:mutation}
x'_k &= \frac{\prod_{i} x_i^{[\tB_{ik}]_+} \ + \ \prod_i x_i^{[-\tB_{ik}]_+}}{x_k} \\ 
x'_i &= x_i  & \mbox{if $i \neq k$.} 
\end{align}
%
%\[ \begin{array}{rcl}
%x'_k &=& \frac{\prod_{i} x_i^{[B_{ik}]_+} \ + \ \prod_i x_i^{[-B_{ik}]_+}}{x_k} \\
%x'_i &=& x_i \ \mbox{if}\ i \neq k \\
%\end{array} \]
We continue mutating on all possible indices, producing new seeds.
The $(n+m)$-tuples $(x_1, \ldots, x_{n+m})$ produced in this manner are called clusters, and the individual $x_i$ are called cluster variables.  The cluster variables $x_1,x_2 \ldots,x_n$ are called \defn{mutable}.  The cluster variables $x_{n+1}, x_{n+2}, \dots, x_{n+m}$ are the same in every cluster and are called \defn{frozen}; we shall often denote them by $y_1 = x_{n+1}, y_2 = x_{n+2}, \ldots, y_m = x_{n+m}$ as well.  
The sub-$\CC$-algebra of $\cF$ generated by all the cluster variables, and the reciprocals of the frozen variables, is the cluster algebra $A(\x,\tB)$, or simply $A(\tB)$ or $A$.  We write $\cA$ or $\cA(\tB)$ or $\cA(\x, \tB)$ for the cluster variety $\Spec A$.  We say that $A(\tB)$ has \defn{full rank} if the matrix $\tB$ has full rank (that is, rank $n$).  It is known \cite[Lemma 3.2]{CA3} that the rank of the matrix $\tB$ is mutation-invariant; see Corollary \ref{cor:rank}.  We say that $\tB$ has \defn{principal coefficients} if $\tB =\left( \begin{smallmatrix} B \\ \mathrm{Id}_n \end{smallmatrix} \right)$ and $d$-principal coefficients if $\tB =\left( \begin{smallmatrix} B \\ d\,\mathrm{Id}_n \end{smallmatrix} \right)$.
\begin{Remark} Cluster algebras are naturally defined over $\ZZ$.  In Section~\ref{sec:pointcounts}, we will work with ground fields other than $\CC$. \end{Remark}

For any cluster $(x_1, \ldots, x_n, x_{n+1}, \ldots, x_{n+m})$, we have the Laurent phenomenon \cite{CA1}: the cluster algebra $A$ is contained in the Laurent polynomial ring $\CC[x_1^{\pm 1}, \ldots, x_{n+m}^{\pm 1}]$.  The open subset $\Spec(\CC[x_1^{\pm 1}, \ldots, x_{n+m}^{\pm 1}]) \subset \cA$ is called a \defn{cluster torus} of $\cA$.

The upper cluster algebra $U \subset \cF$ is defined to be the intersection $\bigcap \CC[x_1^{\pm 1}, \ldots, x_{n+m}^{\pm }]$ of Laurent polynomial rings where the intersection is taken over all seeds.  Thus the Laurent phenomenon states that $A \subseteq U$.

%We denote by $\cA(\x,\tB)$ the cluster algebra with initial seed $(\x,\tB)$.  Sometimes we will just write $\cA$.  The cluster algebra can be defined over $\ZZ$, so it makes sense to talk about the $\FF_q$-points of $X$ where $\FF_q$ is the finite field with $q$ elements, denoted $X(\FF_q)$.
%
%We write $\cU$ for the upper cluster algebra, and let $X:= \Spec(\cU)$ be the cluster variety.

\subsection{Separating edges and the Louise algorithm}
Let $t = (\x,\tB)$ be a seed.  Let $\oG(t) = \oG(\tB)$ be the directed graph with vertices $\{1,2,\ldots,n\}$ and a directed edge $i \to j$ whenever $\tB_{ij} > 0$.   We also let $\Gamma(t)$ denote the underlying undirected graph of $\oG$.
We say that a seed $(\x,\tB)$ is \defn{acyclic} if $\oG(t)$ has no oriented cycles; see \cite{CA3}.  We say that $\cA$ is acyclic if some seed of $\cA$ is acyclic.

For a subset $S \subset [n]$, let $\tB_S$ denote the $|S| \times (n + m)$ matrix consisting of the columns of $\tB$ indexed by $S$.  We say that the cluster algebra $A(\tB_S)$ is obtained from $A(\tB)$ by freezing the cluster variables $\{x_i \mid i \in [n] \setminus S\}$.  If, in addition, $A(\tB_S) \cong A[x^{-1}_i \mid i \in [n] \setminus S]$ then we call $A(\tB_S)$ a cluster localization, and the subvariety $\cA(\tB_S) \subset \cA(\tB)$ is called a cluster chart of $\cA$.  We say that a cluster algebra $A(\tB)$ is \defn{locally acyclic} if $\cA$ can be covered by finitely many acyclic cluster charts; see \cite{Mul}.

We define an edge $e = i \to j$ of $\oG$ to be a \defn{separating edge} if there is no bi-infinite path through $e$.  Edges belonging to directed cycles cannot be separating, but this is not a sufficient condition to be a separating edge.
\begin{proposition}[\cite{Mul}]\label{prop:covering}
Suppose $i \to j$ is a separating edge.  Then $x_i$ and $x_j$ are not simultaneously zero.  In other words, $\cA$ is covered by the two open subsets $\Spec A[x_i^{-1}]$ and $\Spec A[x_j^{-1}]$.
\end{proposition}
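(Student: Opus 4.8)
Following Muller~\cite{Mul}, the plan is to reduce to the purely algebraic assertion that $(x_i,x_j)=A$, i.e.\ that $(x_i,x_j)$ is the unit ideal of $A$: since $\Spec A[x_i^{-1}]$ and $\Spec A[x_j^{-1}]$ are the distinguished open sets $D(x_i),D(x_j)\subseteq\Spec A$, these cover $\Spec A$ exactly when $(x_i,x_j)=A$, and that statement in particular rules out a common zero of $x_i$ and $x_j$ on $\cA$.

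First I would reinterpret the separating-edge hypothesis combinatorially. Let $D\subseteq[n]$ consist of $j$ together with the vertices reachable from $j$ by a directed path in $\oG=\oG(\tB)$, and let $U\subseteq[n]$ consist of $i$ together with the vertices from which $i$ is reachable. Because $\oG$ has finitely many vertices, there is an infinite directed path issuing from $j$ if and only if some directed cycle is reachable from $j$, if and only if the induced subquiver $\oG|_D$ contains a cycle; symmetrically at $i$ and $U$. Since a bi-infinite path through $i\to j$ is the same datum as an infinite path ending at $i$ glued to an infinite path starting at $j$, the hypothesis that $i\to j$ is separating is equivalent to: $\oG|_D$ is acyclic, or $\oG|_U$ is acyclic. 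Next, a one-line check with the exchange relation~\eqref{eq:mutation} shows that replacing $\tB$ by $-\tB$ merely interchanges the two monomials in every exchange relation and negates each matrix mutation, hence leaves all clusters of $A$ unchanged; in particular $A(-\tB)=A(\tB)$, while $\oG(-\tB)=\oG(\tB)^{\mathrm{op}}$ swaps the roles of $D$ and $U$. So, passing to $-\tB$ and relabelling $i\leftrightarrow j$ if necessary, I may assume $\oG|_D$ is acyclic.

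With that reduction I would induct on $|D|$ (over all cluster algebras and all separating edges $i\to j$ with $\oG|_D$ acyclic). If $|D|=1$ then $j$ is a sink of $\oG$, and mutating at $j$ the relation~\eqref{eq:mutation} reads $x_j x_j'=\prod_c x_c^{[\tB_{cj}]_{+}}+\prod_c x_c^{[-\tB_{cj}]_{+}}$. The second product is a monomial in the frozen variables only (no mutable $c$ has $\tB_{cj}<0$, as $j$ is a sink), hence a unit of $A$; the first product is divisible by $x_i$ since $\tB_{ij}>0$. Rearranging exhibits a unit of $A$ inside $(x_i,x_j)$, so $(x_i,x_j)=A$. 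If $|D|\geq 2$, pick a sink $w$ of the finite nonempty acyclic quiver $\oG|_D$; since $D$ is closed under outgoing edges, $w$ is a sink of $\oG$, and $w\neq j$ because $j$ has an out-neighbour in $D$. A direct check of the mutation rule (using $\tB_{wc}\leq 0$ for all mutable $c$) shows that mutating at the sink $w$ reverses exactly the edges incident to $w$ and creates no other edges among the mutable vertices. Consequently, in $\oG(\mu_w(\tB))$ the edge $i\to j$ is untouched (as $i,j\neq w$), the vertex $w$ is no longer reachable from $j$, and the set of vertices reachable from $j$ is contained in $D\setminus\{w\}$ and still induces an acyclic subquiver. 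Thus $i\to j$ is once more a separating edge, now with a strictly smaller set $D$, and applying the inductive hypothesis to the cluster algebra $A(\mu_w(\tB))=A$ — in which $x_i$ and $x_j$ are the same elements — yields $1\in(x_i,x_j)$.

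The point requiring the most care is the quiver combinatorics: verifying that ``$i\to j$ admits no bi-infinite path'' is equivalent to acyclicity of one of the two one-sided cones $\oG|_D$, $\oG|_U$, and then checking in the inductive step that mutating at a sink lying below $j$ genuinely preserves both the separatedness of $i\to j$ and the acyclicity of the (now smaller) cone $\oG|_D$, rather than reversing an edge that creates a fresh directed cycle reachable from $j$. The remaining ingredients — the reduction to ``$(x_i,x_j)=A$'', the $\tB\leftrightarrow-\tB$ symmetry, and the base case — are short.
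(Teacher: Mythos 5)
Your proof is correct, and it is essentially the argument of Muller that the paper cites (the paper itself gives no proof of Proposition \ref{prop:covering}, only the reference to \cite{Mul}): reduce to $(x_i,x_j)=A$, note that separatedness means one of the two reachability cones is acyclic, treat the case where $j$ is a sink via the exchange relation (whose second monomial is a unit because frozen variables are inverted), and induct by mutating at sinks downstream of $j$. The only point left implicit — that the chosen sink $w$ differs from $i$ — is immediate, since $i\in D$ would produce a directed cycle through $i\to j$ inside the acyclic quiver $\oG|_D$.
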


We define the Louise property for cluster algebras recursively.  We say that a cluster algebra $A $ satisfies the \defnintro{Louise property} if either 
\begin{enumerate}
\item for some seed $t$ of $A$, the quiver $\oG(t)$ has no edges, or  
\item for some seed $t = (\x,\tB)$ of $A$, the quiver $\oG(t)$ has a separating edge $i \to j$, such that the cluster algebras $A(\tB_{\oG \setminus \{i\}}), A(\tB_{\oG \setminus \{j\}})$, and $A(\tB_{\oG \setminus \{i,j\}})$ all satisfy the Louise property.
\end{enumerate}
Suppose that $A(\tB)$ satisfies the Louise property.  Then it follows from the definition that all the cluster algebras encountered in this recursion also satisfy the Louise property.  We remark that in the base case (1) of the recursion, we can instead require the weaker property that $\oG(t)$ is acyclic.  Indeed, every acyclic cluster algebra satisfies the Louise property.  As remarked in Theorem \ref{thm:surfaces}, cluster algebras defined from marked surfaces are usually locally acyclic.

\begin{example}
Let 
\[
\tB = \begin{pmatrix} 0 &  -1 & -1 & -1 \\
1 & 0 & 1 & -1 \\
1 & -1 & 0 & 1\\
1 & 1 & -1 & 0 \end{pmatrix} \qquad \oG(\tB) =  \begin{tikzpicture}[every node/.style={draw=blue,thick,circle,inner sep=0pt},scale = 0.6,baseline={([yshift=-.5ex]current bounding box.center)}]
\node[draw,circle] (v1) at (0,0) {$1$};
\node[draw,circle] (v2) at (120:2) {$2$};
\node[draw,circle] (v3) at (240:2) {$3$};
\node[draw,circle] (v4) at (0:2) {$4$};
\draw[->](v2) -- (v1);
\draw[->](v2) -- (v3);
\draw[->](v3) -- (v1);
\draw[->](v3) -- (v4);
\draw[->](v4) -- (v2);
\draw[->](v4) -- (v1);
\end{tikzpicture}
\]
We will check that $\tB$ satisfies the Louise property.   The edge $2 \to 1$ is a separating edge.  The exchange matrices $\tB_{[4] \setminus \{1\}}$, $\tB_{[4] \setminus \{2\}}$, $\tB_{[4] \setminus \{1,2\}}$ have the following three quivers respectively:
\[
\begin{tikzpicture}[every node/.style={draw=blue,thick,circle,inner sep=0pt},scale = 0.6,baseline={([yshift=-.5ex]current bounding box.center)}]
\node[draw,circle] (v2) at (120:2) {$2$};
\node[draw,circle] (v3) at (240:2) {$3$};
\node[draw,circle] (v4) at (0:2) {$4$};
\draw[->](v2) -- (v3);
\draw[->](v3) -- (v4);
\draw[->](v4) -- (v2);
\begin{scope}[shift={(6,0)}]
\node[draw,circle] (v1) at (0,0) {$1$};
\node[draw,circle] (v3) at (240:2) {$3$};
\node[draw,circle] (v4) at (0:2) {$4$};
\draw[->](v3) -- (v1);
\draw[->](v3) -- (v4);
\draw[->](v4) -- (v1);
\end{scope}
\begin{scope}[shift={(12,0)}]
\node[draw,circle] (v3) at (240:2) {$3$};
\node[draw,circle] (v4) at (0:2) {$4$};
\draw[->](v3) -- (v4);
\end{scope}
\end{tikzpicture}
\]
The right two quivers are acyclic and thus Louise.  The left quiver has no separating edge, but after mutating at 2, we obtain the quiver:
\[
\begin{tikzpicture}[every node/.style={draw=blue,thick,circle,inner sep=0pt},scale = 0.6,baseline={([yshift=-.5ex]current bounding box.center)}]
\node[draw,circle] (v2) at (120:2) {$2$};
\node[draw,circle] (v3) at (240:2) {$3$};
\node[draw,circle] (v4) at (0:2) {$4$};
\draw[->](v3) -- (v2);
%\draw[->](v3) -- (v4);
\draw[->](v2) -- (v4);
\end{tikzpicture}
\]
which is acyclic and thus Louise.  It follows that the original exchange matrix $\tB$ is Louise.
\end{example}

If the Louise property holds for $A$, then $A$ is locally acyclic.  This follows from Proposition~\ref{prop:covering}, the recursive definition of Louise, and the following result; see also \cite{MS}.

\begin{prop}[{\cite[Proposition 3.1 and Lemma 3.4]{Mul}}] \label{prop:localization} Let $A(\tB)$ be a cluster algebra. Let $S \subset [n]$.  Suppose that the freezing $A(\tB_S)$ is locally acyclic.  Then $A(\tB_S) \cong A[x^{-1}_i \mid i \in [n] \setminus S]$.
% be the modified exchange matrix where we delete the columns corresponding to $(x_k, x_{k+1}, \ldots, x_n)$; in other words, we treat $(x_k, x_{k+1}, \ldots, x_n)$ as frozen variables.  Suppose that $A' = A(\x,\tB')$ is locally acyclic.  Then $A'$ is isomorphic to the localization $A[x_k^{-1}, x_{k+1}^{-1}, \ldots, x_n^{-1}]$.
\end{prop}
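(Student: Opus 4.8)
The plan is to establish the chain of inclusions
\[ A(\tB_S) \ \subseteq\ A\bigl[x_i^{-1} : i \in [n]\setminus S\bigr] \ \subseteq\ U(\tB_S) \ =\ A(\tB_S), \]
where $U(\tB_S)$ denotes the upper cluster algebra of the freezing and the final equality is the only step that uses the hypothesis, via Muller's theorem that a locally acyclic cluster algebra coincides with its upper cluster algebra. Set $L := [n]\setminus S$ and fix an initial seed $t_0 = (\x,\tB)$. The first inclusion is elementary: mutation at an index $k \in S$ commutes with deleting the columns indexed by $L$, since both operations interact only through the $k$-th row and column, which survive the deletion (and the exchange relation for $x_k$ involves only the $k$-th column). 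Hence every seed of $A(\tB_S)$ is obtained from a seed of $A(\tB)$ reachable from $t_0$ by mutating only in directions in $S$, by freezing the variables $x_\ell$, $\ell \in L$. In particular every cluster variable of $A(\tB_S)$ is a cluster variable of $A(\tB)$, and together with the inverses $x_\ell^{-1}$ ($\ell \in L$) and the inverses of the original frozen variables, these generators all lie in $A[x_i^{-1}: i\in L]$; this gives the first inclusion.

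For the second inclusion, note $A(\tB) \subseteq U(\tB)$ by the Laurent phenomenon, so it suffices to prove $U(\tB)[x_i^{-1}: i\in L] \subseteq U(\tB_S)$. Let $\mathcal{T}_S$ be the set of seeds of $A(\tB)$ reachable from $t_0$ by mutating only in $S$-directions. For $t' \in \mathcal{T}_S$ the variables $x_\ell$, $\ell\in L$, are unchanged, so the cluster of $t'$ as a seed of $A(\tB)$ consists of exactly the mutable variables $\{x_j(t') : j \in S\}$ of the corresponding seed of $A(\tB_S)$, together with the now-frozen $x_\ell$ and the original frozen variables. Thus the Laurent polynomial ring attached to $t'$ is literally the same whether $t'$ is viewed in $A(\tB)$ or in $A(\tB_S)$. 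Since the seeds of $A(\tB_S)$ are precisely the seeds in $\mathcal{T}_S$ with the columns indexed by $L$ deleted, intersecting over this subfamily gives
\[ U(\tB_S) = \bigcap_{t'\in\mathcal{T}_S} \CC[x(t')^{\pm 1}] \ \supseteq\ \bigcap_{t} \CC[x(t)^{\pm 1}] = U(\tB), \]
the right-hand intersection running over all seeds of $A(\tB)$. Finally each $x_\ell$ ($\ell \in L$) is a frozen variable of $A(\tB_S)$, hence invertible in $U(\tB_S)$, so $U(\tB)[x_i^{-1}:i\in L] \subseteq U(\tB_S)$.

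Combining the two inclusions with $A(\tB_S) = U(\tB_S)$, which holds because the freezing is locally acyclic, forces equality throughout, giving $A(\tB_S) = A[x_i^{-1}: i \in [n]\setminus S]$ as subalgebras of the common fraction field $\cF$; this is the asserted isomorphism. The main obstacle — indeed the only non-formal ingredient — is Muller's theorem \cite{Mul} that locally acyclic cluster algebras equal their upper cluster algebras; everything else is bookkeeping about how column deletion interacts with mutation and with the intersection defining the upper cluster algebra. One small point to check carefully is that all the algebras in play are being compared inside the one ambient field $\cF$, which is automatic from the fact that the cluster variables of $A(\tB_S)$ are among those of $A(\tB)$.
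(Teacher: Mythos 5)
Your argument is correct: the sandwich $A(\tB_S) \subseteq A[x_i^{-1} : i \in [n]\setminus S] \subseteq U(\tB_S)$, combined with Muller's theorem that a locally acyclic cluster algebra equals its upper cluster algebra, is exactly the standard proof, and your verifications that mutation in $S$-directions commutes with column deletion and that the corresponding Laurent rings coincide are the right bookkeeping steps. The paper itself gives no proof but cites Muller (Proposition 3.1 and Lemma 3.4 of \cite{Mul}), whose argument is essentially the one you reconstructed, so there is nothing to add.
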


Locally acyclic cluster algebras enjoy a number of favorable properties.

\begin{thm}[{\cite[Theorems 4.1, 4.2, and 7.7]{Mul}}]\label{thm:Muller}  Suppose that $A$ is locally acyclic.  Then $A$ is finitely generated over $\CC$, and is equal to the upper cluster algebra $U$.  If, in addition, $A$ is of full rank, then $\cA$ is smooth. \end{thm}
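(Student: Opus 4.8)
This is Muller's theorem \cite{Mul}; my proposal follows the structure of his argument, reducing all three assertions to the acyclic case, where they are classical: an acyclic cluster algebra of geometric type is finitely generated and equals its upper cluster algebra by Berenstein--Fomin--Zelevinsky \cite{CA3}, and when of full rank it is a smooth complete intersection. By hypothesis $\cA$ is covered by finitely many acyclic cluster charts $\cA(\tB_{S_1}), \dots, \cA(\tB_{S_r})$ with $S_l \subset [n]$; by Proposition \ref{prop:localization} each such chart is a genuine cluster localization, so $\cA(\tB_{S_l}) = \Spec A[f_l^{-1}]$ where $f_l := \prod_{i \in [n] \setminus S_l} x_i \in A$, and the covering condition says exactly that the $f_l$ generate the unit ideal of $A$. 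Moreover, in the full rank part of the theorem each $\tB_{S_l}$ is again of full rank, being the submatrix on a subset of the $n$ linearly independent columns of $\tB$, and this survives passing to an acyclic seed since mutation preserves rank (Corollary \ref{cor:rank}).

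For $A = U$: one has $A \subseteq U$ by the Laurent phenomenon, as noted above. For the reverse inclusion I would use that the assignment $\tB \mapsto U(\tB)$ behaves like a sheaf with respect to cluster localizations: localizing $U(\tB)$ at $f_l$ recovers $U(\tB_{S_l})$, and $U(\tB) = \bigcap_l U(\tB_{S_l})$ inside the ambient function field. This gluing property is the technical core, and it is built on the Mayer--Vietoris decomposition $\cA = \Spec A[x_i^{-1}] \cup \Spec A[x_j^{-1}]$ along a separating edge (Proposition \ref{prop:covering}). Since each $\tB_{S_l}$ is acyclic, the acyclic case gives $U(\tB_{S_l}) = A(\tB_{S_l}) = A[f_l^{-1}]$, while $\bigcap_l A[f_l^{-1}] = A$ because $\Spec A = \bigcup_l D(f_l)$ and the structure sheaf of $\cA$ satisfies the sheaf axiom. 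Combining these identifications yields $U(\tB) = \bigcap_l A[f_l^{-1}] = A$.

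For finite generation and Noetherianity: each $A[f_l^{-1}] = A(\tB_{S_l})$ is a finitely generated $\CC$-algebra, which for an acyclic seed follows from the explicit finite presentation of \cite{CA3}. I would then invoke the standard fact that a commutative ring $R$ admitting elements $f_1, \dots, f_r$ that generate the unit ideal and with each localization $R[f_l^{-1}]$ finitely generated over a field $k$ is itself a finitely generated $k$-algebra: take finitely many elements of $R$ whose images generate each $R[f_l^{-1}]$ after clearing denominators, adjoin the $f_l$ and the coefficients $g_l$ of a relation $1 = \sum_l g_l f_l$, and observe that the resulting finitely generated subalgebra $R' \subseteq R$ satisfies $R'[f_l^{-1}] = R[f_l^{-1}]$ for all $l$, hence $R' = R$ by the sheaf axiom again. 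The identical argument with ``Noetherian'' in place of ``finitely generated'' shows $A$ is Noetherian.

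For smoothness under the full rank hypothesis: smoothness is local and $\cA = \bigcup_l \cA(\tB_{S_l})$, so by the first paragraph it suffices to treat an acyclic, full rank cluster variety. For this I would use the \cite{CA3} presentation: in the acyclic full rank case $A$ coincides with the lower bound algebra, generated by the cluster variables $x_1, \dots, x_n$, their single mutations $x_1', \dots, x_n'$, and the invertible frozen variables, subject to the $n$ exchange relations $x_i x_i' = M_i^+ + M_i^-$; one checks this is a complete intersection of the expected dimension and that the Jacobian of the relations has full rank at every point of $\cA$, the full rank of $\tB$ being precisely what forces the relevant minors not to vanish. I expect the two genuinely hard steps to be (i) the sheaf/gluing property of upper cluster algebras underlying $A = U$ — this is where local acyclicity is really used and is the subtlest point — and (ii) the Jacobian analysis establishing smoothness in the acyclic full rank case, which needs the explicit complete-intersection presentation rather than any soft argument; finite generation and Noetherianity are then formal consequences of the cover.
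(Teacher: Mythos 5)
This statement is quoted from Muller's paper (Theorems 4.1, 4.2, and 7.7 of \cite{Mul}) and is given no proof here, so there is no internal argument to compare against; the right benchmark is Muller's own proof, and your sketch reconstructs it faithfully. The skeleton is exactly his: cover $\cA$ by acyclic cluster localizations, use Proposition~\ref{prop:localization} to identify each chart with $\Spec A[f_l^{-1}]$, observe that the covering condition means the $f_l$ generate the unit ideal, and then transfer each property from the charts. Your treatment of $A=U$ is right, and worth noting is that the containment $U(\tB)\subseteq U(\tB_{S_l})$ is actually the easy direction (the seeds of $A(\tB_{S_l})$ are a subset of those of $A(\tB)$, so the defining intersection for $U(\tB_{S_l})$ is over fewer Laurent rings); the genuinely hard input, as you say, is that $U(\tB_{S_l})$ coincides with $A[f_l^{-1}]$, which combines the acyclic case of \cite{CA3} with the localization statement. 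The finite-generation argument via the standard ``unit ideal plus finitely generated localizations'' lemma is also Muller's. The one place you deviate is smoothness: Muller does not stop at acyclic charts but pushes the cover all the way down to \emph{isolated} charts (quivers with no arrows), where each exchange monomial $M_i^\pm$ involves only frozen variables, the variety is a product of hypersurfaces $x_ix_i'=M_i^++M_i^-$ with a torus, and the Jacobian computation is immediate — this is exactly how the present paper describes his argument in the proof of Theorem~\ref{thm:finitesmooth}. Your proposal to run the Jacobian analysis directly on the acyclic lower-bound presentation is workable in principle but strictly harder, since the exchange monomials then involve other mutable variables and the non-vanishing of the relevant minors requires more care; if you intend to follow Muller, the cleaner route is to note that every edge of an acyclic quiver is separating, so the recursion terminates at isolated seeds.
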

%\begin{thm}[{\cite[Theorem 4.2]{Mul}}]\label{thm:fingen}
%Suppose that $A$ is locally acyclic.  Then $A$ is finitely generated over $\CC$. \end{thm}

%
%\begin{thm}[{\cite[Theorem 7.7]{Mul}}]\label{thm:LAsmooth}
%Suppose that $A$ is locally acyclic and of full rank. Then $\cA$ is smooth.
%\end{thm}

\begin{remark}\label{rem:fullrank}
Suppose that $\tB$ is of full rank, and $\tB_S$ is the modified exchange matrix where we delete some of the columns of $\tB$.  Then $\tB'$ is also of full rank.  This will be useful in our inductive arguments.
\end{remark}

\section{Automorphisms and quotients of cluster varieties}\label{sec:autom}
\def\Aut{\mathrm{Aut}}
\def\mult{\mathrm{mult}}

\subsection{Automorphisms of cluster algebras}

Let $A$ be a cluster algebra. Define $\Aut(A)$ to be the group of algebra automorphisms $\phi$ of $A$ such that, for each cluster variable $x$, we have $\phi(x) = \zeta(x) x$ for some $\zeta (x) \in \CC^{\ast}$. 

For any $k \times \ell$ integer matrix $M$, we write $\mult(M)$ for the map $(\CC^{\ast})^{\ell} \to (\CC^{\ast})^{k}$ given by $\mult(M)(\zeta_1, \ldots, \zeta_{\ell})_i = \prod_{j=1}^{\ell} \zeta_j^{M_{ij}}$, so $\mult(M_1 M_2) = \mult(M_1) \circ \mult(M_2)$. 
%Conceptually, we associate a torus with its lattice of one parameter subgroups, not its character lattice.)

\begin{prop} \label{cluster aut}
Let $(\x, \tB)$ be any seed of $A$. An element of $\Aut(A)$ is determined by its action on $(x_1, \ldots, x_{n+m})$. Given $(\zeta_1, \ldots, \zeta_{n+m}) \in (\CC^{\ast})^{n+m}$, there is an automorphism $\phi$ with $\phi(x_i) = \zeta_i x_i$ if and only if $\zeta \in \Ker(\mult(\tilde{B}^T))$. Thus, $\Aut(A) \cong \mathrm{Hom}(\ZZ^{n+m}/\tilde{B} \ZZ^n, \CC^{\ast})$.
\end{prop}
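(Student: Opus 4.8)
The plan is to analyze an automorphism $\phi \in \Aut(A)$ through its effect on a fixed seed and then propagate the constraint through mutations. First I would observe that, since $A$ is generated as a $\CC$-algebra by the cluster variables together with inverses of the frozen variables, and each cluster variable in any cluster is a Laurent polynomial in $(x_1,\dots,x_{n+m})$ with nonzero constant-free structure, an automorphism $\phi$ scaling every cluster variable is completely determined by the scalars $\zeta_1,\dots,\zeta_{n+m}$ by which it scales $x_1,\dots,x_{n+m}$: indeed $A \subset \CC[x_1^{\pm 1},\dots,x_{n+m}^{\pm 1}]$ by the Laurent phenomenon, so $\phi$ extends to the automorphism $\mult$-type monomial substitution $x_i \mapsto \zeta_i x_i$ of the ambient Laurent ring, and this determines $\phi$ on $A$. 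This gives the first sentence of the statement.

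Next I would pin down exactly which tuples $(\zeta_1,\dots,\zeta_{n+m})$ arise. The substitution $x_i \mapsto \zeta_i x_i$ preserves $A$ iff it sends every cluster variable to a scalar multiple of itself. Since every cluster is obtained by a sequence of mutations from $(\x,\tB)$, it suffices (by induction on the length of the mutation sequence) to check the single-mutation step: if $x'_k$ is obtained from $(\x,\tB)$ by the exchange relation \eqref{eq:mutation}, then $x_k x'_k = \prod_i x_i^{[\tB_{ik}]_+} + \prod_i x_i^{[-\tB_{ik}]_+}$. Applying $x_i \mapsto \zeta_i x_i$ multiplies the left side by $\zeta_k \cdot (\text{scalar on } x'_k)$ and multiplies the two monomials on the right by $\prod_i \zeta_i^{[\tB_{ik}]_+}$ and $\prod_i \zeta_i^{[-\tB_{ik}]_+}$ respectively. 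For the result to again be a scalar multiple of $x_k x'_k$ — equivalently, for $x'_k$ to be scaled rather than sent to a non-monomial Laurent polynomial — these two scalars must be equal, i.e. $\prod_i \zeta_i^{[\tB_{ik}]_+ - [-\tB_{ik}]_+} = \prod_i \zeta_i^{\tB_{ik}} = 1$ for all $k \in [n]$. That is precisely the condition $\zeta \in \Ker(\mult(\tilde B^T))$, where $\mult(\tilde B^T): (\CC^\ast)^{n+m} \to (\CC^\ast)^n$ sends $\zeta$ to $(\prod_i \zeta_i^{\tB_{ik}})_{k}$. Conversely, once this holds, an easy induction shows the substitution scales every cluster variable (the scaling factor on a mutated variable being the common value of the two monomial scalars), so it defines an element of $\Aut(A)$; and distinct $\zeta$ give distinct automorphisms by the first paragraph.

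Finally, the identification $\Aut(A) \cong \Hom(\ZZ^{n+m}/\tilde B \ZZ^n, \CC^\ast)$ is pure duality: $\mult(\tilde B^T) = \Hom(-,\CC^\ast)$ applied to the map $\tilde B: \ZZ^n \to \ZZ^{n+m}$ (viewing $\tilde B$ as the matrix of a homomorphism of free abelian groups), so $\Ker(\mult(\tilde B^T)) = \Hom(\operatorname{CoKer}(\tilde B), \CC^\ast) = \Hom(\ZZ^{n+m}/\tilde B\ZZ^n, \CC^\ast)$ because $\CC^\ast$ is a divisible, hence injective, abelian group and $\Hom(-,\CC^\ast)$ is exact. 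I expect the only genuinely delicate point to be the induction in the second paragraph: one must make sure that scaling the already-constructed cluster variables compatibly forces a well-defined scalar on each newly mutated variable, independently of which mutation path produced it, and that the Laurent expansion of each cluster variable has enough ``spread'' (at least two monomials, or a single monomial) that being sent to a scalar multiple of itself is equivalent to the monomial-exponent condition above — this is where the Laurent phenomenon and the non-vanishing of the exchange binomial are used. Everything else is bookkeeping.
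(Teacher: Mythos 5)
Your proposal is correct, but it takes a more self-contained route than the paper: the paper's proof is essentially a citation, saying that the characterization $\zeta \in \Ker(\mathrm{mult}(\tB^T))$ ``is essentially [GSV, Lemma~2.3]'' and that rewriting the kernel as $\mathrm{Hom}(\ZZ^{n+m}/\tB\ZZ^n,\CC^\ast)$ is elementary, whereas you reprove the GSV lemma from scratch by inducting along mutation sequences. The determination statement via the Laurent phenomenon, the extraction of the condition $\prod_i \zeta_i^{\tB_{ik}}=1$ from the exchange binomial (using that the two exchange monomials are distinct unless column $k$ vanishes, in which case the condition is vacuous), and the duality argument are all fine; the duality in fact only needs left-exactness of $\mathrm{Hom}(-,\CC^\ast)$, so the appeal to injectivity of $\CC^\ast$ is harmless overkill. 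The one step your ``easy induction'' leaves implicit, and which is the real content of the sufficiency direction, is that the kernel condition propagates under mutation: after mutating at $k$ the new scalars are $\zeta'_k=\zeta_k^{-1}\prod_i\zeta_i^{[\tB_{ik}]_+}$, $\zeta'_i=\zeta_i$ ($i\neq k$), and one must check $\prod_i(\zeta'_i)^{\tB'_{ij}}=1$ for all $j$, where $\tB'=\mu_k(\tB)$; a short case computation (splitting on the sign of $\tB_{kj}$ and using $[\tB_{ik}]_++[\tB_{ik}]_-=\tB_{ik}$) reduces this to the assumed relations at columns $j$ and $k$ of $\tB$, so the gap is minor and fillable, but it should be written out since without it the claim that the two monomial scalars agree at \emph{every} seed, not just the initial one, is unsupported. (Path-independence, which you flag as delicate, is actually automatic: the substitution is a single field automorphism of $\Frac(A)$, so whether it scales a given element does not depend on how that element was produced.) What your route buys is independence from the external reference; what the paper's route buys is brevity, since the mutation-invariance computation is exactly what [GSV] supplies.
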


\begin{proof}
The formulation $\zeta \in \Ker(\mult(\tB^T))$ is essentially \cite[Lemma 2.3]{GSV}. Rewriting $ \Ker(\mult(\tB^T))$ as $\mathrm{Hom}(\ZZ^{n+m}/\tB \ZZ^n, \CC^{\ast})$ is elementary.
\end{proof}

\begin{cor} \label{AutCharacter}
The character group $\Hom(\Aut(A), \CC^{\ast})$ is naturally isomorphic to $\ZZ^{n+m}/\tB \ZZ^n$.
\end{cor}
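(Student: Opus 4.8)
The plan is to apply the general fact that for a finitely generated abelian group $G$, the character group $\Hom(\Hom(G,\CC^{\ast}),\CC^{\ast})$ is naturally isomorphic to $G$, and then feed in the description of $\Aut(A)$ from Proposition~\ref{cluster aut}. So the first step is simply to set $G := \ZZ^{n+m}/\tB\ZZ^n$; by Proposition~\ref{cluster aut} we have a natural isomorphism $\Aut(A) \cong \Hom(G,\CC^{\ast})$. Taking $\Hom(-,\CC^{\ast})$ of this isomorphism gives $\Hom(\Aut(A),\CC^{\ast}) \cong \Hom(\Hom(G,\CC^{\ast}),\CC^{\ast})$, so it remains to exhibit the natural (double-duality) isomorphism $G \cong \Hom(\Hom(G,\CC^{\ast}),\CC^{\ast})$.

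The second step is to verify this double-duality statement. There is an evident natural evaluation map $\mathrm{ev}\colon G \to \Hom(\Hom(G,\CC^{\ast}),\CC^{\ast})$ sending $g$ to the homomorphism $\chi \mapsto \chi(g)$. Since $G$ is finitely generated abelian, write $G \cong \ZZ^r \oplus \bigoplus_i \ZZ/d_i$ by the structure theorem; because $\CC^{\ast}$ is a divisible group containing all roots of unity, $\Hom(\ZZ,\CC^{\ast}) \cong \CC^{\ast}$ and $\Hom(\ZZ/d,\CC^{\ast}) \cong \ZZ/d$, and $\Hom(-,\CC^{\ast})$ is additive on finite direct sums, so $\Hom(G,\CC^{\ast}) \cong (\CC^{\ast})^r \oplus \bigoplus_i \ZZ/d_i$ as abstract groups. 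Iterating the same computation (using $\Hom(\CC^{\ast},\CC^{\ast}) \cong \ZZ$, which one gets from the fact that a homomorphism $\CC^\ast \to \CC^\ast$ of abstract groups need not be of this form — so here one must be slightly careful) shows the double dual is abstractly isomorphic to $G$; then one checks $\mathrm{ev}$ is injective (a nonzero $g$ is separated from $0$ by some character, using divisibility of $\CC^{\ast}$) and surjective. In fact it is cleanest to avoid the subtlety about $\Hom(\CC^{\ast},\CC^{\ast})$ entirely: note $\Aut(A)$ is a group of the form $(\CC^{\ast})^r \times (\text{finite})$, and for such a group the characters are exactly $\ZZ^r \times (\text{Pontryagin dual of the finite part})$, and one matches this against $G$ directly via the pairing $G \times \Aut(A) \to \CC^{\ast}$, $(g,\phi)\mapsto \phi(g)$ induced by Proposition~\ref{cluster aut}.

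The third step is to check that the composite isomorphism $\Hom(\Aut(A),\CC^{\ast}) \cong G$ is \emph{natural}, i.e.\ canonical and independent of the seed $(\x,\tB)$. Naturality in the abstract double-duality statement is standard; independence of the seed follows because the isomorphism $\Aut(A) \cong \Hom(G,\CC^{\ast})$ of Proposition~\ref{cluster aut}, while described via a chosen seed, intertwines the two natural pairings — the class of a cluster variable $x_i$ in $\ZZ^{n+m}/\tB\ZZ^n$ pairs with $\phi \in \Aut(A)$ to give the scalar $\zeta(x_i)$ by which $\phi$ rescales $x_i$ — and this pairing between $\Aut(A)$ and (the span of cluster-variable classes in) $G$ is manifestly seed-independent. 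One should remark that $G$ is generated by the classes of the cluster variables of a single seed, so the pairing description determines the isomorphism.

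The main obstacle is the mild pathology that $\Hom(\CC^{\ast},\CC^{\ast})$ computed in the category of \emph{abstract} abelian groups is enormous, not $\ZZ$ — so a naive ``double Pontryagin dual'' argument does not literally apply to the possibly-positive-rank group $\Aut(A)$. The fix, as indicated above, is either to work with the honest bilinear pairing $G \times \Aut(A) \to \CC^{\ast}$ and check its perfectness directly (the free part of $G$ pairs perfectly with the $(\CC^{\ast})^r$ factor onto which it literally evaluates, and the torsion part pairs perfectly with the finite subgroup of $\Aut(A)$ by divisibility of roots of unity), or to restrict attention from the start to the topological/algebraic-group structure on $\Aut(A)$ where $\Hom(\CC^{\ast},\CC^{\ast})$ of \emph{algebraic} characters is indeed $\ZZ$. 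Either route reduces the claim to the elementary fact about finitely generated abelian groups already invoked at the end of Proposition~\ref{cluster aut}'s proof, so the corollary is genuinely immediate once phrased correctly.
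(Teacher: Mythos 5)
Your proof is correct and takes the same route as the paper, whose entire proof is the one-line appeal to double duality for the finitely generated abelian group $\ZZ^{n+m}/\tB \ZZ^n$. Your caution about $\Hom(\CC^{\ast},\CC^{\ast})$ being large for abstract group homomorphisms is well placed: the paper silently interprets $\Hom(\Aut(A),\CC^{\ast})$ as the algebraic (equivalently, continuous) characters of the diagonalizable group $\Aut(A) = \Ker(\mult(\tB^T))$, for which the duality is the standard anti-equivalence between diagonalizable groups and finitely generated abelian groups — exactly the fix you propose, and your alternative via the perfect pairing $G \times \Aut(A) \to \CC^{\ast}$ is an equally valid way to make this precise.
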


\begin{proof}
We have $\Hom(\Hom(\ZZ^{n+m}/\tB \ZZ^n, \CC^{\ast}),\CC^{\ast}) \cong \ZZ^{n+m}/\tB \ZZ^n$, since $\ZZ^{n+m}/\tB \ZZ^n$ is a finitely generated abelian group.  
\end{proof}

It follows that the isomorphism class of the abelian group $\ZZ^{n+m}/\tB \ZZ^n$ is a mutation invariant.  Corollary~\ref{AutCharacter} describes the character group of $\Aut(A)$; its torsion part is the group of characters which are constant on the connected components of $\Aut(A)$. We record the result:

\begin{cor} \label{LocConstCharacter}
The group of locally constant characters of $\Aut(A)$ is naturally isomorphic to $\tB \QQ^n \cap \ZZ^{n+m} / \tB \ZZ^n$.
\end{cor}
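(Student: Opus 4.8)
The plan is to deduce Corollary~\ref{LocConstCharacter} directly from Corollary~\ref{AutCharacter} by identifying the torsion subgroup of $\ZZ^{n+m}/\tB\ZZ^n$ and reinterpreting it. By Corollary~\ref{AutCharacter} we have a natural isomorphism $\Hom(\Aut(A),\CC^\ast)\cong\ZZ^{n+m}/\tB\ZZ^n$, and a character of $\Aut(A)$ is locally constant (i.e.\ constant on connected components) precisely when it is trivial on the identity component $\Aut(A)^0$. Since $\Aut(A)\cong\Hom(\ZZ^{n+m}/\tB\ZZ^n,\CC^\ast)$ is a diagonalizable group, its identity component is a torus and the quotient by it is the finite group $\Hom$ of the torsion part; dually, the locally constant characters form exactly the torsion subgroup of $\Hom(\Aut(A),\CC^\ast)\cong\ZZ^{n+m}/\tB\ZZ^n$.

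So the real content is the elementary algebra identity
\[
\mathrm{Tors}\!\left(\ZZ^{n+m}/\tB\ZZ^n\right) \;=\; \bigl(\tB\QQ^n\cap\ZZ^{n+m}\bigr)\big/\tB\ZZ^n .
\]
First I would note that an element $v + \tB\ZZ^n$ of $\ZZ^{n+m}/\tB\ZZ^n$ is torsion iff $kv\in\tB\ZZ^n$ for some positive integer $k$, iff $kv\in\tB\QQ^n$ for some $k$ (clearing denominators), iff $v\in\tB\QQ^n$; and of course $v\in\ZZ^{n+m}$ by hypothesis. Hence the preimage in $\ZZ^{n+m}$ of the torsion subgroup is exactly $\tB\QQ^n\cap\ZZ^{n+m}$, which gives the displayed identity upon passing to the quotient by $\tB\ZZ^n$.

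The only remaining point is to justify that ``locally constant characters'' corresponds to ``torsion part'' under the isomorphism of Corollary~\ref{AutCharacter}; the excerpt already asserts this in the sentence preceding the statement (``its torsion part is the group of characters which are constant on the connected components of $\Aut(A)$''), so I would simply invoke that, together with the structure theory of finitely generated abelian groups: writing $\ZZ^{n+m}/\tB\ZZ^n\cong\ZZ^r\oplus F$ with $F$ finite, we get $\Aut(A)\cong(\CC^\ast)^r\times\Hom(F,\CC^\ast)$, whose group of connected components is $\Hom(F,\CC^\ast)$, and a character of $\Aut(A)$ kills the identity component $(\CC^\ast)^r$ iff it lies in the $F$-summand, i.e.\ is torsion. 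Combining this with the preceding paragraph completes the proof. I expect no genuine obstacle here: the argument is a short chain of elementary identifications, and the main (minor) care needed is bookkeeping the duality between $\Aut(A)$ and its character group for a not-necessarily-connected diagonalizable group.
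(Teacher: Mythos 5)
Your argument is correct and is essentially the paper's own reasoning: the paper deduces the corollary from Corollary~\ref{AutCharacter} by observing that the locally constant characters correspond to the torsion subgroup of $\ZZ^{n+m}/\tB \ZZ^n$, which is precisely $(\tB \QQ^n \cap \ZZ^{n+m})/\tB \ZZ^n$. You have merely written out the two implicit verifications (locally constant $\leftrightarrow$ torsion via the structure of the diagonalizable group $\Aut(A)$, and the identification of the torsion subgroup as the saturation of $\tB\ZZ^n$), both of which are done correctly.
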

%
%\begin{cor}
%The isomorphism class of the abelian group $\ZZ^{n+m}/\tilde{B} \ZZ^m$ is a mutation invariant.
%\end{cor}

%\begin{proof}
%The previous result shows that $\Aut(A) \cong \mathrm{Hom}(\ZZ^{n+m}/\tilde{B} \ZZ^n, \mathbb{C}^{\ast})$. So $\ZZ^{n+m}/\tilde{B} \ZZ^n$ is isomorphic to the character group of $\Aut(A)$ and, in particular, is independent of the cluster used to compute it.
%\end{proof}

\begin{cor}\label{cor:rank}
If $\tilde{B}$ has rank $n$, then so do all mutations of $\tilde{B}$. Similarly, if the rows of $\tilde{B}$ span $\ZZ^n$ over $\ZZ$, then so do all mutations of $\tilde{B}$.
\end{cor}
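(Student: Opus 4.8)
The plan is to reduce both statements to the remark made just after Corollary~\ref{AutCharacter}, that the isomorphism class of the finitely generated abelian group $G:=\ZZ^{n+m}/\tB\ZZ^n$ is a mutation invariant. Concretely, by Corollary~\ref{AutCharacter} the group $G$ is canonically identified with $\Hom(\Aut(A),\CC^{\ast})$, a description that does not refer to any choice of seed; so if $\tB'$ is a mutation of $\tB$, then $\tB'$ is the exchange matrix of another seed of the same cluster algebra $A$, whence $\ZZ^{n+m}/\tB'\ZZ^n\isom G$. It therefore suffices to express each of the two hypotheses on $\tB$ purely in terms of the isomorphism type of $G$ (the integers $n$ and $m$ being manifestly unchanged by mutation).

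For the rank statement, I would tensor with $\QQ$: the free rank of $G$ equals $\dim_{\QQ}\bigl(\QQ^{n+m}/\tB\QQ^{n}\bigr)=n+m-\rank(\tB)$, and since $\rank(\tB)\le n$ always, $\tB$ has full rank $n$ if and only if $G$ has free rank exactly $m$. As the free rank is an isomorphism invariant of $G$, the first assertion follows from the mutation invariance of the isomorphism class of $G$.

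For the spanning statement, I would invoke the Smith normal form of $\tB$: writing its invariant factors as $d_1\mid d_2\mid\cdots\mid d_r$ with $r=\rank(\tB)$, one has $G\isom \ZZ^{n+m-r}\oplus\bigoplus_{i=1}^{r}\ZZ/d_i\ZZ$. The rows of $\tB$ span $\ZZ^n$ over $\ZZ$ exactly when the transpose map $\tB^{T}\colon\ZZ^{n+m}\to\ZZ^n$ is surjective; since $\tB$ and $\tB^{T}$ have the same invariant factors, this happens if and only if $r=n$ and $d_1=\cdots=d_n=1$, i.e. if and only if $G\isom\ZZ^{m}$ is free of rank $m$. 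Being free of rank $m$ is again an isomorphism invariant of $G$, so this property too is preserved under mutation.

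I do not anticipate a genuine obstacle: the one point that requires care is that the identification $G\isom\Hom(\Aut(A),\CC^{\ast})$ of Corollary~\ref{AutCharacter} is independent of the chosen seed, which is what legitimizes calling the isomorphism class of $G$ ``mutation invariant''; this is already built into the statement of that corollary. Everything else is the standard dictionary between full rank of $\tB$ (resp. surjectivity of $\tB^{T}$) on one side and the free rank (resp. freeness) of $G$ on the other, via Smith normal form.
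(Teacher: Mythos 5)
Your proposal is correct and follows essentially the same route as the paper: both reduce the claim to the mutation invariance of the isomorphism class of $\ZZ^{n+m}/\tB\ZZ^n$ (established via Corollary~\ref{AutCharacter}) and then translate the two hypotheses into invariants of that group. Your version is in fact slightly more careful with the dictionary — the quotient has free rank $m$ (not $n$) when $\tB$ has full rank, and is free of rank $m$ exactly when the rows span $\ZZ^n$ — which fixes what appear to be typos in the paper's one-line justification.
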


\begin{proof}
The former is equivalent to requiring that $\QQ \otimes \ZZ^{n+m}/\tilde{B} \ZZ^n \cong \QQ^m$.  The latter is equivalent to requiring that $\ZZ^{n+m} / \tilde{B} \ZZ^n \cong \ZZ^m$.
\end{proof}

We will say that $\tilde{B}$ has \defn{full rank} if $\tilde{B}$ has rank $n$, and has \defn{really full rank} if $\tilde{B}^T \ZZ^{n+m} = \ZZ^n$. 
By Corollary \ref{cor:rank}, both conditions are mutation invariants, so we also apply the terms full rank and really full rank to $A$ and $\cA$.

\subsection{Covering maps of cluster varieties}
Let $\cA_1$ and $\cA_2$ be cluster varieties with the same rank and extended exchange matrices of the same size.  A \defn{cluster morphism} from $\cA_2$ to $\cA_1$ is a triple $(\Psi, \Phi, \{R_t\})$, where
\begin{enumerate}
\item
$\Psi: \cA_2 \to \cA_1$ is a morphism of algebraic varieties, induced by a ring homomorphism $\Psi^*: A_1 \to A_2$;
\item $\Phi$ is a map taking seeds of $A_1$ to seeds of $A_2$, commuting with mutation.  More precisely, for $k = 1,2,\ldots,n$ we have $\mu_k(\Phi(t)) = \Phi(\mu_k(t))$ for all seeds $t$ of $A_1$;
\item $\{R_t\}$ is a collection of $(n\times m) \times (n\times m)$ integer matrices, with block diagonal form $R_t = \left( \begin{smallmatrix} \mathrm{Id}_n & 0 \\ P & Q \end{smallmatrix} \right)$, indexed by seeds $t = (\z,\tB_1)$ of $A_1$, satisfying the following condition: if $\Phi(t) = (\x,\tB_2)$ then
\[\begin{array}{lcl}
\tB_2 &=& R_t\tB_1 \ \mbox{and} \\
 \Psi^*(z_j) &=& \prod_{i=1}^{n+m} x_i^{R_{ij}} = \begin{cases} x_j \prod_{i=n+1}^{n+m} x_i^{R_{ij}} & \mbox{if $j \in \{1,2,\ldots,n\}$} \\
\prod_{i=n+1}^{n+m} x_i^{R_{ij}} & \mbox{if $j \in \{n+1,n+2\ldots,n+m\}$.}
\end{cases} \end{array}
\]
In other words, $\Psi$ sends the cluster torus of $\Phi(t)$ to the cluster torus of $t$ via $\mult(R^T)$.
\end{enumerate}

Our notion of cluster morphism is related to Fraser's notion of a quasi-homomorphism of cluster algebras which is defined in a more general setting; see \cite[Corollary 4.5]{Fra}. 

The following statement is immediate from (3) of the definition.
\begin{lemma}
A cluster morphism $(\Psi, \Phi, \{R_t\}): \cA_2 \to \cA_1$ exists only if there are seeds $(\z,\tB_1)$ of $\cA_1$ and $(\x,\tB_2)$ of $\cA_2$ such that the principal parts of $\tB_1$ and $\tB_2$ are the same.
\end{lemma}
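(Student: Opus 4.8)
The plan is to extract the conclusion directly from part (3) of the definition of a cluster morphism, with essentially no work beyond a block-matrix multiplication. A cluster algebra always possesses at least one seed, so choose any seed $t = (\z, \tB_1)$ of $\cA_1$; since $\Phi$ carries seeds of $A_1$ to seeds of $A_2$, we obtain a seed $\Phi(t) = (\x, \tB_2)$ of $\cA_2$. I claim that these two seeds already witness the lemma, i.e.\ the principal parts of $\tB_1$ and $\tB_2$ agree.

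To verify this, write $\tB_1 = \left( \begin{smallmatrix} B_1 \\ C_1 \end{smallmatrix} \right)$, with $B_1$ the $n \times n$ principal part and $C_1$ the $m \times n$ matrix of coefficient rows. By the condition in (3), $\tB_2 = R_t \tB_1$ where $R_t = \left( \begin{smallmatrix} \mathrm{Id}_n & 0 \\ P & Q \end{smallmatrix} \right)$, and the block multiplication gives
\[
\tB_2 = \begin{pmatrix} \mathrm{Id}_n & 0 \\ P & Q \end{pmatrix} \begin{pmatrix} B_1 \\ C_1 \end{pmatrix} = \begin{pmatrix} B_1 \\ P B_1 + Q C_1 \end{pmatrix},
\]
so the top $n \times n$ block of $\tB_2$ is exactly $B_1$. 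Hence $\tB_1$ and $\tB_2$ have the same principal part, which is the assertion of the lemma.

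I do not expect any genuine obstacle: the content lies entirely in the prescribed block shape $\left( \begin{smallmatrix} \mathrm{Id}_n & 0 \\ P & Q \end{smallmatrix} \right)$ of the matrices $R_t$, which was built into the definition precisely so that $\Psi$ only rescales the mutable cluster variables by monomials in the frozen ones and correspondingly only alters the coefficient rows of the exchange matrix. In particular, no properties of $\Psi$ as a morphism of varieties, nor the compatibility of $\Phi$ with mutation, are needed for this statement; I would simply record the one-line computation above.
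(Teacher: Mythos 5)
Your proof is correct and is exactly the argument the paper has in mind: the paper simply notes the lemma is immediate from part (3) of the definition, and your block-multiplication with $R_t = \left( \begin{smallmatrix} \mathrm{Id}_n & 0 \\ P & Q \end{smallmatrix} \right)$ spells out precisely why the top $n \times n$ block of $\tB_2 = R_t \tB_1$ equals that of $\tB_1$.
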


Since frozen variables do not mutate and the map $\Psi^*$ does not depend on choice of seed, the formulae $z_j = \prod_{i=1}^{n+m} x_i^{R_{ij}}$ depends only on the cluster variable $z_j$ and not on the seed $t$.

Suppose that $R$ is a $(n+m) \times (n+m)$ integer matrix with block diagonal form $R = \left( \begin{smallmatrix} \mathrm{Id}_n & 0 \\ P & Q \end{smallmatrix} \right)$, and $\tB$ is an extended exchange matrix.  Let $\tC = R\tB$.  For $k = 1,2,\ldots,n$, define $R' =\mu_k(R) = \mu_k^{\tB}(R)$ by the formulae
\begin{equation} \begin{array}{rcll}
R'_{ik} &=& -R_{ik} -[\tC_{ik}]_+ + \sum_{\ell=1}^{n+m}R_{i\ell}[B_{\ell k}]_+ & i>n \\
&=& -R_{ik} +[\tC_{ik}]_- - \sum_{\ell=1}^{n+m}R_{i\ell}[B_{\ell k}]_- & \\
R'_{ij} &=& R_{ij} & j \neq k \ \mbox{or}\ i\leq n \\
\end{array} . \label{eq:mutateR}
\end{equation}

%\begin{equation}
%R'_{ij} = \begin{cases} R_{ij} \hspace{270pt} \mbox{if $j \neq k$ or $i \in \{1,2,\ldots,n\}$,} \\
%
%-R_{ik} -[\tC_{ik}]_+ + \sum_{\ell=1}^{n+m}R_{i\ell}[B_{\ell k}]_+ =-R_{ik} +[\tC_{ik}]_- - \sum_{\ell=1}^{n+m}R_{i\ell}[B_{\ell k}]_-
% \qquad \mbox{otherwise.} \label{eq:mutateR}
%\end{cases}
%\max( \sum_{j=n+1}^{n+m} R_{ij}[\tB_{ji}]_+,- \sum_{j=n+1}^{n+m} R_{ij}[\tB_{ji}]_-)
%\end{equation}
Note that in block form $R' = \left( \begin{smallmatrix} \mathrm{Id}_n & 0 \\ P' & Q \end{smallmatrix} \right)$, so that only the $P$ block changes.

\begin{lemma}\label{lem:Rmutation}
If $\tB' = \mu_k(\tB)$ and $\tC' = \mu_k(\tC)$ then we have $\tC' = R'\tB'$, where $R' = \mu_k^{\tB}(R)$.
\end{lemma}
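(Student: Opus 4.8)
The plan is to verify the matrix identity $\tC'=R'\tB'$ entry by entry, using the block shape $R=\left(\begin{smallmatrix}\mathrm{Id}_n & 0\\ P & Q\end{smallmatrix}\right)$ together with the fact that $R'$ has the same shape and differs from $R$ only in column $k$, the hypothesis $\tC=R\tB$, and the explicit mutation rules for $\tB$, for $\tC$, and for $R$ (formula \eqref{eq:mutateR}). Two of the three index ranges will be immediate. For the rows $i\le n$: since the top $n$ rows of $R$ are $(\mathrm{Id}_n\mid 0)$, the top $n$ rows of $\tC=R\tB$ agree with those of $\tB$, and likewise the top $n$ rows of $R'\tB'$ agree with those of $\tB'$; moreover the mutation $\mu_k$ computes each entry in those rows from the top $n$ rows of $\tC$ alone (for an entry $(i,j)$ with $i\le n$ it uses only $\tB_{ik}$ and $\tB_{kj}$, both in the top $n$ rows since $k\le n$), so the top $n$ rows of $\tC'$ equal those of $\tB'$ and the identity holds there. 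In particular $\tC$ and $\tB$ have the same principal part $B$. For the column $j=k$ with $i>n$: using $\tB'_{\ell k}=-\tB_{\ell k}$, $R'_{i\ell}=R_{i\ell}$ for $\ell\ne k$, and $\tB_{kk}=0$, one finds $(R'\tB')_{ik}=-\sum_{\ell}R_{i\ell}\tB_{\ell k}=-\tC_{ik}$, which equals $\mu_k(\tC)_{ik}=\tC'_{ik}$.

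The only case with content is $i>n$, $j\ne k$. First I would expand $(R'\tB')_{ij}$ using $R'_{i\ell}=R_{i\ell}$ for $\ell\ne k$, $\tB'_{kj}=-\tB_{kj}$, and the ``otherwise'' branch $\tB'_{\ell j}=\tB_{\ell j}+[\tB_{\ell k}]_+[\tB_{kj}]_+-[\tB_{\ell k}]_-[\tB_{kj}]_-$ of the mutation rule for $\ell\ne k$; collecting terms, substituting $\sum_{\ell\ne k}R_{i\ell}\tB_{\ell j}=\tC_{ij}-R_{ik}\tB_{kj}$, and using $\tB_{kk}=0$ turns this into
\[
(R'\tB')_{ij}=\tC_{ij}-(R'_{ik}+R_{ik})\tB_{kj}+[\tB_{kj}]_+\sum_{\ell}R_{i\ell}[\tB_{\ell k}]_+-[\tB_{kj}]_-\sum_{\ell}R_{i\ell}[\tB_{\ell k}]_-.
\]
On the other hand $\tC'_{ij}=\mu_k(\tC)_{ij}=\tC_{ij}+[\tC_{ik}]_+[\tB_{kj}]_+-[\tC_{ik}]_-[\tB_{kj}]_-$, since $\tC_{kj}=\tB_{kj}$ for $j\le n$. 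Subtracting, cancelling $\tC_{ij}$, and writing $\tB_{kj}=[\tB_{kj}]_++[\tB_{kj}]_-$, the asserted equality reduces to the two scalar identities obtained by matching the coefficients of $[\tB_{kj}]_+$ and of $[\tB_{kj}]_-$, namely $[\tC_{ik}]_+=-R'_{ik}-R_{ik}+\sum_{\ell}R_{i\ell}[\tB_{\ell k}]_+$ and $-[\tC_{ik}]_-=-R'_{ik}-R_{ik}-\sum_{\ell}R_{i\ell}[\tB_{\ell k}]_-$. These are exactly the two displayed expressions for $R'_{ik}$ in \eqref{eq:mutateR}; they are consistent with one another precisely because $\tC_{ik}=\sum_{\ell}R_{i\ell}\tB_{\ell k}$, so it does not matter which is used.

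I do not expect a conceptual obstacle: the lemma is a bookkeeping identity, and the only point needing care is the manipulation of the positive and negative parts $[\,\cdot\,]_\pm$ in the last case, together with the observation that the two equivalent forms of \eqref{eq:mutateR} are precisely what arise (one matching $[\tB_{kj}]_+$, the other $[\tB_{kj}]_-$). To make these cancellations more transparent one can instead use the elementary-matrix presentation of mutation: for either sign $\epsilon$, $\mu_k(\tB)=\tilde E_{k,\epsilon}(\tB)\,\tB\,F_{k,\epsilon}(B)$, where $\tilde E_{k,\epsilon}(\tB)$ is the identity except in column $k$ (with $(\tilde E)_{kk}=-1$ and $(\tilde E)_{ik}=[\epsilon\tB_{ik}]_+$ for $i\ne k$) and $F_{k,\epsilon}(B)$ is the identity except in row $k$, depending only on the principal part $B$. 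Since $\tB$ and $\tC$ share their principal part, they share the right factor $F_{k,\epsilon}(B)$, whence $\tC'=\tilde E_{k,\epsilon}(\tC)\,R\,\tB\,F_{k,\epsilon}(B)$ and $R'\tB'=R'\,\tilde E_{k,\epsilon}(\tB)\,\tB\,F_{k,\epsilon}(B)$; the lemma then reduces to the matrix identity $\tilde E_{k,\epsilon}(\tC)\,R=R'\,\tilde E_{k,\epsilon}(\tB)$, whose entries off column $k$ are trivial (using that row $k$ of $R$ consists of a single $1$) and whose column-$k$ entries reproduce \eqref{eq:mutateR} once more.
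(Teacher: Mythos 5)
Your proposal is correct and takes essentially the same route as the paper: a direct entry-by-entry verification of $\tC'=R'\tB'$, with the rows $i\le n$ and the column $j=k$ handled immediately and the case $i>n$, $j\neq k$ reduced, via $\tC=R\tB$, to the defining formula \eqref{eq:mutateR} for $R'_{ik}$. The only difference is organizational — you write $\tB_{kj}=[\tB_{kj}]_++[\tB_{kj}]_-$ and invoke both branches of \eqref{eq:mutateR}, whereas the paper uses a single branch and splits into cases according to the sign of $\tB_{k\ell}$ — and your closing elementary-matrix factorization is a nice optional repackaging of the same computation rather than a different proof.
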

\begin{proof}
Write $S:= R'\tB'$.  It is immediate from the definitions that $S_{i\ell} = \tC'_{i\ell}$ when $i \in \{1,2,\ldots,n\}$, so we assume that $i > n$.  Suppose first that $\ell = k$.  Then
\[
S_{ik}
 = \sum_{j} R'_{ij}\tB'_{jk} =- \sum_j R_{ij} \tB_{jk} = - \tC_{ik} = \tC'_{ik}
\]
where we have used that $\tB_{kk}=0$ in the second equality.

Suppose now that $\ell \neq k$.  Then
\begin{align*}
S_{i\ell}&=\sum_{j=1}^{n+m}  R'_{ij}\tB'_{j\ell}  
= R'_{ik}\tB'_{k\ell}+\sum_{j \neq k} R_{ij}\tB'{j\ell} \\
&= \left(R_{ik} + [\tC_{ik}]_+ -\sum_{s=1}^{n+m} R_{is}[\tB_{sk}]_+\right)\tB_{k\ell} + \sum_{j \neq k} R_{ij} \left(\tB_{j\ell} + \sgn(\tB_{jk})[\tB_{jk}\tB_{kl}]_+\right)\\
&= \sum_{j=1}^{n+m} R_{ij}\tB_{j\ell} + [\tC_{ik}]_+\tB_{k\ell} + \sum_{j=1}^{n+m} R_{ij}\left(\sgn(\tB_{jk})[\tB_{jk}\tB_{k\ell}]_+ - [\tB_{jk}]_+ \tB_{k\ell} \right)
\end{align*}
Now observe that
\[
\left(\sgn(\tB_{jk})[\tB_{jk}\tB_{k\ell}]_+ - [\tB_{jk}]_+ \tB_{k\ell} \right) = \begin{cases} 0 & \mbox{if $\tB_{k\ell} \geq 0$,} \\
-\tB_{jk}\tB_{k\ell} & \mbox{if $\tB_{k\ell} \leq 0$.}
\end{cases}
\]
Thus 
\[
S_{i\ell} - \tC_{i\ell} = \begin{cases} [\tC_{ik} \tB_{k\ell}]_+ & \mbox{if $\tB_{k\ell} \geq 0$,}\\
 [\tC_{ik}]_+\tB_{k\ell}-\left(\sum_{j=1}^{n+m}R_{ij}\tB_{jk} \right)\tB_{k\ell}  = - [\tC_{ik}B_{k\ell}]_+& \mbox{if $\tB_{k\ell} \leq 0$.}
\end{cases}
\]
In both cases we obtain $S_{i\ell} = \tC'_{i\ell}$.
\end{proof}

\begin{theorem}
A cluster morphism $(\Psi, \Phi, \{R_t\}): \cA_2 \to \cA_1$ is completely determined by $R = R_{t_0}$ for the initial seed $t_0 = (\z,\tB)$ (or any other seed).  If $\Phi(\z,\tB) = (\x,\tC)$ and $\mu_k(t_0) = t_1 = (\z',\tB')$ then $R_{t_1} = \mu_k^{\tB}(R)$.  Conversely, any integer matrix $R$ with block form $\left( \begin{smallmatrix} \mathrm{Id}_n & 0 \\ P & Q \end{smallmatrix} \right)$ induces a cluster morphism.
\end{theorem}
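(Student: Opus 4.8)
The plan is to prove the three assertions in turn --- the recursion $R_{t_1}=\mu_k^{\tB}(R)$ under a single mutation, then the uniqueness statement, then the converse --- using Lemma~\ref{lem:Rmutation} together with the parallel computation on cluster variables, propagated along the mutation graph.

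For the recursion, let $(\Psi,\Phi,\{R_t\})$ be given with $\Phi(t_0)=(\x,\tC)$, $\tC=R\tB$, and $t_1=\mu_k(t_0)=(\z',\tB')$. Since $\Phi$ commutes with mutation, $\Phi(t_1)=\mu_k(\Phi(t_0))$, so its exchange matrix is $\tC'=\mu_k(\tC)$ and its cluster $\x'$ has $x'_i=x_i$ for $i\neq k$. By Lemma~\ref{lem:Rmutation}, $(\mu_k^{\tB}R)\tB'=\tC'=R_{t_1}\tB'$; this does not by itself force $R_{t_1}=\mu_k^{\tB}R$ when $\tB'$ is rank-deficient, so I also compare cluster variables. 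For $j\neq k$ the variable $z_j$ is unchanged and $\Psi^{\ast}(z_j)=\prod_i x_i^{R_{ij}}$ involves no $x_k$, because $R_{kj}=0$ by the block form of $R$; read in the cluster $\x'$ this says the $j$-th column of $R_{t_1}$ equals that of $R$, which agrees with $\mu_k^{\tB}R$. For $j=k$, apply $\Psi^{\ast}$ to the exchange relation \eqref{eq:mutation} for $z'_k$: each $\Psi^{\ast}(z_i)$ is a Laurent monomial in the $x_\ell$, so the numerator becomes $\prod_\ell x_\ell^{a_\ell}+\prod_\ell x_\ell^{b_\ell}$ with $a_\ell-b_\ell=\tC_{\ell k}$; pulling out the monomial $\prod_\ell x_\ell^{\min(a_\ell,b_\ell)}$ leaves exactly $x'_k x_k=\prod_\ell x_\ell^{[\tC_{\ell k}]_+}+\prod_\ell x_\ell^{[-\tC_{\ell k}]_+}$, the exchange relation for $x'_k$ in $A_2$ (available since $\tC=R\tB$). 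Dividing by $\Psi^{\ast}(z_k)=x_k\prod_{\ell>n}x_\ell^{R_{\ell k}}$ gives $\Psi^{\ast}(z'_k)=x'_k\prod_{\ell>n}x_\ell^{c_\ell}$ with $c_\ell=\min(a_\ell,b_\ell)-R_{\ell k}$, and a short $[\,\cdot\,]_{\pm}$-manipulation identifies $c_\ell$ with the $k$-th column of $\mu_k^{\tB}R$ prescribed by \eqref{eq:mutateR}. Hence $R_{t_1}=\mu_k^{\tB}R$; iterating along any mutation path shows $\{R_t\}$ is determined by $R$ at a single seed.

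For uniqueness, $\Phi$ commutes with mutation and every seed of $A_1$ is reached from $t_0$ by mutations, so $\Phi$ is determined by $\Phi(t_0)$; the recursion then determines all $R_t$ from $R=R_{t_0}$; and $\Psi^{\ast}$ is determined because $A_1$ is generated by its cluster variables and $\Psi^{\ast}\bigl(z_j(t)\bigr)=\prod_i x_i(\Phi(t))^{(R_t)_{ij}}$ for each of them. Finally $\Phi(t_0)$ itself is determined by $R$: its exchange matrix is $R\tB$, which has principal part $B$, so $\Phi(t_0)$ is the initial seed of $A_2$ with that principal part (the natural convention in this setup), and in particular its frozen variables are those of $A_2$.

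For the converse, given $R$ of block form $\left(\begin{smallmatrix}\Id_n&0\\P&Q\end{smallmatrix}\right)$ one sets $\tC:=R\tB$ (which has principal part $B$), takes $A_2$ with initial seed $(\x,\tC)$, puts $\Phi(t_0):=(\x,\tC)$ and $R_{t_0}:=R$, and defines $\Psi^{\ast}$ on the initial cluster by $z_j\mapsto\prod_i x_i^{R_{ij}}$; one then propagates $\Phi$ and the matrices $R_t$ to all seeds by mutating seeds of $A_1$ and $A_2$ in parallel and mutating the matrices by \eqref{eq:mutateR}, which is consistent since the exchange-matrix patterns of $A_1$ and $A_2$ share the same principal part at every seed. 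Two points then need checking: (i) $\Psi^{\ast}$, a priori a map from the initial Laurent ring of $A_1$ to that of $A_2$, carries $A_1$ into $A_2$ --- by induction on mutation distance the recursion step shows each cluster variable of $A_1$ maps to a cluster variable of $A_2$ times a monomial in the frozen variables of $A_2$, hence into $A_2$, while frozen variables and their inverses map to frozen monomials and their inverses; and (ii) conditions (1)--(3) hold, which is built into the construction. The main obstacle is the cluster-variable half of the recursion step --- isolating the common monomial in $\prod_\ell x_\ell^{a_\ell}+\prod_\ell x_\ell^{b_\ell}$ and reconciling the exponents with \eqref{eq:mutateR} via $[\,\cdot\,]_{\pm}$-identities --- which is of the same character as, but somewhat more intricate than, the matrix computation proving Lemma~\ref{lem:Rmutation}; making the ``parallel'' well-definedness of $\Phi$ fully precise is a secondary point.
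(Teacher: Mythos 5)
Your proposal is correct and follows essentially the same route as the paper: the heart of both arguments is substituting the monomial formula $\Psi^*(z_j)=\prod_i x_i^{R_{ij}}$ into the exchange relation at $k$ and matching exponents against \eqref{eq:mutateR} (your ``factor out $\prod_\ell x_\ell^{\min(a_\ell,b_\ell)}$'' step is the same computation the paper performs by dividing through by $\prod_{i>n}x_i^{r_i}$), combined with Lemma~\ref{lem:Rmutation} for the exchange matrices and the observation that $\Psi^*$ determines the $R_t$ to get path-independence. Your explicit remark that $(\mu_k^{\tB}R)\tB'=R_{t_1}\tB'$ alone does not pin down $R_{t_1}$ when $\tB'$ is rank-deficient is a sound justification for why the cluster-variable computation is needed, and is consistent with what the paper actually does.
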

\begin{proof}
%Assume that $\Phi$ takes the initial seed of $A_1$ to the initial seed of $A_2$.
%Let $\tB$ (resp. $\tC$) denote the extended exchange matrix of the initial seed of $A_1$ (resp. $A_2$).  
Suppose we are given a cluster morphism $(\Psi, \Phi, \{R_t\}): \cA_2 \to \cA_1$.  Substituting $z_j= \prod_{i=1}^{n+m} x_i^{R_{ij}}$ into the exchange relation
\[
z_k z_k' = \prod_{j} z_j^{[\tB_{jk}]_+} \ + \ \prod_j z_j^{[-\tB_{jk}]_+}
\]
gives
\begin{align*}
%x_k \prod_{i=n+1}^{n+m} x_i^{R_{ik}}  x_k'  \prod_{i=n+1}^{n+m} x_i^{R'_{ik}}&= \prod_{i,j} x_i^{R_{ij}[\tB_{jk}]_+} \ + \ \prod_{i,j} x_i^{-R_{ij}[\tB_{jk}]_-} \\
x_kx'_k&=\prod_{i=1}^n x_i^{[\tB_{ik}]_+} \prod_{i=n+1}^{n+m} x_i^{-r_i + \sum_j R_{ij}[\tB_{jk}]_+} \ + \ \prod_{i=1}^n x_i^{-[\tB_{ik}]_-} \prod_{i=n+1}^{n+m} x_i^{-r_i-
\sum_j R_{ij}[\tB_{jk}]_-}
\end{align*}
where $r_i = R_{ik} + R'_{ik}$.  This is the exchange relation for $(\x,\tC)$ if and only if $R'_{ik}$ is given by \eqref{eq:mutateR}.  Thus all the matrices $R_t$ can be obtained by iterated mutation of $R = R_{t_0}$.

Conversely, given $R$ satisfying $\tC = R\tB$, we can define $R_t$ iteratively via mutation.  That we obtain a cluster morphism follows from the above calculation, together with Lemma \ref{lem:Rmutation}.  Note that $\Psi^*: A_1 \to A_2$, if it exists, uniquely determines all the $R_t$ via the formula $z_j= \prod_{i=1}^{n+m} x_i^{R_{ij}}$.  Thus the exchange relation calculation above implies that the matrices $R_t$ are well-defined via mutation (and do not depend on the mutation path taken from $t_0$ to $t$).
\end{proof}

If an initial seed $t_0$ has been chosen, we may thus denote a cluster morphism by $(\Psi,\Phi,R)$, where $R = R_{t_0}$.

We say that a cluster morphism $(\Psi, \Phi, \{R_t\})$ is a \defn{covering map} of cluster varieties if $\Psi: \cA_2 \to \cA_1$ identifies $\cA_1$ with $\cA_2 \sslash H \cong \Spec(A_2^H)$ for a finite subgroup $H \subset \Aut(A_2)$ which acts freely on $\cA_2$.  We shall only consider covering maps in the case that $\cA_2$ is a smooth complex affine algebraic variety, so the geometric invariant theory quotient $\cA_2 \sslash H$ coincides with the quotient as complex manifolds.

%By definition, $\cA_1$ and $\cA_2$ have the same dimension, so the subgroup is necessarily finite.

%We have the commutative diagram
%\[ \xymatrix{
%& \ZZ^n \ar[d]^{\tilde{B}_1} \ar@{=}[r] & \ZZ^n \ar[d]^{\tilde{B}_2} & & \\
%0 \ar[r] & \ZZ^{n+m} \ar[r]^R & \ZZ^{n+m} \ar[r] & \mathrm{Coker}(Q) \ar[r] & 0 \\
%} \]
%with exact rows, inducing a short exact sequence:
%\[ 0 \to \ZZ^{n+m}/\tilde{B}_1 \ZZ^{n} \to \ZZ^{n+m}/\tilde{B}_2 \ZZ^n \to \mathrm{Coker}(Q) \to 0 .\]
%Dualizing, we have
%\[ 0 \to \widehat{\mathrm{Coker}(Q)} \to \Aut(\cA_2) \to \Aut(\cA_1) \to 0 \]
%where the hat denotes the dual group. We shorten $\widehat{\mathrm{Coker}(Q)}$ to $H$; note that this is a finite abelian group.
%

%If $t' = \mu_k(t)$ then we write $\mu_k(R_t) := R_{t'}$.  Our next result gives a formula for the mutated matrix $\mu_k(R)$.

%\fixit{Do I need the locally acyclic and full rank assumption in the following?}
\begin{prop}\label{prop:coveringmap}
Suppose that $\cA_1$ and $\cA_2$ are locally acyclic cluster varieties of full rank.
Suppose that $(\Psi, \Phi, \{R_t\}): \cA_2 \to \cA_1$ is a covering map.  Then $R_t$ has full rank for any $t$.  Conversely, if $(\Psi, \Phi, \{R_t\}): \cA_2 \to \cA_1$ is a cluster morphism determined by a full rank integer matrix $R = R_{t_0}$, then it is a covering map of cluster varieties.

The group $H$ and the matrix $R$ are related by 
\[ H = \Ker(\mult(Q^T)) \cong \Ker(\mult(R^T)) \subset \Aut(A_2). \]
\end{prop}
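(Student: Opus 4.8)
The plan is to analyze the two directions separately, using throughout the explicit formula $\Psi^*(z_j) = \prod_i x_i^{R_{ij}}$ together with the block structure $R_t = \left(\begin{smallmatrix}\mathrm{Id}_n & 0\\ P & Q\end{smallmatrix}\right)$, and the identification $\Aut(A_2)\cong\Hom(\ZZ^{n+m}/\tB_2\ZZ^n,\CC^*)$ from Proposition~\ref{cluster aut}. First I would establish the identity $H = \Ker(\mult(Q^T)) \cong \Ker(\mult(R^T))$ at the level of subgroups of $(\CC^*)^{n+m}$: an element $\zeta = (\zeta_1,\dots,\zeta_{n+m})$ acts on the cluster torus $T_{\Phi(t)}\subset\cA_2$, and $\Psi$ is $\mult(R^T)$ on tori, so $\Psi$ is $H$-invariant precisely when $\mult(R^T)(\zeta\cdot x) = \mult(R^T)(x)$ for all $x$, i.e. $\zeta\in\Ker(\mult(R^T))$; because the top $n\times n$ block of $R$ is $\mathrm{Id}_n$, the first $n$ coordinates of any such $\zeta$ are forced to be $1$, which reduces $\Ker(\mult(R^T))$ to $\Ker(\mult(Q^T))$ acting on the frozen coordinates. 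One then checks $\Ker(\mult(R^T))\subseteq\Aut(A_2)$ using Proposition~\ref{cluster aut}: $\tC = R\tB_1$ gives $\tB_2^T\zeta$-relations that are implied by $R^T\zeta = 0$.

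For the first implication (covering map $\Rightarrow$ $R_t$ full rank), suppose $\Psi$ realizes $\cA_1 = \cA_2\sslash H$ with $H$ finite acting freely. Since $\cA_1,\cA_2$ are locally acyclic of full rank, by Theorem~\ref{thm:Muller} they are smooth, and $\Psi$ is a finite étale covering of the same dimension $n+m$. On cluster tori $\Psi$ is $\mult(R_t^T):(\CC^*)^{n+m}\to(\CC^*)^{n+m}$, which is a covering map of tori iff $R_t^T$ (equivalently $R_t$) is nonsingular over $\QQ$, i.e. has full rank $n+m$. (If $R_t$ were rank-deficient, $\mult(R_t^T)$ would have positive-dimensional fibers, contradicting that $\Psi$ is a finite cover onto its image, which is dense in $\cA_1$.) By Remark~\ref{rem:fullrank}/Corollary~\ref{cor:rank} the rank is independent of $t$.

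Conversely, given a cluster morphism determined by a full-rank $R = R_{t_0}$, set $H := \Ker(\mult(R^T)) = \Ker(\mult(Q^T))$, a finite subgroup of $\Aut(A_2)$ (finite because $\det R\neq 0$ makes $\ZZ^{n+m}/R^T\ZZ^{n+m}$ finite, equivalently $\mathrm{coker}(Q^T)$ finite). The map $\mult(R^T)$ on each cluster torus is then a finite covering of tori with deck group $H$, and $H$ acts freely there; since $H$ acts by the automorphisms $\zeta\cdot x$ it acts freely on all of $\cA_2$ (a fixed point would lie on some cluster torus, as the cluster tori cover $\cA_2$ up to codimension $\geq 2$, but an automorphism fixing a point on an open dense set is the identity). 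It remains to identify $\cA_1$ with $\cA_2\sslash H$: the inclusion $\Psi^*(A_1)\subseteq A_2^H$ is clear from the torus description, and for the reverse I would argue that both $\Psi^*(A_1)$ and $A_2^H$ are upper cluster algebras — using Theorem~\ref{thm:Muller} ($A = U$ for locally acyclic $A$) and the fact that invariants of a Laurent ring under a subgroup of the torus is again a Laurent ring on a sublattice — and that they agree as intersections of Laurent rings over all seeds, since $\Phi$ commutes with mutation and $\{R_t\}$ is determined by mutation via \eqref{eq:mutateR}. The main obstacle is precisely this last surjectivity $\Psi^*(A_1) = A_2^H$: showing that every $H$-invariant regular function on $\cA_2$ descends to a regular (not merely rational) function on $\cA_1$, for which the coincidence of upper cluster algebras under intersection over all cluster tori — and hence the compatibility of the whole mutation combinatorics encoded in $\{R_t\}$ — is the crucial input.
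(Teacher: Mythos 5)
Most of your proposal follows the paper's own route: you identify $H=\Ker(\mult(Q^T))\cong\Ker(\mult(R^T))$ via the block form of $R$, place it inside $\Aut(A_2)$ using $\tB_2^T=\tB_1^T R^T$, get full rank of $R_t$ in the forward direction from the fact that $\Psi$ restricted to a cluster torus is $\mult(R_t^T)$ and must preserve dimension, and prove surjectivity of $\Psi^*$ onto $A_2^H$ by expanding an $H$-invariant element as a Laurent polynomial in every seed (the $H$-invariant monomials are exactly the column lattice of $R_t$, since $R_t$ has full rank) and then invoking $A_1=U_1$ from Muller's theorem. That last step, which you correctly single out as the crux, is exactly the paper's argument.

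There is, however, a genuine gap in your verification that $H$ acts freely on $\cA_2$, which is part of the definition of a covering map. Free action on each cluster torus is clear (a nontrivial translation of a torus has no fixed points), but your extension to all of $\cA_2$ does not work: the union of the cluster tori is \emph{not} all of $\cA_2$ (its complement has codimension $\geq 2$, e.g.\ the point $(0,0,-1)$ on $xx'=y+1$), so a hypothetical fixed point need not lie on any cluster torus, and the parenthetical ``an automorphism fixing a point on an open dense set is the identity'' does not apply — fixing one point is not fixing a dense set. The correct argument, which is the one the paper gives, uses the frozen variables: because $R^T=\left(\begin{smallmatrix}\Id_n & P^T\\ 0 & Q^T\end{smallmatrix}\right)$ has identity in its top-left block, any non-identity $\zeta\in\Ker(\mult(R^T))$ must have $(\zeta_{n+1},\dots,\zeta_{n+m})\neq(1,\dots,1)$; since the frozen variables are invertible, hence nowhere vanishing, on all of $\cA_2$, the relation $x_{n+i}(h(p))=\zeta_{n+i}x_{n+i}(p)$ with $\zeta_{n+i}\neq 1$ shows $h(p)\neq p$ for every point $p\in\cA_2$, with no need to restrict to the tori. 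With this replacement your outline is complete.
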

\begin{proof}
The map $\mult(R_t^T)$ sends the cluster torus of the seed $\Phi(t)$ of $\cA_2$ to the cluster torus of the seed $t$ of $\cA_1$.  This map preserves dimension if and only if $R$ has full rank.

Now, suppose that $R = R_{t_0}$ is of full rank.  Since $\tB_2^T = \tB_1^T R^T$, we see that $\Ker(\mult(R^T)) \subseteq \Ker(\mult(\tB_2^T)) = \Aut(A_2)$. Moreover, projection onto the frozen coordinates gives an isomorphism $\Ker(\mult(R^T)) \cong \Ker(\mult(Q^T))$.  Since $R$ has full rank, so does $Q$, and thus $H := \Ker(\mult(Q^T))$ is a finite abelian subgroup of $\Aut(A_2)$.

We first check that the action of $H$ on $\cA_2$ is free.  Let $h = (\zeta_1, \ldots, \zeta_{n+m})$ be a non-identity element of $\Ker(\mult(R^T))$. We must show that $h$ does not fix any point of $\cA_2$.
The block form  $R^T = \left( \begin{smallmatrix} \mathrm{Id}_n & P^T \\ 0 & Q^T \end{smallmatrix} \right)$ shows that $(\zeta_{n+1}, \ldots, \zeta_{n+m}) \neq (1,1,1,\ldots, 1)$. 
Choose a coordinate $i$ for which $\zeta_{n+i} \neq 1$.  Let $p$ be any point of $\cA_2$. The frozen variables $(x_{n+1}, \ldots, x_{n+m})$ are defined and nonzero everywhere on $\cA_2$, and we have $x_{n+i}(h(p)) = \zeta_{n+i} x_{n+i}(p)$. In particular, $x_{n+i}(h(p)) \neq x_{n+i}(p)$. So $h(p) \neq p$.

We now check that $\Psi: \cA_2 \to \cA_1$ is the quotient map by $H$.  It follows from the formula $\Psi^*(z_j) = \prod_{i=1}^{n+m} x_i^{R_{ij}}$ that $\Psi^*(A_1) \subseteq A_2^H$, and we have that $\Psi^*: A_1 \to A_2$ is an injection since $\Frac(A_1) \hookrightarrow \Frac(A_2)$ is an injection.  It remains to show that any element $f \in A_2^H$ is in the image of $\Psi^*$.

For a seed $t$ of $A_2$, we may write $f$ as a Laurent polynomial $f'/M$ in the cluster variables of $t$, and we may further assume that both $f'$ and $M$ are $H$-invariant.  Thus $f = \Psi^*(g)$, where $g \in \Frac(A_1)$ is a Laurent polynomial in the cluster variables of the seed $\Phi(t)$ of $A_1$.  Such a Laurent polynomial expression for $g$ holds for all seeds of $A_1$, and so $g$ lies in the upper cluster algebra of $A_1$.  By Theorem \ref{thm:Muller} and the assumption that $A_1$ is locally acyclic, we have $g \in A_1$. 
%
%Since $\tB_2^T = \tB_1^T R^T$, we see that $\Ker(\mult(R^T)) \subseteq \Ker(\mult(\tB_2^T)) = \Aut(\cA_2)$. Moreover, projection onto the frozen coordinates gives an isomorphism $\Ker(\mult(R^T)) \cong \Ker(\mult(Q^T))$. We abbreviate $\Ker(\mult(Q^T))$ by $H$. So we have shown that $H$ is a subgroup of $\Aut(\cA_2)$.
\end{proof}

\subsection{Constructing covering maps}
\begin{lemma} \label{lem:smith}
Let $\tB$ be an $(n+m) \times n$ extended exchange matrix of full rank, where $m \geq n$. Let $d$ be large enough that $d \ZZ^n \subseteq \tB^T \ZZ^{n+m}$. Then there exists a full rank integer matrix $R = \left( \begin{smallmatrix} \mathrm{Id}_n & 0 \\ P & Q \end{smallmatrix} \right)$ such that
$
R \tB = \left( \begin{smallmatrix} B \\ d\,\Id_n \\ 0 \end{smallmatrix} \right).
$
\end{lemma}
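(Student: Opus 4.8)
The plan is as follows. Write $\tB = \left(\begin{smallmatrix}B\\C\end{smallmatrix}\right)$ with $B$ the principal part and $C$ the $m\times n$ coefficient part. Because the top $n\times n$ block of the target $\left(\begin{smallmatrix}B\\d\,\Id_n\\0\end{smallmatrix}\right)$ is $B$ itself, every matrix $R$ of the required shape $\left(\begin{smallmatrix}\Id_n & 0\\ P & Q\end{smallmatrix}\right)$ automatically reproduces that top block; writing $V := (P\mid Q)$, the task reduces to finding an \emph{integer} $m\times(n+m)$ matrix $V$ with $V\tB = \left(\begin{smallmatrix}d\,\Id_n\\0\end{smallmatrix}\right)$ whose last $m$ columns $Q$ are invertible over $\QQ$, since $\det R = \det Q$. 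An integer $V_0$ with $V_0\tB = \left(\begin{smallmatrix}d\,\Id_n\\0\end{smallmatrix}\right)$ exists because the hypothesis $d\ZZ^n \subseteq \tB^T\ZZ^{n+m}$ says exactly that each $d e_i$ lies in the image of $\tB^T$, so one can solve for the rows of $V_0$ one at a time. All the difficulty is in arranging that $Q$ be invertible.

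First I would pass to $\QQ$. The integer solutions are the coset $V_0 + \cK_\ZZ$, where $\cK_\ZZ = \{W\in\ZZ^{m\times(n+m)} : W\tB = 0\}$ is a full-rank lattice inside $\cK_\QQ = \{W : \text{every row of } W \text{ lies in } L\}$, with $L\subseteq\QQ^{n+m}$ the $m$-dimensional left null space of $\tB$. Since a full-rank lattice is Zariski dense in its $\QQ$-span and $V\mapsto\det Q(V)$ is a polynomial function on $V_0 + \cK_\QQ$, it is enough to exhibit one \emph{rational} $V$ in this coset with $Q(V)$ invertible.

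For that, note that as $W$ runs over $\cK_\QQ$ the block $Q(W)$ runs over exactly those $m\times m$ matrices all of whose rows lie in $S := \rho(L)$, where $\rho\colon \QQ^{n+m}\to\QQ^m$ projects onto the last $m$ coordinates. So $Q(V)$ ranges over $Q_0 + \{M : \text{rows of }M\text{ in }S\}$, and an easy linear algebra lemma says such an affine set contains an invertible matrix iff $Q_0 := Q(V_0)$ is injective on $S^{\circ} := \{x\in\QQ^m : (0,x)\in L^{\circ}\}$ (with $L^{\circ}$ the annihilator of $L$). Here $L^{\circ}$ is the column space of $\tB$, so $S^{\circ} = \{Cz : z\in\Ker B\}$, and $C$ is injective on $\Ker B$ because $\Ker B\cap\Ker C = \Ker\tB = 0$. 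Right-multiplying $P_0 B + Q_0 C = \left(\begin{smallmatrix}d\,\Id_n\\0\end{smallmatrix}\right)$ by $z\in\Ker B$ gives $Q_0(Cz) = \left(\begin{smallmatrix}dz\\0\end{smallmatrix}\right)$, which is nonzero whenever $z\neq 0$ since $d\neq 0$; hence $Q_0$ is injective on $S^{\circ}$, producing the desired rational $V$, and Zariski density then upgrades it to an integer $V$. Taking $R = \left(\begin{smallmatrix}\Id_n & 0\\ P(V) & Q(V)\end{smallmatrix}\right)$ finishes the argument.

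The main obstacle is exactly this invertibility of $Q$. The naive move — extend $\tB$ to an invertible $(n+m)\times(n+m)$ matrix and take $Q$ to be the bottom-right $m\times m$ block of the inverse — gives an invertible $Q$ iff $B$ is invertible (by the identity relating complementary minors of a matrix and its inverse), which fails whenever $B$ is singular, e.g. for $n$ odd. The point of the computation $Q_0(Cz) = \left(\begin{smallmatrix}dz\\0\end{smallmatrix}\right)$ on $\Ker B$ is that any particular solution $V_0$ already behaves nondegenerately precisely in the directions $S^\circ = C(\Ker B)$ that are invisible to $\cK$, so it can always be corrected by an element of $\cK$ into one with $Q$ invertible.
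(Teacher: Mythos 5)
Your proof is correct, but it takes a genuinely different route from the paper's. The paper writes $\tB$ in Smith normal form $\tB = UDV$, translates the hypothesis $d\ZZ^n \subseteq \tB^T\ZZ^{n+m}$ into divisibility of the elementary divisors $d_i \mid d$, and then explicitly assembles $(P\ Q)$ from $U^{-1}$, $V^{-1}$ and a diagonal correction, finishing with a basis-completion argument to make $Q$ invertible. You instead observe that the hypothesis directly furnishes an integral particular solution $V_0$ of $V\tB = \left(\begin{smallmatrix} d\,\Id_n \\ 0\end{smallmatrix}\right)$, parametrize all solutions by the coset $V_0 + \cK$ over the left null space of $\tB$, and use Zariski density of the integral lattice to reduce the problem to exhibiting a single \emph{rational} solution with $Q$ invertible; the obstruction is then exactly injectivity of $Q_0$ on $C(\Ker B)$, which you verify by the clean identity $Q_0(Cz) = \left(\begin{smallmatrix} dz \\ 0 \end{smallmatrix}\right)$ for $z \in \Ker B$ (the full-rank hypothesis entering as $\Ker B \cap \Ker C = 0$). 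I checked the "easy linear algebra lemma": matrices with all rows in $S$ are exactly the maps killing $S^\perp$ and acting arbitrarily on a complement, so the affine set $Q_0 + \{M : \text{rows in } S\}$ meets $\GL_m$ as soon as $Q_0|_{S^\perp}$ is injective, which is the only direction you use. Your argument buys a conceptual identification of exactly where each hypothesis is used and avoids Smith normal form, at the cost of being non-constructive (genericity rather than an explicit $R$); the paper's version is explicit, which is occasionally convenient when one wants to track the resulting deck group $\Ker(\mult(Q^T))$ concretely.
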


\begin{proof}
Write $\tB$ in Smith normal form: $\tB = U D V$ where $U$ and $V$ are in $\SL_{n+m}(\ZZ)$ and $\SL_{n}(\ZZ)$, and $D = \left( \begin{smallmatrix} \diag(d_1, d_2, \ldots, d_n) \\ 0 \end{smallmatrix} \right)$. (It is usual to further require that $d_1 | d_2 | \cdots | d_n$, but we will not need this.) Since $\tB$ has full rank, the $d_i$ are all nonzero. The condition $d \ZZ^n \subseteq V^T D^T U^T \ZZ^{n+m}$ is equivalent to $d \ZZ^n \subseteq D \ZZ^{n+m}$, since $(V^T)^{-1} \ZZ^n = \ZZ^n$ and $U^T \ZZ^{n+m} = \ZZ^{n+m}$. In other words, all of the $d_i$ divide $d$.

Let $U ' = \begin{pmatrix}V^{-1} & 0 \\ 0 & \Id_{m} \end{pmatrix}\diag(d/d_1,d/d_2,\ldots,d/d_n,1,1,\ldots,1) U^{-1}$, where the diagonal matrix $ \diag(d/d_1,d/d_2,\ldots,d/d_n,1,1,\ldots,1)$ has $m$ trailing $1$'s.  Then $U'$ is a full rank matrix.  Define a $m \times (n+m)$ matrix $(P \; Q)$ where $P$ is $m \times n$ and $Q$ is $m \times m$ by
%$\left( \begin{smallmatrix} P & Q \end{smallmatrix} \right)$ by
\[
\begin{pmatrix}P & Q \end{pmatrix} = \begin{pmatrix}\Id_{n} & \ast_{n \times m} \\ 0 & \ast_{m \times m}  \end{pmatrix} U',
\]
where the $\ast$'s are chosen so that $Q$ has full rank.  (To see that this is possible, let $v_1,\ldots, v_{n+m}$ be the rows of the last $m$ columns of $U'$, and let $S = {\rm span}(v_{n+1},\ldots,v_{n+m})$.  Note that it is always possible to pick a basis $\beta_1,\ldots,\beta_m$ of ${\rm span}(v_{1},\ldots,v_{n+m})$ such that $\beta_1 - v_1, \ldots, \beta_n - v_n, \beta_{n+1},\ldots,\beta_m \in S$.)

We then calculate that
\begin{align*}
\begin{pmatrix}P & Q \end{pmatrix}  \tB &= \begin{pmatrix}\Id_{n} & \ast_{n \times m} \\ 0 & \ast_{m \times m}  \end{pmatrix} \begin{pmatrix}V^{-1} & 0 \\ 0 & \Id_{m} \end{pmatrix}\diag(d/d_1,d/d_2,\ldots,d/d_n,1,1,\ldots,1) D V\\
&= \begin{pmatrix}\Id_{n} & \ast_{n \times m} \\ 0 & \ast_{m \times m}  \end{pmatrix} \begin{pmatrix}V^{-1} & 0 \\ 0 & \Id_{m} \end{pmatrix}\begin{pmatrix}d \Id_n \\ 0_{m \times m} \end{pmatrix} V \\
&=\begin{pmatrix}\Id_{n} & \ast_{n \times m} \\ 0 & \ast_{m \times m}  \end{pmatrix} \begin{pmatrix}d \Id_n \\ 0_{m \times m}\end{pmatrix} \\
&= \begin{pmatrix}d \Id_n \\ 0_{(m -n) \times (m-n)}\end{pmatrix}
\end{align*}
%
%
%Let $L = \QQ \tB \ZZ^n \cap \ZZ^{n+m}$, so $L \cong \ZZ^{n}$. Using the Smith factorization, $L$ is the span of the first $n$ columns of $U$, and $\tB \ZZ^n$ is the span of the columns of $UD$. Since all of the $d_i$ divide $d$, we have $\tB \ZZ^n \supseteq d L$.
%
%Choose a linear map $\left( \begin{smallmatrix} P_0 & Q_0 \end{smallmatrix} \right) : \ZZ^{n+m} \to \ZZ^{m}$ taking $L$ onto the span $V_n \subseteq \ZZ^m$ of the first $n$ coordinate vectors, where $P_0$ is $m \times n$ and $Q_0$ is $m \times m$. 
%If we choose the map $Q_0$ generically, we may arrange for $Q_0$ to be non-singular. 
%
%Let $C$ be the $m \times n$ matrix $\left( \begin{smallmatrix} P_0 & Q_0 \end{smallmatrix} \right) \tB $.  Then
%$$
%C \ZZ^n = \left( \begin{smallmatrix} P_0 & Q_0 \end{smallmatrix} \right) \tB \ZZ^n \supseteq d \left( \begin{smallmatrix} P_0 & Q_0 \end{smallmatrix} \right) L = d V_n.
%$$
%Therefore, we can find an integer $m \times m$ matrix $Q_1$ such that $Q_1 C = \left( \begin{smallmatrix} d \, \Id_n \\ 0 \end{smallmatrix} \right)$, and clearly $Q_1$ can be chosen to be non-singular.
Thus $R = \left(\begin{smallmatrix} \Id_{n \times n} & 0 \\ P & Q \end{smallmatrix}\right)$ has the required property.
%\left( \begin{smallmatrix} \Id & 0 \\ P_0 & Q_0 \end{smallmatrix}\right).$
%$$
%R \tB = 
%\begin{pmatrix} \Id & 0 \\ 0 & Q_1 \end{pmatrix} \begin{pmatrix} \Id & 0 \\ P_0 & Q_0 \end{pmatrix} \tB = 
%\begin{pmatrix} \Id & 0 \\ 0 & Q_1 \end{pmatrix} \begin{pmatrix} B \\ C \end{pmatrix} = \begin{pmatrix} B \\ d\ \Id_n \\ 0 \end{pmatrix}.
% \qedhere
%$$
\end{proof}

\begin{prop}\label{prop:coverprincipal}
Suppose $\cA(\tB)$ is a locally acyclic cluster algebra of full rank, and $\tB$ is $(n+m) \times n$ where $m \geq n$.  Then there is a covering map $\cA(\tilde{B}') \to \cA(\tB)$ of cluster algebras, where $\tB'$ has the form
$
\tB' = \left( \begin{smallmatrix} B \\ d\,\Id_n \\ 0 \end{smallmatrix} \right).
$
\end{prop}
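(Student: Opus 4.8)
The plan is to produce the covering map by assembling two tools already in place: Lemma~\ref{lem:smith}, which manufactures an integer matrix carrying $\tB$ to a matrix of the desired shape, and Proposition~\ref{prop:coveringmap}, which promotes a cluster morphism given by a full rank matrix to a covering map. First I would fix an integer $d$ with $d\ZZ^n\subseteq\tB^T\ZZ^{n+m}$; such a $d$ exists because $\tB$ has full rank, so $\tB^T\ZZ^{n+m}$ is a finite-index subgroup of $\ZZ^n$ and hence contains $d\ZZ^n$ for any sufficiently divisible $d$ (e.g.\ the index itself). Since $m\geq n$ by hypothesis, Lemma~\ref{lem:smith} then supplies a full rank integer matrix $R=\left(\begin{smallmatrix}\Id_n & 0\\ P & Q\end{smallmatrix}\right)$ with $R\tB=\left(\begin{smallmatrix}B\\ d\,\Id_n\\ 0\end{smallmatrix}\right)$. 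I would set $\tB':=R\tB$, which is exactly the matrix appearing in the statement.

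Next I would check that $\tB'$ meets the two hypotheses required to invoke the converse part of Proposition~\ref{prop:coveringmap}. It has full rank: being $(n+m)\times n$ and containing a $d\,\Id_n$ block it has rank $n$ (equivalently, $R$ is invertible over $\QQ$ and $\tB$ has rank $n$). It is locally acyclic: $\tB'$ has the same principal part $B$ as $\tB$, and local acyclicity depends only on the mutation class of the principal part and not on the coefficient rows \cite{Mul}; under the Louise hypothesis this is transparent directly from the recursive definition, which only ever examines quivers built out of $B$ (separating edges, the empty-quiver base case, and the freezings $\oG\setminus\{i\}$, $\oG\setminus\{j\}$, $\oG\setminus\{i,j\}$). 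By the theorem characterizing cluster morphisms, a matrix $R$ of block form $\left(\begin{smallmatrix}\Id_n & 0\\ P & Q\end{smallmatrix}\right)$ with $R\tB=\tB'$ determines a cluster morphism $(\Psi,\Phi,\{R_t\})\colon\cA(\tB')\to\cA(\tB)$.

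Finally I would apply Proposition~\ref{prop:coveringmap}: $\cA(\tB)$ and $\cA(\tB')$ are locally acyclic cluster varieties of full rank, and the cluster morphism above is determined by the full rank matrix $R=R_{t_0}$, so it is a covering map of cluster varieties, with deck group $\Ker(\mult(Q^T))\cong\Ker(\mult(R^T))$. This is precisely the asserted covering map $\cA(\tB')\to\cA(\tB)$ with $\tB'=\left(\begin{smallmatrix}B\\ d\,\Id_n\\ 0\end{smallmatrix}\right)$.

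The genuinely substantive point — everything else being bookkeeping and an application of Lemma~\ref{lem:smith}, the cluster-morphism correspondence, and Proposition~\ref{prop:coveringmap} — is the claim that $\cA(\tB')$ is locally acyclic. If one is willing to assume the (stronger) Louise property, which is the setting of the paper's main theorems, this is immediate because the Louise recursion manifestly depends only on $B$; under the bare local-acyclicity hypothesis one must cite Muller's fact that local acyclicity is insensitive to the coefficient rows of $\tB$. I would therefore expect this to be the only place a reader might want more detail, and I would flag the reference to \cite{Mul} (or the coefficient-free reduction) accordingly.
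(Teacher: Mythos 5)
Your proposal is correct and follows exactly the paper's route: the paper's proof of this proposition is the one-line observation that it "follows immediately from Lemma~\ref{lem:smith} and Proposition~\ref{prop:coveringmap}," and you have simply spelled out the details (existence of a suitable $d$, full rank and local acyclicity of $\cA(\tB')$, and the direction of the morphism) that the paper leaves implicit. No further changes are needed.
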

\begin{proof}
Follows immediately from Lemma \ref{lem:smith} and Proposition \ref{prop:coveringmap}.
\end{proof}

\subsection{Isomorphic cluster varieties}
In some cases, a covering map of cluster varieties is in fact an isomorphism.
\begin{prop} \label{prop:rowspanenough}
Let $\tB$ and $\tB'$ be $(n+m) \times n$ and $(n+m') \times n$ exchange matrices which have the same top $n$ rows and whose rows have the same integer span. Let $\cA = \cA(\tB)$ and $\cA' = \cA(\tB')$ be the corresponding cluster varieties. Then $\cA \times (\CC^{\ast})^{m'} \cong \cA' \times (\CC^{\ast})^m$.  If $m = m'$, then we have natural isomorphisms $H^{k,(p,q)}(\cA) \cong H^{k, (p,q)}(\cA')$.
\end{prop}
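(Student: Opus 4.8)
The plan is to reduce to the case of equally many frozen variables by appending zero rows, then to write down an explicit isomorphism using the cluster‑morphism formalism of Section~\ref{sec:autom}, and finally to deduce the cohomological statement from the Künneth theorem. First I would pad: let $\tB_0$ be the $(n+m+m')\times n$ matrix obtained from $\tB$ by appending $m'$ zero rows, and $\tB'_0$ the one obtained from $\tB'$ by appending $m$ zero rows. The appended frozen variables occur in no exchange relation, so $A(\tB_0)=A(\tB)[z_1^{\pm1},\dots,z_{m'}^{\pm1}]$ and likewise for $\tB'_0$, giving $\cA(\tB_0)\cong\cA\times(\CC^\ast)^{m'}$ and $\cA(\tB'_0)\cong\cA'\times(\CC^\ast)^{m}$. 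Both $\tB_0,\tB'_0$ are $(n+M)\times n$ with $M:=m+m'$, have principal part $B$, and have the same integer row span $L$; so it suffices to produce an isomorphism $\cA(\tB_0)\cong\cA(\tB'_0)$.

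To build it, write the coefficient parts of $\tB,\tB'$ as $C$ (size $m\times n$) and $C'$ (size $m'\times n$). Since the rows of $C'$ lie in $L=\ZZ\langle\text{rows of }B\rangle+\ZZ\langle\text{rows of }C\rangle$, there are integer matrices $A$ (size $m'\times m$) and $P_1$ with $C'=AC+P_1B$; symmetrically $C=\tilde AC'+P_2B$ for integer $\tilde A$ (size $m\times m'$) and $P_2$. I would then set
\[
Q=\begin{pmatrix}A & \Id_{m'}\\ \Id_m-\tilde AA & -\tilde A\end{pmatrix},\qquad P=\begin{pmatrix}P_1\\ -\tilde AP_1-P_2\end{pmatrix},\qquad R=\begin{pmatrix}\Id_n & 0\\ P & Q\end{pmatrix}.
\]
A direct multiplication shows $Q\in\GL_M(\ZZ)$ with inverse $\left(\begin{smallmatrix}\tilde A & \Id_m\\ \Id_{m'}-A\tilde A & -A\end{smallmatrix}\right)$, and another direct check (using the two relations above) shows $R\tB_0=\tB'_0$. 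By the theorem of Section~\ref{sec:autom} describing cluster morphisms, such an $R$ induces a cluster morphism $\cA(\tB'_0)\to\cA(\tB_0)$, i.e.\ a ring map $\Psi^\ast\colon A(\tB_0)\to A(\tB'_0)$ with $\Psi^\ast(z_j)=\prod_i x_i^{R_{ij}}$; and $R^{-1}=\left(\begin{smallmatrix}\Id_n & 0\\ -Q^{-1}P & Q^{-1}\end{smallmatrix}\right)$ is again integral of the required block form with $R^{-1}\tB'_0=\tB_0$, hence induces $\Psi'^\ast\colon A(\tB'_0)\to A(\tB_0)$. Since $R^{-1}R=RR^{-1}=\Id$, the compositions $\Psi'^\ast\Psi^\ast$ and $\Psi^\ast\Psi'^\ast$ fix the respective initial cluster variables and therefore equal the identity, so $\Psi$ is an isomorphism and $\cA\times(\CC^\ast)^{m'}\cong\cA'\times(\CC^\ast)^{m}$.

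For the last assertion, when $m=m'$ we get $\cA\times(\CC^\ast)^m\cong\cA'\times(\CC^\ast)^m$. Because $H^\ast((\CC^\ast)^m)$ is spanned by wedges of the $\dlog$ of the coordinate functions, hence concentrated in Deligne type $(b,b)$ in degree $b$ with $\dim H^b=\binom{m}{b}$, the Künneth isomorphism (which respects the Deligne splitting) gives for every $(k,p,q)$
\[
\sum_{b\ge0}\binom{m}{b}\,h^{k-b,(p-b,q-b)}(\cA)=\sum_{b\ge0}\binom{m}{b}\,h^{k-b,(p-b,q-b)}(\cA').
\]
Isolating the $b=0$ summand and inducting on $k$ yields $h^{k,(p,q)}(\cA)=h^{k,(p,q)}(\cA')$ for all $(k,p,q)$, and the maps realizing these equalities are those induced via Künneth by the isomorphism of varieties, giving $H^{k,(p,q)}(\cA)\cong H^{k,(p,q)}(\cA')$.

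I expect the main obstacle to be pinning down the unimodular matrix $Q$: the content is recognizing that, given mutually inverse integer expressions of $C'$ in terms of $C$ modulo the rows of $B$ (and conversely), the block matrix above is unimodular — a variant of the standard ``Whitehead'' trick $\left(\begin{smallmatrix}A&\Id\\ \Id-\tilde AA&-\tilde A\end{smallmatrix}\right)$. A more pedestrian substitute, also adequate here, is that two generating tuples of the finitely generated abelian group $L$ modulo the subgroup generated by the rows of $B$, of the same sufficiently large length, are $\GL_M(\ZZ)$‑equivalent by Smith normal form; the explicit formula is preferable since it sidesteps both making ``sufficiently large'' precise and a subtlety in lifting automorphisms of that group. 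One should also note that the cluster‑morphism machinery of Section~\ref{sec:autom} is applicable here with no local acyclicity or full‑rank hypothesis, since the construction of $\Psi^\ast$ only uses that substituting the monomial change of variables into the exchange relations reproduces the exchange relations of the target, which is exactly the content of Lemma~\ref{lem:Rmutation} together with the exchange‑relation computation there.
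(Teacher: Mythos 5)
Your proof of the geometric isomorphism is correct and takes a genuinely different route from the paper's. The paper introduces the ``union'' matrix $\tB'' = \left( \begin{smallmatrix} B \\ C \\ C' \end{smallmatrix} \right)$ and exhibits $\cA \times (\CC^{\ast})^{m'}$ and $\cA' \times (\CC^{\ast})^{m}$ as covers of $\cA''$ with lower-triangular unipotent $Q$, hence trivial deck group, invoking Proposition~\ref{prop:coveringmap} to conclude each cover is an isomorphism; that proposition's proof uses $A=U$ for locally acyclic algebras, so the paper's argument silently leans on hypotheses not present in the statement of Proposition~\ref{prop:rowspanenough}. Your construction of a single unimodular $R$ between the two zero-padded matrices via the Whitehead-style block matrix (which I checked: $Q$ does have the stated integer two-sided inverse, and $R\tB_0 = \tB'_0$ follows from $C' = AC+P_1B$ and $C = \tilde AC'+P_2B$), together with the observation that mutually inverse cluster morphisms compose to the identity on an initial cluster, sidesteps Proposition~\ref{prop:coveringmap} entirely and therefore needs no local acyclicity or full-rank hypothesis. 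That is a genuine improvement in self-containedness.

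For the second assertion, however, there is a gap relative to the literal statement. Your Künneth cancellation correctly proves the \emph{numerical} equality $h^{k,(p,q)}(\cA)=h^{k,(p,q)}(\cA')$ (the induction on $k$ isolating the $b=0$ term is valid), and this is what most downstream uses require. But the final sentence does not actually produce a map $H^{k,(p,q)}(\cA)\to H^{k,(p,q)}(\cA')$: the isomorphism $F^{\ast}$ of $H^{\ast}(\cA')\otimes H^{\ast}(T)$ with $H^{\ast}(\cA)\otimes H^{\ast}(T)$ need not have the form $f\otimes\mathrm{id}$, and composing a point-inclusion, $F$, and a projection gives a morphism whose pullback is not obviously an isomorphism. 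The paper instead uses that, for $m=m'$, there exist $R,R'$ with $R\tB=\tB'$ and $R'\tB'=\tB$, giving finite coverings $\cA\to\cA'$ and $\cA'\to\cA$; the transfer argument makes both pullbacks injective on $H^{\ast}(\cdot,\QQ)$, hence (by finite-dimensionality) isomorphisms, and these pullbacks respect the Deligne splitting. If you want the ``natural isomorphisms'' as stated, you should supply such honest maps rather than rely on Künneth cancellation.
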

\begin{proof}
Let $\tB = \left( \begin{smallmatrix} B \\ C \end{smallmatrix} \right)$ and $\tB' = \left( \begin{smallmatrix} B \\ C' \end{smallmatrix} \right)$. Let $\tB'' = \left( \begin{smallmatrix} B \\ C \\ C' \end{smallmatrix} \right)$ with corresponding cluster variety $\cA''$.  Since every row of $C'$ is in the span of $\tB$, we can find $R = \left( \begin{smallmatrix} \Id_n & 0 \\ P & Q \end{smallmatrix} \right)$, where $Q$ is a lower-triangular unipotent matrix, so that $R \left( \begin{smallmatrix} B \\ C \\ 0 \end{smallmatrix} \right) = \left( \begin{smallmatrix} B \\ C\\ C' \end{smallmatrix} \right)$.  We have $\Ker(\mult(Q^T)) = \{1\}$, so by Proposition \ref{prop:coveringmap} we have $\cA \times (\CC^{\ast})^{m'} \cong \cA''$.  Similarly, $\cA' \times (\CC^{\ast})^m \cong \cA''$.

If $m = m'$, then we can find $R$ and $R'$ such that $R \tB = \tB'$ and $R' \tB' = \tB$.  (See also \cite[Corollary~4.5]{Fra}.)  So there are finite coverings $\cA \to \cA'$ and $\cA' \to \cA$. Whenever $X$ is a finite cover of $Y$, we obtain an injection $H^{\ast}(X, \QQ) \to H^{\ast}(Y, \QQ)$ so, in this case, we have injections $H^{\ast}(\cA, \QQ) \to H^{\ast}(\cA', \QQ) \to H^{\ast}(\cA, \QQ)$ which must both be isomorphisms. Moreover, these isomorphisms pass to the mixed Hodge structure.
\end{proof}

%\begin{prop} \label{prop:extrarow}
%Let $\tB$ be an $(n+m) \times n$ extended exchange matrix with $m \geq 1$, and let $\cA$ be the corresponding cluster variety. Suppose that the bottom row of $\tB$ is in the integer span of the other rows. Let $\tB'$ be the $(n+m-1) \times n$ exchange matrix where we delete that bottom row, and $\cA'$ the corresponding cluster variety. Then $\cA \cong \cA' \times (\CC^{\ast})$.
%\end{prop}
%
%\begin{proof}
%Write $\tB_k$ for the $k$-th row of $\tB$. Let $\tB_{n+m} = \sum_{i=1}^{n+m-1} c_i \tB_i$. Let $R$ be the $(n+m) \times (n+m)$ matrix whose bottom row is $(-c_1, -c_2, \ldots, -c_{n+m-1}, 1)$ and which is otherwise the identity. Then $R \tB = \left( \begin{smallmatrix} \tB' \\ 0 \end{smallmatrix} \right)$. So $\cA$ is isomorphic to the cluster variety associated to $\left( \begin{smallmatrix} \tB' \\ 0 \end{smallmatrix} \right)$, which is $\cA' \times \CC^{\ast}$.
%\end{proof}

%
%
%Suppose that $\tB$ and $\tB'$ are $(n+m) \times n$ extended exchange matrices with the same top $n$ rows and the same integer row span. Let $\cA$ and $\cA'$ be the corresponding cluster varieties. What can we say about the relation between $\cA$ and $\cA'$?

Even if $\tB$ and $\tB'$ are both $(n+m) \times n$ extended exchange matrices satisfying the assumption  of Proposition \ref{prop:rowspanenough}, it is not necessarily true that $\cA \cong \cA'$ as cluster varieties.  Consider the exchange matrices:
\[ \tB = \begin{pmatrix} 0 & 5 \\ -5 & 0 \\ 1 & 0 \\ 0 & 1 \end{pmatrix} \qquad \text{and} \qquad \tB' = \begin{pmatrix} 0 & 5 \\ -5 & 0 \\ 1 & 0 \\ 0 & 2 \end{pmatrix}. \]
Suppose we had $\left( \begin{smallmatrix} \mathrm{Id} & 0 \\ P & Q \end{smallmatrix} \right) \tB = \tB'$. Then $Q \equiv \left(\begin{smallmatrix} 1 & 0 \\ 0 & 2 \end{smallmatrix}\right) \bmod 5$ and we deduce that $\det Q \equiv 2 \bmod 5$. But, if $Q$ is invertible over the integers, then $\det Q = \pm 1$, a contradiction. 

We do not know whether or not $\cA \cong \cA'$ as abstract varieties. (In particular, we do not know this for the above example.)
In general, $X \times \CC^{\ast} \cong Y \times \CC^{\ast}$ does not imply $X \cong Y$~\cite{Dubouloz}. 
It is possible that examining cluster varieties might build additional examples of failure of cancellation for products with tori.

\subsection{GSV forms} \label{sec:GSV}
Let $\tB$ be a $n \times (n+m)$ extended exchange matrix, and $\cA = \cA(\tB)$ be the corresponding cluster variety.  We denote the cluster variables in the initial seed by $x_1$, $x_2$, \dots, $x_{n+m}$.  A \defn{Gehktman-Shapiro-Vainshtein form}, or \defn{GSV form} for $\cA$ is a 2-form
\begin{equation}\label{eq:GSV} \gamma = \sum_{i,j=1}^{n+m} \widehat{B}_{ij} \frac{d x_i \wedge d x_j}{x_i x_j}, 
\end{equation}
where $\widehat{B}$ is a $(n+m) \times (n+m)$  skew symmetric matrix whose first $n$ columns are equal to $\tB$.  We say that a GSV-form $\gamma$ is \defn{full rank} if the matrix $\widehat{B}$ is full rank.  Note that full rank GSV-forms only exist if $n+m$ is even.

Suppose that $\cA$ satisfies the Louise property and is of full rank.  Then any GSV form $\gamma$ extends to a closed $2$-form on all of $\cA$ (cf. \cite{Mul3} and Corollary \ref{cor:LinAlg}).  By Lemma \ref{lem:logImpliesTate}, the class $[\gamma] \in H^*(\cA)$ represented by $\gamma$ lies in $H^{2,(2,2)}(\cA)$.

A cluster algebra typically has many GSV forms.

\begin{lemma}\label{lem:GSVfullrank}
Suppose that $n+m$ is even.  Then a full rank GSV form exists.
\end{lemma}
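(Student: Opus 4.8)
The statement is a pure linear algebra claim: given the $(n+m)\times n$ extended exchange matrix $\tB$ with skew-symmetric principal part $B$, and assuming $n+m$ is even, we must produce a $(n+m)\times(n+m)$ skew-symmetric matrix $\wB$ whose first $n$ columns are $\tB$ and which has full rank $n+m$. The plan is to build $\wB$ in block form
\[
\wB = \begin{pmatrix} B & -C^T \\ C & E \end{pmatrix},
\]
where $C$ is the bottom $m\times n$ block of $\tB$ (so that the first $n$ columns are automatically $\tB$), and $E$ is an $m\times m$ skew-symmetric matrix yet to be chosen. Skew-symmetry of $\wB$ holds for any skew-symmetric $E$, so the only real content is choosing $E$ to make $\wB$ nonsingular.

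First I would reduce to a generic-choice argument. The determinant $\det\wB$ is a polynomial function of the entries of $E$; since $\wB$ is a skew-symmetric matrix of even size $n+m$, $\det\wB = \mathrm{Pf}(\wB)^2$, where the Pfaffian $\mathrm{Pf}(\wB)$ is a polynomial (with integer coefficients) in the free parameters $E_{ij}$, $i<j$. It suffices to show this Pfaffian is not identically zero as a polynomial in the $E_{ij}$, for then some (in fact, most) integer choice of $E$ makes it nonzero, giving a full rank GSV form. So the heart of the matter is: \emph{does there exist some skew-symmetric $E$ making $\begin{pmatrix} B & -C^T \\ C & E\end{pmatrix}$ nonsingular?}

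To answer this, I would argue by cases on the parity of $n$ and use the structure of the block matrix. Write $r = \mathrm{rank}\,\tB$; by hypothesis the paper typically works in the full rank case, but here no full-rank hypothesis on $\tB$ is assumed, so I should handle general $\tB$. The key observation: the columns of $\begin{pmatrix} B \\ C \end{pmatrix} = \tB$ together with the columns of $\begin{pmatrix} -C^T \\ E \end{pmatrix}$ must span all of $\CC^{n+m}$. Since $\tB$ already contributes $\mathrm{rank}\,\tB$ independent columns, and the lower blocks $E$ can be chosen freely, I would complete a basis: pick vectors $w_1,\dots,w_{m}$ in $\CC^{n+m}$ so that together with the column span of $\tB$ they span everything, then try to realize $(w_k)$ as the last $m$ columns $\begin{pmatrix} -C^T \\ E\end{pmatrix}e_{n+k}$. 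The top coordinates of these columns are forced to be $-C^T$, but we get to choose the bottom $m\times m$ block $E$ arbitrarily subject only to skew-symmetry — this is the one genuine constraint to navigate, since skew-symmetry forces the diagonal of $E$ to vanish. I expect the cleanest route is: show that the map $E \mapsto \det\wB$ is a nonzero polynomial by exhibiting one explicit skew-symmetric $E$ (e.g. a block-diagonal sum of $2\times 2$ blocks $\left(\begin{smallmatrix} 0 & t \\ -t & 0\end{smallmatrix}\right)$, possibly after permuting coordinates) and computing that $\wB$ becomes nonsingular for large $t$ — the dominant term in $t$ being controlled by the Pfaffian of the $m\times m$ block $E$ times a minor of $B$ of complementary size.

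The main obstacle is the skew-symmetry constraint interacting with the parity of $n$: if $n$ is odd then $m$ is odd (since $n+m$ even), and a full-rank skew-symmetric $m\times m$ matrix does not exist, so the "big $E$" term alone cannot dominate; one must genuinely use the off-diagonal blocks $C, -C^T$ to pick up the missing rank. Concretely, the argument will need to show that $\mathrm{Pf}(\wB)$, expanded along its definition, contains a monomial in the $E_{ij}$ with nonzero coefficient coming from a maximal matching that uses exactly the right number of "cross" edges (between the first $n$ and last $m$ indices, contributing entries of $C$) paired with entries of $B$ and of $E$. Making this combinatorial non-vanishing precise — i.e. checking that at least one such monomial survives with nonzero coefficient regardless of $\tB$ — is where the care is needed. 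A clean way to finish: choose coordinates so that $B$ is in a standard skew-normal form (a direct sum of hyperbolic planes and a zero block), reduce $C$ correspondingly, and then it becomes transparent which entries of $E$ to switch on to saturate the rank; the existence of the required matching then follows by inspection.
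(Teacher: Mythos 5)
There is a genuine problem at the point where you decide that ``no full-rank hypothesis on $\tB$ is assumed, so I should handle general $\tB$.'' The lemma is false for general $\tB$: by definition the first $n$ columns of any skew-symmetric extension $\wB$ are exactly $\tB$, so
\[
\rank \wB \;\le\; \rank \tB + m,
\]
and $\wB$ can be nonsingular only if $\rank\tB = n$. (Concretely, for $n=2$, $m=0$, $B=0$ no full rank GSV form exists.) The full-rank hypothesis on $\tB$ is implicit from the surrounding discussion, is stated explicitly in the paper's proof, and is used essentially; any argument that tries to dispense with it must fail — in particular the ``required matching'' you hope to exhibit by inspection cannot exist when $\rank\tB < n$. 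You need to restore that hypothesis before anything else.

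Once the hypothesis is restored, your ``clean way to finish'' is essentially the paper's argument. The paper first observes, as you do, that it suffices to produce a rational (indeed, generic) extension, since the determinant is a polynomial in the free entries; it then normalizes by congruences $\wB \mapsto R\wB R^T$ with $R$ of the forms $\left(\begin{smallmatrix} \ast & 0\\ 0 & \Id_{m}\end{smallmatrix}\right)$ and $\left(\begin{smallmatrix} \Id_n & 0\\ \ast & \Id_{m}\end{smallmatrix}\right)$, bringing $\tB$ to the shape $\left(\begin{smallmatrix} B' & 0\\ 0&0\\ 0& C\\ 0&0\end{smallmatrix}\right)$ with $B'$ nonsingular skew-symmetric of size $\rank B$ and $C$ a nonsingular \emph{square} block of size $n-\rank B$ — this is exactly where full rank of $\tB$ enters — and then writes the extension down explicitly, with a full-rank skew-symmetric block $A$ of size $m-n+\rank B$ in the remaining corner. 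Your sketch stops short of this: you never exhibit the extension, and you never verify the parity point that makes it possible, namely that $m-n+\rank B$ is even (it is, because $n+m$ is even and $\rank B$ is even for skew-symmetric $B$); that is precisely where the hypothesis ``$n+m$ even'' is consumed. Your remark that for odd $n$ the cross blocks must carry the missing rank is correct, and in the normal form it is the $C$, $-C^T$ blocks that do so.
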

\begin{proof}
We have to show that a full rank, integer $(n+m) \times n$ matrix $\tB$ with skew-symmetric principal part $B$ can be extended to a full rank, integer $(n+m) \times (n+m)$ skew-symmetric matrix $\wB$.  
In other words, we must find a way to choose $\widehat{B}_{ij}$, for $n+1 \leq i,j \leq n+m$, to give a skew symmetric matrix with nonzero determinant. Since this determinant is a polynomial in these $\widehat{B}_{ij}$, it is enough to see this polynomial is nonzero and therefore it is enough to show that we can find such an extension with rational entries.

Suppose $R$ is a non-singular $(n+m) \times (n+m)$ matrix.  Then the matrix $\wB$ is full-rank if and only if the matrix $R\wB R^T$ is.  Using matrices of the form $R = \left(\begin{smallmatrix} \ast & 0 \\ 0 & \Id_{m} \end{smallmatrix}\right)$, we may reduce to the case that the principal part of $\tB$ has the form $ \left(\begin{smallmatrix} B' & 0 \\ 0 & 0 \end{smallmatrix}\right)$, where $B'$ is a skew-symmetric non-singular $\rank(B) \times \rank(B)$ matrix.  Next, by using matrices of the form $R = \left(\begin{smallmatrix} \Id_{n} & 0 \\ \ast & \Id_{m} \end{smallmatrix}\right)$, we may reduce to the case that
\[
\tB = \begin{pmatrix} B' & 0 \\ 0 & 0 \\ 0 & C \\ 0 & 0
\end{pmatrix},
\]
where the four blocks have heights $\rank(B), n-\rank(B), n-\rank(B), m-n+\rank(B)$, and $C'$ is a full-rank square matrix of size $n-\rank(B)$.  It is clear that a full rank extension is given by
\[
\wB = \begin{pmatrix} B' & 0 & 0 & 0 \\ 0 & 0 & - C^T \\ 0 & C & 0 &0 \\ 0 & 0 & 0 & A
\end{pmatrix},
\]
for any full rank skew-symmetric $(m-n+\rank(B)) \times (m-n+\rank(B))$ matrix $A$.
\end{proof}

\begin{proposition}\label{prop:GSVpullback}
Suppose that $(\Psi,\Phi,R): \cA(\tC) \to \cA(\tB)$ is a covering map of cluster varieties.  Let $\gamma$ be a GSV-form for $\cA(\tB)$, given by the matrix $\wB$.  Then the pullback form $\Psi^*(\gamma)$ is a GSV-form for $\cA(\tC)$, given by the matrix $\wC = R\wB R^T$.  If $\gamma$ has full rank, then so does $\Psi^*(\gamma)$.
\end{proposition}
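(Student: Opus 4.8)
The plan is to reduce everything to a one-line computation on the initial cluster torus, followed by some linear-algebra bookkeeping. Let $(\z,\tB)$ be the initial seed of $\cA(\tB)$ and $(\x,\tC)=\Phi(\z,\tB)$ the corresponding seed of $\cA(\tC)$, so that $\tC=R\tB$ and $\Psi^*(z_j)=\prod_{i=1}^{n+m} x_i^{R_{ij}}$ on the cluster torus of $(\x,\tC)$. Write $\gamma=\sum_{j,k}\wB_{jk}\,\dlog z_j\wedge\dlog z_k$. Since $\Psi^*$ commutes with $d$, logarithmic differentiation of the monomial relations gives $\Psi^*(\dlog z_j)=\sum_i R_{ij}\,\dlog x_i$, and substituting this into $\gamma$ and collecting coefficients yields
\[
\Psi^*(\gamma)=\sum_{i,\ell}\Big(\sum_{j,k} R_{ij}\wB_{jk}R_{\ell k}\Big)\,\dlog x_i\wedge\dlog x_\ell=\sum_{i,\ell}(R\wB R^T)_{i\ell}\,\dlog x_i\wedge\dlog x_\ell .
\]
Thus $\Psi^*(\gamma)$ is a $2$-form of the shape \eqref{eq:GSV} on $\cA(\tC)$ with defining matrix $\wC:=R\wB R^T$.

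Next I would check that $\wC$ is a legitimate GSV matrix for $\cA(\tC)$, i.e.\ that it is skew-symmetric and has first $n$ columns equal to $\tC$. Skew-symmetry is immediate from $\wB^T=-\wB$: $\wC^T=R\wB^T R^T=-R\wB R^T=-\wC$. For the columns, the hypothesis that the first $n$ columns of $\wB$ form $\tB$ reads $\wB\left(\begin{smallmatrix}\Id_n\\0\end{smallmatrix}\right)=\tB$; since $R$ has block form $\left(\begin{smallmatrix}\Id_n & 0\\ P & Q\end{smallmatrix}\right)$, its transpose satisfies $R^T\left(\begin{smallmatrix}\Id_n\\0\end{smallmatrix}\right)=\left(\begin{smallmatrix}\Id_n\\0\end{smallmatrix}\right)$, so
\[
\wC\begin{pmatrix}\Id_n\\0\end{pmatrix}=R\,\wB\begin{pmatrix}\Id_n\\0\end{pmatrix}=R\tB=\tC .
\]
This identifies $\Psi^*(\gamma)$ as the GSV form of $\cA(\tC)$ attached to $\wC$.

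For the full-rank statement, recall that for a covering map the integer matrix $R$ is square of full rank (Proposition~\ref{prop:coveringmap}), hence invertible over $\QQ$; and $\gamma$ having full rank means $\wB$ is invertible over $\QQ$. Therefore $\wC=R\wB R^T$ is invertible over $\QQ$, i.e.\ of full rank, so $\Psi^*(\gamma)$ is a full-rank GSV form.

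I do not expect a genuine obstacle: the entire content is the identity $\Psi^*(\dlog z_j)=\sum_i R_{ij}\,\dlog x_i$, read off directly from the monomial formula $\Psi^*(z_j)=\prod_i x_i^{R_{ij}}$, together with the two matrix identities above. The only point worth a sentence of care is that the computation on the initial cluster torus suffices: $\Psi$ carries the cluster torus of $(\x,\tC)$ into that of $(\z,\tB)$ via $\mult(R^T)$, pullback of differential forms is functorial, and a GSV form is by definition a $2$-form of the shape \eqref{eq:GSV} on a cluster torus, so matching the two $2$-forms there matches them as GSV forms.
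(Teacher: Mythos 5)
Your proposal is correct and follows essentially the same route as the paper: pull back $\dlog z_j$ via the monomial formula $\Psi^*(z_j)=\prod_i x_i^{R_{ij}}$ to get $\Psi^*(\dlog z_j)=\sum_i R_{ij}\dlog x_i$, conclude $\wC=R\wB R^T$, and use the block form and full rank of $R$ (Proposition~\ref{prop:coveringmap}) together with $\tC=R\tB$ for the extension and full-rank claims. Your extra verifications (skew-symmetry, the column computation $\wC\left(\begin{smallmatrix}\Id_n\\0\end{smallmatrix}\right)=\tC$, and the remark that checking on one cluster torus suffices) are just explicit spellings of steps the paper leaves implicit.
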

\begin{proof}
Denote the cluster variables of the initial seed of $\cA(\tB)$ (resp. $\cA(\tC)$) by $z_1,z_2,\ldots,z_{n+m}$ (resp. $x_1,x_2,\ldots,x_{n+m}$).  Since $\Psi^*(z_j) = \prod_{i=1}^{n+m} x_i^{R_{ij}}$, we have $\Psi^*(\dlog z_j) = \sum_{i=1}^{n+m} R_{ij} \dlog x_i$.  The formula for the pullback form $\Psi^*(\gamma)$ follows.  Suppose that $\wB$ has full rank.  The matrix $R$ is full rank (Proposition \ref{prop:coveringmap}) and we have $\tC = R\tB$, so the block form of $R$ implies that $\wC$ is a full rank skew-symmetric matrix extending $\tC$.  This gives the last statement.
\end{proof}

\section{Examples} \label{sec:examples}
In this section, we give mixed Hodge tables for various cluster varieties $\cA(\tB)$, to exhibit the phenomena which our later theorems will describe.  These were generally computed via ad hoc Mayer-Vietoris sequences arising from Proposition~\ref{prop:covering}, and using Proposition~\ref{prop:isolatedrank1} and Theorem \ref{thm:standard}. 
In the sequel~\cite{LS}, we will give more systematic ways of computing the cohomology of acyclic cluster varieties.

We first remark on a few ways in which we simplify our presentation. 
All of our examples are locally acyclic (in fact, acyclic) and of full rank. 
So, by Theorem~\ref{thm:curious}, all the cohomology is in degrees $(p,p)$ for the Deligne splitting, which is why we only display the dimensions of the $H^{k,(p,p)}$. 

If $\Gamma(\tB)$ is a tree, then any orientation of $\Gamma$ is mutation equivalent to any other orientation.  So, in examples involving trees, we do not specify how $\Gamma$ is oriented.

Proposition~\ref{prop:rowspanenough} means that we only need to specify $B$ and the integer row span of $\tB$ to give an example. Thus, in our examples, it is not necessary to give a specific bottom part to the $B$-matrix.

\subsection{Type $A_1$}
Table~\ref{tab:isolated} lists the mixed Hodge numbers $\dim H^{k,(p,p)}$ for the isolated cluster variety with $\tB = \left( \begin{smallmatrix} 0 \\ b \end{smallmatrix} \right)$, which we will compute in Proposition~\ref{prop:isolatedrank1}. 

The horizontal coordinate in the table is $k$ and the vertical coordinate in the table is $k-p$.  Thus the entry $b-1$ in the lower right indicates that $\dim H^{2,(1,1)}(\cA) = b-1$.  
Multiplication by the two-form $\gamma$ (see Proposition~\ref{prop:isolatedrank1}) moves two steps to the right in the table. 
The curious Lefschetz theorem (Theorem \ref{thm:curious}) states that every row of this table is palindromic, with centers on the diagonal line $p=(n+m)/2$. We have marked this diagonal line with \framebox{boxed text} in Table~\ref{tab:isolated}.

\begin{table}[!htbp] 
\begin{tabular}{|r|ccc|}
\hline
& $H^0$ & $H^1$ & $H^2$ \\
\hline
\rule{0pt}{1.5em} $k-p=0$ & $1$ & \framebox{$1$} & $1$ \\
$1$ & & & \framebox{$b-1$} \\[0.5 em]
\hline
\end{tabular}
\caption[Cohomology of an isolated cluster algebra]{The cohomology for $\tB = \left( \begin{smallmatrix} 0 \\ b \end{smallmatrix} \right)$. \label{tab:isolated}}
\end{table}

\subsection{Type $A_n$}
Suppose $\Gamma(\tB)$ is a path with $n$ vertices, and all nonzero entries of $\tB$ are $\pm 1$. If $n$ is even, then this exchange matrix is of really full rank even when $m = 0$.  The cohomology $H^\ast(\cA,\QQ)$ is computed in Proposition~\ref{prop:An} to be $\QQ[\gamma]/\gamma^{n/2+1}$.
So the Betti numbers are $(1,0,1,0,\ldots,0,1,0,1)$, and is entirely standard, meaning that it is concentrated entirely in Deligne summands $H^{k, (k,k)}$.

If $n$ is odd, then we must add a frozen row in order to make the matrix full rank. 
If we choose this frozen row so that $\tB$ is really full rank, then all the cohomology is in Deligne summands $H^{k,(k,k)}$ again (Proposition~\ref{prop:An}), with Betti numbers $(1,1,\ldots,1)$.

When $n$ is even,  these Betti numbers might suggest that $\cA$ is homotopy equivalent to $\CC \PP^{n/2}$.
However, this is wrong.  Computing with integer coefficients, we find that $\gamma^{n/2} = \left( \tfrac{n}{2} \right)!  \omega$ where $\gamma$ and $\omega$ are the integral generators of $H^2(\cA,\ZZ)$ and $H^{2n}(\cA,\ZZ)$ respectively. 

\subsection{Type $\tilde A_2$}
We next present the simplest examples of cluster varieties of really full rank where the cohomology is not entirely in $\bigoplus_k H^{k, (k,k)}$,  which we will term the standard part (Section~\ref{sec:standard}).

Suppose $\oG(\tB)$ is an acyclically oriented triangle, with all off-diagonal entries of $B$ being $\pm 1$.
(If we orient the triangle cyclically, it is mutation equivalent to $A_3$, a path.)
We must have $m \geq 1$ for $\tB$ to be full rank; we assume that $m = 1$ and that $\tB$ is really full rank.
The resulting cohomology is shown on the left half of Table~\ref{tab:triangle}.

Later in this section, we will want to consider the rank $4$ cluster variety obtained by adding another mutable variable to the triangle. We show the result on the right half of Table~\ref{tab:triangle}.  The top rows of these tables illustrate Theorem \ref{thm:standard}.

\begin{table}[!htbp] 
\[\begin{array}{|r|ccccc|}
\hline
& H^0 & H^1 & H^2 & H^3 & H^4 \\
\hline
k-p=0 & 1 & 1 & 1 & 1 & 1 \\
1 & & & & 1 &  \\
\hline
\end{array}
\qquad \qquad
\begin{array}{|r|ccccc|}
\hline
& H^0 & H^1 & H^2 & H^3 & H^4 \\
\hline
k-p=0 & 1 & 0  & 1 & 0 & 1 \\
1 & & & & 1 &  \\
\hline
\end{array}
 \]
\caption[Cohomology of a triangle]{The cohomology for $\tB = \left( \begin{smallmatrix} 0 & 1 & 1 \\ -1 & 0 & 1 \\ -1 & -1 & 0 \\ \hline 0 & 0 & -1 \end{smallmatrix} \right)$ and $\tB = \left( \begin{smallmatrix} 0 & 1 & 1 & 0 \\ -1 & 0 & 1 &0 \\ -1 & -1 & 0 & 1 \\ 0 & 0 & -1& 0 \end{smallmatrix} \right)$ \label{tab:triangle}}
\end{table}
In~\cite{LS} we will show that, if $\tB$ is acyclic and of really full rank, then $H^{2,(1,1)}(\cA) \cong 0$ and $H^{3,(2,2)}(\cA) \cong H^1(\Gamma, \CC)$.
So the nontrivial term in $H^{3,(2,2)}$ of Table~\ref{tab:triangle} reflects that $\Gamma$ is not a tree.

\subsection{An example with $k - p > 1$}
%We have not yet listed any examples where $k-p>1$. One can easily obtain one by taking products of isolated cluster algebras, but one still might suspect that there are no such terms when $\Gamma$ is connected. This is false. 
We give an example where $H^{k,(p,p)} \neq 0$ for $k-p >1$, where $n = 8$ and $m = 0$.
%broke this array because it caused an awkward page break
\begin{equation}\label{2Tri} %\begin{array}{c}
\tB = \begin{pmatrix}
0 & 1 & 1 & 0 & 0 & 0 & 0 & 0 \\
-1 & 0 & 1 & 0 & 0 & 0 & 0 & 0 \\
-1 & -1 & 0 & 1 & 0 & 0 & 0 & 0 \\
0 & 0  & -1 & 0 & 1 & 0 & 0 & 0  \\
0 & 0  & 0 & -1 & 0 & 1 & 0 & 0  \\
0 & 0 & 0  & 0 & -1 & 0 & 1 & 1   \\
0 & 0 & 0 & 0  & 0 & -1 & 0 & 1   \\
0 & 0 & 0 & 0 & 0  & -1 & -1 & 0    \\
\end{pmatrix} 
\end{equation}
\begin{equation}
\vec{\Gamma}=
\xymatrix{
2 \ar[r] & 3 \ar[r] & 4 \ar[r] & 5 \ar[r] & 6 \ar[r] \ar[dr] & 7 \ar[d] \\
1 \ar[u] \ar[ur] & & & & & 8 \\
}
%\end{array}  
 \end{equation}
We use the Mayer-Vietoris sequence derived via Proposition~\ref{prop:covering} from the edge between the fourth and fifth vertex. All open sets are products of the examples we discussed in Table~\ref{tab:triangle} (sometimes with different frozen parts, but the same up to row span), so we can reuse those computations and compute the cohomology table in Table~\ref{tab:2Tri}.

The term in $H^{6, (4,4)}$ is the cup product of the two terms in $H^{3,(2,2)}$. As mentioned before, in~\cite{LS} we will show that the $H^{3,(2,2)}$ terms come from the cycles in $\Gamma$ so, roughly, we needed to make two cycles which were ``far enough apart" to have nontrivial cup product.

\begin{table}[!htbp] 
\[\begin{array}{|r|ccccccccc|}
\hline
& H^0 & H^1 & H^2 & H^3 & H^4 & H^5 & H^6 & H^7 & H^8 \\
\hline
k-p=0 & 1 & 0 & 1 & 0 & 1 &  0 & 1 & 0 & 1 \\
1 & & & & 2 &0 & 2 & 0 & 2 &   \\
2 & & & &  & &  & 1 &  &   \\
\hline
\end{array}\]
\caption[Cohomology with $k-p=2$]{The cohomology of $\cA(\tB)$ with $\tB$ given in \eqref{2Tri}. \label{tab:2Tri}}
\end{table}

\subsection{Types $D_4, E_6, E_7, E_8$}
Even when $\Gamma(\tB)$ is a tree, the cohomology need not be entirely standard.

Suppose $\Gamma(\tB)$ is the $D_4$ Dynkin diagram, that is, a star with three leaves.  For $\tB$ to be full rank, we must have $m \geq 2$.  We assume that $m = 2$ and that $\tB$ is really full rank.  The mixed Hodge numbers are shown in Table~\ref{tab:D4}. 
\begin{table}[!htbp] 
\[\begin{array}{|r|ccccccc|}
\hline
& H^0 & H^1 & H^2 & H^3 & H^4 & H^5 & H^6 \\
\hline
k-p=0 & 1 & 2 & 2 & 2 & 2 & 2 & 1 \\
1 & & & & &  1 & &  \\
\hline
\end{array} \]
\caption[Cohomology of $D4$]{The cohomology of really full rank $D_4$ with $m = 2$. \label{tab:D4}}
\end{table}

We also exhibit the cohomology of the cluster varieties of types $E_6$, $E_7$ and $E_8$ in Table~\ref{tab:E678}. The first and last of these is already of really full rank without adding any frozen variables; for $E_7$ we must add a frozen variable, and we choose to do so in order to make $\tB$ really full rank.

\begin{table}[!htbp] 
\[E_6: \begin{array}{|r|ccccccc|}
\hline
& H^0 & H^1 & H^2 & H^3 & H^4 & H^5 & H^6 \\
\hline
k-p=0 & 1 & 0 & 1 & 0 & 1 & 0 & 1 \\
1 & & & & &  1 & &  \\
\hline
\end{array} \]
\[E_7:\begin{array}{|r|ccccccccc|}
\hline
& H^0 & H^1 & H^2 & H^3 & H^4 & H^5 & H^6 & H^7 & H^8\\
\hline
k-p=0 & 1 & 1 & 1 & 1 & 1 & 1 & 1 & 1 & 1\\
1 & & & & &  1 & 1 & 1 & &  \\
\hline
\end{array} \]
\[E_8:\begin{array}{|r|ccccccccc|}
\hline
& H^0 & H^1 & H^2 & H^3 & H^4 & H^5 & H^6 & H^7 & H^8\\
\hline
k-p=0 & 1 & 0 & 1 & 0 & 1 & 0 & 1 & 0 & 1\\
1 & & & & &  1 & 0 & 1 & &  \\
\hline
\end{array} \]
\caption[Cohomology of $E_6$, $E_7$ and $E_8$]{The cohomology of $E_6$ (no frozen variables), $E_7$ (one frozen variable, really full rank), and $E_8$ (no frozen variables) cluster varieties. \label{tab:E678}}
\end{table}

\subsection{Grassmannians}
The {\it open positroid variety} $\mathring \Pi(r,d)$ in the Grassmannian $\Gr(r,d)$ of $r$-planes in $\CC^d$ is obtained by removing the $n$ hyperplanes $\{\Delta_{i,i+1,\ldots,i+r-1}  = 0\}$ from $\Gr(r,d)$ \cite{KLS}, where $\Delta_I$ denotes a Pl\"ucker coordinate.  The open positroid variety is a cluster variety \cite{Scott}.

The open positroid variety $\mathring \Pi(2,d) \subset \Gr(2,d)$ turns out to be a cluster variety $\cA(\tB)$ of really full rank, where $n= d-3$, and $m = d-1$, and $\Gamma(\tB)$ is a path with $n-1$ edges.  Thus Proposition \ref{prop:An} states that the Poincar\'{e} series of $H^\ast(\mathring \Pi(2,d))$ is given by
\[
P(H^\ast(\mathring \Pi(2,d)),t) = 
(1+t)^{d-2} (1+ t+ \cdots + t^{d-3} + t^{d-2} ).
\]

The open positroid cell $\mathring \Pi(3,6) \subset \Gr(3,6)$ has type $D_4$ and is of really full rank with $m=5$, so its cohomology is obtained by multiplying the cohomology of Table~\ref{tab:D4} by that of $(\CC^{\ast})^3$.
Similarly, $\mathring \Pi(3,7)$ and $\mathring \Pi(3,8)$ are of types $E_6$ and $E_8$, of really full rank with $m=6$ and $7$, so their cohomologies are the top and bottom tables of 
Table~\ref{tab:E678}, times that of $(\CC^{\ast})^6$ and $(\CC^{\ast})^7$.  We have repeatedly applied Proposition \ref{prop:rowspanenough}.

Larger Grassmannians are no longer acyclic, although they are locally acyclic~\cite{MS}. One could use the methods of~\cite{MS} to recursively compute $H^{\ast}(\mathring \Pi(d,n))$, but the computations quickly become impractical.  %Vivek Shende has told us he is preparing a manuscript which gives the Betti numbers of $H^{\ast}(\mathring \Pi(d,n))$.

%The simplicity of these examples may lead the reader to suspect that $H^{\ast}(\cA)$ is always in degree $(k,(k,k))$ when $\tB$ has really full rank. Unfortunately, as soon as $\vec{\Gamma}$ is not mutation equivalent to a path, this begins to fail. 
%For example, consider the $D_4$ cluster algebra. 
%We must add two rows to make the exchange matrix full rank; we choose to make it really full rank. 

%\subsection{Vanishing of Deligne summands}
\begin{remark}
The above examples give evidence for the following statement, which will be established in~\cite{LS}.  Let $\oG(\tB)$ be acyclic and $\tB$ be of really full rank, and suppose that $H^{k, (p,p)} \neq 0$. Then $p \geq \max(\tfrac{2}{3} k,2k-m-n)$.

This is stronger than the bound $\tfrac{1}{2} k \leq p $ from Theorem \ref{T:weights}.
\end{remark}
%
%\begin{Theorem}[to be proved in~\cite{LS}]
%Let $\oG(\tB)$ be acyclic and $\tB$ be of really full rank. Suppose that $H^{k, (p,p)} \neq 0$. Then $p \geq \max(\tfrac{2}{3} k,2k-m-n)$.
%\end{Theorem}

%We suspect this result remains true if ``acyclic" is weakened to ``Louise".

\section{Isolated cluster varieties}\label{sec:isolated}
A seed $t = (\x,\tB)$ is \defn{isolated} if $\Gamma(t)$ has no edges. We also use this term for the cluster algebra $A(\tB)$ and the cluster variety $\cA(\tB)$.

\subsection{Rank 1}\label{sec:rankone}

Let $\tilde{B} = \left( \begin{smallmatrix} 0 \\ b \end{smallmatrix} \right)$. So $\cA = \{(x,x',y) \mid x x' = y^b+1 \text{ and } y \neq 0\} \subset \CC^3$.  The aim of this section is to describe the cohomology of $\cA$.  Let $\gamma$ be the $2$-form $\dlog x \wedge \dlog y$. Note that $\dlog x + \dlog x'= b \ y^{r-1} dy/(y^b+1)$, so $\dlog x \wedge \dlog y = - \dlog x' \wedge \dlog y$. Thus, $\gamma$ extends to a form which is defined whenever $x$ or $x'$ is nonzero. The locus where both $x$ and $x'$ vanish is codimension $2$ in the smooth variety $\cA$, so $\gamma$ extends to a regular $2$-form on all of $\cA$, which we also denote by $\gamma$.

%We set $\zeta_m = \exp(2 \pi i/m)$.

\begin{prop}\label{prop:isolatedrank1}
The variety $\cA$ retracts onto the locus $|x|=|x'|$, $|y|=1$, which is $b$ two-dimensional spheres glued in a circle. The Betti numbers are $(1,1,b)$. In $H^0$ and $H^1$, representative classes are $1$ and $\dlog y$, living in $(2 \pi i)^k H^k(\cA, \QQ)$, sitting in the Deligne summands $H^{0,(0,0)}$ and $H^{1,(1,1)}$ respectively.

A basis for $H^2(\cA, \CC)$ is given by the set of $b$ two-forms $\gamma$, $y \gamma$, $y^2 \gamma$, \dots, $y^{b-1} \gamma$. The form $\gamma$ is in $(2 \pi i)^2 H^2(\cA, \QQ)$ and spans the Deligne summand $H^{2,(2,2)}$. The forms $y \gamma$, $y^2 \gamma$, \dots, $y^{b-1} \gamma$ are in $(2 \pi i) H^2(\cA, \QQ(\exp(\pi i/b)))$, the vector space they span has a basis in $H^2(\cA, \QQ)$, and their span is the Deligne summand $H^{2,(1,1)}$.  The mixed Hodge structure on $H^2(\cA)$ is split over $\QQ$.
%The eigenvalues of the $p$-th power Frobenius on $H^0$ and $H^1$ are $1$ and $p$. On the span of $\gamma$, Frobenius acts by $p^2$; on the span of $y \gamma$, $y^2 \gamma$, \dots, $y^{m-1} \gamma$, the eigenvalues of Frobenius are the numbers $\chi(q)$, where $\chi$ ranges over Dirichlet characters whose period $d$ obeys $d|m$ and $d>1$.
\end{prop}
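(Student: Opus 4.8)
The plan is to compute $H^\ast(\cA,\CC)$, together with its mixed Hodge structure, from the cover of $\cA$ by the two cluster tori $T=\{x\neq0\}$ and $T'=\{x'\neq0\}$, using an explicit basis of $2$-cycles to pin down periods.

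\emph{Setup and Betti numbers.} Since $x'=(y^b+1)/x$ on $T$, we have $T\cong(\CC^\ast)^2$ with coordinates $(x,y)$, and symmetrically $T'\cong(\CC^\ast)^2$; moreover $T\cap T'=\{x\neq0,\ y^b+1\neq0\}\cong\CC^\ast\times(\CC^\ast\setminus\{\zeta_1,\dots,\zeta_b\})$, where $\zeta_1,\dots,\zeta_b$ are the roots of $y^b+1$, while $\cA\setminus(T\cup T')$ is the finite set $\{(0,0,\zeta_i)\}$. A one-line Jacobian check shows $\{xx'=y^b+1\}$ is smooth, so $\cA$ is a smooth affine surface with $H^{>2}(\cA)=0$, and deleting the finitely many points of $\cA\setminus(T\cup T')$ does not affect $H^{\le2}$, so $H^{\le2}(\cA)\cong H^{\le2}(T\cup T')$. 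Writing out the Mayer--Vietoris sequence of $T\cup T'$ and computing the three restriction maps $H^\ast(T)\oplus H^\ast(T')\to H^\ast(T\cap T')$ in the $\dlog$-bases---using $\dlog x'=\dlog(y^b+1)-\dlog x=\sum_i\dlog(y-\zeta_i)-\dlog x$, so that $\dlog x'\wedge\dlog y$ and $\dlog x\wedge\dlog y$ agree up to sign in $H^2$---yields Betti numbers $(1,1,b)$. Since $(\CC^\ast)^2$ and $\CC^\ast\times(\text{curve minus points})$ have cohomology of mixed Tate type, Lemma~\ref{lem:mixedTate} shows $T\cup T'$, hence $\cA$, has cohomology of mixed Tate type.

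\emph{A basis of $H^\ast$ and its periods.} For $H^0$ and $H^1$ the classes $1$ and $\dlog y$ span (the latter is nonzero, integrating to $2\pi i$ over the generating $1$-cycle), and they lie in $H^{0,(0,0)}$ and $H^{1,(1,1)}$ by Lemma~\ref{lem:logImpliesTate}. For $H^2$, let $S^2_j$, $j\in\ZZ/b$, be the $2$-sphere in $\cA$ obtained from the circle bundle $\{|x|=|x'|=|y^b+1|^{1/2}\}$ over the $j$-th arc of $\{|y|=1\}\setminus\{y^b+1=0\}$ by collapsing the circle over each endpoint to the point $(0,0,\zeta_j)$; these are continuous images of $S^2$, hence honest $2$-cycles. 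Using $\gamma|_T=\dlog x\wedge\dlog y$ and $\int_{|x|=r}\dlog x=2\pi i$, one computes $\tfrac{1}{2\pi i}\int_{S^2_j}\gamma=\pm2\pi i/b$ (independent of $j$) and $\tfrac{1}{2\pi i}\int_{S^2_j}y^k\gamma=c_k\,\omega^{jk}$ for $1\le k\le b-1$, where $\omega=\exp(2\pi i/b)$ and $c_k=\pm\tfrac1k\exp(k\pi i/b)(\omega^k-1)\neq0$. Thus the period matrix of $\gamma,y\gamma,\dots,y^{b-1}\gamma$ against $S^2_0,\dots,S^2_{b-1}$ equals, up to nonzero column scalars, the Vandermonde matrix $(\omega^{jk})$, which is invertible; since $\dim H^2(\cA)=b=\dim H_2(\cA)$, this simultaneously shows that $\{\gamma,y\gamma,\dots,y^{b-1}\gamma\}$ is a basis of $H^2(\cA,\CC)$, that $\{[S^2_j]\}$ is a $\QQ$-basis of $H_2(\cA,\QQ)$, and (reading off entries) that $\gamma\in(2\pi i)^2H^2(\cA,\QQ)$ and $y^k\gamma\in(2\pi i)H^2(\cA,\QQ(\exp(\pi i/b)))$.

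\emph{Hodge types and the splitting over $\QQ$.} Since $\gamma$ equals $\dlog x\wedge\dlog y$ on the dense open $T$, it is a rational log form, so Proposition~\ref{prop:rationalLogImpliesTate} gives $[\gamma]\in H^{2,(2,2)}(\cA)$. By Lemma~\ref{lem:topHodgeInjective}, $F^2H^2(\cA)\hookrightarrow F^2H^2(T)=H^2(T)=\CC$; it contains $[\gamma]$, whose restriction to $T$ is nonzero, so $F^2H^2(\cA)=\CC[\gamma]$, and mixed Tateness forces $H^{2,(2,2)}(\cA)=F^2H^2(\cA)=\CC[\gamma]$ and $\dim H^{2,(1,1)}(\cA)=b-1$. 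On $T$ the forms $y^k\gamma$ ($k\ge1$) are exact, $y^k\dlog x\wedge\dlog y=-d(\tfrac1k y^k\dlog x)$, so $[y^k\gamma]|_T=0$; writing $[y^k\gamma]=a_k+c_k[\gamma]$ with $a_k\in H^{2,(1,1)}(\cA)$ and restricting to $T$ forces $c_k=0$, whence $[y^k\gamma]\in H^{2,(1,1)}(\cA)$ and these $b-1$ classes form a basis of it. Finally, $H^0$ and $H^1$ are single Deligne summands, visibly $\QQ$-defined; $H^{2,(1,1)}(\cA)=W_2H^2(\cA)$ is $\QQ$-defined because the weight filtration is; and $H^{2,(2,2)}(\cA)=F^2H^2(\cA)$ is $\QQ$-defined because it is spanned by the rational class $(2\pi i)^{-2}[\gamma]$. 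Hence the Deligne splitting of $H^\ast(\cA)$ is defined over $\QQ$, so the mixed Hodge structure---in particular that of $H^2(\cA)$---is split over $\QQ$.

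\emph{The retraction, and the main obstacle.} To justify the stated geometric picture, I would show $\cA$ deformation retracts onto $K:=\{|x|=|x'|,\ |y|=1\}$ in two stages: first the fiberwise rescaling $(x,x',y)\mapsto\big((|x'/x|)^{t/2}x,\,(|x'/x|)^{-t/2}x',\,y\big)$ onto $\{|x|=|x'|\}$, then the radial retraction $y\mapsto y/|y|^t$ of the base, lifted by keeping $|x|=|x'|=|y^b+1|^{1/2}$ and $\arg(x/x')$ fixed; $K$ is then a circle bundle over $\{|y|=1\}$ degenerating over the $b$ points where $y^b+1=0$, i.e.\ $b$ two-spheres cyclically glued at $b$ points, with homology $(1,1,b)$ consistent with the first paragraph. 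The main obstacle is the analytic care in this step---continuity at the locus $x=x'=0$ where the fiber circles collapse (handled by the weighted AM--GM bound $|x|^{1-t/2}|x'|^{t/2}\le|x|+|x'|$) and ensuring $y^b+1$ does not vanish prematurely along the base retraction---together with getting the normalization in the period computation exactly right: it is precisely the rationality (but non-integrality, for $b>1$) of $(2\pi i)^{-2}[\gamma]$, not mixed Tateness by itself, that delivers the $\QQ$-splitting.
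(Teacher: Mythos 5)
Your cohomological argument is correct, and it reaches the conclusions of the proposition by a route that genuinely differs from the paper's in several steps. The paper gets the Betti numbers $(1,1,b)$ from the explicit retraction onto the necklace of spheres, whereas you get them from the Mayer--Vietoris sequence of the two cluster tori $T,T'$ together with the fact that deleting finitely many points of a smooth fourfold does not change $H^{\le 2}$; this also lets you invoke Lemma~\ref{lem:mixedTate} directly for mixed Tateness. For the Deligne types in $H^2$, the paper works with the cover by $U=\{x\neq0\}$, $U'=\{x'\neq0\}$: it identifies $\mathrm{Span}(y\gamma,\dots,y^{b-1}\gamma)$ as the kernel of restriction to $U\oplus U'$, hence as the image of $H^1(U\cap U')$ under the boundary map, which forces type $(1,1)$; you instead pin down $H^{2,(2,2)}(\cA)=\CC[\gamma]$ by combining Proposition~\ref{prop:rationalLogImpliesTate} with the injectivity $F^2H^2(\cA)\hookrightarrow F^2H^2(T)$ of Lemma~\ref{lem:topHodgeInjective}, and then kill the $(2,2)$-component of $[y^k\gamma]$ using its exactness on the single torus $T$ and functoriality of the splitting. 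Your argument that the span of $y\gamma,\dots,y^{b-1}\gamma$ is $\QQ$-defined (it equals $W_2H^2$, which is defined on rational cohomology) replaces the paper's identification of that span as $\ker\sum_j\int_{S_j}$. The period computations over the spheres $S_j$, the Vandermonde argument, and the resulting rationality of $(2\pi i)^{-2}[\gamma]$ are the same in both proofs; a nice feature of your version is that the nondegenerate period pairing plus the Mayer--Vietoris count of $\dim H^2$ shows the $[S_j]$ are a homology basis without appealing to the retraction at all, so the Hodge-theoretic conclusions do not depend on the homotopy model.

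The one genuine gap is the retraction itself, which the proposition asserts and which you only sketch. As written, your first-stage homotopy $(x,x',y)\mapsto\bigl(|x'/x|^{t/2}x,\,|x'/x|^{-t/2}x',\,y\bigr)$ fails to be continuous at $t=0$ along the locus where exactly one of $x,x'$ vanishes (e.g.\ $x=0$, $x'\neq0$, $y^b=-1$): for every $t>0$ the image is $(0,0,y)$, while at $t=0$ it is $(0,x',y)$, so the jump occurs at a locus different from the $x=x'=0$ collapse your AM--GM remark addresses, and that remark controls moduli but not this jump. (The same subtlety is present in the paper's second-stage formula, which it elides; the paper is careful only about the first stage, the retraction of $y$ onto $|y|=1$ via the smoothing function $h(t,r)$.) To complete this part you would either need to modify the fiberwise retraction so that points on the degenerate fibers $\{xx'=0\}$ move continuously (rather than being sent instantaneously to the node), or argue the homotopy equivalence with the necklace of spheres by a softer method; since your Betti-number and period arguments are independent of the retraction, only the first sentence of the proposition is affected.
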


When $b=1$, this result specializes to say the following:
\begin{cor} \label{cor:isolatedreallyfullrank}
Let $\tilde{B} = \left( \begin{smallmatrix} 0 \\ 1 \end{smallmatrix} \right)$ with cluster variety
$
\{(x,x',y) \mid xx' = y+1 \text{ and } y \neq 0\} \subset \CC^3.
$ 
This retracts onto the locus $|x|=|x'|$, $|y|=1$, which is a pinched torus. The Betti numbers are $(1,1,1)$ with representative classes $(1, \dlog y, \dlog x \wedge \dlog y)$. The classes $\dlog y$ and $\dlog x \wedge \dlog y$ live in $(2 \pi i) H^1(\cA, \QQ)$ and $(2 \pi i)^2 H^2(\cA, \QQ)$, and are in Deligne summands $H^{1,(1,1)}$ and $H^{2, (2,2)}$ respectively.
\end{cor}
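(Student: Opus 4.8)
The plan is to reduce to the case $b=1$, bootstrap to general $b$ via a covering map, and then extract the mixed Hodge structure from a handful of explicit classes using the lemmas of Section~\ref{sec:cohom}. Eliminating $y=xx'-1$ identifies the $b=1$ variety $\cA_1=\{xx'=y+1,\ y\neq 0\}$ with $\CC^2\setminus\{xx'=1\}$, and the map $(x,x',y)\mapsto(x,x',y^b)$ realizes the general $\cA$ as a connected degree-$b$ \'etale cover of $\cA_1$ with deck group $\mu_b$ acting by $\zeta\cdot(x,x',y)=(x,x',\zeta y)$. For the retraction I would use the gradient flow on $\cA_1\subset\CC^2$ of $-\mu^2$, where $\mu=|x|^2-|x'|^2$ is the moment map of the circle action $e^{i\vartheta}\cdot(x,x')=(e^{i\vartheta}x,e^{-i\vartheta}x')$: this flow preserves each locus $\{|xx'|=\text{const}\}$ and on it moves monotonically toward $\{|x|=|x'|\}$, so it is complete and retracts $\cA_1$ onto $\{|x|=|x'|\}\cap\cA_1$, the two lines $\{x=0\}$ and $\{x'=0\}$ collapsing to the unique fixed point $(0,0)$; a second similar flow retracts this onto $Z_1=\{|x|=|x'|,\ |xx'-1|=1\}$, which one checks directly is a $2$-sphere with two points identified. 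Lifting the retraction through the covering (homotopy lifting) shows that $\cA$ deformation retracts onto $Z=\{|x|=|x'|,\ |y|=1\}$; projecting to the $y$-circle exhibits $Z$ as a circle bundle over $S^1$ degenerating over the $b$ roots of $-1$, i.e.\ a cycle of $b$ two-spheres, so a Mayer--Vietoris computation gives Betti numbers $(1,1,b)$, and since $\mu_b$ permutes the $b$ spheres simply transitively, $H^2(\cA,\CC)$ is the regular representation of $\mu_b$.

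Next, the explicit forms. The class $1$ generates $H^0$; $\dlog y$ is a nonzero log form (period $2\pi i$ around the $y$-circle), so by Lemma~\ref{lem:logImpliesTate} it generates $H^{1,(1,1)}(\cA)$, and it lies in $(2\pi i)H^1(\cA,\QQ)$. The form $\gamma=\dlog x\wedge\dlog y$ restricts to a log form on the dense cluster torus $\{x\neq 0\}\cong(\CC^\ast)^2$, hence is a rational log form, so $[\gamma]\in H^{2,(2,2)}(\cA)$ by Proposition~\ref{prop:rationalLogImpliesTate}, and $y^j\gamma$ is a regular $2$-form for each $j$. To see that $\gamma,y\gamma,\dots,y^{b-1}\gamma$ are a basis of $H^2(\cA,\CC)$ I would compute their periods over the $b$ spheres $S_0,\dots,S_{b-1}$ of $Z$: on $S_i$ one has $\gamma|_{S_i}=d\theta\wedge d\phi$ (writing $y=e^{i\theta}$, $\phi=\arg x$; the $d\log|x|$ term drops because $|y|=1$), so $\int_{S_i}y^j\gamma=2\pi\int_{\theta_i}^{\theta_i+2\pi/b}e^{ij\theta}\,d\theta$, and the resulting $b\times b$ period matrix is, up to nonzero column scalars, a Vandermonde matrix in the $b$-th roots of unity, hence invertible. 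The same computation gives $\tfrac{1}{(2\pi i)^2}\int_{S_i}\gamma=-\tfrac1b\in\QQ$ and $\tfrac{1}{2\pi i}\int_{S_i}y^j\gamma\in\QQ(e^{\pi i/b})$ for $1\le j\le b-1$; pairing against the $[S_i]$, which span $H_2(\cA,\QQ)$, this shows $[\gamma]\in(2\pi i)^2H^2(\cA,\QQ)$ and $[y^j\gamma]\in(2\pi i)H^2(\cA,\QQ(e^{\pi i/b}))$.

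Finally, the mixed Hodge structure. By Lemma~\ref{lem:topHodgeInjective} the map $F^2H^2(\cA)\to F^2H^2((\CC^\ast)^2)=H^{2,(2,2)}((\CC^\ast)^2)$ is injective, and the target is one-dimensional; since the source contains the nonzero class $[\gamma]\in H^{2,(2,2)}$ it equals $\CC[\gamma]=H^{2,(2,2)}(\cA)$. Combined with Theorem~\ref{T:weights} (weights of $H^2$ lie in $\{2,3,4\}$) and conjugate symmetry, this forces $H^2(\cA)$ to be of mixed Tate type with $H^2(\cA)=H^{2,(2,2)}\oplus H^{2,(1,1)}$ and $\dim H^{2,(1,1)}=b-1$. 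Now $H^{2,(2,2)}=\CC[\gamma]$ is the $\mu_b$-invariant line (it is $\mu_b$-stable, one-dimensional, and $\gamma$ is $\mu_b$-invariant), so, $H^2(\cA,\CC)$ being the regular representation, $H^{2,(1,1)}$ is the sum of the $b-1$ nontrivial isotypic lines; since $y^j\gamma$ transforms by $\zeta\mapsto\zeta^j$ it lies in $H^{2,(1,1)}$ for $1\le j\le b-1$, so these forms are a basis of $H^{2,(1,1)}$. The splitting over $\QQ$ then follows: $H^{2,(2,2)}=\CC[\gamma]$ has the rational basis $[\gamma]/(2\pi i)^2$, while $H^{2,(1,1)}=W_2H^2(\cA)$ is a term of the (rationally defined) weight filtration and so has a basis in $H^2(\cA,\QQ)$.

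The two substantive points are the retraction and the vanishing $\dim F^2H^2(\cA)=1$. For the former, the gradient-flow argument is powered by the confinement observation above, but still requires care at the intermediate stages near the pinch point $(0,0)$ and a reparametrization to reach the limiting map at the final time; for the latter, Lemma~\ref{lem:topHodgeInjective} is used in an essential way. Everything else is either a direct computation (the periods) or formal from the lemmas of Section~\ref{sec:cohom} together with the $\mu_b$-action.
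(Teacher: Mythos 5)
Your proposal is correct in substance, but it reaches the conclusion by a partly different route than the paper, which simply obtains Corollary~\ref{cor:isolatedreallyfullrank} as the $b=1$ case of Proposition~\ref{prop:isolatedrank1}. The agreement is in the skeleton: retract onto the locus $|x|=|x'|$, $|y|=1$, compute the periods of $y^j\gamma$ over the spheres (Vandermonde), and read off rationality from the period matrix. The differences are twofold. First, for the retraction the paper writes explicit homotopies with a bump function $h(t,r)$ (the condition $\tfrac{\partial h}{\partial r}(t,1)=1$ is exactly what makes the map continuous where the fiber collapses), whereas you use the gradient flow of $-\mu^2$ for the moment map $\mu=|x|^2-|x'|^2$, followed by a radial retraction in $y$, and lift through the cover $y\mapsto y^b$. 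This is viable -- the flow preserves $xx'$ exactly, and the uniform bound $\mu(t)\leq \mu(0)/(1+8\mu(0)t)$ makes the reparametrized limit continuous -- but it is the least detailed part of your write-up; the continuity of the second (radial-in-$y$) homotopy near $y^b=-1$ is precisely the point the paper's bump function handles, and you only flag it. Second, and more interestingly, your identification of the Deligne splitting avoids the paper's Mayer--Vietoris argument (where the paper shows $\mathrm{Span}(y\gamma,\dots,y^{b-1}\gamma)$ is the kernel of restriction to $H^2(U)\oplus H^2(U')$, hence comes from $H^1(U\cap U')$ and lies in $(1,1)$, using the explicit exactness $y^c\gamma=\tfrac1c d(y^c\,dx/x)$ on $U$). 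Instead you use Lemma~\ref{lem:topHodgeInjective} to force $\dim F^2H^2(\cA)\leq\dim F^2H^2(T)=1$, so $F^2H^2=\CC[\gamma]=H^{2,(2,2)}$ and the summands $(2,0),(2,1)$ vanish; conjugate symmetry (Lemma~\ref{L:bigrading}(3)) kills $(0,2),(1,2)$, and the $\mu_b$-equivariance of the splitting, together with $H^2$ being the regular representation, places each $y^j\gamma$ ($j\neq 0$) in $H^{2,(1,1)}$. This is a clean structural alternative, and your use of the rationally defined weight filtration for the $\QQ$-basis of $H^{2,(1,1)}$ likewise replaces the paper's observation that this space is the kernel of $\sum_j\int_{S_j}$. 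Two small points to tidy: justify that $\gamma$ and $y^j\gamma$ are regular on all of $\cA$ (e.g.\ $y^j\gamma=\tfrac{1}{b}\,y^{j-b}\,dx\wedge dx'$, regular since $y$ is invertible), and note that on $S_i$ the term $d\log|x|$ drops because $|x|$ is a function of $\arg y$ there (so its wedge with $d\arg y$ vanishes), not merely because $|y|=1$.
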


\begin{proof}[Proof of Proposition~\ref{prop:isolatedrank1}]

We begin by retracting $\cA$ onto a necklace of $b$ two-dimensional spheres.

Let $h(t,r):[0,1] \times \RR_{>0} \to \RR_{>0}$ be a smooth function such that
\begin{itemize}
\item $h(1,r)=r$ and $\lim_{t \to 0} h(t,r)=1$ uniformly on compact subsets of $\RR_{>0}$ 
\item $h(t,1)=1$ and $\tfrac{\partial h}{\partial r}(t,1)=1$.
\end{itemize}
For an explicit example, one may take $h(t,r) = \exp \left( \tfrac{t}{1-t} \tanh \tfrac{(1-t) \log r}{t} \right)$ for $0<t<1$, with $h(1,r)=r$ and $h(0,r)=1$.

The map $\cA \times [0,1] \to \cA$
\[
(x,x', r e^{i \phi}) \mapsto \left(x,\frac{1+h(t,r)^b e^{i b\phi}}{1+r^b e^{i b \phi}} x', h(t,r) e^{i \phi} \right)
\]
is a deformation retract of $\cA$ onto the locus $|y|=1$.  (The condition $\tfrac{\partial h}{\partial r}(t,1)=1$ means that we can extend the fraction continuously to $1$ at $r=1$, $e^{i b \phi}=-1$.)

Write $x=r e^{i \theta}$ and $x' = r' e^{i \theta'}$. The map $\{|y|=1\} \times [0,1] \to \{|y|=1\}$
\[
(r e^{i \theta}, r' e^{i \theta'} ,y) \times t \longmapsto \left( r^{1-t/2} (r')^{t/2} e^{i \theta},  r^{t/2} (r')^{1-t/2} e^{i \theta'}    , y\right)
\]
is a further deformation retract onto the locus $|x|=|x'|$.  Thus $\cA$ deformation retracts to the locus $Y$ where $|x|=|x'|$ and $|y|=1$. 

 The fiber of $Y$ over some $y \in S^1$ is a circle if $y^b+1 \neq 0$, and is a point if $y ^b+ 1 = 0$.  Thus $\cA$ is homotopy equivalent to the space $Y$ consisting of $b$ two-dimensional spheres $S_1$, $S_2$, \ldots, $S_b$ glued in a circle. Specifically, we write $S_j$ for the sphere where $y = \exp(i \phi)$ for $\phi \in [(2 j-1) \pi/b, (2j+1) \pi /b]$.

% For $m \geq 3$we have 
%$$
%S_i \cap S_j = \begin{cases} \text{one point } (0,0,\zeta_m^i) & \mbox{if $i-j = 1 \mod m$} \\
%\text{one point } (0,0, \zeta_m^j) & \mbox{if $j-i = 1 \mod m$} \\
%\emptyset &\mbox{otherwise.}
%\end{cases}
%$$
It is easy to check that the Betti numbers of $Y$, and thus $\cA$, are $(1,1,b)$.

The first homology $H_1(Y,\ZZ)$ is spanned by the class of a continuous curve $\gamma:[0,1] \to Y$ where $\gamma(t) =(x(t),x'(t),e^{2\pi i t})$.  It is thus clear that $\dlog y$ spans $H^1(\cA,\CC)$, and the integral
\[
\int_\gamma \frac{dy}{y} = 2\pi i
\]
shows that $\dlog y$ lies in $(2 \pi i) H^1(\cA, \QQ)$.  
Map $\cA$ to $\mathbb{C}^{\ast}$ by $(x,x',y) \mapsto y$. We see that the induced map $H^1(\CC^{\ast}, \CC) \to H^1(\cA, \CC)$ is an isomorphism. Since pullbacks respect the Deligne splitting, this shows that the one-dimensional space $H^1(\cA, \CC)$ is equal to the Deligne summand $H^{1,(1,1)}(\cA)$.

We next work with $H^2(\cA)$.
We will parametrize the sphere $S_j$ with coordinates $\lambda$ (longitude) and $\phi$ (latitude), so that $y = e^{i \phi}$ with $\phi \in [(2 j-1) \pi/b, (2j+1) \pi /b]$ and $x = r(\phi) e^{i \lambda}$ where $r(\phi) = \sqrt{|1+e^{i b \phi}|}$.
In this parametrization, 
\[ \gamma = \frac{dx}{x} \wedge \frac{dy}{y} = (r'(\phi)/r(\phi) d \phi + i d \lambda) \wedge (i d \phi) = d \phi \wedge d \lambda. \]

Thus, 
\[ \int_{S_j} \gamma = \int_{\phi = (2j-1) \pi/b}^{(2j+1) \pi/b} \int_{\lambda = 0}^{2 \pi} d \phi \ d \lambda = \frac{(2 \pi i)^2}{b} .\]
So $\frac{1}{(2 \pi i)^2} \gamma$ represents a class in $H^2(\cA, \QQ)$.
We remark that, although this computation was easy, it forms the base case of our eventual computation that the mixed Hodge structures of cluster varieties are split over $\QQ$.

%
%We have $\int_{S_j} \gamma = \int_{t=(2j-1)\pi i/m}^{(2j+1) \pi i/m} \int_{s = 0}^{2\pi i} ds dt = \frac{(2\pi i)^2}{m}$ so $\frac{1}{(2 \pi i)^2} \gamma$ represents a class in $H^2(\cA, \QQ)$.
%We remark that, although this computation was easy, it forms the base case of our eventual computation that the mixed Hodge structures of cluster varieties are split over $\QQ$.

For $c \neq 0$, the integral of $y^c \gamma$ over the 2-sphere $S_j$ can be computed as
\begin{multline}
\int_{S_j} y^c \gamma = \int_{\phi = (2j-1) \pi/b}^{(2j+1) \pi/b} \int_{\lambda = 0}^{2 \pi} e^{i c \phi} d \phi \ d \lambda = \\ \frac{2\pi i}{c}\left(\exp((2j+1) c \pi i/b)- \exp((2j-1) c \pi i/b) \right)
= \frac{-4\pi}{c}\sin(\pi c /b)  e^{jc 2\pi i/b} \label{keyIntegral}  \end{multline}
Thus we have $\frac{1}{ 2\pi i } y^c \gamma \in H^2(\cA,\QQ(e^{\pi i /b}))$.

In order to check that $\gamma$, $y \gamma$, $y^2 \gamma$, \dots, $y^{b-1} \gamma$ is a basis for $H^2(\cA, \CC)$, we must check that it pairs perfectly against the $b$ spheres, which are a basis for $H_2(\cA, \QQ)$. In other words, we must check that the $b \times b$ matrix whose $(c,j)$ entry is $\int_{S_j} y^c \gamma$ is invertible. Rescaling the $c=0$ row by $(2 \pi i)^2/b$ and the other rows by $\frac{-4\pi}{c}\sin(\pi c /b)$, we obtain the Vandermonde matrix $\left((e^{2 \pi i/b})^{jc} \right)$, which is clearly invertible.
%$$ \begin{pmatrix} 
%\frac{(2 \pi i)^2}{m} & \frac{(2 \pi i)^2}{m} & \frac{(2 \pi i)^2}{m} & \cdots & \frac{(2 \pi i)^2}{m} \\
%(2 \pi i)( \zeta_m-1) & (2 \pi i)(\zeta_m-1) \zeta_m &  (2 \pi i)(\zeta_m-1) \zeta_m^2 & \cdots  &  (2 \pi i)(\zeta_m-1) \zeta_m^{m-1} \\
%\frac{2 \pi i}{2} ( \zeta_m^2-1) & \frac{2 \pi i}{2} (\zeta_m^2-1) \zeta_m^2 &  \frac{2 \pi i}{2} (\zeta_m^2-1) \zeta_m^4 & \cdots  &   \frac{2 \pi i}{2} (\zeta_m^2-1) \zeta_m^{2m} \\
%\vdots & \vdots & \vdots & \ddots & \vdots \\
%\frac{2 \pi i}{m-1} ( \zeta_m^{m-1}-1) & \frac{2 \pi i}{m-1} (\zeta_m^{m-1}-1) \zeta_m^{m-1} &  \frac{2 \pi i}{m-1} (\zeta_m^{m-1}-1) \zeta_m^{2(m-1)} & \cdots  & \frac{2 \pi i}{m-1} (\zeta_m^{m-1}-1) \zeta_m^{(m-1)(m-1)} \\
%\end{pmatrix}.$$

We abbreviate $\mathrm{Span}(y \gamma, y^2 \gamma, \dots, y^{b-1} \gamma)$ to $V$.
Note that $V$ is the kernel of $\sum_j \int_{S_j} ( \ )$, so it is defined over $\QQ$ as claimed.

We now compute the Deligne splitting on $H^2(\cA)$.

Let $U$ be the open subset of $\cA$ where $x$ is nonzero, and let $U'$ be the open locus where $x'$ is nonzero. So $U \cup U'$ is all of $\cA$ except for the $m$ points $(0,0,e^{\pi i (2j+1)/b})$.  By an application of the Mayer-Vietoris sequence, removing finitely many points from a four-dimensional manifold does not change $H^2$.  Thus the restriction map $H^2(\cA) \to H^2(U \cup U')$ is an isomorphism, and we may compute the mixed Hodge structure on $H^2(U \cup U')$ instead. 
For this, we have the Mayer-Vietoris sequence:
\[
H^1(U \cap U') \to H^2(U \cup U') \to H^2(U) \oplus H^2(U') .
\]
We claim that $V$ is the kernel of $H^2(U \cup U', \CC) \to H^2(U, \CC) \oplus H^2(U', \CC)$. 
This is easy to check: On $U$, we have $y^c \gamma = \frac{1}{c} d ( y^c dx/x) $ and, on $U'$, we have $y^c \gamma = - \frac{1}{c} d ( y^c dx'/x') $. 
So $y^c \gamma$ becomes exact on $U$ and $U'$ and is in the kernel. 
The image of $\gamma$ in $H^2(U)$ is nonzero by Lemma~\ref{lem:dlogzero}, so $V$ is precisely the kernel.
% In particular, this shows that $V$ has a basis in $H^2(\cA, \QQ)$, since we can describe it as the kernel of $H^2(U \cup U', \QQ) \to H^2(U, \QQ) \oplus H^2(U', \QQ)$. 

Projection onto the $x$ and $y$ coordinates shows that $U \cap U' \cong \mathbb{C}^{\ast} \times (\mathbb{C} \setminus \{ e^{\pi i (2j+1)/b} \} )$. 
It is easy to check that $H^1( \mathbb{C}^{\ast} \times (\mathbb{C} \setminus \{ e^{\pi i (2j+1)/b} \} ))$ is entirely in the Deligne summand $H^{1,(1,1)}$.  So the image of this cohomology in $H^2(U \cup U')$ must be in degree $H^{2,(1,1)}$ and we see that $V \subseteq H^{2, (1,1)}(\cA)$.

Also, by Lemma~\ref{prop:rationalLogImpliesTate},  $\mathrm{Span}(\gamma) \subseteq H^{2,(2,2)}(\cA)$.

So far, we have shown that $V \subseteq H^{2, (1,1)}(\cA)$ and  $\mathrm{Span}(\gamma) \subseteq H^{2,(2,2)}(\cA)$.
 Since $H^2 = \mathrm{Span}(\gamma) \oplus V$, we have shown that $ \mathrm{Span}(\gamma) = H^{2,(2,2)}$ and $V = H^{2,(1,1)}$.

We computed that $(2 \pi i)^{-2} \gamma$ is in $H^2(\cA, \QQ)$ and showed that $(2 \pi i)^{-2} \gamma$ spans the $(2,2)$ classes.
And we computed that $V$ has a basis in $H^2(\cA, \QQ)$ and spans the $(1,1)$ classes. 
This shows that the mixed Hodge structure on $H^2(\cA)$ is split over $\QQ$ and concludes the proof of Proposition~\ref{prop:isolatedrank1}. \end{proof}

The cyclic group $\{ \zeta \in \CC^{\ast} : \zeta^b=1 \}$ acts on $Y$ by $(x,x',y) \mapsto (x,x',\zeta y)$. We now decompose $H^{\ast}(Y, \CC)$ into isotypic components for this action.

\begin{cor} \label{BasicIsotypics}
Let $r \in \ZZ/b \ZZ$ and consider the isotypic component of $H^{\ast}(Y, \CC)$ where $\zeta$ acts by $\zeta^r$ for each $b$-th root of unity $\zeta$. If $r=0$, this isotypic component occurs in Deligne degrees $(0,(0,0))$, $(1,(1,1))$ and $(2,(2,2))$, with dimension $1$ each. If $r \neq 0$, then this isotypic component is one-dimensional, in degree $(2,(1,1))$.
\end{cor}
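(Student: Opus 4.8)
The plan is to read this off directly from the explicit description of $H^\ast(Y,\CC) = H^\ast(\cA,\CC)$ furnished by Proposition~\ref{prop:isolatedrank1}, observing that the de Rham representatives listed there are simultaneous eigenvectors for the action of $\mu_b := \{\zeta \in \CC^\ast : \zeta^b = 1\}$.

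First I would record how $\zeta$ acts on the relevant forms. Write $\rho_\zeta$ for the automorphism $(x,x',y)\mapsto(x,x',\zeta y)$ of $\cA$. Then $\rho_\zeta^\ast(x) = x$, $\rho_\zeta^\ast(x') = x'$, and $\rho_\zeta^\ast(y) = \zeta y$; since $\zeta$ is a constant, $\rho_\zeta^\ast$ fixes $\dlog x$, $\dlog x'$ and $\dlog y$, hence fixes $\gamma = \dlog x \wedge \dlog y$, while $\rho_\zeta^\ast(y^c\gamma) = \zeta^c\, y^c\gamma$ for every $c$. The deformation retraction of $\cA$ onto $Y$ constructed in the proof of Proposition~\ref{prop:isolatedrank1} only rescales the moduli $|x|$, $|x'|$, $|y|$ by functions of those moduli, so it commutes with $\rho_\zeta$; hence it is legitimate to compute the $\mu_b$-action on $H^\ast(Y)$ using these forms (equivalently, one may work on $\cA$ throughout, since the entire proof of Proposition~\ref{prop:isolatedrank1} takes place there). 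Moreover each basis class of Proposition~\ref{prop:isolatedrank1} lies in a single Deligne summand, and by functoriality of the Deligne splitting (Proposition~\ref{prop:splitMV}) the isotypic decomposition is compatible with the mixed Hodge structure, so locating each eigenvector in the Deligne splitting suffices.

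Next I would combine this with the basis from Proposition~\ref{prop:isolatedrank1}: $H^0 = \CC\cdot 1$ with $1 \in H^{0,(0,0)}$; $H^1 = \CC\cdot\dlog y$ with $\dlog y \in H^{1,(1,1)}$; and $H^2$ has basis $\gamma, y\gamma, \dots, y^{b-1}\gamma$, where $\gamma$ spans $H^{2,(2,2)}$ and $y\gamma,\dots,y^{b-1}\gamma$ span $H^{2,(1,1)}$. Since $\rho_\zeta^\ast$ acts on these basis vectors by $1$, $1$, and $\zeta^0,\zeta^1,\dots,\zeta^{b-1}$ respectively, the $r=0$ isotypic component is exactly $\CC\cdot 1 \oplus \CC\cdot\dlog y \oplus \CC\cdot\gamma$, which is one-dimensional in each of the Deligne degrees $(0,(0,0))$, $(1,(1,1))$, $(2,(2,2))$; and for $r \neq 0$, identifying $r$ with its representative in $\{1,\dots,b-1\}$, the isotypic component is the line $\CC\cdot y^r\gamma \subseteq H^{2,(1,1)}$, one-dimensional and in degree $(2,(1,1))$. (Under the opposite convention $(\rho_\zeta^{-1})^\ast$ for the action on cohomology the only change is $c \mapsto b-c$ on the $H^2$ eigenvalues, which permutes $\{1,\dots,b-1\}$ and fixes $0$, so the statement is unchanged.) This is precisely the assertion of the corollary.

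I do not expect a genuine obstacle: the corollary is essentially bookkeeping on top of Proposition~\ref{prop:isolatedrank1}. The only point meriting care is the $\mu_b$-equivariance of the retraction $Y \hookrightarrow \cA$, so that the eigenvalue computation performed with explicit differential forms is valid on $H^\ast(Y)$ — but this is immediate from the shape of the retraction, and can be bypassed entirely by carrying out all computations on $\cA$ itself.
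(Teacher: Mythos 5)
Your proof is correct and is exactly the argument the paper intends: the corollary is stated without separate proof precisely because it is read off from the eigenbasis $1$, $\dlog y$, $\gamma$, $y\gamma, \dots, y^{b-1}\gamma$ of Proposition~\ref{prop:isolatedrank1} together with the evident action $y^c\gamma \mapsto \zeta^c y^c\gamma$. Your extra care about the $\mu_b$-equivariance of the retraction (or equivalently working on $\cA$ throughout) is a sensible, if unstated in the paper, point.
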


\begin{remark}
The first author and Pylyavskyy have generalized cluster algebras to Laurent phenomenon algebras \cite{LP}. The Laurent phenomenon algebra which plays the role analogous to $x x' = y^b+1$ (with $y \neq 0$) is $x x' = f(y)$ (with $y \neq 0$) where $f$ is a polynomial of degree $b$. Call this variety $Z$. Let $f(y) = \prod_{i=1}^b (y - \alpha_i)$ and let us assume that the $\alpha_i$ are distinct. 

In this case, it is still true that $Z$ retracts onto a wedge of spheres. One can still define $\gamma = \dlog x \wedge \dlog y$ and show that $\gamma$, $y \gamma$, \dots, $y^{b-1} \gamma$ is a basis for $H^2$. It is still true that $\mathrm{Span}(y \gamma, y^2 \gamma, \dots, y^{b-1} \gamma)$ has a basis in $H^2(Z, \QQ)$. However, integrating $\gamma$ over a basis for $H_2(Z, \QQ)$ now produces terms of the form $\log (\alpha_r/\alpha_s)/(2 \pi i)$. So the mixed Hodge structure is not split over $\QQ$ in this case. 
It is interesting to speculate that the cluster algebras are in some sense the center of the moduli space of Laurent phenomenon algebras and that, at this center, the mixed Hodge structure is split.
\end{remark}

\subsection{The general isolated case}

Let $\tB$ be an $(n+m) \times n$ extended exchange matrix of the form $\left( \begin{smallmatrix} 0 \\ C \end{smallmatrix} \right)$. We make the standing assumption that $\tB$ has full rank. Let $\cA$ be the corresponding cluster variety.
In this section, we will give a complete description of $H^{\ast}(\cA)$.

%We abbreviate the mutable variables of the initial seed as $x_1$, \dots, $x_n$ and the frozen variables as $y_1$, $y_2$, \dots, $y_m$.
%Set $\varphi_i = dy_i/y_i$. Set $\psi_j =  \sum_i C_{ij} \theta_i$ and set $\gamma_j = dx_j/x_j \wedge \psi_j$.
%Note that $\theta_i$ and $\psi_j$ are closed $1$-forms on $\cA$ and $\gamma_j$ extends to a closed $2$-form on $\cA$, so they represent classes in $H^1(\cA, \CC)$ and $H^2(\cA, \CC)$; we will abuse language by using the same notation for the cohomology classes.
Let $C_j$ denote the $j$-th column of $C$.  The group $\Aut(A)$ acts on $\cA$, and hence on $H^{\ast}(\cA)$. The connected component of the identity must act trivially on $H^{\ast}(\cA)$, so we can decompose $H^{\ast}(\cA)$ into isotypic components for the locally constant characters of $\Aut(\cA)$.
In Corollary~\ref{LocConstCharacter}, we saw that the locally constant characters of $\Aut(A)$ are given by $G:= (\tB \QQ^n \cap \ZZ^{n+m})/\tB \ZZ^n$. We note for future reference that, due to the block form of $\tB$, we could also write $G = (C \QQ^n \cap \ZZ^m) / C \ZZ^n$.

For $g \in G$, we can lift $g$ to a representative $\tilde{g} = \sum_{j=1}^n q_j C_j$ where the $q_j$ are in $\QQ$ and are well defined modulo $\ZZ$. 
We define $J(g) = \{ j \in [n] : q_j \not\in \ZZ \}$, and denote by $H^k(\cA)^g$ the $g$-isotypic component of $H^k(\cA)$.

\begin{theorem} \label{thm:Isolated Main}
The mixed Hodge structure on $H^{\ast}(\cA)$ is of mixed Tate type and split over $\QQ$. Fix $g \in G$. The $g$-isotypic component of $H^{k}(\cA)$ lies completely in the Deligne summand $H^{k,(k-|J(g)|, k-|J(g)|)}(\cA)$, and the dimensions of $H^k(\cA)^g$ are given by
\[ \sum_k \dim H^k(\cA)^g q^k = q^{2 |J(g)|} (1+q+q^2)^{n-|J(g)|} (1+q)^{m-n} . \]
%Even more specifically, choose a lift $\tilde{g}=\sum_{j=1}^n q_j C_j$ of $g$ to $\ZZ^{m}$, such that $q_j=0$ for $j \in [n] \setminus J(g)$. Choose an $m-n$ element subset $I$ of $[m]$ such that the $n \times n$ minor of $C$ using the rows from $[m] \setminus I$ has nonvanishing determinant. Then a basis for $H^{\ast}(\cA)^g$ is given by the differential forms of the form 
%\[ y^{\tilde{g}} \bigwedge_{j \in J(g)} \gamma_j \wedge \eta \]
%where $\eta$ is a monomial in $\{ \psi_j, \gamma_j \}_{j \in [n] \setminus J(g)} \sqcup \{ \theta_i \}_{i \in I}$, which does not use both $\psi_j$ and $\gamma_j$ for any $j\in [n] \setminus J(g)$.
\end{theorem}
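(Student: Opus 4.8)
The plan is to reduce, via a covering map, to the case of $d$-principal coefficients --- where $\cA$ becomes a product of the rank $1$ isolated cluster varieties of Section~\ref{sec:rankone} and an algebraic torus --- and then to descend the cohomology computation through the finite quotient.

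First, the reduction. Since $\tB = \left(\begin{smallmatrix} 0 \\ C\end{smallmatrix}\right)$ has full rank, $C$ is an $m\times n$ integer matrix of rank $n$, so $m\ge n$, and $\cA$ is locally acyclic because its quiver has no edges. Choose $d$ with $d\ZZ^n\subseteq \tB^T\ZZ^{n+m}$; by Lemma~\ref{lem:smith} and Proposition~\ref{prop:coverprincipal} there is a covering map $\pi\colon \cA'\to\cA$ with $\cA'=\cA(\tB')$, $\tB'=\left(\begin{smallmatrix} 0 \\ d\,\Id_n \\ 0\end{smallmatrix}\right)$, realizing $\cA\cong\cA'/H$ for $H=\Ker(\mult(Q^T))\subseteq\Aut(\cA')$ acting freely, where $R=\left(\begin{smallmatrix}\Id_n & 0 \\ P & Q\end{smallmatrix}\right)$ satisfies $R\tB=\tB'$. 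Reading off the exchange relations of $\tB'$ (which decouple since the principal part is $0$ and the relation at $k$ involves only $x_k,x_k',y_k$), one gets $\cA'\cong\cA_1\times\cdots\times\cA_n\times(\CC^\ast)^{m-n}$, where $\cA_j=\{x_jx_j'=y_j^d+1,\ y_j\neq 0\}$ is the rank $1$ isolated cluster variety with $b=d$.

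Second, the cohomology of $\cA'$. By the Künneth formula, compatibly with mixed Hodge structures, and Proposition~\ref{prop:isolatedrank1}, $H^\ast(\cA')$ is of mixed Tate type and split over $\QQ$, being a tensor product of such. By Proposition~\ref{cluster aut}, $\Aut(\cA')\cong(\CC^\ast)^n\times\mu_d^n\times(\CC^\ast)^{m-n}$ (with $\mu_d=\{\zeta\in\CC^\ast:\zeta^d=1\}$), its group of locally constant characters is $(\ZZ/d\ZZ)^n$, and it acts on $H^\ast(\cA')$ through the component group $\mu_d^n$. By Corollary~\ref{BasicIsotypics}, for $c=(c_1,\dots,c_n)\in(\ZZ/d\ZZ)^n$ the $c$-isotypic component of $H^\ast(\cA')$ is $\bigotimes_j H^\ast(\cA_j)^{c_j}\otimes H^\ast((\CC^\ast)^{m-n})$: a factor $H^\ast(\cA_j)^{c_j}$ contributes $1+q+q^2$ in Deligne degrees $(k,(k,k))$ when $c_j=0$, and $q^2$ in Deligne degree $(2,(1,1))$ when $c_j\neq 0$. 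Writing $J_c=\{j:c_j\neq 0\}$, the $c$-isotypic component of $H^k(\cA')$ thus lies in the single Deligne summand $H^{k,(k-|J_c|,\,k-|J_c|)}(\cA')$ and has Poincar\'e polynomial $q^{2|J_c|}(1+q+q^2)^{n-|J_c|}(1+q)^{m-n}$.

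Third, the descent and the combinatorics. Since $H$ acts freely with quotient $\cA$, transfer gives $H^\ast(\cA,\QQ)\cong H^\ast(\cA',\QQ)^H$ as mixed Hodge structures (the averaging projector is a morphism of mixed Hodge structures), and a sub-$\QQ$-Hodge-structure of a mixed-Tate, $\QQ$-split one is again mixed Tate and split over $\QQ$, since the Deligne bigrading is functorial and the pieces $H^{k,(p,p)}(\cA')$ are defined over $\QQ$. The relation $\tB'=R\tB$, together with the snake lemma applied to the column subgroups of $\tB$ and $\tB'$, yields an injection $\overline R\colon\ZZ^{n+m}/\tB\ZZ^n\hookrightarrow\ZZ^{n+m}/\tB'\ZZ^n$ with cokernel of order $|\det Q|=|H|$; dualizing gives a short exact sequence $1\to H\to\Aut(\cA')\to\Aut(\cA)\to 1$. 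Hence $\Aut(\cA)$ acts on $H^\ast(\cA)=H^\ast(\cA')^H$ via lifts, and $H^\ast(\cA')^H=\bigoplus_{c}H^\ast(\cA')^c$ with $c$ ranging over $\{c:c|_H=1\}=\overline R\bigl((\ZZ^{n+m}/\tB\ZZ^n)_{\mathrm{tors}}\bigr)\cap(\ZZ/d\ZZ)^n$, the summand $H^\ast(\cA')^c$ being precisely the $g$-isotypic component $H^\ast(\cA)^g$ for $g=\overline R^{-1}(c)\in G$. Finally, for $g\in G$ with lift $\tilde g=\sum_j q_jC_j=C\mathbf q$, $\mathbf q\in\QQ^n$, one computes $\overline R(g)$ by applying $R$ to $(0,\tilde g)\in\ZZ^{n+m}$: since $QC=\left(\begin{smallmatrix} d\,\Id_n \\ 0\end{smallmatrix}\right)$, this is $(0;\,dq_1,\dots,dq_n;\,0,\dots,0)$, so the corresponding $c_j\equiv dq_j\pmod d$ vanishes exactly when $q_j\in\ZZ$. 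Therefore $|J_c|=|J(g)|$, and combining with the previous paragraph gives both the Deligne-degree claim and the asserted generating function $\sum_k\dim H^k(\cA)^g\,q^k=q^{2|J(g)|}(1+q+q^2)^{n-|J(g)|}(1+q)^{m-n}$.

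The computations in the first, second and last steps are routine given Proposition~\ref{prop:isolatedrank1} and Corollary~\ref{BasicIsotypics}. The main obstacle is the third step: one must check carefully that $\Aut(\cA')\to\Aut(\cA)$ is surjective with kernel exactly $H$ (so that the isotypic decomposition of $H^\ast(\cA)$ is precisely the one pulled back from $\cA'$, with no finer splitting coming from automorphisms of $\cA$ not lifting to $\cA'$), and that passing to $H$-invariants is simultaneously compatible with the mixed Hodge structure and with both group actions. The bookkeeping identifying $\overline R\bigl((\ZZ^{n+m}/\tB\ZZ^n)_{\mathrm{tors}}\bigr)$ inside $(\ZZ/d\ZZ)^n$ with the index set $G$, and matching $J(g)$ with $J_c$, is where the two parametrizations in the statement (via locally constant characters versus via the rational lift $\mathbf q$) get reconciled.
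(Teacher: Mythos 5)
Your proposal is correct and follows essentially the same route as the paper: reduce via Lemma~\ref{lem:smith} and Proposition~\ref{prop:coverprincipal} to the $d$-principal-coefficients cover $\cA'\cong Y_d^n\times(\CC^\ast)^{m-n}$, identify $H^\ast(\cA)$ with the $H$-invariants, match the character $g$ of $\Aut(\cA)$ with the character $(dq_1,\dots,dq_n,0,\dots,0)$ of $\Aut(\cA')$, and apply K\"unneth together with Corollary~\ref{BasicIsotypics}. The only difference is that you spell out the exact sequence $1\to H\to\Aut(\cA')\to\Aut(\cA)\to1$ and the transfer/averaging argument in more detail than the paper does, which is fine but not a new idea.
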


\begin{proof}
By Proposition \ref{prop:coverprincipal}, $\cA$ is covered by a cluster variety $\bar{\cA}$, with extended exchange matrix $\bar{\tB} = R\tB = \left( \begin{smallmatrix} 0 \\ d \, \mathrm{Id}_n \\ 0  \end{smallmatrix} \right)$ where the integer $d$ is given in Proposition \ref{prop:coverprincipal}.  Let $\bar{C} = \left( \begin{smallmatrix} d \, \mathrm{Id}_n \\ 0 \end{smallmatrix} \right)$ and $R = \left( \begin{smallmatrix} \mathrm{Id}_n & 0 \\ 0 & Q \end{smallmatrix} \right)$.  Then the covering map $\bar{\cA} \to \cA$ has deck group $H = \Ker(\mult(Q^T))$, and we have $\bar C = QC$.

So $H^{\ast}(\cA, \CC)$ is the $H$-invariant part of $H^{\ast}(\bar{\cA}, \CC)$. 
We will describe the $g$-isotypic component of $H^{\ast}(\cA, \CC)$ as a subspace of $H^{\ast}(\bar{\cA}, \CC)$. 
The character $g$ of $\Aut(\cA)$ pulls back to the character $Q g$ of $\Aut(\bar{\cA})$; here we are thinking of $g$ as an element of $(C \QQ^n \cap \ZZ^m) / C \ZZ^n$.
So the $g$-isotypic component of $H^{\ast}(\cA, \CC)$ is the $Q g$ isotypic component of $H^{\ast}(\bar{\cA}, \CC)$. 

Choose a lift $\tilde{g} = \sum_{j=1}^n q_j C_j$ of $g$ as above. Since $Q$ maps the columns of $C$ to $d$ times the first $n$ coordinate vectors, we have $Q \tilde{g} = (d q_1, d q_2, \ldots, d q_n, 0,0,\ldots,0)$. Also, since $g \in \ZZ^m$ and $Q$ has integer entries, note that the $d q_j$ are integers. We set $r_j =d q_j$.

Thus, our goal is to find the $(r_1,\ldots, r_n,0,0,\ldots,0)$ isotypic component of $H^{\ast}(\bar{\cA}, \CC)$.
Let $Y$ be the cluster variety $\{ (w,w',z) : w w' = z^{d}+1,\ z \neq 0 \}$. Then $\bar{\cA} \cong Y^n \times (\CC^{\ast})^{m-n}$ so, by the K\"unneth isomorphism,
\[ H^{\ast}(\bar{\cA}, \CC) \cong H^{\ast}(Y, \CC)^{\otimes n} \otimes H^{\ast}(\CC^{\ast}, \CC)^{\otimes (n-m)}. \] 
In order to obtain the $(r_1,\ldots, r_n,0,0,\ldots,0)$-isotypic component of $H^{\ast}(\bar{\cA}, \CC)$, we tensor together the $r_j$ component of $H^{\ast}(Y, \CC)$ for each $j$, and all of $H^{\ast}(\CC^{\ast}, \CC)$. 
Corollary~\ref{BasicIsotypics} says that the $j$-th $H^{\ast}(Y, \CC)$ contributes in degrees $(0,(0,0))$, $(1,(1,1))$ and $(2,(2,2))$ if $r_j \equiv 0 \bmod d$, and in degree $(2,(1,1))$ if $r_j \not \equiv 0 \bmod d$. The cohomology of $\CC^{\ast}$ is in Deligne degrees $(0,(0,0))$ and $(1,(1,1))$, with dimension $1$ in each degree.

We note that $J(g) = \#\{j \in [n] \mid r_j \not \equiv 0 \bmod d\}$.  Both statements of the theorem now follow.
\end{proof}

\subsection{Union of cluster tori}
For  later use, we note the following result.
\begin{proposition}\label{prop:uniontori}
Suppose that $\cA$ is locally acyclic.  Let $\bigcup T$ denote the union of all the cluster tori $T \subset \cA$.  Then
$\cA \setminus \bigcup T$ has codimension at least two in $\cA$.
\end{proposition}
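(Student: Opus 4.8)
The plan is to reduce to the acyclic case and there use the known description of cluster variables. First I would use local acyclicity: write $\cA = \bigcup_\alpha \cA(\tB_{S_\alpha})$ as a finite union of acyclic cluster charts, where each $\cA(\tB_{S_\alpha})$ is a cluster localization of $\cA$ at a product of cluster variables. Since a cluster localization $\cA(\tB_S) = \Spec A[x_i^{-1} \mid i \notin S]$ is an open subvariety of $\cA$ whose complement is cut out by the cluster variables $x_i$, $i \notin S$, and since these complements are themselves contained in unions of cluster tori' complements, it suffices to prove the statement for an acyclic cluster variety. (One must be slightly careful here: removing the chart $\cA(\tB_S)$ removes points where some $x_i$ with $i \notin S$ vanishes, and such points can still lie on other cluster tori; so the reduction should be phrased as: if $p \in \cA \setminus \bigcup T$, then $p$ lies in some acyclic chart $\cA(\tB_{S_\alpha})$, and within that chart $p$ is not on any cluster torus of $\cA(\tB_{S_\alpha})$ either, since a cluster torus of the chart is also a cluster torus of $\cA$.)

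So assume $\cA = \cA(\tB)$ is acyclic, with an acyclic seed $(\x,\tB)$. The key input is Muller's structure theory for acyclic cluster algebras: the cluster variables obtained by mutating once in each direction $k=1,\dots,n$, together with the frozen variables, generate $A$, and moreover $A$ is the coordinate ring of a smooth affine variety which is covered by the $n+1$ cluster tori corresponding to the initial seed and the $n$ one-step mutations (this is essentially the content that makes acyclic cluster algebras "nice" — see \cite{Mul} and \cite{CA3}). Concretely, the locus not covered by the initial cluster torus $T_0$ is $V(x_1 \cdots x_n) \cap \cA$, and I claim that on each irreducible component $V(x_k) \cap \cA$ the mutated variable $x'_k$ does not vanish identically — indeed on $V(x_k)$ the exchange relation $x_k x'_k = \prod_i x_i^{[\tB_{ik}]_+} + \prod_i x_i^{[-\tB_{ik}]_+}$ forces $x'_k \cdot 0 = (\text{monomials in the other } x_i)$, so $x'_k$ vanishing would force a monomial relation among the remaining $x_i$, which is impossible in the Laurent polynomial ring. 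Hence the points of $\cA$ not on $T_0$ and not on the mutated torus $T_k$ lie in $V(x_k) \cap V(x'_k) \cap \cA$, which has codimension $\geq 2$ because $x_k, x'_k$ are part of a coordinate system on the smooth variety $\cA$ near such a point (they are algebraically independent since they belong to adjacent clusters). Intersecting over all $k$, the complement $\cA \setminus \bigcup_{k=0}^n T_k$ has codimension $\geq 2$, and a fortiori so does $\cA \setminus \bigcup_T T$.

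The main obstacle I anticipate is making the reduction from the Louise/locally acyclic case to the acyclic case fully rigorous: one needs that the cluster tori of an acyclic cluster localization $\cA(\tB_S)$ really are cluster tori of $\cA$ itself (true, because freezing variables only restricts which mutations are allowed, so every seed of $\cA(\tB_S)$ comes from a seed of $\cA$), and one needs the finitely many acyclic charts to actually cover $\cA$ — which is exactly the definition of local acyclicity, available to us by Theorem~\ref{thm:Muller} once we know the Louise property implies local acyclicity. A secondary technical point is justifying that $x_k$ and $x'_k$ are part of a local coordinate system near a point of $V(x_k)\cap V(x'_k)$: this follows because $\cA$ is smooth there and $dx_k, dx'_k$ are linearly independent in the cotangent space (otherwise $x_k, x'_k$ would be algebraically dependent, contradicting their membership in adjacent clusters, both of which are transcendence bases of $\cF$).
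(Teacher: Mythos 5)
Your reduction to charts is sound and is essentially the paper's first step (the paper phrases it contrapositively, via the generic point of a hypothetical codimension-one component of $\cA \setminus \bigcup T$), but the base case is where the real content lies, and your acyclic base case has genuine gaps. First, an acyclic cluster variety is \emph{not} covered by the $n+1$ tori of the initial seed and its one-step mutations: already for $\tB = \left( \begin{smallmatrix} 0 \\ 1 \end{smallmatrix} \right)$, i.e.\ $xx'=y+1$ with $y \neq 0$, the point $(0,0,-1)$ lies on neither torus. Second, the set-theoretic bookkeeping is off: a point of $\cA \setminus \bigcup_{k=0}^n T_k$ need not lie in any $V(x_k)\cap V(x_k')$, since it could instead have $x_i=x_j=0$ for two distinct mutable indices while every $x_k'$ is nonzero; so you would also need each $V(x_i)\cap V(x_j)$ to have codimension two, which you do not address. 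Third, and most seriously, your argument that $V(x_k)\cap V(x_k')$ has codimension two does not work: setting $x_k=0$ in the exchange relation $x_kx_k'=M^++M^-$ already forces $M^++M^-=0$ \emph{regardless} of $x_k'$, so no impossible monomial relation arises from additionally imposing $x_k'=0$. The fallback via smoothness is also unavailable, because the proposition assumes only local acyclicity and not full rank, which Muller's smoothness theorem requires; and even granting smoothness, linear dependence of $dx_k$ and $dx_k'$ at a single point would not imply algebraic dependence of $x_k$ and $x_k'$ (compare $x$ and $xy$ at the origin of $\CC^2$), so that implication cannot be reversed.

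The paper avoids all of this by running the recursion one step further, down to \emph{isolated} charts (every edge of an acyclic quiver is separating, so acyclic charts are themselves covered by isolated ones), and then passing to the finite cover $\bar\cA \to \cA$ with $\bar{\tB} = \left( \begin{smallmatrix} 0 \\ d\,\Id_n \\ 0 \end{smallmatrix} \right)$ from Proposition~\ref{prop:coverprincipal}. There the exchange relations read $x_ix_i'=y_i^d+1$, with only frozen variables on the right-hand side, so $x_i=x_i'=0$ pins $y_i$ to a finite set of roots of $-1$ and the loci $Y_i=\{x_i=x_i'=0\}$, whose union is exactly the complement of the $2^n$ cluster tori, are visibly of codimension two. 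If you want to repair your argument, reduce to isolated charts rather than stopping at acyclic ones; in the general acyclic case the exchange relations simply do not hand you the codimension-two statement.
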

\begin{proof}
We first establish the result in the case that $\cA$ is isolated.  By Proposition \ref{prop:coverprincipal}, $\cA$ has a finite covering by a cluster variety $\bar{\cA}$, with extended exchange matrix $\bar{\tB} = \left( \begin{smallmatrix} 0 \\ d \, \mathrm{Id}_n \\ 0  \end{smallmatrix} \right)$.  The inverse image of $\bigcup T$ in $\bar{\cA}$ is the union of cluster tori $\bigcup \bar T$ in $\bar{\cA}$.  The desired statement for $\cA$ follows from the statement for $\bar \cA$.  Let $x_1,x_2,\ldots,x_n$ be the initial cluster variables, $x'_1,\ldots,x'_n$ the mutated cluster variables, and $y_1,\ldots,y_{m}$ be the frozen variables of $\bar \cA$.  The cluster variety $\bar \cA$ has $2^n$ cluster charts, each given by the non-vanishing of either $x_i$ or $x'_i$, for each $i \in [n]$.  Let $Y_i = \{x_i = x'_i = 0\} \subset \cA$.  Then $\bar \cA \setminus \bigcup \bar T = \bigcup Y_i$.  But if $x_i = x'_i = 0$ then the equation $x_ix'_i = y_i^d + 1$ forces $y_i$ to take one of finitely many values.  Thus $Y_i$ is isomorphic to finitely many copies of a cluster variety of codimension two.  It follows that $\bar \cA \setminus \bigcup \bar T$ has codimension two in $\bar \cA$.

Now suppose $\cA$ is an arbitrary locally acyclic cluster variety.  Let $Y = \cA \setminus \bigcup T$.  %Since each cluster torus is open in $\cA$, the set $Y$ is closed in $\cA$.  
Suppose $Y$ contains an irreducible component $D$ of codimension one.  Since $\cA$ can be covered by cluster charts that are isolated, there exists some isolated cluster chart $\cA' \subset \cA$ containing the generic point of $D$.  But each cluster torus $T'$ in $\cA'$ is also a cluster torus in $\cA$.  So this contradicts the claim for isolated cluster varieties, and we conclude that the statement holds for all locally acyclic cluster varieties.
%Any rational function $f \in \cF$ regular on $\bigcup T$ must lie in the intersection of Laurent polynomial rings $\bigcap \CC[x_1^{\pm}, \ldots, x_{n+m}^{\pm}]$ taken over all seeds of $\cA$.  By Theorem \ref{thm:Muller}, this intersection is equal to the cluster algebra $A$.  Thus all regular functions on $\bigcup T$ extend to $\Spec A = \cA$.  It follows that $\cA \setminus \bigcup T$ has codimension at least two.
\end{proof}

\section{Mixed Hodge structure and curious Lefschetz for cluster varieties}
\label{sec:mixedhodge}

In this section, we prove the curious Lefschetz theorem for cluster varieties satisfying the Louise property and of full rank.  

\subsection{Tori}
\begin{prop}\label{prop:curioustorus}
Let $T = T_{2r} = \Spec(\CC[x_1^{\pm 1}, \ldots, x_{2r}^{\pm_1}])$ be the $2r$-dimensional complex torus.  Let $B$ be a $2r \times 2r$ skew-symmetric matrix of full rank.  Then the curious Lefschetz property holds for the pair $(T,\gamma)$, where $\gamma = \sum_{i,j=1}^{2r} B_{ij} \dlog x_i \wedge \dlog x_j$.
\end{prop}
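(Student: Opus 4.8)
The plan is to observe that all the cohomology of a torus is of Hodge--Tate type, reduce the statement to a purely linear-algebraic assertion about the exterior algebra $\bigwedge^\bullet\CC^{2r}$, and then deduce that assertion from the case $r=1$ by means of Proposition~\ref{prop:curiousproduct}.

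First I would recall that $H^\ast(T,\CC)\cong\bigwedge^\bullet V$ as graded $\CC$-algebras, where $V=H^1(T,\CC)=\bigoplus_{i=1}^{2r}\CC\,\dlog x_i$, and that each $\dlog x_i$ lies in Deligne bidegree $(1,1)$, so that $H^k(T)=H^{k,(k,k)}(T)$ for all $k$ and $H^{k,(p,q)}(T)=0$ whenever $(p,q)\neq(k,k)$. Thus the mixed Tate condition holds automatically, and in the curious Lefschetz maps $\gamma^{r-p}\colon H^{p+s,(p,p)}(T)\to H^{2r-p+s,(2r-p,2r-p)}(T)$ the source vanishes unless $s=0$. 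Hence the proposition amounts to the assertion that $\gamma^{r-p}\colon H^p(T,\CC)\to H^{2r-p}(T,\CC)$ is an isomorphism for every $p\le r$. Under the identification above, $\gamma$ corresponds to the bivector $\gamma_B=\sum_{i,j}B_{ij}\,e_i\wedge e_j\in\bigwedge^2 V$, whose rank as a bivector equals $\rank B=2r$, so this assertion depends only on the $\GL(V)$-orbit of $\gamma_B$ in $\bigwedge^2 V$.

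Since $\gamma_B$ is a nondegenerate bivector, it lies in the open dense $\GL_{2r}(\CC)$-orbit in $\bigwedge^2\CC^{2r}$, that is, the orbit of $\gamma_{B_0}=e_1\wedge e_2+\cdots+e_{2r-1}\wedge e_{2r}$ (the normal form for nondegenerate alternating forms). A $\GL(V)$ change of basis carrying $\gamma_B$ to $\gamma_{B_0}$ is an automorphism of the graded ring $\bigwedge^\bullet V$ intertwining $\wedge\gamma_B$ with $\wedge\gamma_{B_0}$, so it suffices to prove the proposition when $B=B_0$. In that case $T\cong\prod_{i=1}^r(\CC^\ast)^2$ and $\gamma$ is, up to the harmless scalar $2$, the class $\sum_{i=1}^r\pi_i^\ast\gamma_i$, where $\gamma_i$ generates $H^2$ of the $i$-th factor $(\CC^\ast)^2$. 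By Proposition~\ref{prop:curiousproduct} and induction on $r$ it then suffices to treat the case $r=1$: on $(\CC^\ast)^2$ we have $H^0=H^2=\CC$, the map $\gamma_1^{\,0}=\id\colon H^1\to H^1$ is an isomorphism, and $\gamma_1^{\,1}\colon H^0\to H^2$ sends $1\mapsto\gamma_1\neq0$, an isomorphism of one-dimensional spaces; the mixed Tate condition is trivial there.

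This is in essence the classical hard Lefschetz theorem for the exterior algebra of a symplectic vector space, and one could instead cite that, or write down the $\mathfrak{sl}_2$-triple directly. I do not anticipate a serious obstacle; the one point that needs care is that putting $\gamma_B$ into normal form is an automorphism of the cohomology ring and not of $T$ itself, which is legitimate precisely because the curious Lefschetz maps are defined purely in terms of the ring structure of $H^\ast(T,\CC)$ and the class $\gamma$.
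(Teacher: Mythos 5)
Your proposal is correct and follows essentially the same route as the paper: put the nondegenerate skew form $\gamma$ into symplectic normal form by a change of basis of $H^1(T)$ (which is harmless because the Deligne splitting on $H^\ast(T)$ is just the cohomological grading and the curious Lefschetz maps depend only on the ring structure), then reduce to the two-torus via Proposition~\ref{prop:curiousproduct}, where the statement is immediate. The only cosmetic difference is that the paper performs the normal-form reduction over $\QQ$ with nonzero rational coefficients rather than over $\CC$; both are equally valid here.
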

\begin{proof}
The forms $\theta_j =\dlog x_j$ for $j = 1,2,\ldots, 2r$ form a basis of $H^1(T_{2r},\QQ)$.  In another $\QQ$-basis $\{\theta'_1,\ldots,\theta'_{2r}\}$ of $H^1(T_{2r},\QQ)$, the 2-form $\gamma$ can be written as $\gamma = \sum_{i=1}^r a_i \theta'_{i} \wedge \theta'_{i+r}$, for non-zero rational numbers $a_i$.  By Proposition \ref{prop:curiousproduct}, it suffices to prove the result for $r =1$, that is, for the two-torus.  For the two-torus, the result is immediate.
\end{proof}

\subsection{Isolated cluster varieties}

%Let $C$ be an $m \times n$ matrix of rank $n$, and assume that $n+m$ is even.  Let $\tB =  \left( \begin{smallmatrix} 0 \\ C \end{smallmatrix} \right)$ and let $\cA = \cA(\tB)$ be the corresponding cluster variety.  Let $\gamma$ be a GSV-form for $\cA$, associated to a matrix $\wB$ (see Section \ref{sec:GSV}).
%Let $\widehat{B}$ be an $(m+n) \times (m+n)$ skew symmetric matrix of rank $n+m$ whose first $n$ columns are $\left( \begin{smallmatrix} 0 \\ C \end{smallmatrix} \right)$. Let $\gamma$ be the $2$-form
%\[ \gamma = \sum_{i,j=1}^{n+m} \widehat{B}_{ij} \frac{d x_i \wedge d x_j}{x_i x_j} . \]
%It is straightforward to check that $\gamma$ extends to a closed $2$-form on all of $\cA$; we write $[\gamma]$ for the class in $H^{2,(2,2)}(\cA)$ it represents.

\begin{proposition} \label{Isolated Curious}
Suppose $\cA$ is an isolated cluster variety of full rank and even dimension, that is, $n+m$ is even.  Then the pair $(\cA,\gamma)$ satisfies the curious Lefschetz property for any full rank GSV-form $\gamma$.
%For any $s \geq 0$, and any $r \leq (n+m)/2$, the map 
%\[ [\gamma]^{(n+m)/2 -r} : H^{r+s, (r,r)}(\cA, \CC) \to H^{n+m+s-r, (n+m-r, n+m-r)}(\cA, \CC) \]
%is an isomorphism.
\end{proposition}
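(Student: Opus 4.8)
The plan is to pass to a finite covering on which the exchange matrix is as simple as possible, decompose cohomology into isotypic components for the automorphism group, and on each component recognize the curious Lefschetz statement as an ordinary hard Lefschetz statement which is then checked by a filtration argument. Since $\cA$ is isolated and of full rank, $\tB = \left(\begin{smallmatrix} 0 \\ C \end{smallmatrix}\right)$ with $C$ of rank $n$, so in particular $m \ge n$ and Proposition~\ref{prop:coverprincipal} applies: there is a covering map $\Psi\colon \bar\cA \to \cA$ with finite deck group $H \subset \Aut(\bar\cA)$, where $\bar\cA = \cA(\bar\tB)$ and $\bar\tB = \left(\begin{smallmatrix} 0 \\ d\,\Id_n \\ 0 \end{smallmatrix}\right)$. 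By Proposition~\ref{prop:GSVpullback} the pullback $\bar\gamma := \Psi^\ast\gamma$ is again a full-rank GSV form, and it is $\Aut(\bar\cA)$-invariant as a form (it is assembled from $\dlog$'s of cluster variables, which automorphisms only rescale by constants). Because $H^\ast(\cA,\QQ) = H^\ast(\bar\cA,\QQ)^H$ compatibly with the Deligne splitting (Proposition~\ref{prop:splitMV}) and cup product with $[\bar\gamma]$ is $H$-equivariant, an equivariant isomorphism restricts to an isomorphism on $H$-invariants and a sub-mixed-Hodge-structure of a mixed-Tate one is mixed Tate; so it suffices to prove the curious Lefschetz property for $(\bar\cA,\bar\gamma)$, the mixed Tate-ness of $H^\ast(\cA)$ itself being Theorem~\ref{thm:Isolated Main}.

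Now $\bar\cA \cong Y^n \times (\CC^\ast)^{m-n}$ with $Y = \{ww' = z^d+1,\ z\ne 0\}$, and by Corollary~\ref{BasicIsotypics} the subgroup $(\ZZ/d)^n \subset \Aut(\bar\cA)$ scaling the $z$-coordinates splits $H^\ast(\bar\cA,\CC)$ into isotypic pieces $H^\ast(\bar\cA)^{\vec r}$, each preserved by $[\bar\gamma]$. Fix $\vec r$ with support $J$. For $k \in J$ the $r_k$-isotypic part of the tensor factor $H^\ast(Y_k)$ is one-dimensional, of cohomological degree $2$ and Deligne type $(1,1)$, and every monomial occurring in $[\bar\gamma]$ — a wedge of $\dlog$'s involving $\dlog x_k\wedge\dlog y_k$ or $\dlog y_k$ for such $k$ — annihilates it, so these factors are inert. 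Thus $H^\ast(\bar\cA)^{\vec r} \cong \langle\text{one class, degree }2|J|,\text{ type }(|J|,|J|)\rangle \otimes H^\ast(\bar\cA'')$ with $\bar\cA'' = P^{\,n-|J|}\times(\CC^\ast)^{m-n}$, $P = \{xx'=z+1,\ z\ne 0\}$ the pinched torus, and $[\bar\gamma]$ acting as $\id\otimes[\gamma'']$, where $\gamma''$ is the GSV form on $\bar\cA''$ whose matrix is obtained from $\wB$ by deleting the rows and columns indexed by $J$; a short determinant computation shows $\det\wB$ equals, up to a power of $d$, the determinant of the lower-right frozen block of $\wB$, which is unchanged, so $\gamma''$ is again full rank. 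Since $H^\ast(P)$ and $H^\ast(\CC^\ast)$ are entirely standard (Corollary~\ref{cor:isolatedreallyfullrank}), so is $H^\ast(\bar\cA'')$; tracking the degree and weight shifts through this identification (writing $q = p - |J|$ and $D = \tfrac12\dim\bar\cA'' = \tfrac12(n+m)-|J|$), the curious Lefschetz property for $(\bar\cA,\bar\gamma)$ on the piece $H^\ast(\bar\cA)^{\vec r}$ is exactly the ordinary hard Lefschetz statement that $[\gamma'']^{\,D-q}\colon H^q(\bar\cA'')\to H^{2D-q}(\bar\cA'')$ is an isomorphism for all $q\le D$.

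It remains to prove this for $\bar\cA'' = P^N\times(\CC^\ast)^M$ ($M$ even) and an arbitrary full-rank GSV form $\gamma''$. Write $H^\ast(\bar\cA'') = \bigotimes_k(\CC\oplus\CC\alpha_k\oplus\CC\beta_k)\otimes\bigotimes_l(\CC\oplus\CC\theta_l)$ with $\deg\alpha_k=\deg\theta_l=1$, $\deg\beta_k = 2$, and all generators square-zero, and filter by the number of $\alpha_k$'s appearing in a monomial. The ``split'' GSV form $\gamma''_0$, whose matrix has the two $P$-by-$(\CC^\ast)$ off-diagonal blocks set to zero (leaving a full-rank skew block $G''$ on the torus factor and nonzero coefficients $a_k$ of the $\beta_k$, coming from the $d\,\Id$ block), has $\alpha$-degree $0$, whereas the extra terms of a general $\gamma''$ each involve some $\alpha_k$ and strictly raise the filtration. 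Hence $[\gamma'']$ and all of its powers preserve this finite filtration and act on the associated graded as $[\gamma''_0]$. On the $j$-th graded piece $\bigoplus_{|T|=j}\langle\alpha_T\rangle\otimes\bigotimes_{k\notin T}(\CC\oplus\CC\beta_k)\otimes\bigotimes_l(\CC\oplus\CC\theta_l)$, the operator $[\gamma''_0]$ preserves each $T$-summand and acts there as $\id$ tensored with $\sum_{k\notin T}a_k\,\beta_k\cup \;+\; \sum_{l<l'}(G'')_{ll'}\,\theta_l\theta_{l'}\cup$, which is a sum of hard Lefschetz operators on tensor factors (each $a_k\beta_k\cup$ on $\CC\oplus\CC\beta_k$, and the full-rank skew form on the torus factor by Proposition~\ref{prop:curioustorus}), hence is itself hard Lefschetz by the linear algebra underlying Proposition~\ref{prop:curiousproduct}. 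Therefore $[\gamma''_0]^{\,D-q}$, and so $[\gamma'']^{\,D-q}$, induces isomorphisms on all graded pieces and is thus an isomorphism.

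The step I expect to be the main obstacle is the last one: a general full-rank GSV form on $\bar\cA''$ is not a product form, so Proposition~\ref{prop:curiousproduct} does not apply to it directly, and hard Lefschetz is a priori only a Zariski-open condition on the form. The filtration by $\alpha$-degree is what tames the cross terms, and the content of the full-rank hypothesis is precisely that the leading part $\gamma''_0$ — the part on which the relevant Lefschetz determinants actually depend — is genuinely hard Lefschetz. A secondary nuisance is the careful bookkeeping of cohomological degrees and Deligne types through the isotypic decomposition needed to see that the reduced statement really is honest hard Lefschetz on $\bar\cA''$.
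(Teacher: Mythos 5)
Your proof is correct, but after the first step it follows a genuinely different route from the paper. Both arguments begin identically: pass via Proposition~\ref{prop:coverprincipal} to the cover $\bar\cA$ with $\bar\tB = \left( \begin{smallmatrix} 0 \\ d\,\Id_n \\ 0 \end{smallmatrix} \right)$, pull back $\gamma$, and note that curious Lefschetz for the $H$-invariant part follows from curious Lefschetz upstairs. From there the paper stays with covering maps: it performs two further congruences $R\wB R^T$ (first $R = \diag(\Id_n, d\,\Id_m)$ to make the blocks $S,A,M$ divisible by $d$, then an integral block-unipotent $R$ with $X - X^T = S/d$) to reduce $\wB$ to the block-diagonal form $\left( \begin{smallmatrix} 0 & -d\Id & 0 \\ d\Id & 0 & 0 \\ 0 & 0 & M \end{smallmatrix} \right)$, after which the GSV form is a product form and Proposition~\ref{prop:curiousproduct} together with the rank-one computation and Proposition~\ref{prop:curioustorus} finishes. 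You instead keep the general full-rank $\wB$, decompose $H^\ast(\bar\cA)$ into $(\ZZ/d)^n$-isotypic pieces (reusing Corollary~\ref{BasicIsotypics}/Theorem~\ref{thm:Isolated Main}), observe that nontrivial characters contribute only an inert one-dimensional factor of degree $2|J|$ and type $(|J|,|J|)$ -- which is correct, since every term of $\bar\gamma$ touching an index in $J$ pushes that factor past its top degree -- and then prove honest hard Lefschetz for the non-product form on the standard remainder by filtering by the number of $\dlog y_k$'s, the associated graded operator being the split form; your determinant claim is also right, since $\det\wC = \pm d^{2n}\det M$, so full rank survives deleting the $J$-indexed rows and columns. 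The trade-off: the paper's route avoids any associated-graded argument at the cost of the integrality manipulations in the covering matrices, while yours avoids the extra covers and handles an arbitrary full-rank skew extension directly, at the cost of the (correct, though tersely checked) degree bookkeeping $D_T - (q-|T|) = D - q$ across the isotypic and filtration decompositions. One cosmetic imprecision: the $k\notin J$ factors are the trivial-isotypic parts of $H^\ast(Y_d)$, not literally $H^\ast$ of the pinched torus $P$, but these are isomorphic as graded rings, so nothing breaks.
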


\begin{proof}
We suppose that $\cA = \cA(\tB)$ where $\tB =  \left( \begin{smallmatrix} 0 \\ C \end{smallmatrix} \right)$, and $C$ is an $m \times n$ matrix of rank $n$, where $n+m$ is even.  

As in the proof of Theorem~\ref{thm:Isolated Main}, we have a covering map $(\Psi,\Phi,R): \bar{\cA} \to \cA$, with deck group $H$. By Proposition \ref{prop:GSVpullback}, the form $\gamma$ pulls back to an $H$-invariant GSV form $\bar{\gamma} := \Psi^{\ast}(\gamma)$ on $\bar{\cA}$.  The cohomology $H^{\ast}(\cA)$ (with its Deligne splitting) is identified with the $H$-invariant part of $H^{\ast}(\bar{\cA})$. 
If the Theorem holds for $H^{\ast}(\bar{\cA})$ and $[\bar \gamma]$, then it will in particular hold for the $H$-invariant part of $H^{\ast}(\bar{\cA})$.

So we are reduced to studying the case where $C$ is of the form $\left( \begin{smallmatrix} d \mathrm{Id}_n \\ 0 \end{smallmatrix} \right)$ . We now assume $C$ is of this form, so we can use the notations $\cA$, $\gamma$, etcetera rather than $\bar{\cA}$, $\bar{\gamma}$, etcetera.

The integer matrix $\wB$ has the form
\[
\wB = \begin{pmatrix} 0 & -d\, \Id_n & 0\\
d\,\Id_n & S & -A^T \\
0 & A & M
\end{pmatrix}
\]
where the $A$ is an arbitrary $n \times (m-n)$ matrix, $S$ is a skew-symmetric $n \times n$ matrix, and $M$ is a skew-symmetric full rank $(m-n) \times (m-n)$ matrix.

We first reduce to the case that all the entries of $S, A, M$ are divisible by $d$.  Applying Proposition \ref{prop:GSVpullback} with $R = \left(\begin{smallmatrix}\Id_n & 0 \\ 0 & d \, \Id_n \end{smallmatrix}\right)$, we compute that
\[
R\wB R^T = \begin{pmatrix}\Id_n & 0 \\ 0 & d\, \Id_n \end{pmatrix}\wB \begin{pmatrix}\Id_n & 0 \\ 0 & d\, \Id_n \end{pmatrix}= \begin{pmatrix} 0 & -d^2\, \Id_n & 0\\
d^2\,\Id_n & d^2\,S & -d^2 \,A^T \\
0 & d^2\, A & d^2\, M.
\end{pmatrix}
\]
So after replacing $d$ with $d^2$, and $\cA$ with a cover, we may assume that all the entries of $S, A, M$ are divisible by $d$.  Now let us pick
\[
R = \begin{pmatrix}
\Id_n & 0 & 0 \\ 
X & \Id_n & 0 \\ 
\frac{1}{d}\,A & 0 & \Id_{m-n}
\end{pmatrix}
\]
where $X$ is a $n \times n$ integer matrix satisfying $X - X^T = S/d$.  Then 
\begin{equation}\label{eq:finalwB}
R \wB R^T = \begin{pmatrix} 0 & -d\, \Id_n & 0\\
d\,\Id_n & 0 &0 \\
0 & 0 & M
\end{pmatrix}.
\end{equation}
So again by applying Proposition \ref{prop:GSVpullback}, we may and will assume that $\wB$ has the form given by the RHS of \eqref{eq:finalwB}.  Denote $\theta_j = \dlog x_j$.  We need to consider the two-form
\[
\gamma = 2 d \sum_{j=1}^n \theta_j \wedge \theta_{n+j} + \sum_{k,l=2n+1}^{n+m} M_{k,\ell}\, \theta_k \wedge \theta_\ell \] 
on $\cA$, where the matrix $M$ is skew-symmetric and full rank.

In this case, $\tB = \left(\begin{smallmatrix} 0 \\ d\Id_n \\ 0\end{smallmatrix}\right)$, so $\cA \cong Y_d^n \times T_{m-n}$, where $Y_d$ is the cluster variety with extended exchange matrix $\left(\begin{smallmatrix} 0 \\ d\end{smallmatrix}\right)$ studied in Section \ref{sec:rankone}, and $T_{m-n} \cong (\CC^\ast)^{m-n}$ is the $(m-n)$-dimensional torus.

Denote the cluster variable of $Y_d$ by $x$ and the frozen variable by $y$.  By Proposition \ref{prop:curiousproduct}, it suffices to show that the curious Lefschetz property holds for the pair $(Y_d, 2d \, \dlog x \wedge \dlog y)$ and for the pair $(T_{(n-m)}, \sum_{k,l=2n+1}^{n+m} M_{k,\ell} \dlog x_{k-2n} \wedge \dlog x_{\ell-2n})$.  The former follows from the computations in Section \ref{sec:rankone}.  The latter follows from Proposition \ref{prop:curioustorus}.
\end{proof}

\subsection{General case}\label{ssec:curious}
We now prove the curious Lefschetz property in general.  We remark that most of the important consequences also hold in the case where $\cA$ is odd dimensional.

\begin{theorem}\label{thm:curious}
Let $\cA$ be a cluster variety of rank $n$ with $m$ frozen variables, satisfying the Louise property and of full rank.  Then the mixed Hodge structure of $H^\ast(\cA)$ is of mixed Tate type and is split over $\QQ$.  
\begin{enumerate}
\item
 For any integers $s$ and $p$, we have $\dim H^{p+s, (p,p)}(\cA) = \dim H^{n+m-p+s, (n+m-p, n+m-p)}(\cA)$.
\item
If $n+m$ is even, and $\gamma$ is a full rank GSV-form, then $(\cA, \gamma)$ satisfies the curious Lefschetz property.%, meaning that $\gamma^{(n+m)/2-p}: H^{p+s, (p,p)}(\cA) \to H^{n+m-p+s, (n+m-p, n+m-p)}(\cA)$ is an isomorphism.
\end{enumerate}
\end{theorem}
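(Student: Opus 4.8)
The plan is to argue by induction following the recursive structure of the Louise property. In the base case $\oG(\tB)$ has no edges, so $\cA$ is isolated, and Theorem~\ref{thm:Isolated Main} gives that $H^\ast(\cA)$ is of mixed Tate type and split over $\QQ$, while Proposition~\ref{Isolated Curious} gives, when $n+m$ is even, that $(\cA,\gamma)$ has the curious Lefschetz property for every full rank GSV form. For the inductive step, after a mutation (which changes neither $\cA$ nor the form $\gamma$) we may assume $\oG(\tB)$ has a separating edge $i\to j$ for which $A(\tB_{[n]\setminus\{i\}})$, $A(\tB_{[n]\setminus\{j\}})$, $A(\tB_{[n]\setminus\{i,j\}})$ are all Louise. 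By Propositions~\ref{prop:covering} and~\ref{prop:localization} these are the cluster localizations $U=\Spec A[x_i^{-1}]$, $V=\Spec A[x_j^{-1}]$, $U\cap V=\Spec A[x_i^{-1},x_j^{-1}]$, which cover $\cA$ by open affine subsets; each is again of full rank (Remark~\ref{rem:fullrank}) and has dimension $n+m$, with fewer mutable variables. The restriction of a full rank GSV form $\gamma$ to $U$ (and likewise to $V$, $U\cap V$) is again a full rank GSV form for that cluster localization: its defining matrix $\widehat B$, after reindexing columns so that the remaining mutable indices come first, is a skew-symmetric full rank extension of the relevant exchange matrix, and $\gamma|_U$ agrees with the corresponding GSV form on the shared cluster torus. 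Hence the inductive hypothesis applies to $U$, $V$, $U\cap V$, and we conclude: $H^\ast(\cA)$ is of mixed Tate type by Lemma~\ref{lem:mixedTate}, and, when $n+m$ is even, $(\cA,\gamma)$ has the curious Lefschetz property by Theorem~\ref{CuriousMV}.

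To deduce that the mixed Hodge structure is split over $\QQ$ (in the even case) via Theorem~\ref{thm:splitQ}, we also need that a nonzero scalar multiple of $\gamma$ lies in $H^2(\cA,\QQ)$, and this is carried along the same induction. In the base case it is part of Proposition~\ref{prop:isolatedrank1} and its extension in the proof of Theorem~\ref{thm:Isolated Main}: on the finite cover $\bar{\cA}\cong Y_d^{\,n}\times(\CC^\ast)^{m-n}$ the class $\tfrac{1}{(2\pi i)^2}[\gamma]$ is a $\ZZ$-combination of the integral classes $\tfrac{1}{(2\pi i)^2}[\dlog x\wedge\dlog y]$ on the $Y_d$-factors and $\tfrac{1}{(2\pi i)^2}[\dlog x_k\wedge\dlog x_\ell]$ on the torus factor, and it descends since $H^2(\cA,\QQ)=H^2(\bar{\cA},\QQ)^H$. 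In the inductive step, the rationality of $\tfrac{1}{(2\pi i)^2}[\gamma|_U]$ and $\tfrac{1}{(2\pi i)^2}[\gamma|_V]$ shows that $\tfrac{1}{(2\pi i)^2}[\gamma]$ differs from a class in $H^2(\cA,\QQ)$ by an element of the image of the Mayer--Vietoris connecting map $\delta\colon H^1(U\cap V)\to H^2(\cA)$; that image vanishes because $H^1(U\cap V,\CC)$ is spanned by the classes $\dlog$ of the frozen variables of $U\cap V$ — namely $\dlog y_1,\dots,\dlog y_m,\dlog x_i,\dlog x_j$ — each of which restricts from $U$ or from $V$, so that $H^1(U)\oplus H^1(V)\to H^1(U\cap V)$ is surjective.

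The odd-dimensional case ($n+m$ odd) reduces to the even one. Adjoining a frozen variable whose row in the exchange matrix is zero produces $\cA'=\cA\times\CC^\ast=\cA(\tB')$, still Louise and of full rank, now of even dimension $n+m+1$; by Lemma~\ref{lem:GSVfullrank} it carries a full rank GSV form $\gamma'$, so the even case applies to $(\cA',\gamma')$. Mixed Tate type for $\cA$ follows from the K\"unneth decomposition $H^{k,(p,q)}(\cA')\cong H^{k,(p,q)}(\cA)\oplus H^{k-1,(p-1,q-1)}(\cA)$. Writing $a_{p,s}:=\dim H^{p+s,(p,p)}(\cA)$, the curious Lefschetz identity on $\cA'$ becomes $a_{p,s}+a_{p-1,s}=a_{n+m-p,s}+a_{n+m-(p-1),s}$ for all $p$, so $b_p:=a_{p,s}-a_{n+m-p,s}$ satisfies $b_p=-b_{p-1}$; since $b_p=0$ for $|p|$ large, $b_p\equiv0$, which is part~1. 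Finally $\pi^\ast H^\ast(\cA)$ is a K\"unneth direct summand of $H^\ast(\cA')$ as a mixed Hodge structure with its $\QQ$-structure, so projecting a $\QQ$-basis of each $H^{k,(p,p)}(\cA')$ onto this summand yields a $\QQ$-spanning set, hence a $\QQ$-basis, of $H^{k,(p,p)}(\cA)$; thus $H^\ast(\cA)$ is split over $\QQ$.

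I expect the main obstacle to be the control of the rational structure, i.e.\ the second paragraph. Mixed Tate type and the curious Lefschetz isomorphisms propagate formally through Lemma~\ref{lem:mixedTate} and Theorem~\ref{CuriousMV}, but ``split over $\QQ$'' is not formal — the Deligne decomposition of a rational class need not be rational — which is exactly why one must feed a rational GSV class into Theorem~\ref{thm:splitQ}; verifying the vanishing of the degree-two Mayer--Vietoris connecting map, through the identification of $H^1$ of a (localized) locally acyclic full rank cluster variety with the span of $\dlog$ of its frozen variables, is the delicate point.
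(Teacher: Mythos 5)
Your proposal is correct and follows essentially the same route as the paper: induction along the Louise recursion with the isolated case (Proposition~\ref{Isolated Curious}, Theorem~\ref{thm:Isolated Main}) as base, Lemma~\ref{lem:mixedTate} and Theorem~\ref{CuriousMV} for the Mayer--Vietoris step, Theorem~\ref{thm:splitQ} for splitness, and passage to $\cA\times\CC^\ast$ with K\"unneth for odd $n+m$. Your second paragraph, verifying that a multiple of $[\gamma]$ lies in $H^2(\cA,\QQ)$, addresses a hypothesis of Theorem~\ref{thm:splitQ} that the paper leaves implicit, and your argument for it (rationality on $U$, $V$ plus vanishing of the degree-two connecting map via $H^1$ being spanned by $\dlog$ of frozen variables) is sound and non-circular, since the standard-cohomology results it invokes do not depend on Theorem~\ref{thm:curious}; only note that on the cover $\bar\cA$ the class of the pulled-back GSV form also involves frozen--frozen terms $\dlog y_a\wedge\dlog y_b$ across factors, which are likewise integral, so the conclusion is unaffected.
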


\begin{proof}
We first consider the case that $n+m$ is even, so a full rank GSV-form $\gamma$ exists by Lemma \ref{lem:GSVfullrank}.
We note that all the cluster varieties $\cA'$ appearing in the Louise algorithm for $\cA$ are of full rank (see Remark \ref{rem:fullrank}), and that $\gamma|_{\cA'}$ is a GSV-form for each such $\cA'$.

In Proposition \ref{Isolated Curious} we showed that $H^{\ast}(\cA')$ satisfies the conclusions of the theorem when $\cA'$ is isolated and of full rank.  This is the base case of the recursion in the Louise algorithm.  It then follows from Lemma \ref{lem:mixedTate} that the mixed Hodge structure of $H^\ast(\cA)$ is of mixed Tate type, and from Theorem \ref{CuriousMV} that $(\cA,\gamma)$ satisfies the curious Lefschetz property.  By Theorem \ref{thm:splitQ}, the mixed Hodge structure of $H^\ast(\cA)$ is split over $\QQ$.

We now consider the case where $n+m$ is odd. Let $\tB'$ be the extended exchange matrix where we add one more frozen row to $\tB$, with the additional row being $(0,0,\ldots,0)$. Letting $\cA'$ be the cluster variety with exchange matrix $\tB'$, we have $\cA' \cong \cA \times \CC^{\ast}$. The desired properties of $\cA$ then follow from the theorem for $\cA'$ and the K\"unneth theorem.
\end{proof}

\section{Standard forms on cluster varieties}\label{sec:standard}
Let $X$ be a smooth complex affine algebraic variety whose cohomology is of mixed Tate type.  We define the \defn{standard part} $H^\ast(X)_{st}$ of $H^\ast(X)$ to be the subring
\[
H^\ast(X)_{st} := \bigoplus_k H^{k,(k,k)}(X)
\]
and the nonstandard part to be
\[ 
H^{\ast}(X)_{ns} = \bigoplus_k \bigoplus_{q<k} H^{k, (q,q)}(X).
\]

In this section, we will engage in a detailed study of $H^{\ast}(\cA)_{st}$, which appears to be the most tractable part of $H^{\ast}(\cA)$. 
We summarize our main results:
Proposition~\ref{prop:kernel} provides an alternate description of $H^{\ast}(\cA)_{ns}$ for cluster varieties.
Theorem~\ref{thm:standardequiv} provide several useful theoretical characterizations of $H^{\ast}(\cA)_{st}$.
In Corollary~\ref{cor:LinAlg}, we give a practical method for computing the standard cohomology.
In Theorem~\ref{thm:standard} and Proposition~\ref{prop:standardbasis}, we give explicit ring theoretic generators and a vector space basis for the standard part.

\subsection{Restriction to tori}

For a torus $T$, we have $H^\ast(T)_{st} = H^\ast(T)$.

\begin{prop}\label{prop:kernel}
Suppose that the cluster variety $\cA$ is a smooth complex algebraic variety, and let $T$ be a cluster torus of $\cA$.  Then $H^\ast(X)_{st}$ injects into $H^\ast(T)$ and $H^{\ast}(X)_{ns} = \Ker(H^\ast(X) \to H^\ast(T))$.  In particular, the kernel of the restriction map of cohomology to a cluster torus $T$ does not depend on the choice of $T$.
\end{prop}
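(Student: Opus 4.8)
The plan is to deduce everything from Lemma~\ref{lem:topHodgeInjective} together with the functoriality of the Deligne splitting and the fact that the cohomology of a torus is concentrated, in each degree $k$, in the single Deligne bidegree $(k,k)$. First I would record the two inputs. Since $H^\ast(\cA)$ is of mixed Tate type, the formula from the ``Mixed Tate type'' subsection (equivalently Lemma~\ref{L:bigrading}(2), using $W_{2k}H^k = H^k$) gives $H^{k,(k,k)}(\cA) = F^k H^k(\cA)$, so that the degree-$k$ piece of $H^\ast(\cA)_{st}$ is exactly $F^k H^k(\cA)$. For a cluster torus $T$, every class in $H^k(T)$ is a sum of products of $\dlog$ forms, hence lies in $H^{k,(k,k)}(T)$ by Lemma~\ref{lem:logImpliesTate}; in particular $H^{k,(q,q)}(T) = 0$ for $q \neq k$ and $H^k(T) = F^k H^k(T)$. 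Recall also that $T$ is a dense Zariski open subvariety of the irreducible variety $\cA$, so Lemma~\ref{lem:topHodgeInjective} applies with $U = T$.

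For the injectivity of $H^\ast(\cA)_{st} \to H^\ast(T)$: in degree $k$ the restriction map is, by the previous paragraph, a map $F^k H^k(\cA) \to F^k H^k(T)$, and Lemma~\ref{lem:topHodgeInjective} says this is injective for every $k \geq 1$ (the case $k=0$ being trivial). Summing over $k$ gives that $H^\ast(\cA)_{st}$ injects into $H^\ast(T)$ under restriction.

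For the identification $H^\ast(\cA)_{ns} = \Ker(H^\ast(\cA) \to H^\ast(T))$: by functoriality of the Deligne splitting (Proposition~\ref{prop:splitMV}) the restriction map sends $H^{k,(q,q)}(\cA)$ into $H^{k,(q,q)}(T)$, which vanishes for $q < k$; hence every summand $H^{k,(q,q)}(\cA)$ with $q<k$ lies in the kernel, so $H^\ast(\cA)_{ns} \subseteq \Ker(H^\ast(\cA) \to H^\ast(T))$. Conversely, mixed Tate type gives the decomposition $H^k(\cA) = H^{k,(k,k)}(\cA) \oplus \bigoplus_{q<k} H^{k,(q,q)}(\cA)$; the restriction map is injective on the first summand by the previous paragraph and kills (nothing more than) the second, so the kernel in degree $k$ is exactly $\bigoplus_{q<k} H^{k,(q,q)}(\cA)$. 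Summing over $k$ yields $\Ker(H^\ast(\cA) \to H^\ast(T)) = H^\ast(\cA)_{ns}$, and since the right-hand side is defined without reference to $T$, the last assertion of the proposition follows.

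The only real content is Lemma~\ref{lem:topHodgeInjective}, which is already proved, so I do not anticipate a genuine obstacle; the main thing to get right is the bookkeeping — that under the mixed Tate hypothesis the standard part in degree $k$ is precisely $F^k H^k$, and that the restriction map to a torus can be analyzed one Deligne bidegree at a time because torus cohomology is concentrated in bidegree $(k,k)$ in each cohomological degree $k$.
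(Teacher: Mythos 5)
Your proof is correct and follows essentially the same route as the paper's: the torus cohomology sits entirely in bidegree $(k,k)$, functoriality of the Deligne splitting shows the nonstandard part dies on restriction, and Lemma~\ref{lem:topHodgeInjective} gives injectivity on the standard part (you route this through the identification $H^{k,(k,k)}(\cA)=F^kH^k(\cA)$ under the mixed Tate hypothesis, which is valid in the setting of this section). No gaps.
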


%Proposition \ref{prop:kernel} follows from the following general result, and the observation that for a torus $T$, we have $H^k(T) = H^{k,(k,k)}(T)$.
%
%\begin{prop}\label{prop:inject}
%Let $X$ be a smooth complex algebraic variety, and let $U$ be an open subset of $X$. Then $\Gr_{2k}^W H^k(X)$ injects into $\Gr^W_{2k} H^k(U)$.
%%Let $X$ be a smooth complex algebraic variety, and let $D$ be a hypersurface in $Z$.  Let $U = Z \setminus D$. Then $\Gr_{2k}^W H^k(Z)$ injects into $\Gr^W_{2k} H^k(U)$.
%\end{prop}
%

\begin{proof}
Since all of $H^k(T)$ is in degree $(k,k)$, it is clear that $H^k(X)_{ns}$ is in the kernel of restriction to $T$. On the other hand, $H^{k, (k,k)}(X) \to H^{k, (k,k)}(T)$ is injective by Lemma~\ref{lem:topHodgeInjective}.
\end{proof}

\subsection{Standard forms}\label{ssec:standard}
We say that a differential form $\omega$ of degree $k$ on a torus $T = \Spec \CC[x_1^{\pm 1},\ldots, x_r^{\pm_1}]$ is \defn{standard} if it can be written as
\[
\omega = \sum_{I = \{i_1,i_2,\ldots,i_k\}} a_I \;\frac{dx_{i_1}}{x_{i_1}} \wedge \frac{dx_{i_2}}{x_{i_2}} \wedge \cdots \wedge \frac{dx_{i_k}}{x_{i_k}}, \qquad a_I \in \CC.
\]
Every cohomology class in $H^{\ast}(T)$ has a unique standard representative: the vector space of standard forms maps isomorphically onto $H^\ast(T)$.

\begin{lemma}\label{lem:standardrestrict} Let $Z$ be a smooth complex algebraic variety. Let $T$ and $S$
be two dense tori in $Z$. Let $\eta$ be a class in $H^{\ast}(Z)$ and let $\alpha$ and let $\beta$ be the standard representatives of $\eta|_T \in H^{\ast}(T)$ and $\eta|_S \in H^{\ast}(S)$
respectively. Then $\alpha|_{T \cap S} = \beta|_{T \cap S}$.
\end{lemma}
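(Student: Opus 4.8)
The plan is to reduce the equality of the differential forms $\alpha|_{T\cap S}$ and $\beta|_{T\cap S}$ to the injectivity of log forms into cohomology, Lemma~\ref{lem:dlogzero}. The key observation is that both restrictions are honest log forms on the smooth variety $W := T\cap S$ and that they represent the same cohomology class $\eta|_W$; hence their difference is an exact log form, which must vanish.

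In detail, I would proceed as follows. First, $W = T\cap S$ is an open subvariety of $Z$, hence smooth; if $W = \emptyset$ there is nothing to prove, so assume $W\neq\emptyset$. Second, write $\alpha = \sum_I a_I\,\dlog x_{i_1}\wedge\cdots\wedge\dlog x_{i_k}$ in terms of the coordinate functions $x_i$ of the torus $T$. Each $x_i$ is a unit of $\cO(T)$, and the restriction of a unit to an open subvariety is again a unit, so $\alpha|_W$ lies in the subalgebra of $\Omega^\ast(W)$ generated by $\dlog$ of units of $\cO(W)$; that is, $\alpha|_W$ is a log form on $W$. The same argument, using the coordinate functions of $S$, shows $\beta|_W$ is a log form on $W$, and therefore so is $\alpha|_W-\beta|_W$. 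Third, since taking cohomology classes commutes with pullback and restriction is functorial along $W\into T\into Z$, we get $[\alpha|_W] = (\eta|_T)|_W = \eta|_W$, and likewise $[\beta|_W]=\eta|_W$; hence $\alpha|_W-\beta|_W$ represents the zero class in $H^\ast(W)$. Fourth, a log form is in particular a rational log form (take the dense open set of Lemma~\ref{lem:dlogzero} to be $W$ itself), so Lemma~\ref{lem:dlogzero} applies to the smooth variety $W$ and forces the exact log form $\alpha|_W-\beta|_W$ to be zero. Therefore $\alpha|_{T\cap S}=\beta|_{T\cap S}$.

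I expect no serious obstacle beyond having Lemma~\ref{lem:dlogzero} available: once the ``Hodge theorem for log forms'' is in hand, this statement is essentially a formal consequence of it. The only points needing (minor) care are that the two standard representatives genuinely restrict to \emph{log} forms on the overlap --- which uses nothing more than the fact that units restrict to units on an open subvariety --- and that $T\cap S$ is a nonempty smooth variety, so that Lemma~\ref{lem:dlogzero} is applicable there.
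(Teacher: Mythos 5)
Your proof is correct and is essentially identical to the paper's: both arguments observe that $\alpha|_{T\cap S}-\beta|_{T\cap S}$ is a log form on $T\cap S$ representing the zero class, hence exact, and then invoke Lemma~\ref{lem:dlogzero} to conclude it vanishes. The extra care you take (units restrict to units, nonemptiness of $T\cap S$) is harmless bookkeeping that the paper leaves implicit.
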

\begin{proof}
Let $t_i$ and $s_i$ be the coordinates on the tori $T$ and $S$. Then the form $\alpha|_{T \cap S} - \beta|_{T \cap S}$ is in the ring generated
by the $\dlog t_i$ and $\dlog s_i$. It represents the cohomology class $\eta|_{T \cap S} - \eta|_{T \cap S}$, which is zero, so the form $\alpha|_{T \cap S} - \beta|_{T \cap S}$ is exact. By Lemma \ref{lem:dlogzero}, $\alpha|_{T
\cap S} - \beta|_{T \cap S} =0$.
\end{proof}

A differential form on $\cA$ is called \defn{standard} if it restricts to a standard form on every cluster torus $T \subset \cA$.  Our next result says that classes in $H^\ast(\cA)_{st}$ can be represented uniquely by standard forms.

\begin{thm}\label{thm:standardequiv}
Suppose that the cluster variety $\cA$ is locally acyclic and of full rank.  Then the following vector spaces are isomorphic, by the natural isomorphisms:
\begin{enumerate}
\item The subring $H^\ast(\cA)_{st}$.
\item The space of standard differential forms on $\cA$.
\item The space of differential forms on $\cA$ which are standard on some cluster torus.
%\item For any torus $T$, the space of standard differential forms on $T$ which extend to its neighboring tori.
\end{enumerate}
\end{thm}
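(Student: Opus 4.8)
The plan is to realize all three spaces through two evident maps and to show each map is an isomorphism: the inclusion of the space (2) of standard forms on $\cA$ into the space (3) of regular forms on $\cA$ that are standard on \emph{some} cluster torus, and the map $c$ sending a form in (3) to its de Rham class in $H^\ast(\cA)$. The first point to check is that $c$ is well defined with image inside $H^\ast(\cA)_{st}$: a form $\omega$ in (3) is a rational log form on $\cA$, since it is regular on all of $\cA$ and restricts to a log form on the dense open cluster torus on which it is standard; applying Proposition~\ref{prop:rationalLogImpliesTate} to the homogeneous components of $\omega$ then places $[\omega]$ in $\bigoplus_k H^{k,(k,k)}(\cA) = H^\ast(\cA)_{st}$.

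Injectivity of $c$ on (3) is immediate from Lemma~\ref{lem:dlogzero}: a rational log form whose cohomology class vanishes is itself zero. The real work is to show that $c$ is already surjective from the subspace (2). Here is the argument I would run. It suffices to hit a homogeneous class $\eta \in H^{k,(k,k)}(\cA)$. For every cluster torus $S \subseteq \cA$ let $\alpha_S$ be the unique standard representative on $S$ of $\eta|_S \in H^k(S)$. Since $\cA$ is irreducible, any two cluster tori meet in a dense open set, and Lemma~\ref{lem:standardrestrict} (applied with ambient variety $\cA$ and class $\eta$) gives $\alpha_S|_{S \cap S'} = \alpha_{S'}|_{S \cap S'}$ for all $S, S'$. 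Thus the $\alpha_S$ agree as rational $k$-forms and patch together to a single regular $k$-form $\alpha$ on the open set $U := \bigcup_S S$.

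Now the hypotheses enter in two places. By Theorem~\ref{thm:Muller}, local acyclicity together with full rank makes $\cA$ smooth, so $\Omega^k_\cA$ is locally free; and by Proposition~\ref{prop:uniontori} the complement $\cA \setminus U$ has codimension at least $2$. Consequently $\alpha$ extends (uniquely) to a global regular form $\omega \in \Omega^k(\cA)$. By construction $\omega$ restricts to the standard form $\alpha_S$ on each cluster torus $S$, so $\omega$ lies in (2); moreover $\omega$ is a rational log form, hence $[\omega] \in H^\ast(\cA)_{st}$, while $[\omega]|_T = [\alpha_T] = \eta|_T$ for a fixed cluster torus $T$. Since $H^\ast(\cA)_{st} \hookrightarrow H^\ast(T)$ by Proposition~\ref{prop:kernel}, I conclude $[\omega] = \eta$, so $c$ maps (2) onto $H^\ast(\cA)_{st}$.

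Finally, combining these: $c$ is injective on (3) and already surjective on the subspace (2), so it is bijective on both (2) and (3); in particular the inclusion (2) $\hookrightarrow$ (3) is forced to be an equality, and all three spaces become identified by the natural maps. The main obstacle is the extension step — passing from a regular form defined only on the union of cluster tori to a regular form on all of $\cA$ — and this is exactly where Proposition~\ref{prop:uniontori} (which uses local acyclicity) and the smoothness of $\cA$ from Theorem~\ref{thm:Muller} (which uses full rank) are indispensable; the remainder is formal bookkeeping with the log-form results of Section~\ref{sec:cohom}.
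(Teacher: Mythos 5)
Your proposal is correct and follows essentially the same route as the paper: glue the standard representatives of $\eta|_S$ over all cluster tori via Lemma~\ref{lem:standardrestrict}, extend across the codimension-$\geq 2$ complement using Proposition~\ref{prop:uniontori} and smoothness, and identify the resulting class with $\eta$ by the injectivity of restriction to a torus (your Proposition~\ref{prop:kernel} versus the paper's direct use of Lemma~\ref{lem:topHodgeInjective}, which amount to the same thing). Your explicit invocation of Lemma~\ref{lem:dlogzero} for injectivity of the class map on (3) is a point the paper leaves implicit, but there is no substantive difference.
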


\begin{proof}
We begin by showing that, if $\alpha$ is a class in $H^\ast(\cA)_{st}$, then $\alpha$ can be represented by a standard form. 
Let $T_1$ and $T_2$ be two cluster tori, and let $\eta_i$ be the standard form on $T_i$ representing the cohomology class $\alpha|_{T_i}$. 
Then, by Lemma~\ref{lem:standardrestrict}, we have $\eta_1|_{T_1 \cap T_2} = \eta_2|_{T_1 \cap T_2}$. 
So there is a single form $\eta$ on $\bigcup T_i$ which represents $\alpha$. Set $U = \bigcup T_i$.
By Proposition \ref{prop:uniontori}, the complement of $U$ is codimension $\geq 2$ in $\cA$, so $\eta$ extends to a differential form on all of $\cA$, which we will also denote by $\eta$. 
We must see that $\eta$ represents the cohomology class $\alpha$. At the moment, we only know that $\eta|_U$ represents $\alpha|_U$.

From Proposition~\ref{prop:rationalLogImpliesTate}, the form $\eta$ represents a class in $H^{k, (k,k)}(\cA)$; call that class $\alpha'$, and we know that $\alpha|_U = \alpha'|_U$.
But, by Lemma~\ref{lem:topHodgeInjective}, $H^{k, (k,k)}(\cA)$ injects into $H^{k, (k,k)}(U)$, so  $\alpha|_U = \alpha'|_U$ implies that $\alpha=\alpha'$, as desired.

Clearly, a standard form is, in particular, standard on some torus. And, by Proposition~\ref{prop:rationalLogImpliesTate}, a form on $\cA$ which is standard on some torus represents a standard class. This shows the equivalence of (1), (2) and~(3). 
\end{proof}
In the sequel, we will identify $H^*(\cA)_{st}$ with the vector space of standard forms on $\cA$.

We will later prove an additional characterization of standard cohomology in Proposition~\ref{prop:neighborTori}.

\subsection{Extension of standard forms to neighboring tori}\label{ssec:neighbor}

Let $T$ be a cluster torus in the cluster variety $\cA$, with mutable variables $x_1$, $x_2$, \dots, $x_n$ and frozen variables $x_{n+1}$, $x_{n+2}$, \dots, $x_{n+m}$. Let $T_1$, $T_2$, \dots, $T_n$ be the neighboring tori. 
In this section, we will develop a criterion for a standard form $\omega$ on $T$ to extend to a differential form on the $T_i$. 
Once we prove Proposition~\ref{prop:neighborTori}, this will give a simple way to compute $H^{\ast}(\cA)_{st}$ by a direct linear algebra computation.
We remark that the results of this section have no hypotheses of local acyclicity; we could simplify some of our arguments if we added them, but we want to emphasize how direct and computational our proofs are.

\begin{proposition}\label{prop:residues}
%Suppose that the cluster variety $\cA$ satisfies the Louise property and is of full rank.  Let $T$ be the initial cluster torus and $\{T_r \mid r =1,2,\ldots,n\}$ the cluster tori for adjacent seeds.  A standard form $\theta$ on $T$ representing a class in $H^{\ast}(\cA)_{st}$ satisfies the following condition: 
With the above notations, for $1 \leq r \leq n$,  write $\omega = \omega_1 + \omega_2 \wedge \dlog x_r$, where $\dlog x_r$ does not appear in $\omega_1$ or $\omega_2$. Then $\omega$ extends to $T_r$ if and only if
\begin{equation}\label{eq:residue}
\omega_2 \wedge \sum_{s=1}^{m+n} \tB_{sr} \frac{d x_s}{x_s} =0.
\end{equation}
%Also, in this case, $\omega$ extends to a standard form on $T'$.

%If in addition, $\cA$ satisfies \eqref{eq:codimtwo} then this characterizes the space of standard forms $H^{\ast}(\cA)_{st}$ as a subspace of the standard forms on $T$.
\end{proposition}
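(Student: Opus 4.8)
The plan is to analyze the mutation relation $x_r x_r' = M_+ + M_-$ explicitly, where $M_{\pm} = \prod_s x_s^{[\pm \tB_{sr}]_+}$, and to understand exactly which standard forms on $T$ remain regular after the change of coordinates replacing $x_r$ by $x_r'$. Write $\omega = \omega_1 + \omega_2 \wedge \dlog x_r$ as in the statement, so that $\omega_1, \omega_2$ are standard forms in the variables $x_s$, $s \neq r$, which are common to $T$ and $T_r$. The only potential obstruction to extending $\omega$ across the divisor $\{x_r' = 0\} \cap T_r$ (equivalently, $\{x_r = \infty\}$ relative to the chart $T$, in the sense that on $T_r$ we have $x_r = (M_+ + M_-)/x_r'$) is the term $\omega_2 \wedge \dlog x_r$. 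So the whole question reduces to: when is $\dlog x_r$, expressed via $x_r = (M_+ + M_-)/x_r'$, a form whose "bad part" is killed after wedging with $\omega_2$?

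**Key computation.** Substituting $x_r = (M_+ + M_-)/x_r'$ gives
\[
\dlog x_r = \dlog(M_+ + M_-) - \dlog x_r'.
\]
The term $\dlog x_r'$ is regular on $T_r$, so it contributes nothing to the obstruction. For the term $\dlog(M_+ + M_-)$: factoring out $M_-$ (say), we get $M_+ + M_- = M_-(1 + M_+/M_-)$, and $M_+/M_- = \prod_s x_s^{\tB_{sr}}$, so
\[
\dlog(M_+ + M_-) = \dlog M_- + \dlog\bigl(1 + \textstyle\prod_s x_s^{\tB_{sr}}\bigr).
\]
Now $\dlog M_- = \sum_s [-\tB_{sr}]_+ \dlog x_s$ is a log form in the $x_s$, $s \neq r$ (note $\tB_{rr} = 0$), hence already standard and regular on $T_r$. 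The remaining piece $\dlog(1 + \prod_s x_s^{\tB_{sr}})$ is regular on the locus $\{1 + \prod_s x_s^{\tB_{sr}} \neq 0\}$, but its pole along $\{1 + \prod_s x_s^{\tB_{sr}} = 0\}$ — which is precisely (a piece of) the locus $\{x_r' = 0\}$ — is governed by the differential of $\prod_s x_s^{\tB_{sr}}$, which up to a unit is $\sum_s \tB_{sr}\, dx_s/x_s$. Thus $\omega$ extends to $T_r$ precisely when wedging $\omega_2$ with $\sum_{s=1}^{n+m}\tB_{sr}\,\dlog x_s$ annihilates this residue, i.e. exactly condition \eqref{eq:residue}. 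One direction (sufficiency) follows by checking that $\omega_2 \wedge \dlog(1 + \prod_s x_s^{\tB_{sr}})$ is then regular; the other (necessity) follows by computing the residue (Poincaré residue along the smooth divisor $\{x_r' = 0\}$, or equivalently the coefficient of the simple pole) and observing it is a nonzero multiple of $\omega_2 \wedge \sum_s \tB_{sr}\,\dlog x_s$ restricted to that divisor, together with the fact that a standard form with no pole has its residue vanishing identically.

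**Main obstacle.** The delicate point is making the "residue" argument rigorous on $T_r$, since the divisor $\{x_r' = 0\}$ is not a coordinate hyperplane in the torus coordinates of $T$, and $T_r$ is a torus only away from this divisor — so $\omega$ a priori is just a rational form on $T_r$ with poles along $\{x_r' = 0\}$. I would handle this by working in the $T_r$-coordinates $(x_1, \ldots, x_{r-1}, x_r', x_{r+1}, \ldots, x_{n+m})$ and expanding $\omega$ as a rational form in $x_r'$: the computation above shows $\omega = \omega_1 - \omega_2 \wedge \dlog x_r' + \omega_2 \wedge \dlog M_- + \omega_2 \wedge \dlog(1 + \prod_s x_s^{\tB_{sr}})$, and only the last term can fail to be regular; its pole is along $\{1 + \prod_s x_s^{\tB_{sr}} = 0\}$ and is simple, with residue a constant multiple of $\omega_2 \wedge \bigl(\sum_s \tB_{sr}\,\dlog x_s\bigr)\big|_{\{1+\prod x_s^{\tB_{sr}}=0\}}$. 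The form $\omega_2 \wedge \sum_s \tB_{sr}\,\dlog x_s$ is itself standard (a log form in all the $x_s$), so by Lemma~\ref{lem:dlogzero} — or just by inspection of standard forms — it vanishes on $\cA$ if and only if it vanishes on the dense subset $\{1 + \prod_s x_s^{\tB_{sr}} = 0\} \cap T_r$ if and only if it is identically zero. This gives the equivalence cleanly in both directions. One should also note that extension of $\omega$ across the complement of $T \cup T_r$ inside a putative ambient space is not needed here: the statement is purely about extending from $T$ to $T_r$, and regularity along the one divisor $\{x_r' = 0\}$ is the only condition, since $\omega$ is already regular on the overlap $T \cap T_r$ where $x_r \neq 0 \neq x_r'$.
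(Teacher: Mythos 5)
Your overall route is the same as the paper's: glue $T$ and $T_r$ along $x_r x'_r = M_+ + M_-$, substitute $\dlog x_r = \dlog M_- + \dlog(1+u) - \dlog x'_r$ where $u = M_+/M_- = \prod_s x_s^{\tB_{sr}}$, note that the first pieces are manifestly standard on $T_r$ so the only problematic term is $\omega_2 \wedge \dlog(1+u)$, and reduce everything to regularity along the divisor $T_r \setminus (T\cap T_r)$. The sufficiency direction is correct and is exactly the paper's computation, via $\dlog(1+u) = \frac{u}{1+u}\,\dlog u$. (One recurring slip: the relevant divisor is $\{x_r=0\}\cap T_r = \{1+u=0\}\cap T_r$, not $\{x'_r=0\}$; the latter is empty in $T_r$.)

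The necessity direction as written has a genuine error. The Poincar\'e residue of $\omega_2\wedge\dlog(1+u)$ along a component $Z$ of $\{1+u=0\}$ is $\pm\,\omega_2|_Z$, the pullback of $\omega_2$ to $Z$ --- not a multiple of $\left(\omega_2\wedge\sum_s\tB_{sr}\dlog x_s\right)\big|_Z$. The degrees do not even match, and moreover $\dlog u$ pulls back to zero on $Z$ (where $u$ is constant), so the form you name restricts to zero on $Z$ regardless of whether \eqref{eq:residue} holds; taken literally, your vanishing criterion is vacuous. In addition, $Z$ is a divisor, not a dense subset of $T_r$, and Lemma~\ref{lem:dlogzero} (injectivity of rational log forms into cohomology) is not the relevant tool for a pointwise vanishing statement. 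There are two clean repairs. (i) The paper's: the residue is $\omega_2|_Z$, and the kernel of restriction of constant-coefficient standard forms to the subtorus $Z$ coincides with the kernel of $\psi\mapsto\psi\wedge\dlog u$ (both equal the ideal generated by $\dlog u$ in the exterior algebra on the $\dlog x_s$), so the residue vanishes iff \eqref{eq:residue} holds. (ii) Closer to your own computation: from $\omega_2\wedge\dlog(1+u) = \frac{u}{1+u}\,(\omega_2\wedge\dlog u)$, note that $\omega_2\wedge\dlog u$ has constant coefficients in the basis of standard monomials, hence is nowhere vanishing as a section of $\Omega^{\bullet}$ on the torus unless it is identically zero; since $u/(1+u)$ has a genuine pole along $\{1+u=0\}$, the product is regular there iff $\omega_2\wedge\dlog u=0$. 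Either repair closes the gap; with one of them in place your argument matches the paper's.
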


\begin{proof}
Consider the two tori $T$ and $T_r$, glued together along the relation
\[
x_r x'_r = m^+ + m^-,
\]
where $m^{\pm}  = \prod_{s=1}^{m+n} x_s^{[\pm \tB_{sr}]_+}$.
Let $f = m^+ + m^-$.  
%Let $\omega$ be a standard form on $T$.   Write $\omega = \omega_1 + \omega_2 \wedge \dlog x_r$, where the variable $x_r$ does not occur in the $\omega_i$.  
On $T_i$, the form $\omega$ can be written as the (possibly meromorphic) form
\[
\omega' = \omega_1 + \omega_2 \wedge \dlog (f /x'_r) = \omega_1 + \omega_2 \wedge  \dlog f - \omega_2 \wedge   \dlog x'_r.
\]

If $\omega$ extends to $T_r$, then the residue $\Res_f \omega' = \omega_2|_{f=0}$ must be zero.  Each component $Z$ of $\{ f = 0 \}$ is a torus, and the restriction map from standard forms on $T_r$ to standard forms on $Z$ has the same kernel as the map $\psi \mapsto \psi \wedge  \dlog (m^{+}/m^{-})$. We deduce that $\omega_2 \wedge \dlog (m^{+}/m^{-})=0$, and we compute $\omega_2 \wedge \dlog (m^{+}/m^{-})=\omega_2 \wedge (\sum_{s=1}^{m+n} \tB_{sr} \dlog x_s) = 0$. 

We also must verify that, in this case, $\omega$ is standard at $T'$. To this end, we note that $f = m_- (1+m_+/m_-)$, so $\dlog f = \dlog m_- + \dlog(m_+/m_-)/(m_-/m_+ +1)$. Thus, under our assumption that $\omega_2 \wedge  \dlog(m_+/m_-)=0$, we have $\omega_2 \wedge \dlog f = \omega_2  \wedge \dlog m_-$ and thus
\begin{equation}
\omega' =  \omega_1 + \omega_2 \wedge  (\dlog m_-  -    \dlog x'_r). \label{mutateStandard}
\end{equation}
This last formula is clearly standard.

We can reverse the argument: the only potential poles of $\omega'$ are simple poles along the zero locus of $f$ and, if $\omega_2 \wedge (\sum_{s=1}^{m+n} \tB_{sr} \dlog x_s) = 0$, the above computation shows that there are no such poles.
\end{proof}

We remark that a variant form of \eqref{mutateStandard} also holds:
\begin{equation}
\omega' =  \omega_1 + \omega_2 \wedge  (\dlog m_+  -    \dlog x'_r). \label{mutateStandardPlus}
\end{equation}
These are equivalent, because \eqref{eq:residue} states that $\omega_2 \wedge (\dlog m_+ - \dlog m_-)=0$.

\begin{prop} \label{keepsExtending}
In the above notation, let $\omega$ be a standard form on $T$ which extends to all $n$ neighboring tori.  Then $\omega$ extends to a standard form on every cluster torus.
\end{prop}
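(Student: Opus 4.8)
The plan is to propagate the extension property along the exchange graph by induction on the length of a mutation path from the base cluster $T$. The key observation is that Proposition~\ref{prop:residues} gives an \emph{intrinsic} criterion — independent of which cluster torus one starts from — for a standard form to extend across a single mutation: if $\omega$, written in the variables of a cluster $T'$ as $\omega = \omega_1 + \omega_2 \wedge \dlog x_r$, satisfies $\omega_2 \wedge \sum_s (\tB')_{sr}\,\dlog x_s = 0$, then $\omega$ extends to the $r$-th neighbor $T'_r$ of $T'$, and moreover \eqref{mutateStandard} exhibits $\omega$ as a \emph{standard} form on $T'_r$, with an explicit formula for its expansion in the mutated variables. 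So the real content is: once $\omega$ has been shown standard on all clusters at distance $\le \ell$ from $T$, show it extends across every mutation out of those clusters, i.e. verify the residue condition \eqref{eq:residue} at each such cluster.

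\smallskip

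First I would set up the induction. Let $T = T^{(0)}$, and suppose inductively that $\omega$ restricts to a standard form on every cluster torus reachable from $T$ by at most $\ell$ mutations. Fix such a torus $T'$ at distance exactly $\ell$, reached from a torus $T''$ at distance $\ell-1$ by mutation in some direction $k$. Let $T'_r$ be a neighbor of $T'$ in direction $r$. If $T'_r$ is at distance $\le \ell$ from $T$ (e.g. $T'_r = T''$, or $T'_r$ is reachable by a shorter path) there is nothing to prove, so assume $T'_r$ is genuinely new, at distance $\ell+1$. We must check that $\omega$, written in the cluster variables $x'_1,\dots,x'_{n+m}$ of $T'$ with exchange matrix $\tB'$, satisfies \eqref{eq:residue}: decomposing $\omega = \omega'_1 + \omega'_2 \wedge \dlog x'_r$, we need $\omega'_2 \wedge \sum_{s} \tB'_{sr}\,\dlog x'_s = 0$.

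\smallskip

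The mechanism for verifying this is to track how the standard expansion of $\omega$ transforms under the mutation $T'' \rightsquigarrow T'$. By the induction hypothesis and Proposition~\ref{prop:residues}, $\omega$ is standard on $T''$ and its expansion on $T'$ is obtained from \eqref{mutateStandard} (or the variant \eqref{mutateStandardPlus}): the only cluster variable that changes is $x_k$, and $\dlog x'_k = \dlog m_{\pm} - \dlog x_k$ where $m_{\pm}$ are monomials in the \emph{unchanged} variables. In other words, the linear substitution $H^1(T'') \to H^1(T')$ on $\dlog$'s is given by $\mult$ of the $R$-matrix $\left(\begin{smallmatrix}\mathrm{Id}&\\&\end{smallmatrix}\right)$ attached to mutation, and — crucially — under this substitution the "curvature forms" $\sum_s \tB''_{s j}\,\dlog x_s$ transform exactly as the columns of $\tB$ transform under matrix mutation, because the GSV/curvature $2$-form $\sum_{ij}\tB_{ij}\dlog x_i\wedge \dlog x_j$ (on the extended torus) is mutation invariant — this is the computation underlying Section~\ref{sec:GSV} and \cite{GSV}. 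So the condition $\omega'_2 \wedge \sum_s \tB'_{sr}\dlog x'_s = 0$ reduces, after the substitution, to a condition of the same shape at the cluster $T''$ — either trivially (if $r\neq k$, since then the $r$-th column of $\tB$ and the form $\omega_2$ change in a compatible way) or to the already-known extension across a different neighbor of $T''$. The bookkeeping is a direct matrix computation using the mutation rule \eqref{eq:mutation} for $\tB$, the formula \eqref{mutateStandard}, and the identity $\omega_2 \wedge (\dlog m_+ - \dlog m_-) = 0$ recorded just before this proposition.

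\smallskip

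\textbf{Main obstacle.} The genuine difficulty is the case analysis in the induction step: showing that the residue condition \eqref{eq:residue} at the new cluster $T'$ in direction $r$ is implied by the residue conditions already known at $T''$. When $r = k$ this is essentially automatic from \eqref{mutateStandard} (the roles of $T''$ and $T'_k = $ the $k$-mutation of $T'$ unwind the mutation we just did). The subtle case is $r \neq k$: here one must use that $\tB'_{sr}$ differs from $\tB''_{sr}$ by the quadratic mutation correction terms $[\tB''_{sk}]_+[\tB''_{kr}]_+ - [\tB''_{sk}]_-[\tB''_{kr}]_-$, and that $\dlog x'_s = \dlog x_s$ for $s\neq k$ while $\dlog x'_k$ picks up $\dlog m_\pm$; one then checks that the extra terms in $\omega'_2 \wedge \sum_s \tB'_{sr}\dlog x'_s$ are wedge products of $\omega_2$ against $\dlog$-combinations that already vanish by the known residue conditions at $T''$ (in directions $k$ and $r$). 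This is a finite, mechanical verification, and I expect it to be the only place where real care is needed; the rest of the argument is the standard "spread along the exchange graph" induction together with Proposition~\ref{prop:residues} and the mutation invariance of the curvature form.
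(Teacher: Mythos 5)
Your proposal is correct and follows essentially the same route as the paper: induct on the distance from $T$ in the exchange graph, reduce the inductive step to a square of four adjacent clusters $T''$, $T'=\mu_k(T'')$, $\mu_r(T'')$, $\mu_r(T')$ with $r\neq k$, and verify the residue condition of Proposition~\ref{prop:residues} at $T'$ in direction $r$ using the mutated expansion \eqref{mutateStandardPlus} together with the residue conditions already known at $T''$ in directions $k$ and $r$. The ``finite, mechanical verification'' you defer is exactly the displayed computation in the paper's proof (culminating in \eqref{vanishing}), so the only difference is that you have not written it out.
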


\begin{proof}
Let $T'$ be some other cluster torus, and let $T=T^0$, $T^1$, \dots, $T^{\ell}=T'$ be a minimal length path through the cluster complex from $T$ to $T'$. We will show by induction on $\ell$ that $\omega$ extends to a standard form on $T'$. The base case $\ell=1$ is Proposition~\ref{prop:residues}.

 Let $T^{i+1}$ be related to $T^i$ by mutation in the $k_i$-th variable; note that our assumption that the path is minimal implies that $k_{i-1} \neq k_i$. Let $U$ be the torus obtained by mutating $T^{\ell-2}$ in the $k_{\ell-1}$-th variable. By our inductive assumption, $\omega$ extends to a standard form on $T^{\ell-2}$, $T^{\ell-1}$ and $U$.
 
 Before proceeding we clean up our notation. Reorder our variables so that $k_{\ell-2}=1$ and $k_{\ell-1}=2$. 
 Set $V = T^{\ell-2}$, $V^1 = T^{\ell-1}$, $V^2 = U$ and $V^{12} = T^{\ell} = T'$. 
 So $\omega$ is a standard form on $V$ which extends to $V^1$ and $V^2$, and we must show it extends to $V^{12}$. 
 Let $(x_1, x_2, \ldots, x_{n+m})$ be the cluster variables on $V$, and let $x'_1$ be the cluster variable on $V^1$ obtained by mutating $x_1$. 
 The following figure depicts the portion of the exchange graph we are considering.

\[\xymatrix{
T=T^0 \ar@{-}^(0.6){\mu_{k_0}}[r] & T^1  \ar@{-}^(0.6){\mu_{k_1}}[r] & \cdots  \ar@{-}^(0.4){\mu_{k_{\ell-3}}}[r] & V  \ar@{-}^{\mu_1}[r] & V^1   \ar@{-}^(0.4){\mu_{2}}[r] & V^{12} = T' \\
& & & V^2  \ar@{-}^{\mu_2}[u] & & \\
}\]

 Let the extended exchange matrix at $V$ be $\tB$ and let $\tB'$ be the extended exchange matrix at $V^1$. Without loss of generality, we may assume that $\tB_{12} \geq 0$.
 We also adopt abbreviations for the following $1$-forms:
\begin{align*}
\alpha_1 &= \sum_{r=3}^{n+m} \tB_{r1} \frac{dx_r}{x_r}, \qquad  \alpha^+_1 = \sum_{r=3}^{n+m} [\tB_{r1}]_+ \frac{dx_r}{x_r}, \qquad 
 \alpha_2 = \sum_{r=3}^{n+m} \tB_{r2} \frac{dx_r}{x_r}, \qquad \alpha'_2 = \sum_{r=3}^{n+m} \tB'_{r2} \frac{dx_r}{x_r}.
\end{align*}

%
% \[ 
% \begin{array}{rcl@{\quad}rcl}
% \alpha_1 &=& \sum_{r=3}^{n+m} \tB_{r1} \frac{dx_r}{x_r} &  \alpha^+_1 &=& \sum_{r=3}^{n+m} [\tB_{r1}]_+ \frac{dx_r}{x_r} \\
% \alpha_2 &=& \sum_{r=3}^{n+m} \tB_{r2} \frac{dx_r}{x_r} &  \alpha'_2 &=& \sum_{r=3}^{n+m} \tB'_{r2} \frac{dx_r}{x_r}   \\
% \end{array} \]
%For $r \geq 3$, we have $\tB'_{r2}  = \tB_{r2} + \tB_{12} [\tB_{r1}]_+$ (using our hypothesis that $\tB_{12} \geq 0$), so we have
%\[ \alpha'_2 = \alpha_2 + \tB_{12} \alpha^+_1. \]
 
 Write 
 \[ \omega = \omega_1 + \omega_2 \wedge \frac{d x_1}{x_1} + \omega_3 \wedge  \frac{d x_2}{x_2} + \omega_4 \wedge \frac{d x_1}{x_1} \wedge \frac{d x_2}{x_2}  \]
where the $\omega_i$ are standard forms on $V$ not containing $\dlog x_1$ or $\dlog x_2$.

From Proposition~\ref{prop:residues} and the hypothesis that $\omega$ extends to $V^2$, we have
\[ 0 = (\omega_3 + \omega_4 \frac{d x_1}{x_1}) \wedge (\alpha_2 + \tB_{12} \frac{dx_1}{x_1} )   = \omega_3 \wedge \alpha_2 + ( \tB_{12} \omega_3 - \omega_4 \wedge \alpha_2) \wedge \frac{d x_1}{x_1}  .\]
Now, the term $\omega_3 \wedge \alpha_2$ does not contain $\dlog x_1$, and every standard monomial in $ ( \tB_{12} \omega_3 - \omega_4 \alpha_2) \wedge \dlog x_1$ contains $\dlog x_1$. So each of these terms is individually zero, and we deduce that
\begin{equation}
 \omega_3 \wedge \alpha_2 = \tB_{12} \omega_3 - \omega_4 \wedge \alpha_2  = 0 \label{vanishing}
\end{equation}

From \eqref{mutateStandardPlus}, $\omega$ is given on $V^1$ by the formula:
\begin{multline*} \left( \omega_1 + \omega_3 \wedge \dfrac{d x_2}{x_2} \right) + \left( \omega_2 - \omega_4 \wedge \dfrac{dx_2}{x_2} \right) \wedge \left( \alpha_1^+ - \dfrac{dx'_1}{x'_1} \right)=\\
\left( \omega_1 + \omega_2 \wedge \alpha_1^+ - \omega_2 \wedge \dfrac{dx'_1}{x'_1} \right) + \left( \omega_3 + \omega_4 \wedge \alpha^+_1 - \omega_4 \wedge \dfrac{d x'_1}{x'_1} \right) \wedge \dfrac{d x_2}{x_2} . \end{multline*}
(We do not need to include a $[\tB_{21}]_+ \dlog x_2$ term in $d m_+/m_+$, because $\tB_{21} \leq 0$.) By Proposition~\ref{prop:residues}, we must verify that
\[  \left( \omega_3 + \omega_4 \wedge \alpha^+_1 - \omega_4 \wedge \dfrac{d x'_1}{x'_1} \right) \wedge \left(\tB'_{12} \dfrac{d x'_1}{x'_1} + \alpha'_2 \right)=0.  \]
Substituting our formulas for $\tB'$ and $\alpha'_2$, this becomes
\[ \left( \omega_3 + \omega_4 \wedge \alpha^+_1 - \omega_4 \wedge \dfrac{d x'_1}{x'_1} \right) \wedge \left(- \tB_{12} \dfrac{d x'_1}{x'_1} +\alpha_2 + \tB_{12} \alpha^+_1 \right) = 0. \]
Expanding, we need
\[ \omega_3 \wedge \alpha_2 + \left( \tB_{12} \omega_3 - \omega_4 \wedge \alpha_2 \right) \left( \alpha^+_1 - \dfrac{dx'_1}{x'_1} \right) =0.\]
This follows immediately from \eqref{vanishing}.
\end{proof}

We can now prove our last description of $H^{\ast}(\cA)_{st}$:
\begin{prop} \label{prop:neighborTori}
Let $\cA$ be a locally acyclic cluster variety of full rank.
Let $T \subset \cA$ be a cluster torus.  Then the space of standard forms on $\cA$ is naturally isomorphic to the space of standard forms on $T$ that extends to the cluster tori neighboring $T$.
%
%If $T$ is a cluster torus, and $\eta$ is a standard form on $T$ which extends to the cluster tori bordering $T$, then $\eta$ extends to a standard form on all of $\cA$.
\end{prop}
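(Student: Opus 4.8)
The plan is to prove that the natural restriction map $\rho$, sending a standard form $\eta$ on $\cA$ to $\eta|_T$, is the asserted isomorphism. First I would check that $\rho$ is well-defined with the claimed target. By definition a standard form $\eta$ on $\cA$ restricts to a standard form on \emph{every} cluster torus; in particular $\eta|_T$ is standard, and since the tori neighboring $T$ are themselves cluster tori, the same rational form $\eta$ is regular (indeed standard) on each of them, so $\eta|_T$ extends to the neighboring tori in the sense of Proposition~\ref{prop:residues}. Thus $\rho$ maps the space of standard forms on $\cA$ into the space of standard forms on $T$ that extend to the tori neighboring $T$.

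Injectivity of $\rho$ is essentially immediate. Since $\cA$ is locally acyclic and of full rank, Theorem~\ref{thm:Muller} gives that $A = \cO(\cA)$ equals the upper cluster algebra, which is a subring of the field $\cF$ and hence a domain, so $\cA$ is an integral variety and $T$ is a dense open subset. A regular $k$-form on $\cA$ restricting to $0$ on the dense open set $T$ must be $0$ (the restriction map on sections of the locally free sheaf $\Omega^k_{\cA}$ over an integral scheme is injective). (Alternatively one may invoke Theorem~\ref{thm:standardequiv}, that $\eta \mapsto [\eta]$ identifies standard forms on $\cA$ with $H^{\ast}(\cA)_{st}$, together with Proposition~\ref{prop:kernel}, that $H^{\ast}(\cA)_{st} \hookrightarrow H^{\ast}(T)$.)

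For surjectivity, let $\omega$ be a standard form on $T$ that extends to each of the $n$ tori neighboring $T$. By Proposition~\ref{keepsExtending}, $\omega$ then extends to a standard form on \emph{every} cluster torus of $\cA$. The key point is that all cluster tori share the same ambient rational function field $\cF$, so the object being extended is one fixed rational $k$-form, namely $\omega$ itself, which is being asserted to be regular on each cluster torus; compatibility of these extensions on overlaps $T' \cap T''$ is therefore automatic. Hence they patch to a regular differential form $\eta$ on the open set $U := \bigcup_{T'} T'$, the union of all cluster tori. By Proposition~\ref{prop:uniontori}, $\cA \setminus U$ has codimension at least two, and by Theorem~\ref{thm:Muller} $\cA$ is smooth, so $\eta$ extends uniquely to a regular differential form on all of $\cA$, still denoted $\eta$ (this is the same codimension-two extension argument used in the proof of Theorem~\ref{thm:standardequiv}). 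By construction $\eta$ restricts to a standard form on every cluster torus, so $\eta$ is a standard form on $\cA$, and $\rho(\eta) = \eta|_T = \omega$.

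The substantive inputs — Proposition~\ref{keepsExtending} (one extension to all neighbors forces extension everywhere) and Proposition~\ref{prop:uniontori} (complement of the cluster tori has codimension $\geq 2$) — are already in hand, so the only thing needing care here is the bookkeeping in the surjectivity step: one must use that ``$\omega$ extends to the cluster torus $T'$'' refers to the single fixed rational form $\omega$ being regular on $T'$, which makes the gluing tautological, and then that regular forms on a smooth variety extend across closed subsets of codimension at least two. Everything else is formal, so I do not expect a serious obstacle.
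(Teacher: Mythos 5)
Your proposal is correct and follows essentially the same route as the paper: restriction gives one direction, and for the converse you use Proposition~\ref{keepsExtending} to extend to all cluster tori, then Proposition~\ref{prop:uniontori} together with smoothness (Theorem~\ref{thm:Muller}) to extend across the codimension-two complement. The explicit injectivity/gluing remarks you add (density of $T$, all tori sharing one rational form) are exactly the points the paper leaves implicit, so there is no substantive difference.
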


\begin{proof}
If $\omega$ is a standard form on $\cA$, it restricts to a standard form on all cluster tori, so in particular it does on $T$ and on the tori bordering $T$.

Now, suppose that $\omega$ is standard on $T$ and extends to the tori neighboring $T$. By Proposition~\ref{keepsExtending}, $\omega$ extends to all of the cluster tori. 
Let $U$ be the union of all cluster tori. By Proposition \ref{prop:uniontori} and Theorem \ref{thm:Muller},
$\cA$ is smooth and $\cA \setminus U$ is codimension $\geq 2$ in $\cA$, so $\omega$ extends to all of $\cA$; this extension $\tilde{\omega}$ is by definition a standard form on $\cA$. 
\end{proof}

Proposition~\ref{prop:neighborTori} combines with Proposition~\ref{prop:residues} to give:
\begin{cor} \label{cor:LinAlg}
Let $\cA$ be a locally acyclic cluster variety of full rank. Let $T$ be a cluster torus with extended exchange matrix $\tB$, with cluster variables $x_1,x_2,\ldots,x_{n+m}$.
%mutable variables $x_1$, \dots, $x_n$ and frozen variables $x_{n+1}$, \dots, $x_{n+m}$. 
Then $H^{\ast}(\cA)_{st}$ is isomorphic to the space of standard forms $\eta$ on $T$ which for $1 \leq r \leq n$, satisfies
\[ \eta_{2,r} \wedge \sum_{i=1}^{n+m} \tB_{ir} \frac{d x_i}{x_i} = 0%\qquad \mbox{for $1 \leq i \leq n$}
, \]
where $\eta = \eta_{1,r} + \eta_{2,r} \wedge \dlog x_r$ for standard forms $\eta_{1,r}, \eta_{2,r}$ not containing $\dlog x_r$.
%
%\[ \eta^{(r)} \wedge \sum_{r=1}^{n+m} \tB_{ri} \tfrac{dx_r}{x_r} = 0\qquad \mbox{for $1 \leq i \leq n$}. \]
\end{cor}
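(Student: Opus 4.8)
The plan is to obtain Corollary~\ref{cor:LinAlg} by directly combining Proposition~\ref{prop:neighborTori} with Proposition~\ref{prop:residues}; there is essentially no new content beyond bookkeeping. By Theorem~\ref{thm:standardequiv} we identify $H^{\ast}(\cA)_{st}$ with the space of standard differential forms on $\cA$. Since $\cA$ is locally acyclic and of full rank, Proposition~\ref{prop:neighborTori} says that the restriction map sends this space isomorphically onto the space of standard forms $\eta$ on the fixed cluster torus $T$ which extend (as regular forms) to each of the $n$ cluster tori $T_1,\dots,T_n$ neighboring $T$.

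Next I would fix $r$ with $1\le r\le n$ and apply Proposition~\ref{prop:residues} to the neighbor $T_r$. For a standard form $\eta$ on $T$, the decomposition $\eta=\eta_1+\eta_2\wedge\dlog x_r$, with $\eta_1,\eta_2$ standard and not involving $\dlog x_r$, is unique because the standard monomials form a basis of the space of standard forms on $T$; hence $\eta_2$ is well defined. Proposition~\ref{prop:residues} then asserts exactly that $\eta$ extends to $T_r$ if and only if
\[
\eta_2\wedge\sum_{s=1}^{n+m}\tB_{sr}\,\frac{dx_s}{x_s}=0 .
\]
One should note that Proposition~\ref{prop:residues} phrases its conclusion as extension ``to $T_r$'' as a differential form, while its proof (formula~\eqref{mutateStandard}) shows that when this residue condition holds the resulting extension is automatically standard on $T_r$; so there is no gap between ``extends to a form on $T_r$'' and ``extends to a standard form on $T_r$''.

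Finally I would intersect the conditions over $r=1,\dots,n$: the space of standard forms on $T$ extending to all of $T_1,\dots,T_n$ is precisely the space of standard $\eta$ on $T$ satisfying the displayed equation for every $r\in\{1,\dots,n\}$. Composing this identification with the isomorphism of the first paragraph yields the corollary, the asserted natural isomorphism being the restriction map $H^{\ast}(\cA)_{st}\to H^{\ast}(T)$ of Proposition~\ref{prop:neighborTori}. The only thing requiring care is matching the indexing conventions of the two propositions (the decomposition of $\eta$ changes with the mutation direction $r$); since the substantive work has already been done in Propositions~\ref{prop:neighborTori} and~\ref{prop:residues}, I do not anticipate a genuine obstacle.
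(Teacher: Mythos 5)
Your proposal is correct and follows exactly the paper's route: the paper derives this corollary by the one-line observation that Proposition~\ref{prop:neighborTori} combines with Proposition~\ref{prop:residues}, which is precisely the chain of identifications you spell out. Your extra remark that the extension produced by Proposition~\ref{prop:residues} is automatically standard is a fair point already handled inside the proof of that proposition.
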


We note that the above condition is a system of linear equations on the coefficients of $\eta$ in the basis of standard monomials. In the authors' experience, solving these equations is the fastest way to compute $H^{\ast}(\cA)_{st}$.

\subsection{Generators and Poincar\'{e} series of the standard part}\label{ssec:generators}
There are two obvious classes of standard forms on $\cA$: the GSV forms which we introduced in Section \ref{sec:GSV}, and the forms $\dlog x_i$ for $x_i$ a frozen variable. Each of these is manifestly a $\log$-form on the initial torus, and they extend to regular differential forms on the whole of $\cA$.
Our main result in this section is that they are ring generators for the standard cohomology.

Suppose the connected components of $\Gamma = \Gamma(\tB)$ are $\Gamma_1,\ldots,\Gamma_r$.  Then a GSV-form $\gamma$ for $\Gamma_a$ is the differential form associated to the extended exchange matrix $\tB'$ consisting of the columns of $\tB$ indexed by vertices of $\Gamma_a$.  Such $\gamma$ are regular on $\cA(\tB)$ by an application of Proposition \ref{keepsExtending}.

\begin{theorem}\label{thm:standard}
Suppose that $\cA$ is locally acyclic and of full rank.  Then $H^{\ast}(\cA)_{st}$ is generated by GSV 2-forms $\gamma_1,\ldots,\gamma_r$ for the connected components $\Gamma_1,\ldots,\Gamma_r$ of $\Gamma$, and the 1-forms $\dlog y_1, \dlog y_2,\ldots,\dlog y_m$, where $y_i$ are the frozen variables. If the connected components of $\Gamma$ have cardinalities $n_1$, $n_2$, \dots, $n_r$, then the Poincar\'{e} series of $H^{\ast}(\cA)_{st}$ is
\[ P(H^{\ast}(\cA)_{st},t) = (1+t)^{m-r} \prod_{i=1}^r \left( 1+ t+ \cdots + t^{n_i} + t^{n_i+1} \right) . \]
\end{theorem}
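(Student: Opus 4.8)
## Proof proposal for Theorem~\ref{thm:standard}

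My plan is to run the Louise recursion and do everything by Mayer--Vietoris on the standard part, with the isolated case (Theorem~\ref{thm:Isolated Main}) as the base case. The key structural input is Corollary~\ref{cor:LinAlg}: the standard cohomology $H^{\ast}(\cA)_{st}$ is the space of standard forms $\eta$ on a fixed cluster torus $T$ satisfying the $n$ linear ``residue'' conditions $\eta_2 \wedge \sum_r \tB_{ri}\,\dlog x_r = 0$. This makes it believable that GSV forms and $\dlog y_j$ generate: a GSV form for $\Gamma_a$ is precisely a standard $2$-form built from the columns indexed by $\Gamma_a$ that trivially satisfies the residue relations, and $\dlog y_j$ for a frozen $y_j$ is a unit log form hence extends. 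So the first step is to check that the listed forms genuinely lie in $H^{\ast}(\cA)_{st}$ (they extend to all of $\cA$, using Proposition~\ref{keepsExtending} for the GSV forms and the fact that frozen variables are units), and that they generate a subring whose Poincar\'e series is at most the claimed product; the upper bound is an easy count, since the subring generated by the $\gamma_a$ (each nilpotent of order $n_a+1$, by the curious Lefschetz / isolated computations) and the $m$ one-forms $\dlog y_j$ modulo the single relation $\sum_a [\gamma_a]$-type dependence coming from $r$ components (accounting for the $(1+t)^{m-r}$ rather than $(1+t)^m$) has exactly that series.

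The heart of the argument is the matching lower bound, i.e.\ that these forms actually span. Here I would induct via the Louise algorithm. For the base case, $\Gamma(\tB)$ has no edges, so $\cA$ is isolated; Theorem~\ref{thm:Isolated Main} with $|J(g)|$ summed over the group $G$ gives the dimension of $H^{k,(k,k)}(\cA)$ (the $g=0$ isotypic piece, which is exactly the standard part since all higher $|J(g)|$ pieces land in strictly smaller $(p,p)$), and one reads off the Poincar\'e series $(1+t)^m \prod_{i=1}^n (1+t+t^2)$ --- matching the formula with every $n_i = 1$ and $r = n$. For the inductive step, pick the separating edge $i \to j$ of the Louise datum, so $\cA$ is covered by $U = \Spec A[x_i^{-1}]$ and $V = \Spec A[x_j^{-1}]$ with $U \cap V = \Spec A[x_i^{-1}, x_j^{-1}]$, and these are (up to the localization identifications of Muller) the cluster varieties of the freezings $A(\tB_{\oG\setminus\{i\}})$, $A(\tB_{\oG\setminus\{j\}})$, $A(\tB_{\oG\setminus\{i,j\}})$, each Louise and full rank. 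The Mayer--Vietoris sequence respects the Deligne splitting (Proposition~\ref{prop:splitMV}), and since everything in sight is of mixed Tate type (Theorem~\ref{thm:curious}), it restricts to a long exact sequence on the standard parts
\[
\cdots \to H^{k}(\cA)_{st} \to H^k(U)_{st}\oplus H^k(V)_{st} \to H^k(U\cap V)_{st} \overset{\delta}{\to} H^{k+1}(\cA)_{st} \to \cdots.
\]
By induction the three outer terms are generated by GSV and frozen-$\dlog$ forms for their respective quivers, so I need: (a) the restriction maps are compatible with these generators (immediate, since restricting a GSV form for a component of $\Gamma$ to $U$ gives the corresponding GSV form, possibly with $x_i$ now ``frozen-and-inverted''); and (b) the connecting map $\delta$ sends the extra generator of $H^\ast(U\cap V)_{st}$ not coming from $U$ or $V$ --- namely $\dlog x_i$ or $\dlog x_j$, which became a unit only after the double localization --- to a GSV class on $\cA$. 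This last point is exactly the rank-$1$ computation of Section~\ref{sec:rankone} / Corollary~\ref{cor:isolatedreallyfullrank} localized along the edge: $\delta(\dlog x_i \wedge \beta)$ recovers (up to the known cup-product identities) $\gamma \cup \beta$ where $\gamma$ is the GSV form of the edge. Combining (a), (b) with the Poincar\'e-series bookkeeping --- the three smaller Poincar\'e polynomials assemble, via the alternating sum forced by the long exact sequence, into the claimed product for $\cA$ --- closes the induction.

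The main obstacle, I expect, is controlling the connecting map $\delta$ precisely: one must show not just that $\delta$ is surjective onto the ``new'' generators but that the ranks line up so the Mayer--Vietoris sequence forces the Poincar\'e series to be exactly the product (no cancellation, no leftover classes). Concretely this amounts to verifying that the generator $\gamma_a$ of the $a$-th component either ``already lived'' on $U$ and $V$ (when the edge $i\to j$ is not in $\Gamma_a$) or else is hit by $\delta$ from the $\dlog$ of a now-invertible variable (when $i \to j \in \Gamma_a$, so that deleting $i$ or $j$ disconnects or shrinks $\Gamma_a$ and the factor $(1+t+\cdots+t^{n_a+1})$ must be reconstructed from the smaller factors). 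Tracking how the factor $1+t+\cdots+t^{n_a+1}$ for a component of size $n_a$ is built from the factors of the component(s) of $\Gamma_{\oG\setminus\{i\}}$, $\Gamma_{\oG\setminus\{j\}}$, $\Gamma_{\oG\setminus\{i,j\}}$ is the combinatorial crux, and I would organize it by a clean induction on $|\Gamma_a|$ using a polynomial identity of the shape $(1+t+\cdots+t^{a+1}) = $ [contribution of $U$] $+$ [contribution of $V$] $- $ [contribution of $U \cap V$] $+ t\cdot$[image of $\delta$], exactly mirroring the exact sequence degree by degree.
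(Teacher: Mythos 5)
There is a genuine gap, in fact two. First, your boundary-map mechanism is incompatible with how $\delta$ interacts with the Deligne splitting. By Proposition~\ref{prop:splitMV}, $\delta$ preserves the type $(p,q)$ while raising cohomological degree by one, so it sends $H^k(U\cap V)_{st}=H^{k,(k,k)}(U\cap V)$ into $H^{k+1,(k,k)}(\cA)$, which lies in the \emph{non-standard} part. Hence there is no long exact sequence of standard parts of the form you wrote, and $\delta$ can never ``send $\dlog x_i\wedge\beta$ to a GSV class'': a GSV class sits in $H^{2,(2,2)}$, whereas the image under $\delta$ of a standard class of degree one has type $(1,1)$. The correct shape of the argument (this is what the paper does in Proposition~\ref{prop:An}) is the opposite of yours: one must prove that the restriction $H^k(U)_{st}\oplus H^k(V)_{st}\to H^k(U\cap V)_{st}$ is \emph{surjective}, so that the standard parts form a short exact sequence $0\to H^k(\cA)_{st}\to H^k(U)_{st}\oplus H^k(V)_{st}\to H^k(U\cap V)_{st}\to 0$ and the Poincar\'e series obey exact inclusion--exclusion. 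That surjectivity is the hard step; it is established in the paper only for a leaf edge of a path, by an explicit computation with $\gamma^{(1)}_V$ and $\gamma^{(2)}_V$, and it does not follow formally from your inductive generation statement for a general separating edge. (Your base-case count also has a slip: the standard part of an isolated full-rank variety has Poincar\'e series $(1+t)^{m-n}(1+t+t^2)^n$, not $(1+t)^m(1+t+t^2)^n$.)

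Second, your induction is organized around the Louise recursion and invokes Theorem~\ref{thm:curious}, both of which require the Louise property, whereas the theorem is asserted for all locally acyclic cluster varieties of full rank; local acyclicity gives an acyclic open cover but no recursive separating-edge datum, so your induction cannot start under the stated hypotheses. The paper's route is different: using covering maps (Lemma~\ref{lem:standardquotient}, Proposition~\ref{prop:coverprincipal}) and products with tori it reduces to principal coefficients with $\Gamma$ connected, and there it proves the explicit basis $\gamma^j\wedge\bigwedge_{i\in I}\dlog y_i$, $j+\#I\le n$ (Proposition~\ref{prop:standardbasis}) by a direct leading-term linear-algebra argument on a single cluster torus, using the residue conditions of Corollary~\ref{cor:LinAlg} and connectivity of $\Gamma$ via freezings along paths; Mayer--Vietoris enters only in the path case. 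If you want to salvage your approach, you would need to (i) restrict to the Louise setting or find a substitute for the recursion, and (ii) replace the ``$\delta$ hits new generators'' step by a proof of surjectivity of restriction on standard parts for an arbitrary separating edge, together with the combinatorial identity $P_{st}(\cA)=P_{st}(U)+P_{st}(V)-P_{st}(U\cap V)$ for the claimed products --- neither of which is routine.
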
  

We invite the reader to compare this formula to the top rows of the tables in Section~\ref{sec:examples}.

%\begin{lemma}\label{lem:abelquotient}
%Suppose $X$ and $X' = X^H$ are related by taking a finite abelian quotient as in Proposition \ref{prop:abelquotient}, where the matrix $R$ is of the form 
%$$
%R = \left(\begin{array}{cc} I_{n\times n} & 0 \\ 0 & C \end{array}\right).
%$$
%Then $H^{\ast}(X')_{st} = H^{\ast}(X)_{st}$.  In particular, $H^{\ast}(X')_{st}$ is generated by GSV forms and $\dlog$-frozen forms if and only if the same is true for $H^{\ast}(X)_{st}$.
%\end{lemma}
%\begin{proof}
%Since $H$ acts freely on $X$, we have $H^{\ast}(X') = H^{\ast}(X)^H$.  But $H$ acts multiplicatively on mutable and frozen cluster variables, and thus trivially on all standard forms.  Thus $H^{\ast}(X)_{st}=  (H^{\ast}(X)_{st})^H \subset H^{\ast}(X')_{st}$.  On the other hand, any standard form on $X'$ pulls back via the quotient map to a standard form on $X$.  Thus $H^{\ast}(X)_{st} = H^{\ast}(X')_{st}$.
%
%Under this equality, the GSV-forms are identified, and the $\dlog$-frozen forms are related by the invertible $m \times m$ matrix $C$.
%\end{proof}

We now state a more particular result when $A$ has principal coefficients and $\Gamma$ is connected.
\begin{prop}\label{prop:standardbasis}
Suppose that $\cA$ is locally acyclic and has principal coefficients, and $\Gamma$ is connected. Then a basis for $H^{\ast}(\cA)_{st}$ is the following list of differential forms:
\[ \gamma^j \wedge \bigwedge_{i \in I}  \frac{dy_i}{y_i} \quad \mbox{for} \quad j+\#(I) \leq n\]
where $\gamma$ is a fixed GSV form,  $I$ may be any subset of $[n]$ and the $y_i$ are the frozen variables.
\end{prop}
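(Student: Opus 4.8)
The plan is to combine the ring-generation statement and the Poincaré series from Theorem~\ref{thm:standard} with a direct count. By Theorem~\ref{thm:standard}, when $\Gamma$ is connected (so $r=1$ and $n_1 = n$) the standard ring $H^\ast(\cA)_{st}$ is generated by a single GSV $2$-form $\gamma$ together with the $1$-forms $\dlog y_1,\dots,\dlog y_n$ (here $m=n$ since we have principal coefficients). Hence the monomials $\gamma^j \wedge \bigwedge_{i\in I}\dlog y_i$, ranging over $j\ge 0$ and $I\subseteq[n]$, span $H^\ast(\cA)_{st}$ as a $\CC$-vector space. So the task reduces to two things: (i) show that these monomials already span once we impose the constraint $j + \#I \le n$, i.e. that $\gamma^{n+1-\#I}\wedge\bigwedge_{i\in I}\dlog y_i = 0$ for every $I$; and (ii) show that the surviving monomials $\{\gamma^j\wedge\bigwedge_{i\in I}\dlog y_i : j+\#I\le n\}$ are linearly independent. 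Once both hold, counting these monomials and comparing with the Poincaré series finishes the proof.

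For (ii), linear independence follows purely from the Poincaré series: the number of pairs $(j,I)$ with $j+\#I\le n$ is $\sum_{k=0}^{n}\binom{n}{k}(n+1-k)$, and the degree of $\gamma^j\wedge\bigwedge_{i\in I}\dlog y_i$ is $2j+\#I$. Grouping by $\#I = k$, the generating function of these monomials by cohomological degree is $\sum_{k=0}^n \binom{n}{k} t^k (1 + t^2 + t^4 + \cdots + t^{2(n-k)})$; one checks by a short algebraic manipulation (multiply through by $1-t^2$, or use $1+t^2+\cdots+t^{2(n-k)} = \tfrac{1-t^{2(n-k+1)}}{1-t^2}$) that this equals $(1+t)^{n-1}(1+t+\cdots+t^{n+1})$, which is exactly $P(H^\ast(\cA)_{st},t)$ from Theorem~\ref{thm:standard} with $m=n$, $r=1$, $n_1=n$. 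Since the proposed list has the right cardinality in each degree and spans (granting (i)), it must be a basis.

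The real content is (i): showing $\gamma^{n+1-k}\wedge\bigwedge_{i\in I}\dlog y_i = 0$ when $\#I = k$. The cleanest way I would argue this is on the initial cluster torus $T$, using that the natural map $H^\ast(\cA)_{st}\hookrightarrow H^\ast(T)$ is injective (Proposition~\ref{prop:kernel}) and that a standard form on $\cA$ is determined by its restriction to $T$ (Theorem~\ref{thm:standardequiv}). On $T\cong(\CC^\ast)^{2n}$ with coordinates $x_1,\dots,x_n,y_1,\dots,y_n$, the form $\gamma$ is the log $2$-form attached to the skew-symmetric matrix $\wB$ extending $\tB = \left(\begin{smallmatrix} B\\ \Id_n\end{smallmatrix}\right)$; the key structural fact is that the $x_i$--$y_i$ block of $\tB$ is the identity, so modulo the subalgebra generated by $\dlog y_1,\dots,\dlog y_n$ the form $\gamma$ is congruent to $2\sum_i \dlog x_i\wedge\dlog y_i$ plus terms $\sum_{i<j}B_{ij}\dlog x_i\wedge\dlog x_j$. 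Wedging with $\bigwedge_{i\in I}\dlog y_i$ kills the $\dlog y$--$\dlog y$ and the repeated-$\dlog y_i$ contributions, and a straightforward combinatorial/rank argument (e.g. viewing $\gamma\wedge\bigwedge_{i\in I}\dlog y_i$ as living in $\Alt^\bullet$ of the quotient of the $2n$-dimensional space $H^1(T)$ by $\mathrm{span}(\dlog y_i : i\in I)$, which has dimension $2n-k$; but the $\dlog y_i$ with $i\notin I$ that survive in $\gamma$ are pinned to the $n-k$ basis vectors $\dlog x_i$, so effectively one works in an $n+1-k$-ish... ) shows that the $(n+1-k)$-th power vanishes. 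Concretely I would pass to the change of basis $\theta_i := \dlog x_i$, $\phi_i := \dlog x_i + \sum_j B_{ij}\dlog x_j/2 + \dlog y_i$ making $\gamma \equiv 2\sum_i \theta_i\wedge\phi_i \pmod{\mathrm{span}(\dlog y)}$ with the $\theta_i,\phi_i$ ($i\in[n]$) a basis of $H^1(T)$; then $\gamma\wedge\bigwedge_{i\in I}\dlog y_i = \gamma\wedge\bigwedge_{i\in I}(\phi_i - \theta_i - \cdots)$, and since the $2$-form $\gamma$ restricted to the span of $\{\theta_i,\phi_i : i\notin I\}$ (dimension $2(n-k)$) has rank $2(n-k)$, we get $\gamma^{n-k+1}\wedge\bigwedge_{i\in I}\dlog y_i = 0$ by the elementary fact that the $(\ell+1)$-st exterior power of a rank-$2\ell$ skew form vanishes. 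The main obstacle is bookkeeping the mixed terms $\sum B_{ij}\dlog x_i\wedge\dlog x_j$ carefully enough to be sure the effective rank of $\gamma$ after wedging with $\bigwedge_{i\in I}\dlog y_i$ and reducing mod $\mathrm{span}(\dlog y)$ is exactly $2(n-k)$ and not larger; this is where connectedness of $\Gamma$ and the principal-coefficient hypothesis enter, and I would verify it by an explicit linear-algebra computation with $\wB$ analogous to the normal-form reductions in the proof of Proposition~\ref{Isolated Curious}.
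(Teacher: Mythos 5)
Your proposal has two serious problems. First, a circularity: in the paper, Theorem~\ref{thm:standard} (both the statement that $\gamma$ and the $\dlog y_i$ generate $H^\ast(\cA)_{st}$ and the Poincar\'e series formula) is \emph{deduced from} Proposition~\ref{prop:standardbasis} — see the proof headed ``Proof of Theorem~\ref{thm:standard} assuming Proposition~\ref{prop:standardbasis}.'' The only cases established independently are rank one (Proposition~\ref{prop:isolatedrank1}) and paths (Proposition~\ref{prop:An}). So you cannot invoke Theorem~\ref{thm:standard} either for spanning or for the dimension count that you use to get linear independence; the independence must be proved directly, which the paper does by a leading-term argument (Lemma~\ref{lem:standardIndep}).

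Second, and more fatally, your step (i) is false: the monomials with $j+\#I=n+1$ do not vanish in general. Take $n=2$, $B=\left(\begin{smallmatrix}0&b\\-b&0\end{smallmatrix}\right)$ with $b\neq 0$, principal coefficients, and work with the reduced form $\bgamma = 2b\,\dlog x_1\wedge\dlog x_2 + 2\,\dlog x_1\wedge\dlog y_1 + 2\,\dlog x_2\wedge\dlog y_2$. Then
\[
\bgamma\wedge\dlog y_1\wedge\dlog y_2 \;=\; 2b\,\dlog x_1\wedge\dlog x_2\wedge\dlog y_1\wedge\dlog y_2 \;=\; -\tfrac{b}{4}\,\bgamma^2 \;\neq\; 0
\]
on the cluster torus, hence in $H^\ast(\cA)_{st}$ by Proposition~\ref{prop:kernel}. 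The flaw in your rank argument is that wedging with $\bigwedge_{i\in I}\dlog y_i$ only kills the terms of $\gamma$ containing some $\dlog y_i$ with $i\in I$; the terms $B_{ij}\,\dlog x_i\wedge\dlog x_j$ with $i\in I$ survive, so the ``effective rank'' of $\gamma$ after this wedge is governed by $\widehat{B}$ with only the rows and columns indexed by $\{n+i:i\in I\}$ deleted — a $(2n-k)\times(2n-k)$ matrix whose rank can well exceed $2(n-k)$. What is actually true is that the out-of-range monomials lie in the \emph{span} of the in-range ones (as in the identity above), and establishing these relations is precisely the hard content of the proposition. The paper proves spanning by taking an arbitrary standard form $\omega$, analyzing its leading part (the terms with fewest mutable $\dlog$'s), using the residue conditions of Proposition~\ref{prop:residues} to show the leading coefficients $d_{IJ}$ satisfy $I\subseteq J$ and depend only on $J$ (via connectedness of $\Gamma$ and a reduction to path subquivers, Corollary~\ref{cor:pathbasis}), and then subtracting a combination of $\bgamma^r\wedge\bigwedge_{i\in J}\dlog y_i$ and inducting. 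None of that machinery appears in your outline, so the proposal does not constitute a proof.
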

The proof of Proposition~\ref{prop:standardbasis} is delayed to Section \ref{sec:proofstandardbasis}. 

\begin{lemma}\label{lem:standardquotient}
Suppose that $(\Psi,\Phi,\{R_t\}): \cA' \to \cA$ is a covering map of locally acyclic cluster varieties of full rank.  Then we have $H^{\ast}(\cA)_{st} = H^{\ast}(\cA')_{st}$.  Furthermore, GSV-forms and $\dlog$-forms of $\cA$ pullback to GSV-forms and $\dlog$-forms of $\cA'$.
\end{lemma}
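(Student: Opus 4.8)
The plan is to identify both standard parts with spaces of standard differential forms via Theorem~\ref{thm:standardequiv} and to show that $\Psi^\ast$ restricts to a bijection between them. The whole argument rests on one elementary observation: any $h\in\Aut(\cA')$ acts on each cluster torus by rescaling its cluster coordinates, $h^\ast(x_i)=\zeta_i x_i$ with $\zeta_i\in\CC^\ast$, hence $h^\ast(\dlog x_i)=\dlog x_i$, so $h^\ast$ fixes every standard form on a cluster torus, and therefore every standard form on $\cA'$ (two regular forms agreeing on the dense open $\bigcup T$ agree everywhere). In particular the finite group $H\subset\Aut(\cA')$ with $\cA=\cA'\sslash H$ acts trivially on the space of standard forms on $\cA'$.

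Next I would check that $\Psi^\ast$ carries standard forms on $\cA$ to standard forms on $\cA'$ and is injective; injectivity is immediate since $\Psi$ is a finite surjection. For a standard form $\omega$ on $\cA$, the pullback $\Psi^\ast\omega$ is a regular form on the smooth variety $\cA'$ (smoothness holds by the running hypotheses and Theorem~\ref{thm:Muller}); restricted to the cluster torus of a seed $\Phi(t)$, the map $\Psi$ is the monomial map $\mult(R_t^T)$ onto the cluster torus of $t$ by item~(3) of the definition of cluster morphism, and the pullback of a standard form along a monomial map of tori is again standard, since $\dlog$ of a Laurent monomial is a $\CC$-linear combination of the $\dlog x_i$. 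As $\Phi$ commutes with mutation and every seed of $\cA'$ is obtained from $\Phi(t_0)$ by some mutation sequence, $\Psi^\ast\omega$ is standard on every cluster torus of $\cA'$, i.e.\ it is a standard form on $\cA'$.

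For surjectivity, take a standard form $\omega'$ on $\cA'$. By the observation above $\omega'$ is $H$-invariant, and since $H$ acts freely on the smooth variety $\cA'$ with quotient $\cA$, the cover $\cA'\to\cA$ is finite \'etale, so $\Psi^\ast$ identifies $\Omega^k(\cA)$ with $\Omega^k(\cA')^H$; hence $\omega'=\Psi^\ast\omega$ for a regular form $\omega$ on $\cA$. To see $\omega$ is standard, restrict to a cluster torus $T_t$: then $\mult(R_t^T)^\ast(\omega|_{T_t})=\omega'|_{T_{\Phi(t)}}$ is standard, and because $R_t$ has full rank (Proposition~\ref{prop:coveringmap}) the monomial map $\mult(R_t^T)$ is a finite covering of tori, so $z^a\mapsto x^{R_t^T a}$ is injective on exponents and $\{\dlog z_j\}$ pulls back to a basis of the $1$-forms on $T_{\Phi(t)}$; it follows that $\omega|_{T_t}$ is standard precisely because its pullback is. Doing this for every $t$ shows $\omega$ is a standard form on $\cA$, and combining with the previous paragraph and Theorem~\ref{thm:standardequiv} at both ends gives the isomorphism $H^\ast(\cA)_{st}\cong H^\ast(\cA')_{st}$ induced by $\Psi^\ast$.

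Finally, the ``furthermore'' clauses are short: that a GSV form of $\cA$ pulls back to a GSV form of $\cA'$ is exactly Proposition~\ref{prop:GSVpullback}; and for a frozen variable $y_j$ of $\cA$, item~(3) of the definition of cluster morphism gives $\Psi^\ast(y_j)=\prod_{i=n+1}^{n+m}x_i^{R_{ij}}$, a Laurent monomial in the frozen variables of $\cA'$ and hence a unit of $\cO(\cA')$, so $\Psi^\ast(\dlog y_j)=\dlog\bigl(\Psi^\ast(y_j)\bigr)=\sum_{i=n+1}^{n+m}R_{ij}\,\dlog x_i$ is a $\dlog$-form on $\cA'$. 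I expect the main obstacle to be purely a matter of bookkeeping: verifying in the surjectivity step that full rank of $R_t$ forces ``standardness'' to be reflected by pullback along $\mult(R_t^T)$, together with the (standard but worth stating carefully) fact that $H$-invariant regular forms descend along the free finite quotient $\cA'\to\cA$.
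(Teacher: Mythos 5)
Your proposal is correct and follows essentially the same route as the paper: both identify $H^\ast(\cA)_{st}$ and $H^\ast(\cA')_{st}$ with spaces of standard forms via Theorem~\ref{thm:standardequiv} and use that each cluster torus of $\cA'$ maps to the corresponding torus of $\cA$ by the monomial map $\mult(R_t^T)$ with finite kernel, under which pullback is an isomorphism on standard forms, with the final clause coming from Proposition~\ref{prop:GSVpullback}. Your \'etale-descent argument for surjectivity is just a more explicit version of the compatibility-and-gluing step the paper leaves implicit.
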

\begin{proof}
The map $\Psi$ sends a cluster torus $T'$ of $\cA'$ to a cluster torus $T$ of $\cA$ via a monomial map with finite abelian kernel.  Under such a map, pullback induces an isomorphism between standard forms on $T$ and standard forms on $T'$ (and in turn also an isomorphism $H^\ast(T) \cong H^\ast(T')$).  These isomorphisms are compatible for all cluster tori, and by Theorem \ref{thm:standardequiv}, we obtain an isomorphism $H^{\ast}(\cA)_{st} = H^{\ast}(\cA')_{st}$.  The last statement of the lemma also follows immediately (cf. Proposition \ref{prop:GSVpullback}).
%
%
%By the Louise condition and the compatibility of Mayer-Vietoris sequences with the Deligne splitting (Proposition \ref{prop:splitMV}), it suffices to prove that $H^{\ast}(\cA)_{st} = H^{\ast}( \cA')_{st}$ when both are isolated cluster varieties.  By Theorem \ref{thm:Isolated Main}, we have $H^{\ast}(\cA)_{st} = H^{\ast}(\cA)^{\id}$ where $\id$ denotes the trivial locally constant character of $\Aut(\cA)$, identified with the zero vector of $(\tB \QQ^n \cap \ZZ^{n+m})/\tB \ZZ^n$.  Since, $H^{\ast}(\cA)^{\id} = H^{\ast}(\cA')^{\id}$, the result follows.
\end{proof}

 Proposition~\ref{prop:standardbasis} implies our main result.
\begin{proof}[Proof of Theorem~\ref{thm:standard} assuming Proposition~\ref{prop:standardbasis}]
We compute the generating function for the list of differential forms in Proposition \ref{prop:standardbasis}, counted by degree.  
Letting $i = \#I$, we have
\begin{multline*}
\sum_{i+j \leq n} \binom{n}{i} t^{i+2j}  = \sum_{i=0}^n \binom{n}{i} \left(t^i + t^{i+2} + \cdots + t^{2n-i} \right) 
= \frac{1}{t^2-1} \sum_{i=0}^n \binom{n}{i} ( t^{2n+2-i} - t^i ) = \\
 \frac{1}{t^2-1} \left(  t^{n+2} (t+1)^n - (t+1)^n \right) = \frac{(t^{n+2} - 1) (t+1)^n}{(t-1)(t+1)} = (t+1)^{n-1} (t^{n+1} + t^n + \cdots + t + 1 ).
\end{multline*}
Thus the theorem holds for cluster varieties $\cA$ with principal coefficients and $\Gamma$ connected.  By taking products of the cluster varieties, it holds for any $\cA$ with principal coefficients.  By Lemma \ref{lem:standardquotient}, the theorem holds for any $\cA$ with $d$-principal coefficients, for any integer $d \geq 1$.  By taking products with a torus, the theorem holds for any $\cA$ with extended exchange matrix of the form $\left(\begin{smallmatrix}B \\d \Id_n \\ 0\end{smallmatrix}\right)$.  Finally, by Lemma \ref{lem:standardquotient} and Proposition \ref{prop:coverprincipal}, the theorem holds for any $\cA$ with full rank $\tB$.
\end{proof}

\begin{lem} \label{lem:standardIndep}
Under the hypotheses of Proposition~\ref{prop:standardbasis}, the differential forms in Proposition~\ref{prop:standardbasis} are linearly independent.
\end{lem}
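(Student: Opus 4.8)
The plan is to reduce the statement to linear algebra in the exterior algebra of a symplectic vector space. Since $\cA$ has principal coefficients it is of full rank, hence smooth by Theorem~\ref{thm:Muller}, so Proposition~\ref{prop:kernel} gives an injection $H^{\ast}(\cA)_{st}\hookrightarrow H^{\ast}(T)$ for the initial cluster torus $T$. Writing $V=H^1(T,\CC)$, which has basis $\dlog x_1,\dots,\dlog x_n,\dlog y_1,\dots,\dlog y_n$ (recall $m=n$), we have $H^{\ast}(T,\CC)=\Alt^{\bullet}V$, and the restrictions to $T$ of $\gamma$ and of the $\dlog y_i$ are honest logarithmic forms, so the restriction of $\gamma^{j}\wedge\bigwedge_{i\in I}\dlog y_i$ is the element $\widehat\gamma^{\,j}\wedge f_I$ of $\Alt^{\bullet}V$, where $f_I:=\bigwedge_{i\in I}\dlog y_i$ and $\widehat\gamma\in\Alt^2V$ is the $2$-vector attached to $\widehat B$. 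I would take the GSV form given by $\widehat B=\left(\begin{smallmatrix}B&-\Id_n\\\Id_n&0\end{smallmatrix}\right)$; this matrix is nonsingular (indeed $\det=1$), so $\widehat\gamma$ is a symplectic form on $V$. (This choice is essentially forced: for a completion $\widehat B$ of the wrong rank the forms $\widehat\gamma^{\,j}f_I$ need not even be nonzero, since $\widehat\gamma^{\,n}=\mathrm{Pf}(\widehat B)$ times the top exterior power; and for Theorem~\ref{thm:standard} it is enough to know Proposition~\ref{prop:standardbasis} for one good GSV form.) So it remains to prove that $\{\widehat\gamma^{\,j}\wedge f_I : I\subseteq[n],\ j+\#I\le n\}$ is linearly independent in $\Alt^{\bullet}V$.

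The key observation is that for this completion $\widehat B_{n+i,n+j}=0$, i.e.\ $\widehat\gamma(\dlog y_i,\dlog y_j)=0$, so $F:=\mathrm{span}(\dlog y_1,\dots,\dlog y_n)$ is isotropic; since $\dim F=n=\tfrac12\dim V$, it is Lagrangian. Hence $\dlog y_1,\dots,\dlog y_n$ extends to a symplectic basis $g_1,\dots,g_n,\dlog y_1,\dots,\dlog y_n$ of $V$ with $\widehat\gamma=\sum_k g_k\wedge\dlog y_k$. Now use the $\mathfrak{sl}_2$-action on $\Alt^{\bullet}V$ attached to a symplectic form: with $L=\widehat\gamma\wedge(-)$, $\iota$ the contraction by the inverse bivector, and $H$ acting on $\Alt^kV$ by $k-n$, the triple $(L,\iota,H)$ satisfies the $\mathfrak{sl}_2$-relations, so $\Alt^{\bullet}V$ is a finite-dimensional $\mathfrak{sl}_2$-module; this yields hard Lefschetz ($L^{n-k}\colon\Alt^kV\to\Alt^{2n-k}V$ is an isomorphism for $k\le n$) and the primitive decomposition $\Alt^{\bullet}V=\bigoplus_{k=0}^{n}\bigoplus_{j=0}^{n-k}\widehat\gamma^{\,j}\,P^k$ with $P^k=\ker(\iota|_{\Alt^kV})$. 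Each $f_I$ is killed by $\iota$ (the inverse bivector only pairs $g_k$ with $\dlog y_k$, and $f_I$ involves no $g_k$), so $f_I\in P^{\#I}$; and the $f_I$ are manifestly linearly independent in $\Alt^{\bullet}V$, hence linearly independent inside $\bigoplus_kP^k$. Extending $\{f_I\}$ to a homogeneous basis of $\bigoplus_kP^k$, the primitive decomposition shows that the strings $\{\widehat\gamma^{\,j}f_I : 0\le j\le n-\#I\}$, together with the analogous strings of the remaining basis vectors, form a basis of $\Alt^{\bullet}V$. In particular $\{\widehat\gamma^{\,j}f_I : j+\#I\le n\}$ is linearly independent, which is the lemma.

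The only genuinely delicate point is the one flagged above: the full-rank hypothesis is exactly what makes $\widehat\gamma$ a symplectic form, and without it the statement fails — already for the acyclically oriented $A_2$ triangle, an ill-chosen completion gives $\widehat\gamma^{\,2}=0$. Once that is in place, everything is formal symplectic/$\mathfrak{sl}_2$ linear algebra, the hard Lefschetz theorem and primitive decomposition for $\Alt^{\bullet}$ of a symplectic space being standard, so no positivity or Hodge-theoretic input is needed. The remaining things to check carefully are routine: that restriction to $T$ intertwines the wedge of differential forms with the wedge in $\Alt^{\bullet}V$ (immediate, since the relevant forms are logarithmic on $T$), that $\widehat B=\left(\begin{smallmatrix}B&-\Id_n\\\Id_n&0\end{smallmatrix}\right)$ is nonsingular, and that $0\le j\le n-\#I$ is literally the condition $j+\#I\le n$.
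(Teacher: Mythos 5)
Your reduction to exterior algebra on the initial cluster torus is fine, but the symplectic step contains a genuine error on which the rest of the argument depends. You pass from ``the bottom-right block of $\wB$ is zero, so $F=\mathrm{span}(\dlog y_1,\dots,\dlog y_n)$ is Lagrangian'' to ``$\widehat\gamma=\sum_k g_k\wedge\dlog y_k$ for some completion $g_1,\dots,g_n$ of the $\dlog y_i$ to a basis of $V$.'' This conflates the bivector $\widehat\gamma\in\Alt^2 V$, whose coefficient matrix in the basis $\{\dlog x_i\}$ is $\wB$, with a bilinear form \emph{on} $V$ having Gram matrix $\wB$; the two transform inversely under change of basis. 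Any element of the form $\sum_k g_k\wedge\dlog y_k$ lies in $V\wedge F$ and hence dies in $\Alt^2(V/F)$, whereas the image of $\widehat\gamma$ in $\Alt^2(V/F)$ is $\sum_{i,j} B_{ij}\,\bar\theta_i\wedge\bar\theta_j$ (writing $\theta_i=\dlog x_i$), which is nonzero whenever $B\neq 0$. So the claimed Darboux basis exists only in the trivial case $B=0$. Equivalently, primitivity of $f_I$ requires the vanishing of the bottom-right block of $\wB^{-1}$, not of $\wB$: for your choice $\wB=\left(\begin{smallmatrix}B&-\Id_n\\ \Id_n&0\end{smallmatrix}\right)$ one computes $\wB^{-1}=\left(\begin{smallmatrix}0&\Id_n\\ -\Id_n&B\end{smallmatrix}\right)$, so $\iota(\dlog y_i\wedge\dlog y_j)\propto B_{ij}$, which is already nonzero for $A_2$ with principal coefficients. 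No completion repairs this: if the bottom-right block of $\wB^{-1}$ vanishes and $\wB^{-1}$ is invertible, a block computation forces the top-left block of $\wB$ to vanish, i.e.\ $B=0$. Hence the $f_I$ are not lowest-weight vectors and the primitive decomposition cannot be applied to them as you do.

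The lemma is still true, but it needs a different mechanism. The paper replaces $\gamma$ by $\bgamma=\sum_{i,j}B_{ij}\theta_i\wedge\theta_j+2\sum_i\theta_i\wedge\eta_i$ (where $\eta_i=\dlog y_i$) and filters standard monomials by the number of mutable factors $\theta_i$ they contain: the lowest-$\theta$-degree part of $\bgamma^r\wedge\eta_I$ is a nonzero multiple of $\sum_{J\in\binom{[n]\setminus I}{r}}\theta_J\wedge\eta_{I\cup J}$, which is nonempty exactly because $r+\#I\le n$, and these leading parts are supported on pairwise disjoint sets of monomials $\theta_J\wedge\eta_K$ with $J\subseteq K$, since such a monomial determines $(r,I)=(\#J,\,K\setminus J)$. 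If you want to salvage the $\mathfrak{sl}_2$ framework you would have to expand each $f_I$ into primitive components and control how the resulting strings overlap, which is no easier than this leading-term argument.
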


\begin{proof}
We adopt the following notational shorthands: we write $\theta_i$ for $\dlog x_i$ and $\eta_i$ for $\dlog y_i$. For $I \subseteq [n]$, we write $\theta_I$ and $\eta_I$ for $\bigwedge_{i \in I} \theta_i$ and $\bigwedge_{i \in I} \eta_i$ respectively.  Let $\bgamma = \sum_{i,j=1}^n B_{ij} \theta_i \wedge \theta_j + \sum_i 2 \theta_i \wedge \eta_i$.  Thus $\bgamma$ differs from the GSV-form $\gamma$ by terms of the form $\eta_i \wedge \eta_j$.  It suffices to prove the statement with $\bgamma$ instead of $\gamma$.

If $\omega$ is a standard differential form on the torus of degree $k$ then we can write it as $\sum_{\#I + \# J = k} c_{IJ} \theta_I \wedge \eta_J$. 
We will consider the \newword{leading part} of $\omega$ to be the terms which minimize $\# I$.  That is, the leading terms are the ones with the most frozen variables and the fewest mutable variables.

The leading term of $\bgamma^r \wedge \eta_I$ is $\sum_{J \in \binom{[n] \setminus I}{r}} \theta_J \wedge \eta_{I \cup J}$.
Since $r+\#I \leq n$, the sum is not empty.  It is easy to check that these leading terms are linearly independent. 
\end{proof}

\begin{remark} We do not have a conceptual explanation for the factorization of the Poincar\'{e} series. 
When $n$ is even, it is tempting to rewrite the formula as $(t+1)^n (t^n+t^{n-2} + \cdots + t^2 + 1)$ and guess that a basis should be given by the set of differential forms of the form $\bgamma^r \wedge d \log y_{i_1} \wedge d \log y_{i_2} \wedge \cdots \wedge d \log y_{i_k}$ with $r \leq n/2$. 
However, this is wrong: in type $D_4$ with principal coefficients, we have $\bgamma^2 \wedge d \log y_1 \wedge d \log y_2 \wedge d \log y_3=0$, where $1$, $2$ and $3$ are the leaves of the $D_4$ diagram.
\end{remark}

\subsection{Standard cohomology when the quiver is a path}

The aim of this section is to prove Theorem~\ref{thm:standard} and Proposition~\ref{prop:standardbasis} when $\Gamma$ is a path.

\begin{prop}\label{prop:An}
Suppose that $\cA = \cA(\tB)$ is a cluster variety of full rank $n$ with $m$ frozen variables $y_1$, $y_2$, \dots, $y_m$ and that the graph $\Gamma(B)$ is a path. 
Then $H^{\ast}(\cA)_{st}$ is generated by the GSV-form $\gamma$ and by the forms $\dlog y_i$. The Poincar\'{e} series of $H^{\ast}(\cA)_{st}$ is given by
\[ (1+t)^{m-1} (1+t+t^2+\cdots + t^n+t^{n+1}) . \]
\end{prop}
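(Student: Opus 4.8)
The plan is to prove Proposition~\ref{prop:An} by induction on $n$, using the Mayer--Vietoris machinery developed in Section~\ref{sec:cohom} together with the explicit linear-algebra description of standard cohomology in Corollary~\ref{cor:LinAlg}. The base cases $n=0$ (a torus, where $H^\ast(\cA)_{st}=H^\ast((\CC^\ast)^m)$ has Poincar\'e series $(1+t)^m$) and $n=1$ (the isolated rank-one case treated in Proposition~\ref{prop:isolatedrank1}, suitably producted with a torus) are already settled and give the claimed series $(1+t)^{m-1}(1+t+t^2)$ when $n=1$. For the inductive step, orient the path so that an endpoint vertex, say vertex $1$, has a single separating edge $1\to 2$ (the outermost edge of the path; any edge of a path is separating since no bi-infinite path exists through a tree). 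By Proposition~\ref{prop:covering}, $\cA$ is covered by $U=\Spec A[x_1^{-1}]$ and $V=\Spec A[x_2^{-1}]$, with $U=\cA(\tB_{\oG\setminus\{1\}})$, $V=\cA(\tB_{\oG\setminus\{2\}})$, and $U\cap V = \cA(\tB_{\oG\setminus\{1,2\}})$ the corresponding freezings; since freezings of a path-quiver are again path-quivers (possibly disconnected, but each component a path) and full rank is preserved (Remark~\ref{rem:fullrank}), the inductive hypothesis applies to all three.

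Next I would run the Mayer--Vietoris sequence, but only on the \emph{standard} part of cohomology. The key input is that the restriction maps and the boundary map $\delta$ all preserve the Deligne splitting (Proposition~\ref{prop:splitMV}), and that for a locally acyclic full-rank cluster variety $\cA$ the standard part $H^\ast(\cA)_{st}=\bigoplus_k H^{k,(k,k)}(\cA)$ injects into $H^\ast(T)$ for a cluster torus $T$ (Proposition~\ref{prop:kernel}). Because the curious Lefschetz theorem (Theorem~\ref{thm:curious}) tells us everything is of mixed Tate type, the long exact sequence decomposes into short pieces; the crucial observation is that $\delta$ shifts cohomological degree by $1$ while preserving the $(p,p)$-bidegree, so it carries $H^{k,(k,k)}(U\cap V)$ to $H^{k+1,(k,k)}(\cA)\subset H^\ast(\cA)_{ns}$ — i.e.\ $\delta$ maps the standard part of $U\cap V$ into the \emph{nonstandard} part of $\cA$. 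This means the standard parts fit into a short exact sequence $0\to H^\ast(\cA)_{st}\to H^\ast(U)_{st}\oplus H^\ast(V)_{st}\to (\text{image in }H^\ast(U\cap V)_{st})\to 0$ only after one identifies the image, so I would instead argue via the neighboring-tori criterion of Corollary~\ref{cor:LinAlg}: a standard form on the initial path-torus lies in $H^\ast(\cA)_{st}$ iff it satisfies the residue equations \eqref{eq:residue} for each mutable $r$. I would set up the generating function count directly: writing a standard form as $\sum c_{I}\,\theta_I$ with $\theta_i=\dlog x_i$ and $\eta_j=\dlog y_j$, the equations coming from the two endpoint vertices and the interior vertices of the path cut down the space, and one computes the Hilbert series of the solution space to be $(1+t)^{m-1}(1+t+\cdots+t^{n+1})$.

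Concretely, the cleanest route is probably: (i) reduce to principal coefficients by Lemma~\ref{lem:standardquotient} and Proposition~\ref{prop:coverprincipal}, then to $m=n$, then strip torus factors, so it suffices to handle $\tB=\left(\begin{smallmatrix}B\\ \Id_n\end{smallmatrix}\right)$ with $B$ the path; (ii) show the forms $\gamma^j\wedge\eta_I$ with $j+\#I\le n$ are linearly independent — this is exactly Lemma~\ref{lem:standardIndep}, whose leading-term argument applies verbatim since a path is connected; (iii) show these forms span, equivalently that $\dim H^\ast(\cA)_{st}$ in each degree is at most the coefficient of $t^k$ in $(1+t)^{n-1}(1+t+\cdots+t^{n+1})$, by counting solutions to the residue equations of Corollary~\ref{cor:LinAlg}. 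For a path, the residue equation at vertex $r$ involves only $\theta_{r-1},\theta_{r+1},\theta_r$ (and $\eta_r$ under principal coefficients), so the system is sparse and can be solved by an explicit recursion along the path; this is the computational heart of the argument and the main obstacle will be organizing that solution-counting so that the telescoping generating-function identity $\sum_{i+j\le n}\binom{n}{i}t^{i+2j}=(t+1)^{n-1}(t^{n+1}+\cdots+1)$ (already recorded in the proof of Theorem~\ref{thm:standard}) drops out cleanly. Once the dimension count matches the number of independent forms from step (ii), equality forces those forms to be a basis, proving both the generation statement and the Poincar\'e series formula simultaneously.
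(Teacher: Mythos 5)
Your setup is sound and you correctly identify the two available routes, but the proposal stops exactly where the proof has to begin. You rightly observe that the Mayer--Vietoris boundary map sends $H^{k,(k,k)}(U\cap V)$ into $H^{k+1,(k,k)}(\cA)\subseteq H^\ast(\cA)_{ns}$, so that one gets an exact sequence $0\to H^k(\cA)_{st}\to H^k(U)_{st}\oplus H^k(V)_{st}\to H^k(U\cap V)_{st}$ whose last map is not obviously surjective. The paper's proof consists precisely of establishing that surjectivity: it inducts on $k$ as well as $n$, works modulo the ideal $\sum_i(\dlog y_i)H^{k-1}(U\cap V)_{st}$ to reduce to two explicit classes ($\gamma^{k/2}$ and $\dlog x_1\wedge\dlog x_2\wedge\gamma^{k/2-1}$, or their odd-degree analogues), and exhibits these as restrictions from $U$ or $V$ by splitting the GSV form of $V$ into the pieces $\gamma_V^{(1)}$ and $\gamma_V^{(2)}$ attached to the components of $\Gamma_V$. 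Once surjectivity is in hand, the short exact sequence plus the inductive Poincar\'e series for $U$, $V$, $U\cap V$ gives the dimension count, and Lemma~\ref{lem:standardIndep} converts that into generation. You instead propose to abandon this and count solutions of the residue equations of Corollary~\ref{cor:LinAlg} directly, asserting that "one computes the Hilbert series of the solution space" to be the claimed product. That computation is the entire content of the proposition and is not carried out; for a path of length $n$ it is not a routine recursion, and the paper's only machinery for such counts (the leading-term and $d_{IJ}=d_{I'J}$ analysis of Section~\ref{sec:proofstandardbasis}) itself invokes Corollary~\ref{cor:pathbasis}, i.e.\ the path case you are trying to prove, so appealing to it would be circular.

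A second, smaller point: your reduction "then to $m=n$, then strip torus factors" is fine, but note the paper only reduces to $\tB=\left(\begin{smallmatrix}B\\ d\,\Id_n\end{smallmatrix}\right)$ for some $d\ge 1$ (via Proposition~\ref{prop:coverprincipal} and Lemma~\ref{lem:standardquotient}), not to honest principal coefficients $d=1$; this does not affect the standard part, but you should say which covering you are passing to. The fix for the main gap is either to supply the explicit solution count for the residue system on a path (with the two endpoint equations and the $n-2$ interior equations), or to return to the Mayer--Vietoris route and prove the surjectivity of $H^k(U)_{st}\oplus H^k(V)_{st}\to H^k(U\cap V)_{st}$ as the paper does.
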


%\begin{prop}\label{prop:An}
%Suppose that $\cA = \cA(\tB)$ is a cluster variety of full rank.  Suppose that $\Gamma$ is a disjoint union of paths $\Gamma_1,\ldots,\Gamma_r$ with $n_1$, $n_2$, \dots,n_r$ vertices, and we have $m$ frozen variables $y_1$, $y_2$, \dots, $y_m$.   Then $H^{\ast}(\cA)_{st}$ is generated by GSV-forms $\gamma_1,\ldots,\gamma_r$ of $\Gamma_1,\ldots,\Gamma_r$ and by the forms $\dlog y_i$. The Poincar\'{e} series of $H^{\ast}(\cA)_{st}$ is given by
%\[
%(1+t)^{m-r} \prod_{i=1}^r \left( 1+ t+ \cdots + t^{n_i} + t^{n_i+1} \right).
%\]
%
%Suppose in addition that
%
%If, in addition, $\tB$ is really full rank, then $H^{\ast}(\cA)_{st} = H^{\ast}(\cA)$.
%\end{prop}

%Note that if $\Gamma$ is a union of paths then  in particular it is acyclic, so $\cA$ is also locally acyclic.

\begin{proof}[Proof of Proposition \ref{prop:An}]
By taking products with a torus, and using Lemma \ref{lem:standardquotient} and Proposition \ref{prop:coverprincipal}, we may assume that $\tB$ has the form $\tB = \left( \begin{smallmatrix} B \\ d\,\Id_n \\ 0 \end{smallmatrix} \right)$.  By removing torus factors, we may further reduce to the case $\tB = \left( \begin{smallmatrix} B \\ d\,\Id_n\end{smallmatrix} \right)$.

We proceed by induction on the number $n$ of vertices in $\Gamma$.  
%Again by taking products, we may assume that $\Gamma$ is connected, and thus a path with $n$ vertices.  
The base case $n = 1$ is  Proposition~\ref{prop:isolatedrank1}.  Let $x_1$ be the cluster variable corresponding to one of the degree one vertices of $\Gamma$, and let $x_2$ be its neighbor.  Let $U = \{x_1 \neq 0\}$ and $V = \{x_2 \neq 0\}$.  Consider the Mayer-Vietoris sequence
\[
\cdots \to H^{k-1}(U \cap V) \to H^{k}(\cA) \to H^{k}(U) \oplus H^{k}(V) \to H^{k}(U \cap V)\to H^{k+1}(\cA) \to \cdots
\]

Taking the $(k,k)$ component for the Deligne splitting, we have an exact sequence
\[
0 \to H^k(\cA)_{st} \to H^k(U)_{st} \oplus H^k(V)_{st} \to H^k(U \cap V)_{st} \to H^{k+1, (k,k)}(\cA) \to \cdots 
\]

Our first task is to show, by a direct computation, that $H^k(U)_{st} \oplus H^k(V)_{st} \to H^k(U \cap V)_{st}$ is surjective, so we can cut off this sequence.  Let $\gamma = \sum_{i,j=1}^{n+m} \widehat{B}_{ij} \dlog x_i \wedge \dlog x_j$ denote a GSV form for $\cA$.  The quiver $\Gamma_V$ of the cluster variety $V$ is a disjoint union of the singleton $\{1\}$ and the path $\{3,4,\ldots,n\}$.  Choose GSV forms 
\[ \gamma^{(1)}_V = 2\widehat{B}_{12} \frac{dx_1}{x_1} \wedge \frac{dx_2}{x_2} + \text{ other terms}\] and 
\[ \gamma^{(2)}_V = 2\widehat{B}_{23} \frac{dx_2}{x_2} \wedge \frac{dx_3}{x_3}+ 2\widehat{B}_{34}\frac{dx_3}{x_3} \wedge \frac{dx_4}{x_4}+ \cdots + 2\widehat{B}_{n-1,n} \frac{dx_{n-1}}{x_{n-1}}\wedge \frac{dx_n}{x_n} + \text{ other terms}\] for these components so that $\gamma|_V = \gamma^{(1)}_V + \gamma^{(2)}_V$.  Here, the ``other terms" involve the frozen variables $y_1,y_2,\ldots,y_m$; the variable $x_1$ does not appear in $\gamma^{(2)}_V$ while the variables $x_3,\ldots,x_n$ do not appear in $\gamma^{(1)}_V$.
%
%Thus, up to terms of $\dlog y_i \wedge \dlog y_j$, the forms $\gamma_U, \gamma_{UV}$, and $\gamma^{(1)}_V + \gamma^{(2)}_V$ are GSV-forms on $U, U\cap V$ and $V$ respectively.

%the restriction of $\gamma$ to $U$ be denoted $\gamma_U$ and the restriction of $\gamma$ to $U\cap V$ be $\gamma_{UV}$.  The restriction $\gamma_V$ of $\gamma$ to $V$ the GSV forms of $V$ be $\gamma_V$ for the component containing $x_3$, $x_4$, \dots and $\gamma'_V$ for the component containing $x_1$.  

The form $\gamma$ restricts to a GSV-form on $U \cap V$.  By induction, $H^{\ast}(U \cap V)_{st}$ is spanned by 
\begin{equation}
\gamma^r \wedge \left(\frac{d x_1}{x_1}\right)^{\epsilon_1} \wedge \left(\frac{d x_2}{x_2}\right)^{\epsilon_2}  \wedge \bigwedge_{i \in I}  \frac{dy_i}{y_i} \label{spanninglist}
\end{equation}
where $I$ is a subset of the frozen variables, and $\epsilon_1$, $\epsilon_2 \in \{0,1\}$.

We also induct on $k$, so we may assume that $H^{k-1}(U)_{st} \oplus H^{k-1}(V)_{st} \to H^{k-1}(U \cap V)_{st}$ is surjective. (The base case $k=0$ is trivial.) 
Therefore, any class in $H^k(U \cap V)_{st}$ which is in $\sum_i (\dlog y_i) H^{k-1}(U \cap V)_{st}$ is inductively known to be in the image of $H^k(U)_{st} \oplus H^k(V)_{st}$. 
Modulo $\sum_i (\dlog y_i) H^{k-1}(U \cap V)_{st}$, there are only two forms in the list~\eqref{spanninglist} which we must consider: 
\[ 
\begin{cases} 
\gamma^{k/2} \ \mbox{and}\ \dlog x_1 \wedge \dlog x_2 \wedge \gamma^{k/2-1} & \mbox{if $k$ is even,} \\
\dlog x_1 \wedge \gamma^{(k-1)/2} \ \mbox{and}\ \dlog x_2 \wedge \gamma^{(k-1)/2} & \mbox{if $k$ is odd.}  \\
\end{cases} \]
The forms $\gamma^{k/2}$ and $\dlog x_2 \wedge \gamma^{(k-1)/2}$ are clearly images of classes in $H^{\ast}(V)$.
%
%Since $\gamma_{UV} = \gamma_V$, the forms $\gamma_{UV}^r$ and $\dlog x_2 \wedge \gamma_{UV}^r$ are images of the classes $\gamma_V^r$ and $\dlog x_2 \wedge \gamma_V^r$ in $H^{\ast}(V)$.

We have
\[
\frac{dx_1}{x_1} \wedge\frac{dx_2}{x_2} \wedge \gamma^{k/2-1}  = \pm \frac{1}{2 \widehat{B}_{12}} \gamma^{(1)}_V \gamma^{k/2-1}  \mod \sum_i \frac{dy_i}{y_i} H^{\ast}(U \cap V)
\]
and $\gamma^{(1)}_V\wedge \gamma^{k/2-1} \in H^{\ast}(V)$.  Similarly %, using Lemma \ref{lem:GSVpower},
\[
\frac{dx_1}{x_1} \wedge (\gamma^{(2)}_{V})^{(k-1)/2}   = \frac{dx_1}{x_1} \wedge \gamma^{(k-1)/2} \mod \sum_i \frac{dy_i}{y_i} H^{\ast}(U \cap V)
\]
and $\dlog x_1 \wedge \gamma^{(k-1)/2} \in H^{\ast}(U)$. We have now shown that $H^{\ast}(U)_{st} \oplus H^{\ast}(V)_{st} \to H^{\ast}(U \cap V)_{st}$ is surjective.

So we have a short exact sequence:
\[
0 \to H^k(\cA)_{st} \to H^k(U)_{st} \oplus H^k(V)_{st} \to H^k(U \cap V)_{st} \to 0 .
\]

Now, by induction, the dimensions of $H^k(U)_{st}$, $H^k(V)_{st}$ and $H^k(U \cap V)_{st}$ are the coefficients of $t^k$ in $(1+t)^{n+1-1}(1+ t+ \cdots + t^{n-1+1})$, $(1+t)^{n+1-2} (1+t+t^2) (1+t+ \cdots + t^{n-2+1})$ and $(1+t)^{n+2-1} (1+t+ \cdots + t^{n-2+1})$ respectively. So the dimension of $H^k(\cA)$ is the coefficient of $t^k$ in
\begin{multline*} 
(1+t)^n (1+t+\cdots + t^n) + (1+t)^{n-1} (1+t+t^2) (1+t+\cdots+t^{n-1}) - (1+t)^{n+1} (1+t+\cdots+t^{n-1}) \\ = (1+t)^{n-1} (1+t+\cdots + t^{n+1})
\end{multline*} 

Finally, let $R$ be the subring of $H^{\ast}(\cA)$ generated by the GSV form and the $\dlog y_i$. By  Lemma~\ref{lem:standardIndep}, the degree $k$ part of $R$ has dimension at least the coefficient of $t^k$ in $ (1+t)^{n-1} (1+t+\cdots + t^{n+1})$. So we conclude that $H^{\ast}(\cA)$ is generated by the GSV form and the $\dlog y_i$, and that it has Poincar\'{e} series $(1+t)^{n-1} (1+t+\cdots + t^{n+1})$.
\end{proof}

\begin{prop} \label{prop:pathReallyFullRank}
Continue all the hypotheses of Proposition~\ref{prop:An}.
Suppose in addition that $\tB$ is really full rank, then $H^{\ast}(\cA)_{st} = H^{\ast}(\cA)$.
\end{prop}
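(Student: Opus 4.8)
The plan is to prove, by induction on the number $n$ of mutable vertices, the slightly more general statement: \emph{if $\cA(\tB)$ is really full rank and $\Gamma(\tB)$ is a disjoint union of paths, then $H^\ast(\cA)=H^\ast(\cA)_{st}$}, i.e.\ $H^k(\cA)=H^{k,(k,k)}(\cA)$ for all $k$; Proposition~\ref{prop:pathReallyFullRank} is the case of a single path. Throughout I will use the elementary observation that freezing preserves the really full rank condition: if $\tB^T\ZZ^{n+m}=\ZZ^n$, then for $S\subseteq[n]$ the image of $(\tB_S)^T$ is the projection of $\ZZ^n$ onto the coordinates in $S$, which is all of $\ZZ^{|S|}$.

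For the base case $\Gamma$ has no edges, so the principal part of $\tB$ is $0$ and $m\ge n$ (since $\tB^T\colon\ZZ^{n+m}\to\ZZ^n$ is onto). By Proposition~\ref{prop:rowspanenough} there is a natural isomorphism of mixed Hodge structures $H^{k,(p,q)}(\cA)\cong H^{k,(p,q)}\bigl(\cA(\tB')\bigr)$ with $\tB'=\left(\begin{smallmatrix}0\\ \Id_n\\ 0\end{smallmatrix}\right)$, since $\tB$ and $\tB'$ have the same top $n$ rows, the same row span $\ZZ^n$, and the same size. But $\cA(\tB')\cong Y_1^{\,n}\times(\CC^\ast)^{m-n}$, where $Y_1=\{xx'=y+1,\ y\neq0\}$, because the $k$-th exchange relation reads $x_kx_k'=y_k+1$. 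By Corollary~\ref{cor:isolatedreallyfullrank} the cohomology of $Y_1$ is entirely standard, as is that of a torus, so the K\"unneth theorem gives the base case. (Alternatively one could invoke Theorem~\ref{thm:Isolated Main} together with the fact that really full rank forces $\ZZ^{n+m}/\tB\ZZ^n$ to be torsion free, so the group $G$ appearing there is trivial.)

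For the inductive step, assume $\Gamma$ has an edge; pick a leaf $x_1$ of one of its path components and let $x_2$ be its unique neighbor. The edge joining them is separating, so by Proposition~\ref{prop:covering} we have $\cA=U\cup V$ with $U=\Spec A[x_1^{-1}]=\cA(\tB_{[n]\setminus\{1\}})$, $V=\Spec A[x_2^{-1}]=\cA(\tB_{[n]\setminus\{2\}})$, and $U\cap V=\cA(\tB_{[n]\setminus\{1,2\}})$. Freezing $x_1$ shortens a path, freezing $x_2$ splits a path into two, and freezing both does both; in all cases $\Gamma$ becomes a disjoint union of paths on fewer mutable vertices, and really full rank is preserved as noted. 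Hence the inductive hypothesis applies: each of $H^\ast(U)$, $H^\ast(V)$, $H^\ast(U\cap V)$ is entirely standard. Now run the Mayer--Vietoris sequence of $\cA=U\cup V$, which respects the Deligne splitting (Proposition~\ref{prop:splitMV}). The computation inside the proof of Proposition~\ref{prop:An} shows that $H^k(U)_{st}\oplus H^k(V)_{st}\to H^k(U\cap V)_{st}$ is surjective for every $k$; combined with $H^\ast(U)_{st}=H^\ast(U)$, etc., this map is then surjective on full cohomology, so every connecting homomorphism $\delta\colon H^{k-1}(U\cap V)\to H^k(\cA)$ vanishes and $H^k(\cA)$ injects into $H^k(U)\oplus H^k(V)$. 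Since the target lies in Deligne degree $(k,k)$ and the injection respects the Deligne splitting, $H^k(\cA)=H^{k,(k,k)}(\cA)$, completing the induction.

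The main point requiring care is making the induction self-contained: since freezing the neighbor of a leaf disconnects the quiver, the induction must be run for the class of disjoint unions of paths, so one should check that the surjectivity argument of Proposition~\ref{prop:An} (which proves exactness on the right of the relevant Mayer--Vietoris sequence) goes through in that generality — it does, essentially verbatim, because the passive path components enter only through $\dlog$ of frozen variables and factor out of the argument. Apart from this, there is no real obstacle: the upgrade from "standard cohomology" to "all cohomology" comes for free from the inductive hypothesis together with functoriality of the Deligne splitting.
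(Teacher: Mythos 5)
Your proof is correct in outline and shares the paper's skeleton: induction, the Mayer--Vietoris cover $\cA=U\cup V$ coming from the edge at a leaf, and the surjectivity of $H^k(U)_{st}\oplus H^k(V)_{st}\to H^k(U\cap V)_{st}$ established inside the proof of Proposition~\ref{prop:An}, which kills the relevant connecting maps and forces $H^{k,(q,q)}(\cA)=0$ for $q<k$. The genuine difference is how you deal with the fact that freezing $x_2$ disconnects the quiver. The paper first applies the reductions from the opening paragraph of Proposition~\ref{prop:An} (coverings as in Proposition~\ref{prop:coverprincipal}, or Proposition~\ref{prop:rowspanenough}; legitimate for full cohomology because really full rank permits $d=1$) to assume principal coefficients, so that $U$, $V$ and $U\cap V$ become, up to cohomology, \emph{products} of single-path really-full-rank cluster varieties; the induction then stays within the class of single paths and K\"unneth handles the passive pieces. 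You instead enlarge the induction class to disjoint unions of paths, using the (correct and worth stating) observation that freezing preserves the really full rank condition, and you reduce to principal form only in the base case. Both routes work; yours trades the covering/reduction step for a broader inductive statement.

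The one caveat is the point you flagged yourself, where your stated justification is off: the passive path components do \emph{not} enter only through $\dlog$'s of frozen variables, since their vertices remain mutable in $U$, $V$ and $U\cap V$ and contribute powers of their own GSV $2$-forms to $H^{\ast}(U\cap V)_{st}$. The surjectivity you need is still true, but the correct reason is that those generators are untouched by freezing $x_1,x_2$ and hence restrict from both $U$ and $V$, while the generators involving $\dlog x_1$ and $\dlog x_1\wedge\dlog x_2$ are handled exactly by the $\gamma^{(1)}_V$, $\gamma^{(2)}_V$ manipulation in the proof of Proposition~\ref{prop:An}. To make this precise in your generality you need a generating set for $H^{\ast}(U\cap V)_{st}$ when $\Gamma$ is disconnected and the coefficients are arbitrary, i.e.\ Theorem~\ref{thm:standard} (whose proof in the paper does not depend on the present proposition, so there is no circularity); alternatively you can sidestep the issue entirely by performing the paper's reduction to principal coefficients, under which the freezings literally factor as products and the single-path case of Proposition~\ref{prop:An} applies verbatim.
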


\begin{proof}
We prove our result by induction on $n$; the base case $n=1$ is Corollary~\ref{cor:isolatedreallyfullrank}.  Applying the reductions in the first paragraph of the proof of Proposition~\ref{prop:An}, we can assume that $\tB$ has principal coefficients.

Then $U$, $V$ and $U \cap V$ are each a product of path algebras associated with paths where $\tB$ has really full rank so, by induction, we can assume that $H^{\ast}(U)_{ns}$, $H^{\ast}(V)_{ns}$ and $H^{\ast}(U \cap V)_{ns}$ are all zero. For $q \leq k-2$, in the Mayer-Vietoris sequence, the term $H^{k, (q,q)}(\cA)$ is sandwiched between $H^{k-1, (q,q)}(U \cap V)$ and $H^{k, (q,q)}(U) \oplus H^{k, (q,q)}(V)$, which are both zero by induction. In the case $q=k-1$, we have an exact sequence 
\[ \begin{array}{rcl@{}c@{}lcl}
& & H^{k-1, (k-1,k-1)}(U) &\oplus& H^{k-1, (k-1,k-1)}(V)& \to& H^{k-1, (k-1,k-1)}(U \cap V) \\ \to H^{k, (k-1, k-1)}(\cA) &\to& H^{k, (k-1, k-1)}(U) &\oplus&  H^{k, (k-1, k-1)}(V) & & \\
\end{array}.\]
%\begin{multline*} H^{k-1, (k-1,k-1)}(U) \oplus H^{k-1, (k-1,k-1)}(V) \to H^{k-1, (k-1,k-1)}(U \cap V) \\ \to H^{k, (k-1, k-1)}(\cA) \to H^{k, (k-1, k-1)}(U) \oplus  H^{k, (k-1, k-1)}(V) .\end{multline*}
The last term is zero by induction, and the map on the top line is surjective as we established earlier in the proof, so $H^{k, (k-1,k-1)}(\cA)=0$ as desired.
\end{proof}

\begin{cor} \label{cor:pathbasis}
Continue the hypotheses of Proposition~\ref{prop:An}. 
Suppose that $\tB = \left( \begin{smallmatrix} B \\ \mathrm{Id} \\ C \end{smallmatrix} \right)$ is obtained from the principal coefficient exchange matrix by adding additional frozen rows. 
Let $x_1$, $x_2$, \ldots, $x_n$ be the cluster variables, let $y_1$, $y_2$, \ldots, $y_n$ be the frozen variables from the principal coefficient subalgebra, and let $v_1$, $v_2$, \ldots, $v_{\ell}$ denote the additional frozen variables. Then
\[ \gamma^r \bigwedge_{i \in I}  \frac{d y_i }{y_i} \bigwedge_{j \in J}  \frac{d v_j} {v_j} \quad \mbox{for} \quad r+\#(I)\leq n\]
for $I \subset [n]$, and $J \subset [\ell]$ forms a basis for $H^{\ast}(\cA)$.
\end{cor}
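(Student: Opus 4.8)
The plan is to reduce the statement to a dimension count together with a single linear-independence computation on a cluster torus. Since the matrix $\tB=\left(\begin{smallmatrix}B\\\mathrm{Id}\\C\end{smallmatrix}\right)$ contains the block $\mathrm{Id}_n$, its rows span $\ZZ^n$, so $\tB$ is really full rank; as $\Gamma(B)$ is a path, Proposition~\ref{prop:pathReallyFullRank} gives $H^\ast(\cA)=H^\ast(\cA)_{st}$, and Proposition~\ref{prop:An}, applied with $m=n+\ell$ frozen variables, shows that $H^\ast(\cA)$ is generated as a ring by $\gamma$ and the forms $\dlog y_1,\ldots,\dlog y_n,\dlog v_1,\ldots,\dlog v_\ell$, with Poincar\'e series $(1+t)^{\,n+\ell-1}(1+t+\cdots+t^{n}+t^{n+1})$. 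Because $\gamma$ has degree $2$ while the $\dlog$ forms have degree $1$ and square to zero, $H^\ast(\cA)$ is spanned by the monomials $\gamma^r\wedge\bigwedge_{i\in I}\dlog y_i\wedge\bigwedge_{j\in J}\dlog v_j$ over all $r\ge 0$, $I\subseteq[n]$, $J\subseteq[\ell]$.

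Next I would count the proposed sublist. Using the identity $\sum_{i+j\le n}\binom{n}{i}t^{i+2j}=(1+t)^{n-1}(1+t+\cdots+t^{n+1})$ established in the proof of Theorem~\ref{thm:standard}, the generating function of the list $\{\gamma^r\wedge\bigwedge_{i\in I}\dlog y_i\wedge\bigwedge_{j\in J}\dlog v_j : r+\#I\le n\}$, counted by total degree, is
\[
\Big(\sum_{i+j\le n}\binom{n}{i}t^{\,i+2j}\Big)\cdot(1+t)^{\ell}=(1+t)^{n-1}(1+t+\cdots+t^{n+1})\cdot(1+t)^{\ell}=(1+t)^{\,n+\ell-1}(1+t+\cdots+t^{n+1}),
\]
which agrees with the Poincar\'e series of $H^\ast(\cA)$. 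Hence it suffices to prove that the forms in the proposed list are linearly independent in $H^\ast(\cA)$; having the correct dimension in each degree, they then automatically span, so they form a basis (and the monomials with $r+\#I>n$ are redundant).

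To prove linear independence, restrict everything to the initial cluster torus $T=\Spec\CC[x_1^{\pm},\ldots,x_n^{\pm},y_1^{\pm},\ldots,y_n^{\pm},v_1^{\pm},\ldots,v_\ell^{\pm}]$. Since $H^\ast(\cA)=H^\ast(\cA)_{st}$ and $\cA$ is locally acyclic of full rank, the restriction $H^\ast(\cA)\to H^\ast(T)$ is injective (Proposition~\ref{prop:kernel}); on $T$ each of our classes is represented by a log form, and the ring of log forms on $T$ injects into $H^\ast(T)$ (Lemma~\ref{lem:dlogzero}), where it is the full exterior algebra on $\dlog x_1,\ldots,\dlog x_n,\dlog y_1,\ldots,\dlog y_n,\dlog v_1,\ldots,\dlog v_\ell$. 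So it is enough to check linear independence of the forms $\gamma^r\wedge\bigwedge_{i\in I}\dlog y_i\wedge\bigwedge_{j\in J}\dlog v_j$ ($r+\#I\le n$) inside that exterior algebra. Here I would follow the leading-term method of Lemma~\ref{lem:standardIndep}: as the first $n$ columns of $\wB$ are the columns of $\tB$, on $T$ one has $\gamma=\bgamma+\varepsilon$ with
\[
\bgamma=\sum_{i,j=1}^{n}B_{ij}\,\dlog x_i\wedge\dlog x_j\;+\;2\sum_{j=1}^{n}\dlog x_j\wedge\Big(\dlog y_j+\sum_{a=1}^{\ell}C_{aj}\,\dlog v_a\Big)
\]
and $\varepsilon$ involving no $\dlog x_j$; the part of $\gamma^r\wedge\bigwedge_{i\in I}\dlog y_i\wedge\bigwedge_{j\in J}\dlog v_j$ with the fewest factors $\dlog x_j$ comes only from $\bgamma^r$ and equals, up to a nonzero scalar, $\sum_{S}\dlog x_S\wedge\bigwedge_{j\in S}\big(\dlog y_j+\sum_a C_{aj}\dlog v_a\big)\wedge\bigwedge_{i\in I}\dlog y_i\wedge\bigwedge_{j\in J}\dlog v_j$, the sum over $r$-subsets $S$ of $[n]\setminus I$ (nonempty because $r+\#I\le n$); one then verifies directly that these leading parts, indexed by the admissible triples $(r,I,J)$, are linearly independent.

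The step I expect to be the main obstacle is this last linear-independence check: the bookkeeping of leading terms is more delicate than in Lemma~\ref{lem:standardIndep} because the forms $\dlog v_a$ now appear both inside $\bgamma$ (through the matrix $C$) and as the free wedge factors $\bigwedge_{j\in J}\dlog v_j$. The same idea nonetheless goes through; alternatively, the whole argument can be organized via the isomorphism $\cA\cong\cA\big(\left(\begin{smallmatrix}B\\\mathrm{Id}\end{smallmatrix}\right)\big)\times(\CC^\ast)^\ell$ coming from Proposition~\ref{prop:coveringmap} applied to the unipotent matrix $R=\left(\begin{smallmatrix}\mathrm{Id}_n&0&0\\0&\mathrm{Id}_n&0\\0&-C&\mathrm{Id}_\ell\end{smallmatrix}\right)$, the K\"unneth theorem, and Proposition~\ref{prop:standardbasis}, after checking that a GSV form of $\cA$ pulls back to a GSV form of the product differing from the one of the $\left(\begin{smallmatrix}B\\\mathrm{Id}\end{smallmatrix}\right)$-factor only by a $2$-form in the frozen $\dlog$'s, so that the change of basis to the list of Proposition~\ref{prop:standardbasis} tensored with $\bigwedge_{j\in J}\dlog v_j$ is triangular.
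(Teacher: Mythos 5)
Your argument follows essentially the same route as the paper's: the paper's proof of this corollary is exactly ``linear independence by (the method of) Lemma~\ref{lem:standardIndep}, plus the observation that the count of the listed forms matches the Poincar\'e series of Proposition~\ref{prop:An}'' (with Proposition~\ref{prop:pathReallyFullRank} supplying $H^{\ast}(\cA)=H^{\ast}(\cA)_{st}$, so that the Poincar\'e series is that of the full cohomology). Your dimension count and your use of Proposition~\ref{prop:kernel} and Lemma~\ref{lem:dlogzero} to reduce linear independence to a computation in the exterior algebra of log forms on $T$ are both fine. Two caveats. First, your assertion that the part of $\gamma^r\wedge\bigwedge_{i\in I}\dlog y_i\wedge\bigwedge_{j\in J}\dlog v_j$ with the fewest factors of $\dlog x_s$ ``comes only from $\bgamma^r$'' is not literally true: writing $\gamma=\bgamma+\varepsilon$ with $\varepsilon$ purely in the frozen $\dlog$'s, the expansion of $\gamma^r$ contains $\varepsilon^r$, which has \emph{no} factors of $\dlog x_s$, so the minimal-$\dlog x$ part is not governed by $\bgamma^r$ alone. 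The fix is the one Lemma~\ref{lem:standardIndep} uses: first replace $\gamma$ by $\bgamma$ (``it suffices to prove the statement with $\bgamma$ instead of $\gamma$''), and only then run the leading-term argument; after that substitution your bookkeeping, including the appearance of the $\dlog v_a$ inside $\bgamma$ through $C$, does go through by a secondary filtration on the number of $\eta$'s. Second, your proposed alternative organization via the product decomposition and Proposition~\ref{prop:standardbasis} is circular in the paper's logical order: the proof of Proposition~\ref{prop:standardbasis} (specifically Lemma~\ref{lem:dIJ}) invokes Corollary~\ref{cor:pathbasis}, so you cannot use it here.
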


\begin{proof}
By Proposition~\ref{lem:standardIndep}, the given forms are linearly independent. The computation in the proof that  Proposition~\ref{prop:standardbasis} implies Theorem~\ref{thm:standard}, shows that the number of these forms matches the Betti numbers we computed in Proposition~\ref{prop:An}.
\end{proof}

\subsection{Proof of Proposition \ref{prop:standardbasis}} \label{sec:proofstandardbasis}
We once again adopt the following notational shorthands: we write $\theta_i$ for $\dlog x_i$ and $\eta_i$ for $\dlog y_i$. For $I \subseteq [n]$, we write $\theta_I$ and $\eta_I$ for $\bigwedge_{i \in I} \theta_i$ and $\bigwedge_{i \in I} \eta_i$ respectively.  Let $\bgamma = \sum_{i,j=1}^n B_{ij} \theta_i \wedge \theta_j + \sum_i 2 \theta_i \wedge \eta_i$.  It suffices to prove the claim with the GSV form $\gamma$ replaced by the reduced GSV form $\bgamma$.

We showed in Lemma~\ref{lem:standardIndep} that the proposed basis is linearly independent, so we must now show it spans.  Recall that we defined the leading part of a standard differential form in the proof of Lemma \ref{lem:standardIndep}.  Let $\omega = \sum c_{IJ} \theta_I \eta_J$ be a standard differential form on $\cA$. We will show that $\omega$ can be written as a linear combination of the forms $\bgamma^r \wedge \eta_I$, with $r+\# I \leq n$. We may assume that $\omega$ is homogeneous, say of degree $d$.
% We note for future reference that we will only use that $\omega$ is a standard form on the initial torus which extends to the neighbors of the initial torus.

Let the leading part of $\omega$ (in the sense of Proposition~\ref{ssec:generators}) be $\omega_0$; suppose that $\omega_0$ has degree $r$ in the mutable terms and degree $d-r$ in the frozen terms. 

\begin{lemma}
If $\theta_I \wedge \eta_J$ occurs with nonzero coefficient in $\omega_0$, then $I \subseteq J$.
\end{lemma}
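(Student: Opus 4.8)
The plan is to exploit the residue (extension) conditions from Proposition~\ref{prop:residues} applied to the standard form $\omega$ at each mutable index $r \in [n]$, combined with the structure of the leading part. Recall that $\omega$ is standard on $\cA$, hence on the initial torus $T$, and extends to every neighboring torus $T_r$. So for each $r \in [n]$, writing $\omega = \omega^{(r)}_1 + \omega^{(r)}_2 \wedge \theta_r$ with $\theta_r$ absent from $\omega^{(r)}_1, \omega^{(r)}_2$, equation~\eqref{eq:residue} gives
\[
\omega^{(r)}_2 \wedge \left( \sum_{s=1}^{n+m} \tB_{sr}\, \frac{dx_s}{x_s} \right) = 0.
\]
Since $A$ has principal coefficients, the frozen variables are $y_1,\ldots,y_n$ with $\tB_{n+i,j} = \delta_{ij}$, so this $1$-form is $\sum_{s=1}^n B_{sr}\theta_s + \eta_r$. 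In particular $\eta_r$ appears, with coefficient $1$.

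First I would pass to leading parts. The operation $\omega \mapsto \omega^{(r)}_2$ (extracting the $\theta_r$-component) interacts with the leading-part filtration: the leading part of $\omega^{(r)}_2$ is obtained from $\omega_0$ by extracting its $\theta_r$-component, provided that component is nonzero, since contracting against $\theta_r$ decreases the mutable degree by exactly one for all monomials uniformly. Thus, taking leading parts in~\eqref{eq:residue} (using that wedging with the fixed $1$-form $\sum_s B_{sr}\theta_s + \eta_r$ does not decrease the number of frozen factors, and strictly the $\eta_r$-term raises it) yields that the leading part of $\omega^{(r)}_2$, wedged with $\eta_r$, must vanish — because the $\eta_r$-contribution is the unique one that attains minimal mutable degree and maximal frozen degree simultaneously, so it cannot be cancelled. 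Concretely, writing $\omega_0 = \sum_{I,J} c_{IJ}\theta_I \wedge \eta_J$ with $(\#I, \#J) = (r, d-r)$ fixed, the coefficient of $\theta_r$ in $\omega_0$ is $\sum_{r \in I} \pm c_{IJ}\, \theta_{I \setminus r}\wedge\eta_J$, and the vanishing of its wedge with $\eta_r$ forces: for every monomial $\theta_I \wedge \eta_J$ in $\omega_0$ with $r \in I$, we have $r \in J$.

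Now I would assemble these index-by-index conclusions. Fix a monomial $\theta_I \wedge \eta_J$ occurring in $\omega_0$ with $c_{IJ} \neq 0$. For each $r \in I$, the previous paragraph (applied to that $r$) gives $r \in J$. Since $I \subseteq [n]$ (only mutable $\theta$'s can have index in $[n]$; the frozen $\dlog y_i$ are the $\eta_i$, handled separately), this gives $I \subseteq J$, which is exactly the claim.

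The main obstacle I anticipate is the bookkeeping in the leading-part argument: verifying rigorously that no cancellation can occur among the terms of $\omega^{(r)}_2 \wedge (\sum_s B_{sr}\theta_s + \eta_r)$ at the extremal $(\text{mutable},\text{frozen})$-bidegree. This requires carefully checking that the terms coming from wedging $\omega^{(r)}_{2,0}$ with $\eta_r$ land in a bidegree strictly separated (in the frozen direction) from those coming from wedging with $\sum_s B_{sr}\theta_s$, and that within the $\eta_r$-wedge the surviving monomials are genuinely the leading ones of $\omega^{(r)}_2$. Once that separation is in place, linear independence of standard monomials does the rest. A minor additional point to dispatch is the case $r \in I$ but the coefficient of $\theta_r$ in $\omega$ fails to have $\omega_0$'s $\theta_r$-part as its leading part — but this cannot happen, since extraction of a fixed $\theta_r$ is degree-homogeneous of bidegree $(-1,0)$ on all standard monomials.
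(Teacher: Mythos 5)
Your proof is correct and follows essentially the same route as the paper: apply the extension criterion of Proposition~\ref{prop:residues} at an index $r \in I$, use principal coefficients to see the $1$-form is $\sum_s B_{sr}\theta_s + \eta_r$, and observe that the $\eta_r$-contribution at minimal mutable degree cannot be cancelled because any cancelling monomial would have strictly smaller mutable degree than the leading part, a contradiction. Your bidegree-separation packaging is just a mild reorganization of the paper's argument, which instead names the specific would-be cancelling monomial $\theta_{I\setminus j}\wedge\eta_{J\cup i}$ and notes it cannot occur in $\omega$.
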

\begin{proof}
Suppose otherwise, with $i \in I \setminus J$. Note that no term of the form  $\theta_{I \setminus j} \wedge \eta_{J \cup i}$ can occur in $\omega$ as, if it did, it would lie in the leading part rather than $\theta_I \wedge \eta_J$. Without such terms, $\omega$ cannot satisfy \eqref{eq:residue} of Proposition \ref{prop:residues} for $r = i$.  More specifically, writing $\omega = \omega_1 + \omega_2 \wedge \theta_i$, we see that the term $\theta_I \wedge \eta_J$ in $\omega$ contributes a term $\omega_{I \setminus i} \eta_{J \cup i}$ to the form $\alpha = \omega_2 \wedge \sum_{s=1}^{2n} \tB_{si} \dlog x_s$. %(where $x_{n+1},\ldots,x_{2n}$ are equal to $u_1,\ldots,u_n$ in the present notation).  
But the only other way $\theta_{I \setminus i} \eta_{J \cup i}$ can occur in $\alpha$ is if $\omega$ contained the term $\theta_{I \setminus j} \wedge \eta_{J \cup i}$ for some $j \neq i$.
\end{proof}

So we can write $\omega_0$ as
\[ \omega_0 = \sum_{I \cap J = \emptyset} d_{IJ} \  \theta_I \wedge \eta_{I \cup J} \]
for some coefficients $d_{IJ}$. We will need to be careful about sign conventions for the wedge products: We assume that the $\dlog x_i$ for $i \in I$ are ordered in the same way within $\theta_I$ and $\eta_I$ and that the $\dlog x_j$ for $j \in J$ are ordered within $\eta_{I \cup J}$ after the $\dlog x_i$ for $i \in I$.

\begin{lemma}\label{lem:dIJ}
With the above notation, we have $d_{IJ} = d_{I' J}$, for any $I$ and $I'$ disjoint from $J$.
\end{lemma}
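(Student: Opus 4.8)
\textbf{Proof plan for Lemma~\ref{lem:dIJ}.} The statement asserts that, in the leading part $\omega_0 = \sum_{I \cap J = \emptyset} d_{IJ}\, \theta_I \wedge \eta_{I \cup J}$ of a standard form $\omega$, the coefficient $d_{IJ}$ depends only on $J$ and not on the ``mutable shadow'' $I$. The plan is to extract this from the residue condition \eqref{eq:residue} of Proposition~\ref{prop:residues}, exactly as in the previous lemma, but now tracking which terms of $\alpha = \omega_2 \wedge \sum_{s=1}^{2n} \tB_{si}\,\dlog x_s$ survive in the leading degree rather than merely whether they vanish. Fix two sets $I$ and $I'$, both disjoint from $J$, and assume first that they differ by a single element: $I' = (I \setminus \{a\}) \cup \{b\}$ with $a \in I$, $b \notin I$, $b \notin J$ (the general case follows by iterating, since any two subsets of $[n]\setminus J$ of the same size are connected by a chain of such single swaps, and the sizes must agree because $\theta_I\wedge\eta_{I\cup J}$ and $\theta_{I'}\wedge\eta_{I'\cup J}$ both have mutable degree $r$).

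The key computation is to apply Proposition~\ref{prop:residues} with $r = a$. Write $\omega = \omega_1 + \omega_2 \wedge \theta_a$, so that $\omega_2$ collects the coefficient of $\theta_a$; in leading degree, $\omega_2$ contains the term $d_{IJ}\,\theta_{I\setminus a}\wedge\eta_{I\cup J}$ (with a sign from reordering $\theta_a$ to the front) coming from $\theta_I\wedge\eta_{I\cup J}$, and it contains $\pm d_{I'J}\,\theta_{I'}\wedge\eta_{I'\cup J}$ divided by\ldots\ no: the term $\theta_{I'}\wedge\eta_{I'\cup J}$ does not contain $\theta_a$, so it does not contribute to $\omega_2$ via $\theta_a$; instead, the relevant contribution is that the wedge $\alpha = \omega_2 \wedge \sum_s \tB_{sa}\,\dlog x_s$ must vanish, and in it the monomial $\theta_{(I\setminus a)\cup\{b\}\setminus\{b\}}$\ldots\ I should be more careful. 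The point is: in $\alpha$, the monomial $\theta_{I\setminus a}\wedge\theta_b\wedge\eta_{I\cup J} = \pm\,\theta_{I'\setminus b}\wedge\theta_b\wedge\eta_{I\cup J}$ arises in exactly two ways in leading degree --- from the term $d_{IJ}\,\theta_{I\setminus a}\wedge\eta_{I\cup J}$ in $\omega_2$ wedged with the $\tB_{ba}\,\dlog x_b$ piece, and from the term in $\omega_2$ coming from $\theta_{I'}\wedge\eta_{I'\cup J}$ (which has $\theta_a$?\ no). The honest route: $\theta_{I'}\wedge\eta_{I'\cup J}$ \emph{does} contain $\theta_a$ only if $a\in I'$, which it is not. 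So one instead uses the residue equation at $r = b$: there $\omega_2'$ (coefficient of $\theta_b$) contains $\pm d_{I'J}\,\theta_{I'\setminus b}\wedge\eta_{I'\cup J} = \pm d_{I'J}\,\theta_{I\setminus a}\wedge\eta_{I'\cup J}$, and $\eta_{I'\cup J} = \eta_{(I\cup J)\setminus\{a\}\cup\{b\}}$; wedging with the $\tB_{aa}=0$\ldots\ This needs the $\dlog x_a$ term. So the correct symmetric statement: apply \eqref{eq:residue} with $r=a$ to control how $d_{IJ}$ relates to the coefficient of $\theta_{I\setminus a}\wedge\eta_{(I\cup J)\cup\{b\}}$ via the $\tB_{ba}\,\dlog x_b$ summand, and apply it with $r=b$ to control how $d_{I'J}$ relates to the \emph{same} leading monomial via the $\tB_{ab}\,\dlog x_a$ summand; since $\tB_{ab} = -\tB_{ba}$, matching coefficients of this monomial forces $d_{IJ} = d_{I'J}$ (the remaining structure constants $\tB_{ba}$ cancel, using that these are the edges of $\Gamma$ and one can reduce, as in the rest of the section, to principal coefficients where $\tB_{n+i,i}=1$).

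The main obstacle I expect is bookkeeping the signs and making sure no \emph{other} leading monomials in $\omega$ feed into the monomial $\theta_{I\setminus a}\wedge\theta_b\wedge\eta_{I\cup J}$ inside $\alpha$ --- in principle a term $\theta_{I\setminus a}\wedge\eta_{(I\cup J)\setminus\{c\}\cup\{b\}}$ for some $c$ could also contribute. One must check that any such competing term either is not leading (wrong mutable degree), or is forbidden by the previous lemma (which says leading monomials have $I\subseteq J$, i.e.\ mutable indices are a subset of frozen ones), or has $b$ already among its frozen indices so that $\dlog x_b\wedge\eta = 0$. Granting the previous lemma's normal form $\omega_0 = \sum d_{IJ}\,\theta_I\wedge\eta_{I\cup J}$, this case analysis is finite and each case is immediate. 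Once the single-swap identity $d_{IJ}=d_{I'J}$ is established, transitivity of equality along a chain of swaps (with the intermediate sets still disjoint from $J$, which is automatic since all swaps take place inside $[n]\setminus J$) gives the full statement. I would present the single-swap computation in a short display, suppress the routine sign chase, and invoke the previous lemma for the normal form.
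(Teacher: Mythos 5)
Your overall strategy --- iterate single swaps $I \mapsto (I\setminus\{a\})\cup\{b\}$ and extract the identity $d_{IJ}=d_{I'J}$ from the residue condition \eqref{eq:residue} at $r=a$ and $r=b$ --- has two genuine gaps. First, the residue equation at $r=a$ only couples $d_{IJ}$ to $d_{I'J}$ through the summand $\tB_{ba}\dlog x_b$, so it gives no information unless $\tB_{ba}\neq 0$, i.e.\ unless $a$ and $b$ are adjacent in $\Gamma$. Your reduction therefore needs $I$ and $I'$ to be joined by a chain of single swaps \emph{along edges of $\Gamma$} with every intermediate set still disjoint from $J$; this requires paths in $\Gamma$ that avoid $J$, and $\Gamma\setminus J$ need not be connected (e.g.\ $\Gamma$ the path $1\text{--}2\text{--}3$ with $J=\{2\}$, $I=\{1\}$, $I'=\{3\}$: the only route from $1$ to $3$ passes through the forbidden vertex $2$, and $\tB_{31}=0$, so your equations say nothing). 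Only connectivity of $\Gamma$ itself is available. Second, your dismissal of competing contributions is not sound: the residue identity holds for all of $\omega$, not just its leading part, and in principal coefficients $\lambda_a=\sum_s B_{sa}\theta_s+\eta_a$ has a frozen summand $\eta_a$ through which terms of $\omega$ of mutable degree $r+1$ (i.e.\ \emph{non}-leading terms, with uncontrolled coefficients) land in exactly the same bidegree component of $\omega_2\wedge\lambda_a$ as the leading contributions. Concretely, the coefficient of $\theta_{I'}\wedge\eta_{I\cup J}$ in that wedge is $\pm d_{IJ}\tB_{ba}\pm c$, where $c$ is the coefficient of the non-leading term $\theta_{I\cup\{b\}}\wedge\eta_{(I\cup J)\setminus\{a\}}$ in $\omega$; the equation at $r=b$ produces $\pm d_{I'J}\tB_{ab}\pm c$ with the \emph{same} $c$, and it is only this coincidence (plus skew-symmetry) that lets the unknown $c$ cancel. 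You gesture at this but misidentify the auxiliary monomial (you write $\theta_{I\setminus a}\wedge\eta_{(I\cup J)\cup\{b\}}$, which has the wrong bidegree and does not arise from $\lambda_a$), and you do not establish the cancellation; and in any case it only works for adjacent $a,b$, returning you to the first gap.

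For comparison, the paper's proof does not stay at the level of the residue equations. It inducts on $\#(I\setminus I')$, chooses $i_1\in I\setminus I'$ and $i'_1\in I'\setminus I$ at minimal distance in $\Gamma$, freezes everything off a minimal path $\Gamma'$ joining them, and invokes the explicit basis for path-type cluster varieties (Corollary~\ref{cor:pathbasis}, itself proved by Mayer--Vietoris) to write $\omega|_{\cA'}=\beta_1+\bgamma'\wedge\beta_2+(\bgamma')^2\wedge\delta$. Both leading monomials $\theta_I\wedge\eta_{I\cup J}$ and $\theta_{I'}\wedge\eta_{I'\cup J}$ are then seen to arise from one and the same term $\bgamma'\wedge\theta_{I\setminus i_1}\wedge\eta_{(I\cup J)\setminus i_1}$, forcing equal coefficients; interior path vertices lying in $J$ cause no trouble, and those lying in $I\cap I'$ are handled by iterating the freezing along subpaths. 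That structural input (the path basis) is exactly what replaces the unbounded cascade of residue relations your approach would need when $a$ and $b$ are far apart in $\Gamma$.
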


\begin{proof}
Our proof is by induction on $\# (I \setminus I')$. The base case, $\# (I \setminus I')=0$, means that $I=I'$ so the statement is trivial.

Let $I \cap I' = \{ j_1, j_2, \ldots, j_s \}$. Let $I \setminus I' = \{ i_1, i_2, \ldots, i_{r-s} \}$ and $I' \setminus I = \{ i'_1, i'_2, \ldots, i'_{r-s} \}$.

For two indices $k$ and $\ell$ in $[n]$, let $\delta(k, \ell)$ be the length of the shortest path through $\Gamma$ from $k$ to $\ell$. (Here is where we are using that $\Gamma$ is connected.) 
Reorder $I$ and $I'$ such that $\delta(i_1, i'_1)$ is minimal among all pairs $\delta(i_p, i'_q)$.
Let $\Gamma' \subseteq \Gamma$ be a minimal length path from $i_1$ to $i'_1$. 
In particular, none of the interior vertices of $\Gamma'$ are in $I \setminus I'$ or $I' \setminus I$. Also, there are no edges connecting two vertices in $\Gamma'$ other than the edges in the path $\Gamma'$.

We first present our argument in the case that $\Gamma'$ does not contain any of the elements of $I \cap I'$.
Consider the cluster variety $\cA'$ obtained from freezing all of the variables not on the path $\Gamma'$, with extended exchange matrix $\tB'$ obtained from taking the columns of $\tB$ indexed by $\Gamma'$.
 %\fixit{reference to properties of freezing}.  %Not sure what you want here. 
%For clarity, we shall now write $c_i := a_i$ and $d_i:=b_i$ if $x_i$ is frozen in $X'$.  

By Corollary~\ref{cor:pathbasis} applied to $\cA'$, we may write
\[
\omega|_{\cA'} = \beta_1  + \bgamma' \wedge \beta_2 + (\bgamma')^2 \wedge \delta
\]
where $\beta_1$ and $\beta_2$ do not involve $\theta_i$ for $i \in \Gamma'$, and in addition $\beta_2$ has no term using all the $\{\eta_i \mid i \in \Gamma'\}$.  Note that the terms $\theta_I \wedge \eta_{I \cup J}$ and $\theta_{I'} \wedge \eta_{I' \cup J}$ of interest to us can only occur in $\bgamma' \wedge \beta_2$ since they use exactly one of the $\{\theta_i \mid i \in \Gamma'\}$.  Consider a monomial $\theta_K \wedge  \eta_{K \cup L}$ using exactly one of the $\{\theta_i \mid i \in \Gamma'\}$.  Then $\theta_K \wedge  \eta_{K \cup L}$ belongs to the leading part of $\bgamma' \theta_{K \setminus i} \wedge \eta_{K \cup L \setminus i}$, and  $\theta_K \wedge  \eta_{K \cup L}$ belongs to the leading part of only this term in $\bgamma' \wedge \beta_2$.  Furthermore, all monomials occurring in the leading part of $\bgamma' \wedge \beta_2$ use exactly one of the $\{\theta_i \mid i \in \Gamma'\}$.  In particular, the leading parts of $\bgamma' \wedge \beta_2$ are terms in $\omega_0$, and cannot be canceled by terms from $\beta_1$ or $(\bgamma')^2 \wedge \delta$.

Since our term $\theta_I\eta_{I \cup J}$ belongs to $\omega_0$, we see that $\beta_2$ cannot contain a term of the form $\theta_{I \setminus \{i_1,j\}} \wedge \eta_{I \cup J}$, for otherwise $\omega_0$ would contain $\theta_K \wedge \eta_{K \cup L}$ where $|K| < r$.  Thus the term $\theta_I  \wedge \eta_{I \cup J}$ can only come from a term $\theta_{I\setminus i_1} \wedge  \eta_{I \cup J \setminus \{ i_1 \} }$ in $\bgamma \wedge \beta_2$.  But $\theta_{I'} \wedge \eta_{I' \cup J}$ occurs in $\bgamma \wedge \theta_{I\setminus\{ i_1 \} } \wedge  \eta_{I \cup J \setminus\{ i_1 \}}$ with the same coefficient, so $d_{IJ} = d_{I'J}$.

Now we consider the case that some of the elements of $I \cap I'$ lie on the path $\Gamma'$. Re-indexing as necessary, let's say that $\Gamma'$ starts at $i_1$ and passes through $\ell_1$, $\ell_2$, \ldots, $\ell_t$ before ending at $i'_1$.  Repeatedly use the freezing argument of the preceding argument, first on the subpath of $\Gamma'$ from $\ell_t$ to $i'_1$, then on the subpath from $\ell_{t-1}$ to $\ell_t$, and so forth, with the final application being on the subpath from $i_1$ to $\ell_1$. The effect is to show that, without changing the coefficient, we can change $I$ to $I  \cup \{ i'_1 \} \setminus \{ \ell_t \}$, to $I \cup \{ i'_1 \} \setminus \{ \ell_{t-1} \} $ and so forth, until we finally reach $I \cup \{ i'_1 \} \setminus \{ i_1 \}$. This has one more element in common with $I'$, so now we are done by induction as above.
\end{proof}

By Lemma \ref{lem:dIJ}, we can express $\omega_0$ in the form
\[ \omega_0 = \sum_{J} d_J \left( \sum_{I \in \binom{[n] \setminus J}{r}} \theta_I \wedge \eta_I\right) \wedge \eta_J \]
for some constants $d_J$. Note that $r + \#J \leq n$, since $r = \#I$ and $I \subset [n] \setminus J$.

Take $\omega' = ((-1)^{r-1}/2^r)\sum_J d_J \bgamma^r \eta_J$. Then $\omega'$ has the same leading term as $\omega$ does. So $\omega - \omega'$ has a leading term which has lower degree in the mutable variables than $\omega$ does. By induction, $\omega - \omega'$ is a linear combinations of terms of the form $\bgamma^k \eta_J$ with $k+\#J \leq n$. Since $\omega'$ is defined to be a linear combination of such terms as well, we see that $\omega$ is a linear combination of terms of the form $\bgamma^k \eta_J$ with $k + \#J \leq n$.  This completes the proof of Proposition~\ref{prop:standardbasis}. \hfill \qedsymbol

\section{Point counts for cluster varieties over finite fields}
\label{sec:pointcounts}

%In this section, we write $H^\ast_{et}$ to distinguish etale cohomology groups from the singular cohomology groups in the rest of the text.
\subsection{Cluster varieties in finite characteristic}
Cluster varieties are naturally defined over $\ZZ$.  In this section, we consider cluster varieties in finite characteristic.  We let $\oF_p$ denote the algebraic closure of the finite field $\FF_p$, and let $\cA_{\oF_p}$ denote the base change of $\cA_\ZZ$ to $\oF_p$.  Write $\cA(\FF_q)$ for the set of $\FF_q$-points of $\cA$.  We refer the reader to \cite{BMRS} for more on cluster varieties in finite characteristic.  Note that in \cite{Mul}, local acyclicity is defined for cluster algebras over $\ZZ$. 

\begin{thm}\label{thm:finitesmooth}
Suppose that $\cA_\ZZ$ is locally acyclic and of really full rank.  Then $\cA_{\oF_p}$ is smooth for all primes $p$.
\end{thm}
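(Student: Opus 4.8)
The plan is to reduce to the acyclic case and there invoke the Jacobian criterion, the one genuinely new ingredient being the observation that \emph{really} full rank over $\ZZ$ forces $\tB$ to have rank $n$ modulo every prime. First I would reduce to acyclic charts. Since local acyclicity is set up over $\ZZ$ in \cite{Mul}, the scheme $\cA_\ZZ$ is covered by finitely many open subvarieties $\cA(\tB_S)$ with $\tB_S$ acyclic; an open cover stays an open cover after base change, so $\cA_{\overline{\FF_p}}$ is covered by the $\cA(\tB_S)_{\overline{\FF_p}}$, and since smoothness is local it suffices to treat one chart. Here one first checks that freezing preserves really full rank: if $\tB^T\ZZ^{n+m}=\ZZ^n$, then applying the coordinate projection $\ZZ^n\to\ZZ^S$ shows that the rows of $\tB_S$ span $\ZZ^{|S|}$, so $\tB_S$ is again really full rank (compare Remark~\ref{rem:fullrank} for ordinary full rank).

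The second step is the elementary key point. The condition $\tB^T\ZZ^{n+m}=\ZZ^n$ says precisely that the gcd of the $n\times n$ minors of $\tB$ equals $1$; hence for every prime $p$ at least one maximal minor is a unit modulo $p$, so $\tB$ has rank $n$ over $\FF_p$ and therefore also over $\overline{\FF_p}$. This is exactly what rules out the pathological examples: the variety $xx'=y^b+1$ with $y\neq 0$ is singular over $\overline{\FF_p}$ whenever $p\mid b$, and indeed $\tB=\left(\begin{smallmatrix}0\\b\end{smallmatrix}\right)$ is really full rank over $\ZZ$ only for $b=\pm 1$.

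The third step is the Jacobian computation for an acyclic seed, following Muller's proof of \cite[Theorem~7.7]{Mul}, which rests on the presentation of an acyclic cluster algebra by its lower bound (Berenstein--Fomin--Zelevinsky, \cite{CA3}). For an acyclic seed $\cA$ is the closed subscheme of $\AA^{2n}\times\GG_m^m$, with coordinates $x_1,\dots,x_n,x_1',\dots,x_n',y_1,\dots,y_m$, cut out by the $n$ exchange relations $F_k=x_kx_k'-\prod_i x_i^{[\tB_{ik}]_+}-\prod_i x_i^{[-\tB_{ik}]_+}$; in particular it is a complete intersection of the expected dimension $n+m$. Since $\partial F_k/\partial x_j'=x_k\,\delta_{jk}$, the Jacobian has rank $n$ wherever all $x_k\neq 0$. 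At a point $P$ with some $x_k(P)=0$, the relation $F_k=0$ forces the two exchange monomials to cancel at $P$, and the remaining partial derivatives of $F_k$ at $P$ are controlled, up to nonzero scalars, by the $k$-th column of $\tB$ taken modulo $p$ together with the value $x_k'(P)$; the full rank of $\tB$ over $\overline{\FF_p}$ established in the second step is precisely what keeps the Jacobian of rank $n$ at $P$. Muller's argument is characteristic-free once $\tB$ has full rank over the ground field, so it applies here verbatim.

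I expect the last step to be the main obstacle: the combinatorics of the Jacobian at the deep strata, where many cluster variables vanish simultaneously, requires care in identifying which partial derivatives survive and in checking that the surviving rows of the Jacobian remain linearly independent modulo $p$; this is exactly the place where reduced full rank (as opposed to rank $n$ in characteristic $0$) is used. Everything else — the reduction to charts, the base-change bookkeeping, the gcd-of-minors remark — is formal.
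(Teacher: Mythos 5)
Your proposal is correct and follows essentially the same route as the paper, whose entire proof is a two-sentence citation: Muller's Jacobian computation in \cite[Section 7]{Mul} establishes regularity in characteristic zero, and the really full rank hypothesis is exactly what guarantees those Jacobians keep the expected rank over $\overline{\FF_p}$. Your elaborations (really full rank is preserved by freezing, and is equivalent to the gcd of the maximal minors of $\tB$ being $1$, hence to $\tB$ having rank $n$ modulo every prime) are the correct way to make the paper's second sentence precise, and the residual work you flag at the deep strata is carried entirely by Muller's characteristic-free argument, just as the paper intends.
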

\begin{proof}
In \cite[Section 7]{Mul}, Muller shows that $\cA_\QQ$ is regular by computing Jacobian matrices of the natural presentations of isolated cluster varieties.  The really full rank condition ensures that these Jacobian matrices also have the expected rank over a field of finite characteristic.
\end{proof}
\subsection{Dirichlet characters}
A \defn{Dirichlet character} of modulus $n$ is a function $\chi: \ZZ \to \CC^\ast$ satisfying:
\begin{enumerate}
\item for all $a \in \ZZ$, we have $\chi(a+n) = \chi(a)$,
\item if $\gcd(a,n) > 1$, then $\chi(a) = 0$, 
\item if $\gcd(a,n) = 1$, then $\chi(a) \neq 0$, and
\item for all $a,b, \in \ZZ$, we have $\chi(ab) = \chi(a)\chi(b)$.
\end{enumerate}
%A Dirichlet character $\chi$ of modulus $n$ induces a Dirichlet character $\chi^*$ of modulus $kn$ for any $k >1$, determined by $\chi^*(a) = \chi(a)$ for $\gcd(a,kn)  = 1$.  
The product of two Dirichlet characters $\chi_1$ and $\chi_2$ of modulus $n_1$ and $n_2$ is a Dirichlet character of modulus $\lcm(n_1,n_2)$.  If $\chi$ is a Dirichlet character of modulus $n$, we write $\chi^{-1}$ for the Dirichlet character of modulus $n$, satisfying $\chi^{-1}(a) = 1/\chi(a)$ for $\gcd(a,n) = 1$.
% The \defn{principal} Dirichlet character $\chi_e$ with modulus $n$ takes the value $1$ at all $a \in \ZZ$ satisfying $\gcd(a,n) = 1$.

For any $n$, we write $\Dir(n)$ for the set of Dirichlet characters modulo $n$. Note that $\#\Dir(n) = \phi(n)$, where $\phi$ is the Euler totient function.
If $d$ divides $n$, we also consider the elements of $\Dir(d)$ as functions on $\ZZ/n\ZZ$ by the composition $\ZZ/n \ZZ \to \ZZ/d \ZZ \to \CC$. 
Let $\Dir^{\ast}(n)$ be the multiset $\bigsqcup_{d |n} \Dir(d)$. Note that $\# \Dir^{\ast}(n) = \sum_{d|n} \phi(d) = n$.

The following lemma is standard:
\begin{lem} \label{charSum}
For any positive integer $d$ and any integer $q$, we have
\[ \sum_{\chi \in \Dir(d)} \chi(q) = \begin{cases} \phi(d) & q \equiv 1 \bmod n \\ 0 & \mbox{otherwise} \end{cases} . \]
\end{lem}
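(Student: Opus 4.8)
The plan is to recognize the left-hand side as the character sum over the finite abelian group $G := (\ZZ/d\ZZ)^\times$ and apply the standard orthogonality relation. (Note the statement should read $q \equiv 1 \bmod d$, not $\bmod n$.)

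First I would dispose of the degenerate case. If $\gcd(q,d) > 1$, then property (2) in the definition of a Dirichlet character gives $\chi(q) = 0$ for every $\chi \in \Dir(d)$, so the sum is $0$; this agrees with the right-hand side because $q \equiv 1 \bmod d$ would force $\gcd(q,d) = \gcd(1,d) = 1$, which is excluded. So from now on assume $\gcd(q,d) = 1$ and write $\bar q$ for the image of $q$ in $G$.

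Next I would set up the identification of $\Dir(d)$ with the character group $\widehat{G} := \Hom(G, \CC^\ast)$. By properties (1) and (4), each $\chi \in \Dir(d)$ restricts to a group homomorphism $G \to \CC^\ast$; conversely, any homomorphism $G \to \CC^\ast$ extends (by $0$ on the non-units) to a Dirichlet character of modulus $d$, and properties (3) and (4) guarantee $\chi$ is nonzero exactly on the units. Hence $\chi \mapsto \chi|_G$ is a bijection $\Dir(d) \to \widehat{G}$, and $\sum_{\chi \in \Dir(d)} \chi(q) = \sum_{\psi \in \widehat{G}} \psi(\bar q)$.

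Finally I would invoke (and, for completeness, recall the one-line proof of) the orthogonality relation: for a finite abelian group $G$ and $g \in G$, one has $\sum_{\psi \in \widehat{G}} \psi(g) = |G|$ if $g = e$ and $0$ otherwise. Indeed, if $g \neq e$ then, since the characters of a finite abelian group separate points, there is some $\rho \in \widehat{G}$ with $\rho(g) \neq 1$; multiplication by $\rho$ permutes $\widehat{G}$, so writing $S$ for the sum we get $\rho(g)\,S = S$, whence $S = 0$. Applying this with $|G| = \phi(d)$ and noting $\bar q = e$ in $G$ precisely when $q \equiv 1 \bmod d$ completes the proof. There is no real obstacle here — the lemma is entirely standard — and the only point requiring a moment's care is the bookkeeping in the $\gcd(q,d)>1$ case and the $\bmod d$ versus $\bmod n$ discrepancy in the statement.
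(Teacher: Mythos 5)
Your proof is correct: the paper offers no proof of this lemma (it is simply cited as standard), and your argument — identifying $\Dir(d)$ with the character group of $(\ZZ/d\ZZ)^\times$ and applying orthogonality of characters, with the $\gcd(q,d)>1$ case handled by property (2) — is exactly the intended standard argument. You are also right that the statement contains a typo: the condition should read $q \equiv 1 \bmod d$, not $\bmod n$.
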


We will want the variant:
\begin{lem} \label{charSum*}
For any positive integer $n$ and any integer $q$, we have
\[ \sum_{\chi \in \Dir^*(n)} \chi(q) = \gcd(q-1,n) . \]
\end{lem}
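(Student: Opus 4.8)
The plan is to derive Lemma~\ref{charSum*} from Lemma~\ref{charSum} by grouping the multiset $\Dir^*(n) = \bigsqcup_{d \mid n}\Dir(d)$ according to its blocks. Writing the left-hand side as an iterated sum,
\[
\sum_{\chi \in \Dir^*(n)} \chi(q) = \sum_{d \mid n} \sum_{\chi \in \Dir(d)} \chi(q),
\]
I would apply Lemma~\ref{charSum} to each inner sum. That lemma states $\sum_{\chi \in \Dir(d)} \chi(q)$ equals $\phi(d)$ when $q \equiv 1 \bmod d$ and $0$ otherwise. So the only divisors $d \mid n$ that contribute are those with $d \mid q-1$, i.e. $d \mid \gcd(q-1,n)$, and each such $d$ contributes $\phi(d)$.

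Hence
\[
\sum_{\chi \in \Dir^*(n)} \chi(q) = \sum_{\substack{d \mid n \\ d \mid q-1}} \phi(d) = \sum_{d \mid \gcd(q-1,n)} \phi(d).
\]
The final step is the classical identity $\sum_{d \mid N} \phi(d) = N$ (Gauss's theorem on the Euler totient), applied with $N = \gcd(q-1,n)$, which gives $\gcd(q-1,n)$ as claimed. One should note the harmless edge case: if $\gcd(a,d) > 1$ for the residue of $q$ then $\chi(q) = 0$ for all $\chi \in \Dir(d)$ anyway, but this situation simply cannot occur for the contributing $d$, since $d \mid q - 1$ forces $\gcd(q,d) = 1$; so Lemma~\ref{charSum} applies cleanly.

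There is essentially no obstacle here — the argument is a two-line interchange of summation plus two standard facts (Lemma~\ref{charSum} and the $\sum_{d\mid N}\phi(d) = N$ identity). The only point requiring a word of care is making sure the bookkeeping over the \emph{multiset} $\Dir^*(n)$ is interpreted correctly, namely that each block $\Dir(d)$ is summed separately and then the blocks are summed over $d \mid n$; since the blocks for distinct $d$ are genuinely distinct copies (a character mod $d$ is regarded as a function on $\ZZ/n\ZZ$ via reduction), there is no double-counting issue to resolve beyond this observation.
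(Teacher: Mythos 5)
Your proof is correct and is essentially identical to the paper's: both split the multiset $\Dir^*(n)$ into the blocks $\Dir(d)$ for $d \mid n$, apply Lemma~\ref{charSum} to each block, and conclude with $\sum_{d \mid \gcd(q-1,n)} \phi(d) = \gcd(q-1,n)$. No differences worth noting.
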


\begin{proof}
The sum is  $\sum_{d|n} \sum_{\chi \in \Dir(d)} \chi(q) = \sum_{d|\gcd(n,q-1)} \phi(d) = \gcd(q-1,n)$.
\end{proof}

\subsection{Point counts and Frobenius eigenvalues}\label{ssec:eigenvalues}

The set $\{ f: \{ \mbox{prime powers} \} \to \CC\}$ of all complex valued functions on the set of prime powers has a ring structure obtained by pointwise addition and multiplication.  Let $\Lambda$ denote the subring generated over $\ZZ$ by the functions $q \mapsto q$ and the functions $q \mapsto \chi(q)$, for Dirichlet characters $\chi$.  Since the product of two Dirichlet characters is again a Dirichlet character, every element $f \in \Lambda$ has the form
\[
f(q) = \sum_r a_r \chi_r(q) q^{b_r}
\]
where $a_r \in \ZZ$ and $b_r \in \ZZ_{\geq 0}$ are integers.  The aim of this section is to prove the following two results.

\begin{theorem}\label{thm:eigenvalues}
Suppose that $\cA$ satisfies the Louise property and is of full rank.  Let $\ell$ be a prime.  Then there exists a list of nonnegative integers $s_1,s_2,\ldots,s_r$, and Dirichlet characters $\chi_1,\chi_2,\ldots,\chi_r$, such that for sufficiently large primes $p$, the eigenvalues of Frobenius on the $\ell$-adic cohomology $H^\ast(\cA_{\overline{\FF_{p}}},\QQ_\ell)$ are given by
\[
\chi_1(p)p^{s_1}, \ldots, \chi_r(p)p^{s_r}.
\]
If, in addition, $\cA$ is of really full rank, then for all primes $p \neq \ell$, the eigenvalues of Frobenius are all powers of $p$.
\end{theorem}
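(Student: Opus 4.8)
The plan is to run the Louise recursion inside \'etale cohomology and reduce to the rank-one computation of Section~\ref{sec:rankone}. Fix $\ell$, and let $N$ be a positive integer divisible by $\ell$, by the moduli of all Dirichlet characters that will appear below, and by every prime at which $\cA$, any cluster variety occurring in the Louise algorithm for $\cA$, or any finite cover used below fails to be smooth and \'etale over $\ZZ[1/N]$; "sufficiently large $p$" will mean $p\nmid N$. Working over $\overline{\FF_p}$, a separating edge $i\to j$ gives, by Proposition~\ref{prop:covering}, an affine open cover $\cA=\Spec A[x_i^{-1}]\cup\Spec A[x_j^{-1}]$ defined over $\ZZ$, whose pieces are $\cA(\tB_{\oG\setminus\{i\}})$, $\cA(\tB_{\oG\setminus\{j\}})$ and $\cA(\tB_{\oG\setminus\{i,j\}})$ --- all Louise of full rank by the definition of the Louise property and Remark~\ref{rem:fullrank}, each with one fewer mutable variable. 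The corresponding Mayer-Vietoris sequence in $\QQ_\ell$-cohomology is $\mathrm{Frob}_p$-equivariant, and its connecting map $H^{k-1}(U\cap V)\to H^{k}(\cA)$ carries no Tate twist, being the boundary of the distinguished triangle of restriction maps of constant sheaves (just as it preserves the Deligne splitting in Proposition~\ref{prop:splitMV}). Hence every Frobenius eigenvalue on $H^{\ast}(\cA)$ occurs on one of $H^{\ast}(U)$, $H^{\ast}(V)$, $H^{\ast}(U\cap V)$; and because all the data spreads out over $\ZZ[1/N]$, the ranks of the connecting maps equal their generic values for $p\nmid N$, so the full eigenvalue multiset of $H^{\ast}(\cA)$ is determined by those of $U$, $V$, $U\cap V$ uniformly in $p$. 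By induction on the number of mutable variables we are reduced to the base case in which $\oG(\tB)$ has no edges, i.e. $\cA$ is isolated.

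For isolated $\cA$ of full rank the top $n\times n$ block of $\tB$ vanishes, so $m\ge n$ by full rank, and Proposition~\ref{prop:coverprincipal} yields a finite cover $\bar\cA\to\cA$ defined over $\ZZ[1/N]$ with group $H$, where $\bar\cA\cong Y_d^{n}\times(\GG_m)^{m-n}$ and $Y_d=\{xx'=y^d+1,\ y\ne0\}$; then $H^{\ast}(\cA_{\overline{\FF_p}},\QQ_\ell)=H^{\ast}(\bar\cA_{\overline{\FF_p}},\QQ_\ell)^{H}$ $\mathrm{Frob}_p$-equivariantly. By Kunneth, and since a product of Dirichlet characters is a Dirichlet character, it suffices to compute $\mathrm{Frob}_p$ on $H^{\ast}(Y_{d,\overline{\FF_p}},\QQ_\ell)$ (that on $H^{\ast}(\GG_m)$ being $1,p$). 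I would use the projection $\pi\colon Y_d\to\GG_m$, $(x,x',y)\mapsto y$, whose general fibre is $\GG_m$ and whose $d$ special fibres, over the $\mathrm{Frob}_p$-stable set $\mu=\{y:y^d=-1\}$, are pairs of crossing affine lines; thus $R^{0}\pi_{\ast}\QQ_\ell=\QQ_\ell$, $R^{1}\pi_{\ast}\QQ_\ell\cong j_!\QQ_\ell(-1)$ with $j\colon\GG_m\setminus\mu\hookrightarrow\GG_m$, and $R^{\ge 2}\pi_{\ast}=0$. The degenerate Leray spectral sequence gives $H^{0}(Y_d)=\QQ_\ell$, $H^{1}(Y_d)=\QQ_\ell(-1)$, and, via $0\to j_!\QQ_\ell(-1)\to\QQ_\ell(-1)\to\bigoplus_{\alpha\in\mu}(i_{\alpha})_{\ast}\QQ_\ell(-1)\to0$,
\[
0\longrightarrow\bigl(\QQ_\ell[\mu]/\QQ_\ell\bigr)(-1)\longrightarrow H^{2}(Y_d)\longrightarrow\QQ_\ell(-2)\longrightarrow0 .
\]
Finally, for $p$ odd, $\operatorname{tr}\!\bigl(\mathrm{Frob}_p^{k}\mid\QQ_\ell[\mu]\bigr)=\#\{\alpha\in\FF_{p^{k}}:\alpha^{d}=-1\}=\gcd(p^{k}-1,2d)-\gcd(p^{k}-1,d)$, which by Lemma~\ref{charSum*} applied to $p^{k}$ equals $\sum_{\chi\in\Dir^{\ast}(2d)\setminus\Dir^{\ast}(d)}\chi(p)^{k}$; so the eigenvalues of $\mathrm{Frob}_p$ on $\QQ_\ell[\mu]$ are the roots of unity $\{\chi(p):\chi\in\Dir^{\ast}(2d)\setminus\Dir^{\ast}(d)\}$, and discarding the principal character modulo $2d$ (the trivial summand) we conclude that every Frobenius eigenvalue on $H^{\ast}(Y_d)$, hence on $H^{\ast}$ of any isolated full rank $\cA$, has the form $\chi(p)\,p^{s}$ with $(\chi,s)$ drawn from a finite list depending only on $\tB$. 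Combined with the recursion, this proves the first assertion.

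For the really full rank statement, freezing preserves really full rank (by the argument of Remark~\ref{rem:fullrank}), so every cluster variety occurring in the Louise algorithm for a really full rank $\cA$ is really full rank, hence smooth in all characteristics $\ne\ell$ by Theorem~\ref{thm:finitesmooth}. Moreover an isolated really full rank $\cA$ is \emph{isomorphic over $\ZZ$} to $Y_1^{n}\times(\GG_m)^{m-n}$: in Lemma~\ref{lem:smith} one may take $d=1$ when $\tB^{T}\ZZ^{n+m}=\ZZ^{n}$, and then the block $Q$ can be chosen in $\GL_m(\ZZ)$, so there is no deck group (compare Proposition~\ref{prop:rowspanenough}). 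Since $Y_1\cong\AA^2\setminus\{xx'=1\}$ with $\{xx'=1\}\cong\GG_m$, the Gysin sequence gives $H^{\ast}(Y_{1,\overline{\FF_p}},\QQ_\ell)=\QQ_\ell\oplus\QQ_\ell(-1)\oplus\QQ_\ell(-2)$ for every $p\ne\ell$, with Frobenius eigenvalues $1,p,p^{2}$. As the Mayer-Vietoris cover above is defined over $\ZZ$, the recursion now applies for all $p\ne\ell$ and shows that the Frobenius eigenvalues on $H^{\ast}(\cA_{\overline{\FF_p}},\QQ_\ell)$ are products of copies of $1,p,p^{2}$, i.e. powers of $p$.

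The substantive content is the cohomology of $Y_d$, which is essentially the computation already performed in Proposition~\ref{prop:isolatedrank1} and Corollary~\ref{BasicIsotypics}. The main obstacle is the uniformity in $p$: one must verify that the finitely many excluded primes (dividing the various $d$'s, the orders of the deck groups, the moduli of the relevant Dirichlet characters, the primes of bad reduction, and $\ell$) can be excluded once and for all; that taking finite-group invariants and Kunneth products commutes with $\mathrm{Frob}_p$; and --- the delicate point --- that the ranks of the Mayer-Vietoris connecting maps stabilise. The cleanest way to handle the last item is to spread $\cA$ and all the auxiliary data out over $\Spec\ZZ[1/N\ell]$ and invoke constructibility of the higher direct images, so that the characteristic polynomial of $\mathrm{Frob}_p$ on $H^{\ast}(\cA_{\overline{\FF_p}},\QQ_\ell)$ is the specialisation at $p$ of a single recipe built from the base case.
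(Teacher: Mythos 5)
Your proof is correct and its overall architecture matches the paper's: reduce by Proposition~\ref{prop:coverprincipal} and K\"unneth to $Y_d=\{xx'=y^d+1,\ y\neq 0\}$, take $H$-invariants, and propagate through the Louise recursion via the Galois-equivariant Mayer--Vietoris sequence. Where you genuinely diverge is the rank-one base case. The paper (Proposition~\ref{prop:eigen}) gets the Frobenius eigenvalues on $H^2(Y_d)$ indirectly: it imports the Betti numbers $(1,1,d)$ from the complex computation via the comparison theorem, pins down the eigenvalues $1,p,p^2$ by restriction to a cluster torus, and then determines the remaining $d-1$ eigenvalues by matching the point count of Lemma~\ref{lem:isolated1} against the Grothendieck--Lefschetz trace formula, using Lemma~\ref{charSum*} and the stability of $X(d)$ under $\chi\mapsto\chi^{-1}$. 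You instead compute $H^{\ast}(Y_d)$ structurally via the Leray spectral sequence of $\pi:(x,x',y)\mapsto y$, exhibiting $H^2(Y_d)$ as an extension of $\QQ_\ell(-2)$ by $(\QQ_\ell[\mu]/\QQ_\ell)(-1)$ with $\mu=\{y^d=-1\}$, and then only need to count roots of $y^d=-1$ rather than all of $\cA(\FF_q)$; the same character-sum lemma converts the traces into Dirichlet characters. One point to be careful about: the identification $R^1\pi_{\ast}\QQ_\ell\cong j_!\QQ_\ell(-1)$ is not automatic for a non-proper morphism (there is no proper base change), so you should either justify the stalk computation at the points of $\mu$ or work with $R\pi_!$ and dualize; the answer is in any case confirmed by the known Betti numbers. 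Two further remarks in your favor: your explicit treatment of the really-full-rank case (take $d=1$ in Lemma~\ref{lem:smith}, so $Y_1$ contributes only $1,p,p^2$ for every $p\neq\ell$) fills in the paper's terse ``the same argument shows''; and your attention to spreading out over $\ZZ[1/N]$ and to the stabilization of the ranks of the connecting maps addresses a uniformity issue --- that a \emph{single} list $(\chi_i,s_i)$ works for all large $p$ --- which the paper's proof passes over in silence.
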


\begin{theorem}\label{thm:pointcount}
Suppose that $\cA$ satisfies the Louise property and is of full rank.  Then for sufficiently large primes $p$, and $q= p^ a$ a prime power,
the function $q \mapsto \#\cA(\FF_q)$ agrees with an element of the ring $\Lambda$.  
If, in addition, $\cA$ is of really full rank, then for all prime powers $q$, the function $q \mapsto \#\cA(\FF_q)$ is a polynomial in $q$.
\end{theorem}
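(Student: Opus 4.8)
\textbf{Proof plan for Theorem~\ref{thm:pointcount}.} The strategy is to run the same Louise induction that drives the proof of Theorem~\ref{thm:curious}, but now on the level of point counts rather than mixed Hodge structures, transferring the information through the comparison between singular cohomology and $\ell$-adic cohomology. First I would establish the base case: for an isolated cluster variety of full rank, Theorem~\ref{thm:Isolated Main} gives an explicit description of $H^\ast(\cA)$, and the proof of that theorem realizes $\cA$ (after a finite abelian cover $\bar\cA \to \cA$ with deck group $H$) as $Y_d^{\,n} \times (\CC^\ast)^{m-n}$, where $Y_d = \{ww' = z^d+1,\ z\neq 0\}$. Counting $\FF_q$-points of $Y_d$ is elementary: for each of the $q-1$ nonzero values $z\in\FF_q$, the equation $ww' = z^d+1$ contributes $q-1$ points when $z^d+1\neq 0$ and $2q-1$ points when $z^d+1 = 0$, and the number of $z$ with $z^d = -1$ is controlled by $\gcd(d,q-1)$; Lemma~\ref{charSum*} rewrites $\gcd(q-1,d)$ as $\sum_{\chi\in\Dir^\ast(d)}\chi(q)$, so $\#Y_d(\FF_q)$ lies in $\Lambda$. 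Taking products and then passing to the $H$-invariant part (equivalently, extracting the appropriate isotypic component, which by Theorem~\ref{thm:Isolated Main} is a uniform linear combination of monomials in $q$ and Dirichlet characters) shows $\#\cA(\FF_q)\in\Lambda$ for $\cA$ isolated and of full rank.

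Next I would handle the inductive step via the Mayer--Vietoris / scissor relation. If $i\to j$ is a separating edge, then by Proposition~\ref{prop:covering} the variety $\cA$ is covered by the two cluster localizations $U = \Spec A[x_i^{-1}]$ and $V=\Spec A[x_j^{-1}]$, and inclusion--exclusion on $\FF_q$-points gives $\#\cA(\FF_q) = \#U(\FF_q) + \#V(\FF_q) - \#(U\cap V)(\FF_q)$. Since $U$, $V$ and $U\cap V$ are freezings of $\cA$ that still satisfy the Louise property and still have full rank (Remark~\ref{rem:fullrank}), by induction each of the three counts lies in $\Lambda$, hence so does $\#\cA(\FF_q)$. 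This completes the proof of the first assertion, modulo making precise the meaning of ``sufficiently large $p$'': the covering maps, the Smith-normal-form integer $d$ of Proposition~\ref{prop:coverprincipal}, and the finitely many primes dividing the relevant determinants must be excluded, but there are only finitely many such primes across the whole (finite) Louise recursion.

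For the really-full-rank refinement I would argue that in this case all Frobenius eigenvalues are integer powers of $p$, so the point count is a $\ZZ$-polynomial in $q$ valid for \emph{all} prime powers. By Theorem~\ref{thm:finitesmooth}, $\cA_{\overline{\FF_p}}$ is smooth for every prime $p$, so one can run the $\ell$-adic analogue of the Louise induction with no exceptional primes: the base case is again $Y_d$, but now the really-full-rank hypothesis forces $d=\pm1$ (really full rank means $\tB^T\ZZ^{n+m} = \ZZ^n$, which over each isolated piece makes the relevant Smith invariants units), so $Y_1 = \{ww' = z+1,\ z\neq 0\}$ has $\#Y_1(\FF_q) = (q-1)(q-1) + (2q-1) = q^2-2q+2\cdot? $\,—\,more carefully, $z$ ranges over $\FF_q\setminus\{0\}$, the single bad value $z=-1$ contributes $2q-1$ and the other $q-2$ values contribute $q-1$ each, giving $q^2 - q$, visibly a polynomial; by Corollary~\ref{cor:isolatedreallyfullrank} its cohomology is spanned by $\dlog$-classes, all of weight $(k,k)$, so Frobenius acts by $p^k$. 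The Mayer--Vietoris sequence in $\ell$-adic cohomology is strictly compatible with the Frobenius action, so the property ``all Frobenius eigenvalues are powers of $p$'' is inherited by $\cA$ at each inductive step (the relevant long exact sequence degenerates into short exact sequences after taking Frobenius-weight graded pieces, exactly as in the proof of Theorem~\ref{CuriousMV}). The Grothendieck--Lefschetz trace formula then gives $\#\cA(\FF_q) = \sum_k (-1)^k \mathrm{tr}(\mathrm{Frob}_q \mid H^k_c(\cA_{\overline{\FF_p}},\QQ_\ell))$, which by the above is $\sum_k (-1)^k (\text{integer combination of powers of }q)$, independent of $p$ by smoothness and the comparison with the Betti numbers computed earlier; hence $\#\cA(\FF_q)$ is a single polynomial in $q$.

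The main obstacle I expect is \emph{bookkeeping the exceptional primes and the base change carefully enough to make the ``sufficiently large $p$'' claim rigorous}: the covering map of Proposition~\ref{prop:coverprincipal} is constructed by a Smith-normal-form argument over $\ZZ$, and one must check that over $\FF_p$ for $p$ not dividing the finitely many relevant minors the cover remains étale of the correct degree, that the isotypic decomposition of $H^\ast$ is still valid $\ell$-adically, and that the Louise recursion terminates with a uniform bound on the bad primes. In the really-full-rank case the smoothness statement (Theorem~\ref{thm:finitesmooth}) removes this difficulty entirely, which is precisely why one gets an honest polynomial with no excluded primes; the delicate point there is instead verifying that the weight filtration on $\ell$-adic cohomology is strict in the Mayer--Vietoris sequence so that the inductive ``eigenvalues are powers of $p$'' hypothesis propagates, but this is the $\ell$-adic mirror of the mixed-Tate argument already carried out in Section~\ref{sec:curious}.
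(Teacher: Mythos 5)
Your route for the first assertion is genuinely different from the paper's, and in one respect more elementary. The paper never does inclusion--exclusion on point counts directly: it first proves Theorem~\ref{thm:eigenvalues} (Frobenius eigenvalues are of the form $\chi(p)p^s$) by running the Louise induction on $\ell$-adic cohomology, with the rank-one base case handled by Proposition~\ref{prop:eigen}, and then \emph{deduces} Theorem~\ref{thm:pointcount} from the Grothendieck--Lefschetz trace formula together with Poincar\'e duality. Your inductive step --- $\#\cA(\FF_q) = \#U(\FF_q) + \#V(\FF_q) - \#(U\cap V)(\FF_q)$ for the cover coming from a separating edge, with $U$, $V$, $U\cap V$ again Louise and of full rank --- is valid (an $\FF_q$-point lands in one of the two opens, and Muller's covering result is a statement over $\ZZ$) and bypasses $\ell$-adic cohomology entirely for the point-count statement. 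The trade-off is that your argument, even completed, does not yield the Frobenius eigenvalue statement, which the paper also wants. Your treatment of the really-full-rank case is also essentially sound once you note that really full rank is inherited by freezings (a projection of a spanning set of $\ZZ^n$ spans $\ZZ^S$) and that the reduction to $Y_1^n\times(\CC^\ast)^{m-n}$ goes through Proposition~\ref{prop:rowspanenough} with a unipotent $Q$, hence is an isomorphism over $\ZZ$ after multiplying both sides by split tori; a small arithmetic slip: $\#Y_1(\FF_q) = (q-2)(q-1) + (2q-1) = q^2-q+1$, not $q^2-q$.

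The genuine gap is in your base case for general isolated $\cA$ of full rank: you cannot pass from $\#\bar\cA(\FF_q)$ to $\#\cA(\FF_q)$ by ``extracting the $H$-isotypic component,'' because that phrase only makes sense for cohomology, not for raw point counts. The map $\bar\cA \to \cA = \bar\cA \sslash H$ is a torsor under a finite diagonalizable group $H = \Ker(\mult(Q^T)) \cong \prod\mu_{e_i}$, and since $H^1(\FF_q,\mu_e) = \FF_q^\times/(\FF_q^\times)^e$ is nontrivial, the induced map $\bar\cA(\FF_q)\to\cA(\FF_q)$ is in general neither surjective nor of constant fiber size; in particular $\#\cA(\FF_q)\neq \#\bar\cA(\FF_q)/\#H$. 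To repair this you must either (a) invoke the trace formula on $H^\ast(\bar\cA_{\overline{\FF_p}},\QQ_\ell)^H$ --- which requires knowing the individual Frobenius eigenvalues on $H^\ast(\bar\cA)$, not merely $\#\bar\cA(\FF_q)$, i.e.\ you are forced back into the paper's Proposition~\ref{prop:eigen}-style argument --- or (b) count points of the isolated variety directly from its equations $x_ix_i' = \prod_j y_j^{[C_{ji}]_+} + \prod_j y_j^{[-C_{ji}]_+}$: summing over $y\in(\FF_q^\times)^m$ reduces the count to the number of $y$ satisfying $\prod_j y_j^{C_{ji}} = -1$ for $i$ in each subset $S\subseteq[n]$, an elementary character-sum computation in the spirit of Lemma~\ref{charSum*} that stays inside $\Lambda$. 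Either fix works, but as written the quotient step is a hole, and it is exactly the point where the paper chose to route everything through $\ell$-adic cohomology.
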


Some work has been done on point counts of cluster varieties.  For instance, see \cite{Cha, Cha2, Mor}.

\begin{proof}[Proof of  Theorem \ref{thm:pointcount} from Theorem \ref{thm:eigenvalues}]
Let $\cA$ have rank $n$ and $m$ frozen variables.  The variety $\cA_\QQ$ is regular \cite{Mul}.  It follows that, for large primes $p$, the variety $\cA_{\oF_p}$ is smooth.  We assume that $p$ is sufficiently large that this is the case.  
 %Fix an embedding $\QQ_\ell \hookrightarrow \CC$.  
 By the Grothendieck--Lefschetz fixed point formula \cite[(1.5.1)]{DelWeil}, we have
\begin{equation}\label{eq:Groth}
\#\cA(\FF_{p^a}) = \sum_{k=n+m}^{2n+2m} (-1)^k {\rm Tr}(F^a,H^k_{c}(\cA_{\oF_p},\QQ_\ell)) = \sum_{k=n+m}^{2n+m} (-1)^k \sum_{i=1}^{\dim H^k_{c}(\cA) }  \alpha_i^a
\end{equation}
where the $\alpha_i$ are the eigenvalues of the Frobenius $F$ on the compactly supported $\ell$-adic cohomology $H^k_{c}(\cA_{\oF_p},\QQ_\ell)$.  
%The complex numbers $\alpha_i$ are algebraic numbers with absolute value $p^{w_i/2}$ for integers $w_i$.
%Suppose that $\cA_{\oF_p}$ has dimension $n$.  
Applying Poincar\'{e} duality (\cite[Theor\`eme 2.8]{DelWeil}), we have a perfect pairing between compactly supported $\ell$-adic cohomology and usual $\ell$-adic cohomology:
\[
H^k_c(\cA_{\oF_p},\QQ_\ell) \times H^{2n+2m-k}(\cA_{\oF_p},\QQ_\ell) \to \QQ_\ell(-n-m).
\]   In particular, if the eigenvalues of Frobenius on $H^k_c(\cA_{\oF_p},\QQ_\ell)$ are $\alpha_1,\alpha_2,\ldots,\alpha_r$, then the eigenvalues of Frobenius on $H^{2n+2m-k}(\cA_{\oF_p},\QQ_\ell)$ are $p^{n+m}/\alpha_1$, \dots, $p^{n+m}/\alpha_r$.  Let the eigenvalues of Frobenius on $H^k(\cA_{\oF_p},\QQ_\ell)$ be given by $\epsilon_{k1} p^{w_{k1}}$, $\epsilon_{k2} p^{w_{k2}}, \dots$ and let $p$ be sufficiently large. Then, using Theorem~\ref{thm:eigenvalues}, we obtain
\begin{equation}\label{eq:GrothDual}
\#\cA(\FF_{q}) =\sum_{k=0}^{n+m} (-1)^{k} \sum_{r=1}^{\dim H^k(\cA)} \epsilon_{kr}^{-a} p^{a(n+m-w_{kr})} = \sum_{k=0}^{n+m} (-1)^k \sum_{r=1}^{\dim H^k(\cA)} \chi^{-1}_{kr}(q) q^{n+m-w_{kr}}
\end{equation}
for some characters $\chi_{kr}$.
 Theorem \ref{thm:pointcount} then follows in the full rank case.

%where $s'_i = n-s_i$ and $\chi'_i = 1/\chi_i$ (also a Dirichlet character).

By Theorem \ref{thm:finitesmooth}, $\cA_{\overline{\FF_q}}$ is smooth in all characteristics when $\tB$ has really full rank, so Poincar\'{e} duality holds in all characteristics.  The last statement of Theorem \ref{thm:eigenvalues} thus gives the last statement of Theorem \ref{thm:pointcount}.  \end{proof}
% for sufficiently large prime powers $q$.

Deligne's theory of weights relates the exponents $w_{ki}$ in \eqref{eq:GrothDual} to a weight filtration $W_\bullet H^k$ on $H^k(\cA_{\oF_p},\QQ_\ell)$. Specifically, Frobenius preserves this filtration, and the action of Frobenius on $\Gr^{w}_W:=W_{w}/W_{w-1}$ is by eigenvalues of the form $\epsilon p^{w/2}$, where $|\epsilon|=1$.  Assume $p$ is large enough that $\cA_{\oF_p}$ is smooth.
A comparison theorem between etale and Betti cohomology (see \cite[Theorem 2.2]{KL}, \cite[Theorem 20.5 and 21.5]{Milne}, and \cite{FK}) gives
\begin{equation}\label{eq:comparison}
\dim \Gr^{2w}_W(H^k(\cA_{\oF_p},\QQ_\ell))= \dim H^{k,(w,w)}(\cA,\CC)
\end{equation}
and $\dim \Gr^{2w-1}_W(H^k(\cA_{\oF_p},\QQ_\ell))=0$.
%(We have stated this in terms of dimensions only in order to avoid the technicalities of comparing ground fields.)
Therefore, for each $k$ and $w$, we have $\dim H^{k, (w,w)}$ many characters $\chi_{kw1}$, $\chi_{kw2}$, \dots such that
\begin{equation}\label{eq:GrothDualWeights}
\#\cA(\FF_{q}) = \sum_{k=0}^{n+m} (-1)^{k} \sum_w q^{n+m-w} \sum_{r=1}^{\dim H^{k,(w,w)}(\cA)} \chi^{-1}_{kwr}(q) .
\end{equation}
%
%By repeating the proof of Theorem \ref{thm:curious} for etale cohomology, or using comparison theorems (\cite{FK,Milne}) between $H^*(\cA,\CC)$ and $H^\ast(\cA_{\FF_p},\QQ_\ell)$, we see that $H^\ast(\cA_{\FF_p},\QQ_\ell)$ satisfies the curious Lefschetz property.  

We use the curious Lefschetz property to simplify \eqref{eq:GrothDualWeights}. 
\begin{proposition}
Suppose that $\cA$ satisfies the Louise property and is of full rank. Let $p$ be large enough that $\cA_{\oF_p}$ is smooth.  Let the eigenvalues of Frobenius on $\Gr^{2w}_W(H^k(\cA_{\oF_p},\QQ_\ell))$ be $\chi_{kw1}(p) p^w,\chi_{kw2}(p) p^w,\ldots,\chi_{kwr}(p) p^w$ where $1 \leq r \leq \dim H^{k,(w,w)}(\cA)$.  Then for a prime power $q = p^a$, we have
\begin{equation}\label{eq:GrothFinal}
\#\cA(\FF_{q}) = \sum_{k=0}^{n+m} (-1)^{n+m-k} \sum_w q^w \sum_{r=1}^{\dim H^{k,(w,w)}(\cA)} \chi_{kwr}(q).
\end{equation}
\end{proposition}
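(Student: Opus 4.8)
The plan is to obtain \eqref{eq:GrothFinal} from \eqref{eq:GrothDualWeights} by re-indexing the double sum through the symmetry supplied by the curious Lefschetz isomorphism, and then correcting a $\chi \leftrightarrow \chi^{-1}$ discrepancy using the rationality of Frobenius characteristic polynomials. Write $N = n+m$. First I would dispose of the parity issue exactly as in the proof of Theorem~\ref{thm:curious}: if $N$ is odd, apply the even case to $\cA^{+} := \cA \times \CC^{\ast}$, a full rank cluster variety with the Louise property and even dimension $N+1$. Since $\#\cA^{+}(\FF_q) = (q-1)\,\#\cA(\FF_q)$ and, by Künneth, $\Gr^{2w}_W H^k(\cA^{+}_{\overline{\FF_p}},\QQ_\ell) \cong \Gr^{2w}_W H^k(\cA_{\overline{\FF_p}},\QQ_\ell) \oplus \Gr^{2w-2}_W H^{k-1}(\cA_{\overline{\FF_p}},\QQ_\ell)$ as Frobenius modules (the $\CC^{\ast}$ factor contributing $\QQ_\ell$ in weight $0$ and $\QQ_\ell(-1)$ in weight $2$, each with trivial Dirichlet character), the right-hand side of \eqref{eq:GrothFinal} for $\cA^{+}$ factors as $(q-1)$ times that for $\cA$; dividing by $q-1$ reduces to the case $N = 2d$ even.

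Assuming $N = 2d$, fix a full rank GSV form $\gamma$ (which exists by Lemma~\ref{lem:GSVfullrank}). By Theorem~\ref{thm:curious}, cup product with $\gamma^{\,d-w}$ is an isomorphism $H^{k,(w,w)}(\cA,\CC) \xrightarrow{\ \sim\ } H^{N+k-2w,\,(N-w,N-w)}(\cA,\CC)$; in particular $(k,w) \mapsto (N+k-2w,\,N-w)$ is an involution of the set of pairs with $H^{k,(w,w)} \ne 0$, and it preserves the dimensions $h_{k,w} := \dim H^{k,(w,w)}(\cA)$ — the latter being precisely the numerical statement Theorem~\ref{thm:curious}(1). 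Passing to $\ell$-adic cohomology for $p$ large: a nonzero rational multiple of $\gamma$ lies in $H^2(\cA,\QQ)$ while $\gamma \in H^{2,(2,2)}$, so under the comparison isomorphism the étale realization of $\gamma$ spans a Frobenius-stable line in $\Gr^4_W H^2(\cA_{\overline{\FF_p}},\QQ_\ell)$ on which Frobenius acts by $p^2$. Hence $\gamma^{\,d-w}\cup(-)$ intertwines the Frobenius actions on $\Gr^{2w}_W H^k$ and $\Gr^{2(N-w)}_W H^{N+k-2w}$ up to the scalar $p^{2(d-w)} = p^{N-2w}$, so the multiset of Frobenius eigenvalues on the target is $\{\chi_{kwr}(p)\,p^{N-w}\}_r$; in particular the Dirichlet-character parts of the two weight-graded pieces coincide, $\{\chi_{N+k-2w,\,N-w,\,r}\}_r = \{\chi_{kwr}\}_r$.

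Now substitute $(k,w)$ by its partner $(N+k-2w,\,N-w)$ throughout \eqref{eq:GrothDualWeights}. Using $(-1)^{N+k-2w} = (-1)^{N-k}$, $q^{N-(N-w)} = q^{w}$, the dimension identity $h_{k,w} = h_{N+k-2w,\,N-w}$, and the equality of character multisets just established, \eqref{eq:GrothDualWeights} becomes
\[
\#\cA(\FF_q) \;=\; \sum_{k=0}^{N}(-1)^{N-k}\sum_w q^{w}\sum_{r=1}^{\dim H^{k,(w,w)}(\cA)} \chi_{kwr}^{-1}(q).
\]
It then remains to replace $\chi_{kwr}^{-1}$ by $\chi_{kwr}$ in each inner sum, i.e.\ to show the multiset $\{\chi_{kwr}\}_r$ is stable under $\chi \mapsto \chi^{-1}$. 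For this I would invoke that, for $p$ large, the characteristic polynomial of Frobenius on $\Gr^{2w}_W H^k(\cA_{\overline{\FF_p}},\QQ_\ell)$ lies in $\QQ[T]$ — a standard consequence of Deligne's weight theory together with the smoothness of $\cA_{\overline{\FF_p}}$ — so its root multiset $\{\chi_{kwr}(p)\,p^{w}\}_r$ is $\mathrm{Gal}(\overline{\QQ}/\QQ)$-stable; dividing by $p^{w}\in\QQ$, the multiset of roots of unity $\{\chi_{kwr}(p)\}_r$ is stable under complex conjugation, which acts on roots of unity by inversion. Applying the same to $F^{a}$ (whose eigenvalues are the $a$-th powers) gives $\sum_r \chi_{kwr}^{-1}(q) = \sum_r \chi_{kwr}(q)$ for every prime power $q = p^{a}$, and \eqref{eq:GrothFinal} follows. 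The main obstacle is this last step: one must know the rationality, hence Galois-stability, of the Frobenius eigenvalues at the level of each individual weight-graded piece $\Gr^{2w}_W H^k$ (not merely of the full cohomology), so that the passage from $\chi^{-1}$ back to $\chi$ is legitimate; the rest is bookkeeping organized by the curious Lefschetz involution.
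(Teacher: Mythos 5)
Your argument is correct and is essentially the paper's own proof: the same parity reduction (adding a trivial frozen direction so that both \eqref{eq:GrothDualWeights} and \eqref{eq:GrothFinal} pick up a factor of $q-1$), the same use of the \'etale GSV class, which acts on Frobenius eigenvalues by $p^{2}$, to invoke curious Lefschetz, and the same appeal to rationality of the Frobenius characteristic polynomial to trade $\chi^{-1}$ for $\chi$; the only difference is bookkeeping, since the paper pairs contributions along the Lefschetz strings through the primitive spaces $W(s,\ell,0)$ of \eqref{primitiveH}, while you pair whole graded pieces via the involution $(k,w)\mapsto(n+m+k-2w,\,n+m-w)$. The rationality step you single out as the main obstacle is exactly the fact the paper itself invokes, and it is enough to have it on each $H^{k}$: within a fixed $k$, distinct weights give eigenvalues of distinct absolute value, so conjugation-stability of the eigenvalue multiset descends automatically to each $\Gr^{2w}_W H^{k}$.
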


\begin{proof}
By Proposition \ref{prop:rowspanenough}, both~\eqref{eq:GrothDualWeights} and~\eqref{eq:GrothFinal} multiply by $q-1$ when we add an extra row to $\tB$ in the integer span of the previous rows.
So we may assume that $m+n$ is even, and we may apply Theorem \ref{thm:curious}.  Since $n+m$ is even, we have $(-1)^k = (-1)^{n+m-k}$.  Let $[\gamma] \in H^2(\cA_{\oF_p},\QQ_\ell)$ be such that $(\cA_{\oF_p},[\gamma])$ satisfies curious Lefschetz.  
Now, choose Frobenius eigenbases for the spaces $W(s,\ell,0)$ in \eqref{primitiveH}. For each such basis element $\alpha$, look at the Frobenius eigenvectors $\alpha$, 
$[\gamma] \alpha$, $[\gamma]^2 \alpha$, \dots, $[\gamma]^{\ell} \alpha$. The corresponding eigenvalues are of the form $p^i \chi$ for $(m+n)/2-\ell \leq i \leq (m+n)/2+\ell$.
The contributions to the RHS of \eqref{eq:GrothDualWeights} and of \eqref{eq:GrothFinal} are $(-1)^{s+\ell} \chi^{-1} \sum_{i=(m+n)/2-\ell}^{i=(m+n)/2+\ell} p^{n+m-i}$ and $(-1)^{s + \ell} \chi \sum_{i=(m+n)/2-\ell}^{i=(m+n)/2+\ell} p^i$ respectively.  The sums of powers of $p$ are equal. Since the characteristic polynomial of Frobenius has rational coefficients, if $p^i \chi$ is an eigenvalue of Frobenius, then so is the complex conjugate $p^i \chi^{-1}$.  This proves that \eqref{eq:GrothDualWeights} and \eqref{eq:GrothFinal} are equivalent.
\end{proof}

\subsection{Point counts and eigenvalues in rank 1}
%The following result is straightforward.  
%\begin{lemma}\label{L:dthroots}
%Let $a_1,a_2,\ldots,a_m$ be integers, and let $d = \gcd(a_1,a_2,\ldots,a_m,q-1)$.  Define
%$f: (\FF_q^\times)^m \to \FF_q^\times$ by 
%$$
%(x_1,x_2,\ldots,x_m) \longmapsto x_1^{a_1} \cdots x_m^{a_m}.
%$$
%Then for $x \in \FF_q^\times$, we have
%$$
%|f^{-1}(x)| = \begin{cases} d(q-1)^{m-1} & \mbox{if $x$ is a $d$-th power in $\FF_q$,}\\
%0 & \mbox{if $x$ is not a $d$-th power in $\FF_q$.} 
%\end{cases}
%$$ 
%The number of $d$-th powers in $\FF_q^\times$ is equal to $(q-1)/d$.  Also, $-1$ is in the image of $f$ if and only if $(q-1)/d$ is even, or $p = 2$.
%\end{lemma}

\begin{lemma}\label{lem:isolated1}
Let $\cA$ be the rank one cluster variety with extended exchange matrix $\tB = (0,d)^T$, where we assume that $d \geq 0$.  Suppose $q = p^a$ is an odd prime power.
Then
\[
\#\cA(\FF_q) = q^2 +(c-2)q + 1
\]
where $c$ is the number of $d$-th roots of $-1$ in $\FF_q^\times$, given by
\[
c = \gcd(q-1,2d) - \gcd(q-1,d).
\]

%$$ 
%c = \begin{cases} \gcd(q-1,d) & \mbox{if $(q-1)/d$ is even,} \\
%0 & \mbox{otherwise.}
%\end{cases}
%$$
\end{lemma}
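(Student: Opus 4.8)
The plan is to stratify the point count by the value of $y$. For $y \in \FF_q^\times$ fixed, put $b = y^d + 1 \in \FF_q$ and count the pairs $(x,x') \in \FF_q^2$ with $xx' = b$: if $b \neq 0$ there are exactly $q-1$ such pairs (choose $x \in \FF_q^\times$ arbitrarily, then $x' = b/x$ is forced), while if $b = 0$ the equation $xx' = 0$ has $2q-1$ solutions (the $q$ pairs with $x = 0$, together with the $q-1$ pairs with $x \neq 0$ and $x' = 0$).

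Next I would introduce $c := \#\{y \in \FF_q^\times : y^d = -1\}$, so that exactly $c$ values of $y$ give $b = 0$ and the remaining $q-1-c$ values give $b \neq 0$. Summing the fiberwise counts,
\[
\#\cA(\FF_q) = (q-1-c)(q-1) + c(2q-1) = (q-1)^2 + cq = q^2 + (c-2)q + 1,
\]
which is the stated formula, modulo the evaluation of $c$.

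To compute $c$, I would use that $\FF_q^\times$ is cyclic of order $q-1$, so for every integer $k \geq 0$ the solution set of $y^k = 1$ in $\FF_q^\times$ has size $\gcd(k, q-1)$. Since $q$ is odd we have $-1 \neq 1$ in $\FF_q^\times$; hence any $y$ with $y^{2d} = 1$ satisfies $y^d \in \{1, -1\}$ with these two cases mutually exclusive, so the $2d$-th roots of unity partition into the $d$-th roots of $1$ and the $d$-th roots of $-1$. Counting the three sets gives $\gcd(q-1, 2d) = \gcd(q-1, d) + c$, that is, $c = \gcd(q-1, 2d) - \gcd(q-1, d)$, as claimed (the conventions $\gcd(0,q-1) = q-1$ handle the case $d = 0$ correctly).

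I do not anticipate a real obstacle; the only points needing care are the degenerate fiber $b = 0$, where the affine conic $xx' = y^d + 1$ degenerates to a union of two lines meeting at the origin and contributes $2q-1$ rather than $q-1$ points, and the use of $q$ odd to guarantee $1 \neq -1$, so that the $d$-th roots of $-1$ are genuinely disjoint from the $d$-th roots of $1$; in characteristic $2$ the final expression for $c$ would change.
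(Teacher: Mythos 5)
Your proposal is correct and follows essentially the same argument as the paper: stratify by $y \in \FF_q^\times$, count $q-1$ pairs $(x,x')$ over each nonzero value of $y^d+1$ and $2q-1$ pairs over each zero value, and note that the latter occurs exactly when $y^d=-1$. Your explicit verification of the formula $c=\gcd(q-1,2d)-\gcd(q-1,d)$ via the cyclicity of $\FF_q^\times$ is correct and fills in a step the paper leaves implicit.
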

\begin{proof}
We need to count the number of solutions $(x,x',y) \in \FF_q \times \FF_q \times \FF_q^{\times}$ satisfying
\[
xx' = y^d + 1.
\]
For a fixed non-zero value of the RHS, there are $q-1$ choices for $(x,x')$.  If the RHS is zero, then there are $2q-1$ choices for $(x,x')$.  The RHS is zero exactly when $y^d = -1$ in $\FF_q^{\times}$.  
\end{proof}

Let $X(d)$ be the multiset of Dirichlet characters obtained from the multiset $\bigsqcup_{c\mid 2d,\ c \notdivides d} \Dir(c)$ by deleting the trivial character of modulus $2d$.

\begin{proposition}\label{prop:eigen}
Let $\cA$ be the rank one cluster variety with extended exchange matrix $\tB = \left( \begin{smallmatrix} 0 \\ d \end{smallmatrix} \right)^T$ where $d \geq 0$.
Let $p$ be an odd prime not dividing $d$, and let $\ell$ be a prime distinct from $p$. Then Frobenius acts on $H^0(\cA_{\oF_p}, \QQ_{\ell})$ and $H^1(\cA_{\oF_p}, \QQ_{\ell})$ by multiplication by $1$ and $p$ respectively.  It acts on $H^2(\cA_{\oF_p}, \QQ_{\ell})$ with one eigenvalue equal to $p^2$ and the remaining eigenvalues equal to $ \chi(p)p$ for $\chi$ ranging over $X(n)$.  

If $d = 1$, the statement also holds with $p = 2$.
\end{proposition}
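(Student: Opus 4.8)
The plan is to compute the étale cohomology $H^\ast(\cA_{\overline{\FF_p}},\QQ_\ell)$ together with its Frobenius action directly, via the same two-chart cover underlying Proposition~\ref{prop:isolatedrank1}. Write $\cA=\{xx'=y^d+1,\ y\neq 0\}$ and set $U=\{x\neq 0\}$, $U'=\{x'\neq 0\}$, both defined over $\FF_p$. When $p\nmid d$ (and, as we note at the end, for every $p$ when $d=1$) the Jacobian criterion shows $\cA_{\overline{\FF_p}}$ is smooth of dimension $2$, since the partials $x',x,-dy^{d-1}$ of $xx'-y^d-1$ have no common zero on $\cA$; moreover $\cA$ is irreducible, being the spectrum of a cluster algebra, hence of a domain. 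Projection to $(x,y)$ resp.\ $(x',y)$ identifies $U\cong\GG_m^2\cong U'$, and $U\cap U'=\{x\neq0,\ x'\neq 0\}\cong\GG_m\times C$ with $C:=\GG_m\setminus S$, where $S:=\{s:s^d=-1\}$ is a finite étale $\FF_p$-scheme with $d$ geometric points. The complement $\cA\setminus(U\cup U')$ is the set of $d$ points with $x=x'=0$, of codimension $2$, so purity (the Gysin sequence, using smoothness) gives $H^k(\cA)\xrightarrow{\sim}H^k(U\cup U')$ for $k\le 2$; higher cohomology vanishes as $\cA$ is affine of dimension $2$.

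First I would run the Galois-equivariant Mayer--Vietoris sequence for the cover $\{U,U'\}$ of $U\cup U'$. Since $H^0(U)\oplus H^0(U')\to H^0(U\cap U')$ is surjective, one obtains $H^0(\cA)=\QQ_\ell$ with trivial Frobenius, $H^1(\cA)=\ker\!\big(H^1(U)\oplus H^1(U')\xrightarrow{\alpha}H^1(U\cap U')\big)$, and a short exact sequence $0\to M\to H^2(\cA)\to N\to 0$, where $M:=\mathrm{coker}(\alpha)$ and $N:=\ker\!\big(H^2(U)\oplus H^2(U')\to H^2(U\cap U')\big)$. Using the basis $\dlog x,\dlog y$ of $H^1(\GG_m^2)$, the identity $\dlog x'=\dlog(y^d+1)-\dlog x$ on $U\cap U'$, and the vanishing $\dlog(y^d+1)\wedge\dlog y=0$ in $H^2(C)=0$, a direct linear-algebra computation gives: $H^1(\cA)\cong\QQ_\ell(-1)$ (Frobenius acts by $p$); $N$ is one-dimensional, isomorphic to $\QQ_\ell(-2)$ (Frobenius $p^2$) and spanned by the image of the class of the GSV form $\gamma$ of Proposition~\ref{prop:isolatedrank1}; and $M\cong H^1(C)/\langle\dlog y,\ \dlog(y^d+1)\rangle$, which is canonically $\overline{\mathrm{Perm}}(S)\otimes\QQ_\ell(-1)$, where $\overline{\mathrm{Perm}}(S)$ denotes the permutation module $\QQ_\ell[S]$ modulo the all-ones vector, of dimension $d-1$. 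In particular $\dim H^2(\cA)=d$, matching Proposition~\ref{prop:isolatedrank1}.

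It then remains to pin down the Frobenius eigenvalues on $M\cong\overline{\mathrm{Perm}}(S)\otimes\QQ_\ell(-1)$. By the Grothendieck--Lefschetz trace formula for the $0$-dimensional $\FF_p$-scheme $S$, the trace of the $a$-th power of Frobenius on $\QQ_\ell[S]$ equals $\#S(\FF_{p^a})$, the number of $d$-th roots of $-1$ in $\FF_{p^a}^\times$, which by Lemma~\ref{lem:isolated1} is $\gcd(p^a-1,2d)-\gcd(p^a-1,d)$ for $p$ odd. Applying Lemma~\ref{charSum*} to each $\gcd$, using complete multiplicativity of Dirichlet characters, and noting that $\gcd(p,2d)=1$ so that the trivial character of modulus $2d$ contributes $1$, this trace equals $1+\sum_{\chi\in X(d)}\chi(p)^a$. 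Hence Frobenius has trace $\sum_{\chi\in X(d)}\chi(p)^a$ on $\overline{\mathrm{Perm}}(S)$, and trace $\sum_{\chi\in X(d)}(\chi(p)p)^a$ on $M$, for all $a\ge 1$. Since $\#X(d)=d-1=\dim M$, Newton's identities (power sums for all $a\ge1$ determine the multiset of eigenvalues) show that the eigenvalues of Frobenius on $M$ are exactly $\{\chi(p)p:\chi\in X(d)\}$. Combined with $N\cong\QQ_\ell(-2)$, the characteristic polynomial of Frobenius on $H^2(\cA)$ is $(t-p^2)\prod_{\chi\in X(d)}(t-\chi(p)p)$, which together with the computations of $H^0$ and $H^1$ is the assertion. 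For $d=1$ and $p=2$ the variety $\cA$ is still smooth, $S$ is the single $\FF_2$-point $\{1\}$, so $M=0$ and $H^2(\cA)\cong\QQ_\ell(-2)$; since $X(1)=\varnothing$ the conclusion again holds.

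The main obstacle is the middle step: identifying $M$ with the reduced permutation module of $S$ carrying the correct Tate twist — so that Frobenius acts through the geometric permutation of the $d$-th roots of $-1$ scaled by $p$ — and then converting the resulting $\gcd$ into a sum over Dirichlet characters via Lemma~\ref{charSum*}. The remaining ingredients (purity, the Mayer--Vietoris linear algebra, and Newton's identities) are routine.
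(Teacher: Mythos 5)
Your proof is correct, but it takes a genuinely different route from the paper's. The paper never computes the Galois module structure of $H^\ast(\cA_{\overline{\FF_p}},\QQ_\ell)$ directly: it imports the Betti numbers $(1,1,d)$ and the weight data from Proposition~\ref{prop:isolatedrank1} via the comparison isomorphism \eqref{eq:comparison}, uses restriction to a cluster torus to account for the eigenvalues $1$, $p$ and one eigenvalue $p^2$, writes the remaining $d-1$ eigenvalues as $\epsilon_j p$, and then pins down the $\epsilon_j$ by equating the Grothendieck--Lefschetz expression \eqref{eq:GrothDual} with the elementary point count of Lemma~\ref{lem:isolated1}, rewritten as a character sum via Lemma~\ref{charSum*}; the final multiset identification uses that $X(d)$ is stable under $\chi\mapsto\chi^{-1}$, which plays the role of your Newton's-identities step. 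You instead compute the cohomology as a Galois module from scratch: purity for the codimension-two locus $\{x=x'=0\}$, the equivariant Mayer--Vietoris sequence for $U\cup U'$, and the identification of the non-torus part of $H^2$ with the reduced permutation module on the scheme $S$ of $d$-th roots of $-1$, Tate-twisted by $(-1)$ --- after which the same Lemmas~\ref{lem:isolated1} and~\ref{charSum*} convert $\#S(\FF_{p^a})$ into the character sum. Your route is longer but self-contained on the $\ell$-adic side (no appeal to the characteristic-zero computation, to \eqref{eq:comparison}, or to Poincar\'e duality for $\cA$), and it yields strictly more: the actual module structure of $H^2$ as an extension of $\QQ_\ell(-2)$ by the twisted reduced permutation module, not just the eigenvalue multiset; the paper's route is shorter given the machinery already assembled in Section~\ref{ssec:eigenvalues}. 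Two small remarks: the irreducibility aside is unnecessary (connectedness of $\cA_{\overline{\FF_p}}$ already falls out of your $H^0$ computation, and base-changing a domain to $\overline{\FF_p}$ does not formally stay a domain, though here it does since $xx'-y^d-1$ is irreducible), and your vanishing of $\dlog(y^d+1)\wedge\dlog y$, justified by $H^2(C)=0$ for the affine curve $C$, is exactly what is needed to see that the GSV class spans the kernel $N$ --- both steps check out.
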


\begin{proof}
We first remark that $\cA_{\oF_p}$ is smooth whenever $p$ does not divide $d$.  For brevity, we write $\cA$ instead of $\cA_{\oF_p}$ in the rest of the proof.  By \eqref{eq:comparison} and Proposition \ref{prop:isolatedrank1}, the Betti numbers of $H^\ast(\cA, \QQ_{\ell})$ are $(1,1,d)$.

Let $T = (\oF_p^\times)^2$ be a cluster torus.  Frobenius acts on $H^i(T,\QQ_\ell)$ by $p^i$, and the restriction map $H^{\ast}(\cA,\QQ_\ell) \to H^{\ast}(T,\QQ_\ell)$ has one-dimensional image in each of $H^0$, $H^1$ and $H^2$.  This establishes the claim for $H^0$ and $H^1$, and accounts for the eigenvalue $p^2$ in $H^2$. 

Let the Frobenius eigenvalues on the kernel of $H^{2}(\cA,\QQ_\ell) \to H^{2}(T,\QQ_\ell)$  be $\epsilon_1p $, $\epsilon_2 p$, \dots, $\epsilon_{d-1}p $. 
From \eqref{eq:GrothDual}, we have
\[ \# \cA(\FF_{p^a}) = p^{2a} - p^a + 1 + p^a \sum_{i=1}^{d-1} \epsilon_j^{-a}. \]
On the other hand, from Lemma~\ref{lem:isolated1}, the number of such points is
\[ p^{2a} - p^a + 1+ p^a (\gcd(p^a-1,2d) - \gcd(p^a-1,d)-1)  = p^{2a} - p^a + 1 + p^a \sum_{\chi \in X(d)} \chi(p)^a . \]
So $\sum_j \epsilon_j^{-a} = \sum_{\chi \in X(d)} \chi(p)^a$ for all $a$.  Since the multiset $X(d)$ is invariant under $\chi \mapsto \chi^{-1}$, we deduce that the multiset $(\epsilon_1, \epsilon_2, \ldots, \epsilon_{d-1})$ is $X(d)$.
\end{proof}

\begin{example}
Let $\tB = \left( \begin{smallmatrix} 0 \\ 3 \end{smallmatrix} \right)$.  Then the Frobenius eigenvalues of $H^\ast(\cA_{\oF_p},\QQ_\ell)$ are given by
\[
\begin{array}{|r|lll|}
\hline
& H^0& H^1 & H^2 \\
\hline
k-p=0 & 1 & p & p^2 \\
1 & & & p,\ \left(\tfrac{-3}{p}\right) p\\
\hline
\end{array}\]
where $(\tfrac{-3}{p}) $ denotes a Legendre symbol. Compare to Table~\ref{tab:isolated}.
\end{example}

\subsection{Proof of Theorem \ref{thm:eigenvalues}}
By Proposition \ref{prop:eigen}, the statement of Theorem \ref{thm:eigenvalues} holds for the cluster variety $\cA(\tB)$, where $\tB = (0,d)^T$ for any $d \in \ZZ$.  By taking products, it thus holds for any cluster variety with extended exchange matrix of the form $\tB =  \left( \begin{smallmatrix} 0 \\ d\,\Id_n \\ 0 \end{smallmatrix} \right)$.

Let $\cA(\tB)$ be an isolated cluster variety of full rank.  By Proposition \ref{prop:coverprincipal}, there is a covering map $(\Psi,\Phi,\{R_t\}):\cA' \to \cA$, where $\cA'$ has extended exchange matrix $\tB' = \left( \begin{smallmatrix} 0 \\ d\,\Id_n \\ 0 \end{smallmatrix} \right)$ for some nonnegative integer $d$.  Let $H \subseteq \Aut(\cA')$ be the finite abelian group such that $\cA = \cA' \sslash H$.  Then we also have $\cA_{\oF_p}= \cA'_{\oF_p} \sslash H$.  The actions of Frobenius and of $H$ on the $\ell$-adic cohomology $H^\ast(\cA'_{\oF_p},\QQ_\ell)$ commute.  It follows that the eigenvalues of Frobenius acting on $H^\ast(\cA_{\oF_p},\QQ_\ell)$ is a subset of the eigenvalues of Frobenius acting on $H^\ast(\cA'_{\oF_p},\QQ_\ell)$.  Thus Theorem \ref{thm:eigenvalues} holds for isolated cluster varieties of full rank.

The Mayer-Vietoris exact sequence for $\ell$-adic cohomology is a long exact sequence of modules of the Galois group.  Thus if $\cA = U \cup V$ and the statement of Theorem \ref{thm:eigenvalues} holds for $U, V,$ and $U\cap V$, then it also holds for $\cA$.  By the definition of the Louise property, we see that Theorem \ref{thm:eigenvalues} holds for all cluster varieties $\cA$ satisfying the Louise property and of full rank.

Now suppose that $\cA$ satisfies the Louise property and has really full rank.  By Theorem \ref{thm:finitesmooth}, the variety $\cA$ is smooth in all characteristics.  The same argument as above shows that the eigenvalues of Frobenius must be powers of $p$.  This completes the proof of Theorem \ref{thm:eigenvalues}.

\begin{example}
Let
\[ \tB = \begin{pmatrix} 0 & 1 & 1 & 0 \\ -1 & 0 & 1 &0 \\ -1 & -1 & 0 & 1 \\ 0 & 0 & -1& 0 \end{pmatrix}.
\]
This is the matrix on right hand side of Table~\ref{tab:triangle}. The eigenvalues of Frobenius on the cohomology groups are
\[\begin{array}{|r|ccccc|}
\hline
& H^0 & H^1 & H^2 & H^3 & H^4 \\
\hline
k-p=0 & 1 & 0  & p^2 & 0 & p^4 \\
1 & & & & p^2 &  \\
\hline
\end{array}\]
so by \eqref{eq:GrothFinal}, we have $\#\cA(\FF_q) = (1+q^2+q^4)-q^2 =q^4 +1$. 
We can decompose $\cA$ into strata according to which of the cluster variables in the initial seed are nonzero, and get
\[ \cA = \GG_m^4 \sqcup (\GG_m^2 \times \AA^1) \sqcup (\GG_m^2 \times \AA^1) \sqcup (\GG_m^2 \times \AA^1) \sqcup (\GG_m^2 \times \AA^1) \sqcup \AA^2 \sqcup \AA^2 \]
Here $\GG_m = \AA^1 \setminus \{ 0 \}$.  We confirm our computation:
\[ \#\cA({\FF_q}) = (q-1)^4 + 4 (q-1)^2 q + 2 q^2 = q^4+1. \]
%This sort of point counting argument, in our experience, is the most useful way to verify computations.
\end{example}

\end{document}